\documentclass[a4paper, oneside, 11pt]{amsart}

\usepackage[T1]{fontenc}              
\usepackage[latin1]{inputenc}    
\usepackage{geometry}

\usepackage{amsmath, amsfonts,amssymb} 
\usepackage{mathtools}
\usepackage{bbm}
\usepackage{amsthm}     
\usepackage[capitalize]{cleveref}
\usepackage{graphicx}
\usepackage{subfig}     
\usepackage{listings}
\usepackage{enumerate}
\usepackage{tikz-cd}
\usepackage{rotating}
\usepackage{stmaryrd}
\usepackage{soul} 

\usepackage{todonotes}

\newtheorem{lem}{Lemma}[section]
\newtheorem{prop}[lem]{Proposition}

\newtheorem{thm}[lem]{Theorem}

\theoremstyle{definition}
\newtheorem{defin}[lem]{Definition}
\newtheorem{remark}[lem]{Remark}

\newtheorem{example}[lem]{Example}

\DeclareMathOperator{\rad}{rad}

\DeclareMathOperator{\topM}{top}
\DeclareMathOperator{\socM}{soc}
\DeclareMathOperator{\gldim}{gldim}

\DeclareMathOperator{\add}{add}
\DeclareMathOperator{\Ext}{Ext}
\DeclareMathOperator{\Gr}{Gr}
\DeclareMathOperator{\gr}{gr}
\DeclareMathOperator{\stgrmodu}{\setul{0.6ex}{0.1ex}
\textup{\ul{gr}}}

\DeclareMathOperator{\Hom}{Hom}
\DeclareMathOperator{\RHom}{\mathbb{R}Hom}

\DeclareMathOperator{\Tor}{Tor}

\DeclareMathOperator{\modu}{mod}

\DeclareMathOperator{\stTor}{\setul{0.6ex}{0.1ex}
\textup{\ul{Tor}}}
\DeclareMathOperator{\stExt}{\setul{0.6ex}{0.1ex}
\textup{\ul{Ext}}}

\DeclareMathOperator{\stmodu}{\setul{0.6ex}{0.1ex}
\textup{\ul{mod}}}

\DeclareMathOperator{\Hm}{H}
\DeclareMathOperator{\HH}{HH}
\DeclareMathOperator{\stHH}{\setul{0.6ex}{0.1ex}
\textup{\ul{HH}}}
\DeclareMathOperator{\HC}{HC}

\DeclareMathOperator{\op}{op}

\DeclareMathOperator{\D}{\mathcal{D}}

\DeclareMathOperator{\U}{\mathcal{U}}
\DeclareMathOperator{\C}{C}
\DeclareMathOperator{\K}{K}

\DeclareMathOperator{\tr}{tr}
\DeclareMathOperator{\Coeff}{Coeff}

\usepackage[foot]{amsaddr}

\title[\resizebox{5.25in}{!}{Hochschild (co)homology and cyclic homology via a graded Euler characteristic with applications to higher preprojective algebras}]{Hochschild (co)homology and cyclic homology via a graded Euler characteristic with applications to higher preprojective algebras}

\author{Jon Wallem Anundsen and Mads Hustad Sand\o y}

\begin{document}

\keywords{Higher Koszul algebra, higher homological algebra, trivial extension, preprojective algebra, $n$-representation finite algebra, cyclic homology, Hochschild homology, Hochschild cohomology, Frobenius algebra, graded Cartan determinant, Coxeter polynomial}
\subjclass[2020]{16E40, 16S37, 16W50, 16G20, 16G60, 18G80}

\address{Department of mathematical sciences, NTNU, NO-7491 Trondheim, Norway}
\email{jon.w.anundsen@alumni.ntnu.no}
\email{mads.sandoy@ntnu.no}

\begin{abstract}
Computing the structure of the Hochschild (co)homology and the cyclic homology of an algebra can be hard work, but Etingof and Eu \cite{Etingof-Eu'06} showed that it can be done surprisingly easily for preprojective algebras of ADE Dynkin type, at least if one only wants to know the graded vector space structure of each Hochschild cohomology group. 
Their method is based on exploiting strong structural features of such a preprojective algebra via a graded Euler characteristic that can computed using the algebra's graded Cartan matrix. 
In this paper, we present a generalization of the method used by Etingof and Eu to higher preprojective algebras. 
We also apply our generalization to the higher preprojective algebras of the $2$-representation finite algebras that arise as tensor products of representation finite hereditary algebras of type $\mathbb{A}$. 
For this, it turns out to be enough to know the graded vector space structure of the center and the zeroth Hochschild homology to be able to deduce the graded vector space structure of the Hochschild (co)homology and the cyclic homology in all other degrees.
\end{abstract}

\maketitle

\tableofcontents

\section{Introduction}
Hochschild (co)homology and cyclic homology are all notorious for how difficult they can be to compute for non-trivial examples. 
It is thus quite striking when a method allows all three of these forms of (co)homology to be computed surprisingly easily for otherwise seemingly complicated classes of algebras, as is the case in \cite{Etingof-Eu'06} and in the works \cite{Evans-Pugh'12, Morigi'22}: 
In particular, under the assumption that the base field has characteristic zero, the Hochschild (co)homology and cyclic homology was computed for preprojective algebras of ADE Dynkin algebras in \cite{Etingof-Eu'06}. 
Using essentially the same approach, \cite{Evans-Pugh'12} achieved similar results for a ``higher'' version of preprojective algebras of ADE Dynkin algebras in all types except type $\mathbb{A}$, which was instead later done by \cite{Morigi'22} up to assuming the veracity of two conjectures. 

The simplifications their method yields can be quite staggering. 
Indeed, in \cite{Etingof-Eu'06} the method allowed one to infer the Hochschild (co)homology and cyclic homology in all degrees from knowing the zeroth Hochschild homology.
In other words, it completely removed the need to do explicit computations using complexes!
Of course, for each of these classes of algebras, the various forms of (co)homology are periodic, and so even with a naive approach it would suffice to compute them in finitely many degrees. 
However, the simplifications afforded by the method cuts down the number of degrees one needs to compute dramatically: e.g.\ for \cite{Etingof-Eu'06} it goes from five to one. 

The goal of the present paper is to generalize the method or approach used in the aforementioned papers. 
However, to explain how and why we do this, we  first give a fairly ``handwavy'' review of the method itself: 
Let $\Lambda = \oplus_{i \geq 0} \Lambda_i$ be a positively graded finite dimensional $K$-algebra for a field $K$ of characteristic zero, and let $\overline{\HH}_i(\Lambda)$ and $\overline{\HC}_i(\Lambda)$ be its $i$th reduced Hochschild and cyclic homology, respectively.
Note that $\overline{\HH}_i(\Lambda) = \HH_i(\Lambda)$ holds for $i > 0$. 
In brief, the method begins by using that for any $n$ there is a short exact sequence
$$
0 \to \overline{\HC}_{n - 1}(\Lambda) \to \overline{\HH}_n(\Lambda) \to \overline{\HC}_n(\Lambda) \to 0
$$
by \cite[Theorem 4.1.13]{loday}, splicing these together to form the Connes long exact sequence
$$
0 \to \overline{\HH}_0(\Lambda) \to \overline{\HH}_1(\Lambda) \to \overline{\HH}_2(\Lambda) \to \overline{\HH}_3(\Lambda) \to \cdots,
$$
and then attempting to leverage the graded Euler characteristic
$$\sum_{i = 1}^{\infty} a_i x^i = \sum_{i = 1}^{\infty}\sum_{j = 0}^{\infty} (-1)^{i}\dim \overline{\HC}_{i}(\Lambda)_j x^j.$$

In the papers cited above, this is done by using that 
\begin{enumerate}[(A)]
    \item $\Lambda$ is positively graded such that $\Lambda = \oplus_{i\geq 0} \Lambda_i$ with $\Lambda_0$ semisimple. Hence, if $C_\Lambda(x)$ is the graded Cartan matrix of $\Lambda$, one has by \cite{Etingof-Ginzburg'07} that
$$\prod_{i = 1}^{\infty}(1 - x^i)^{-a_i} = \prod_{s = 1}^{\infty}\det C_\Lambda(x^s).$$
\end{enumerate}

However, this by itself is not sufficient, and so further strong properties of $\Lambda$ must be invoked.
In the case of \cite{Etingof-Eu'06, Evans-Pugh'12, Morigi'22}, these properties include that the algebras in question are almost Koszul in the sense of \cite{BBK02}: Roughly speaking, a finite dimensional algebra is almost Koszul if it is as close as possible to actually being Koszul (as in \cite{Priddy'70}, but see also \cite{BGS96}) while also satisfying that its simple modules are periodic. 
Indeed, let $\Lambda = \oplus_{i = 0}^{p}\Lambda_i$ be almost Koszul and recall that it then has a ``dual'' algebra 
$$\Lambda^! := \oplus_{i = 0}^{q}\Ext^{i}_{\gr \Lambda}(\Lambda_0, \Lambda_0 \langle i \rangle).$$
Moreover, recall that if $p,q \geq 2$ and $\Lambda$ is connected and basic, then both $\Lambda$ and $\Lambda^!$ are selfinjective and even Frobenius.
In this case, (A) is actually part of the assumption that $\Lambda$ is almost Koszul, but we also obtain the following: 

\begin{enumerate}[(A)]
    \setcounter{enumi}{1}
    \item $\Lambda$ has a minimal graded projective resolution $P^\bullet_{\Lambda^{e}} \twoheadrightarrow \Lambda$ of $\Lambda$ considered as a bimodule that is ``piece-wise linear'' by \cite{Yu'12} (and ``twisted periodic'' by \cite{Green-et-al-2}); and
    \item the graded Cartan matrices of $\Lambda$ and $\Lambda^!$ satisfy $$C_\Lambda(x) C_{\Lambda^!}(-x)^T = I + (-1)^{q} x^{p+q} P^{p+q}$$ for a permutation matrix $P$ corresponding to the product of the Nakayama permutation of $\Lambda$ and the inverse of the Nakayama permutation of $\Lambda^!$; see Proposition 3.14 of \cite{BBK02}.  
\end{enumerate}

Note that (C) allows one to reduce to  whichever one of $C_\Lambda(x)$ and $\C_{\Lambda^!}(x)$ is easier to work with, and it often results in significant simplifications. 

Let $\mu$ be the Nakayama automorphism of $\Lambda$ and recall that ${}_{1}\Lambda_{\mu^{-1}}$ is the bimodule obtained by twisting the right $\Lambda$-action of $\Lambda$ by $\mu^{-1}$, i.e.\ if $\lambda, \lambda'' \in \Lambda$ and $\lambda' \in {}_{1}\Lambda_{\mu^{-1}}$, then  $\lambda \cdot \lambda' \cdot \lambda'' = \lambda\lambda'\mu(\lambda'')$. 
With this in mind, we note that in all of \cite{Etingof-Eu'06, Evans-Pugh'12} and \cite{Morigi'22}, the algebras satisfy that
\begin{enumerate}[(D1)]
    \item $\Omega^{m+1}_{\Lambda^{e}} \Lambda$ and ${}_{1}\Lambda_{\mu^{-1}}$ are isomorphic as bimodules for some integer $m \geq 2$; and 
    \item $\mu$ has finite order. 
\end{enumerate} 
In the following example, we try to show how all of this comes together by sketching what is done in \cite{Etingof-Eu'06}. 
\begin{example}\label{iex: Etingof-Eu}
Let $\Lambda$ be an ADE Dynkin preprojective algebra; see  \cite{Gelfand-Ponomarev}, \cite{DR80}, but also note that e.g.\ \cite{Rin98} is a readable source discussing various equivalent definitions of preprojective algebras.
Then, by \cite{Schofield}, we have that $\Omega^3_{\Lambda^{e}}\Lambda \cong \Lambda_{\mu^{-1}}$ so that (D1) holds; and (D2) holds since it is well known that $\mu^2 = 1$.
This forces a certain self-duality of a portion of the Connes long exact sequence for $\Lambda$ via the formula $\HH_i (\Lambda) \cong D\HH_{5 - i} (\Lambda)$ that follows by also using that $\Lambda^{e} := \Lambda^{\op} \otimes \Lambda$ is Frobenius since $\Lambda$ is Frobenius. 
In fact, more can be said: Note that it is well known that $\Lambda^!$ is in this case isomorphic to the trivial extension $\Delta (A^!)$ for $A = KQ$ and $Q$ an orientation of an ADE Dynkin quiver of the same type as $\Lambda$; see e.g.\ \cite{BBK02}.
As a consequence, we have that
$$\Lambda^! = \oplus_{i = 0}^{2}\Ext^{i}_{\gr \Lambda}(\Lambda_0, \Lambda_0 \langle i \rangle),$$
i.e.\ $\Lambda^!$ is of highest degree $q = 2$. 

We now know that
$$C_{\Lambda^!}(x) = I + x (M + M^T) + x^2I$$
for $M$ the adjacency matrix of $A^!$. We can thus in principle use (A) and (C) in combination to compute the graded Euler characteristic of $\overline{\HC}_*(\Lambda)$.
For instance, one can find the values of $\det C_{\Lambda^!}(x)$ in \cite{Lusztig'83}.
In case the Dynkin type of $\Lambda$ is $\mathbb{A}_n$, we get that $\det C_{\Lambda^!}(x) = (1 - x^{n+1})^2$. 
Moreover, the matrix $P$ in (C) can similarly be determined by consulting a source such as \cite{Schofield} or \cite{Erdmann-Snashall'98}; but see also \cite{HI11b}.
If the Dynkin type is $\mathbb{A}_n$, one obtains 
\begin{align*}
\chi_{\overline{\HC}_*(\Lambda)}(x) 
& = 
-\frac{1}{1 - x^{2n+2}} \left( \sum_{i = 1}^{n}x^{2i} -x^{n+1} + nx^{2n+2} \right).
\end{align*}

We now let $h$ be the Coxeter number of the Dynkin type of $\Lambda$ --- e.g.\  $h = n + 1$ if the type is $\mathbb{A}_n$ --- and observe that $\Omega^{3}_{\Lambda^{e}}\Lambda \cong \Lambda_{\mu^{-1}}\langle h\rangle$ holds by \cite{Yu'12} as graded bimodules, where $h = h - 2 + 2$ is the sum of the highest degrees of $\Lambda$ and $\Lambda^!$.
Note that given a graded $\Lambda$-module $M$, we let $M \langle j \rangle$ denote the $j$\textit{-th graded shift of} $M$, i.e.\ $M \langle j \rangle$ has the same underlying $\Lambda$-module structure as $M$, but its grading is defined by setting $M\langle j \rangle_i := M_{i - j}$.
Hence, we obtain that (B), (D1) and (D2) in combination yield that $\Omega^{6}_{\Lambda^{e}}(\Lambda) \cong \Lambda\langle 2h \rangle$. Hence, we see that $\HH_{i + 6} (\Lambda) \cong \HH_i (\Lambda) \langle 2h \rangle$ for $i \geq 1$. 
Moreover, another fairly straightforward computation then shows that $\HH_i (\Lambda) \cong D\HH_{5 - i} (\Lambda)\langle 2h \rangle$ for $1 \leq i \leq 4$.

Consider now the following diagram obtained from the Connes long exact sequence. 

\[
\begin{tikzcd}
& 0 \dar & \\
0 \leq \deg \leq h - 2 & \overline{\HH}_0(\Lambda) \dar["B_0"] \rar[equal, shorten = 2mm, shift right] & C \dar["\sim" {rotate=90, anchor=north}] &\\
1 \leq \deg \leq h - 1  & \overline{\HH}_1(\Lambda) \dar["B_1"] \rar[equal, shorten = 2mm, shift right] & C \rar[phantom, "\oplus"] & X_1 \dar["\sim" {rotate=90, anchor=north}]\\
2 \leq \deg \leq h & \overline{\HH}_2(\Lambda) \dar["B_2"] \rar[equal, shorten = 2mm, shift right] & X_2 \dar["\sim" {rotate=90, anchor=north}] \rar[phantom, "\oplus"] & X_1\\
h \leq \deg \leq 2h - 2 & \overline{\HH}_3(\Lambda) \dar["B_3"] \rar[equal, shorten = 2mm, shift right] & D(X_2)\langle 2h \rangle \rar[phantom, "\oplus"] & D(X_1)\langle 2h \rangle \dar["\sim" {rotate=90, anchor=north}]\\
h + 1 \leq \deg \leq 2n - 1 & \overline{\HH}_4(\Lambda) \dar["B_4"] \rar[equal, shorten = 2mm, shift right]  & D(C)\langle 2h \rangle \dar["\sim" {rotate=90, anchor=north}] \rar[phantom, "\oplus"] & D(X_1)\langle 2h \rangle\\
h + 2 \leq \deg \leq 2h & \overline{\HH}_5(\Lambda) \dar["B_5"] \rar[equal, shorten = 2mm, shift right]  & D(C)\langle 2h \rangle \rar[phantom, "\oplus"] & L \langle 2h \rangle  \dar["\sim" {rotate=90, anchor=north}]\\
2h \leq \deg \leq 3h - 2 & \overline{\HH}_6(\Lambda)  \arrow[r, equal, shorten = 2mm, shift right]  & X_3 \rar[phantom, "\oplus"]  & L \langle 2h \rangle 
\end{tikzcd}
\]

The bounds on the degrees depicted in the left-most column come from the ``piece-wise linear'' property in (B), i.e.\ from \cite{Yu'12}. 
Using these bounds, we note that it is straightforward to read off that $X_1$ is trivial in degrees less than $2$ and in degrees greater than $h-1$, that $X_2$ is trivial except possibly in degree $h$, that $L\langle 2h\rangle$ is trivial except possibly in degree $2h$, and thus we deduce that $X_1, X_2 \cong D(X_{2})\langle 2h\rangle, D(X_1)\langle 2h\rangle$ and $L$ are non-trivial in disjoint sets of degrees. 
Hence, as long as one knows the Hilbert series of $C$ --- which we recall is $h_C(x) = \sum_{i\in \mathbb{N}} \dim_K (C_i) x^{i}$ for a positively graded $K$-vector space $C$--- one could easily obtain the Hilbert series of $X_1, X_2, D(X_1)\langle 2h\rangle$ and $L$ from the Euler characteristic we could compute using (A) and (C).

Since $\HH_0 (\Lambda) \cong \Lambda/[\Lambda, \Lambda]$ holds for any $K$-algebra, it seems reasonable to hope to be able to compute the structure of $C$ as a graded vector space. 
In fact, \cite{Malkin-Ostrik-Vybornov'06} shows that $\HH_0(\Lambda) \cong \Lambda/\rad \Lambda$ in case $\Lambda$ is a preprojective of an ADE Dynkin algebra. 
Hence, $C$ is trivial, implying that $B_4 = 0$ and thus also that $X_3$ is trivial.  
Consequently, one can compute $\HH_i(\Lambda)$ for all $i \geq 7$ by using that $\HH_{i + 6} (\Lambda) \cong \HH_{i} (\Lambda)\langle 2h \rangle$ holds for $i \geq 1$ as mentioned above. 

By using (D1)-(D2) and the fact that $\Lambda^{e}$ is Frobenius, we can rewrite the computations above in terms of Hochschild cohomology: e.g.\ one obtains that $\HH^{i}(\Lambda) \cong \HH_{2 - i}(\Lambda) \langle - 1 \rangle$ holds for $i = 1$ while $\HH^{i}(\Lambda) \cong \HH_{8 - i}(\Lambda) \langle -2h - 1 \rangle$ holds for $2 \leq i \leq 7$.

\subsection{Our results}
We generalize the method used in \cite{Etingof-Eu'06, Evans-Pugh'12} and \cite{Morigi'22} by generalizing the assumptions necessary for the aforementioned versions of (A), (B), and (C) to hold. 
In particular, we are able to weaken the assumption that the algebra is graded in such a way that the degree zero part is semisimple. 
For (A), this takes the following form. 
\begin{thm}[See \cref{thm: etingof-ginzburg}]\label{ithm: etingof-ginzburg}
Let $\Lambda = \oplus_{i \geq 0} \Lambda_i$ be a positively graded $K$-algebra satisfying the following assumptions:
\begin{enumerate}[(i)]
    \item $\Lambda = \oplus_{i \geq 0}\Lambda_i$ is locally finite dimensional, i.e.\ each homogeneous component $\Lambda_i$ of $\Lambda$ is finite dimensional.
    \item The homogeneous component of $\Lambda$ of degree zero is given by a finite acyclic quiver with admissible relations.
\end{enumerate}
Also let $\chi_{\overline{\HC}_*(\Lambda)}(x) = \sum_{k \geq 1} a_k x^k$. 
Then  
\[
\prod\limits_{k=1}^\infty(1-x^k)^{-a_k}=\prod\limits_{s=1}^\infty\det C_\Lambda(x^s).
\]
\end{thm}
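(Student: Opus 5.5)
The plan is to follow the Etingof--Ginzburg argument, which goes in three steps. First, compute the graded Euler characteristic of $\overline{\HH}_*(\Lambda)$ from a graded bimodule resolution. Second, convert this into information about $\overline{\HC}_*$ via Connes' sequence. Third, exponentiate using a plethystic/Lyndon-word type identity. Throughout, everything is done in the ring of formal power series in $x$; local finiteness (i) guarantees that each graded piece is finite dimensional, so all sums are well defined.

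\textbf{Step 1: the Euler characteristic of $\HH_*$.} Let $S=\Lambda_0=KQ/I$ with $Q$ finite acyclic. Then $S$ has finite global dimension and a finite projective bimodule resolution by modules $S e_i\otimes e_j S$. Using this, I will build a graded projective $\Lambda^e$-resolution of $\Lambda$ whose terms are $\Lambda\otimes_S V_k \otimes_S \Lambda$. Here the $V_k$ are graded $S$-bimodules, for instance coming from the bar construction relative to $S$ combined with the $S$-resolution. Acyclicity gives $\Lambda_0$ a finite resolution and lets the $S$-part be controlled. Tensoring with $\Lambda$ over $\Lambda^e$ computes $\HH_*(\Lambda)$.

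The graded Euler characteristic is then a trace-type quantity: $\sum_i(-1)^i h_{\HH_i(\Lambda)}(x)$ equals $\sum_s$ of $\tr$-expressions which, by the standard computation, sum to $-\,x\frac{d}{dx}\log\det C_\Lambda(x)$ plus a degree-zero correction coming from $\HH_*(\Lambda_0)$. I expect this step to be the main obstacle. In Etingof--Ginzburg the ground ring is semisimple, so one simply takes $\Tor^{\Lambda}(\Lambda_0,\Lambda_0)$, and the Cartan matrix of the Koszul-type complex is $C_\Lambda(x)^{-1}$. Here I would first reduce to that case using the identity $\det C_\Lambda(x)=\det C_{\Lambda}^{\text{simple}}(x)\cdot\det C_{\Lambda_0}$. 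Since $Q$ is acyclic, $C_{\Lambda_0}$ is unitriangular and so has determinant $1$. Moreover $\HH_i(\Lambda_0)=0$ for $i>0$ and $\HH_0(\Lambda_0)=K^{Q_0}$ (a standard fact for acyclic quivers with relations, via a Happel-type computation). So refining the grading by the radical filtration (or passing to the associated graded with respect to the total grading that also counts arrows of $Q$) should reduce to the semisimple case without changing the relevant quantities.

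\textbf{Steps 2 and 3: from $\HH$ to $\HC$, then exponentiate.} In characteristic zero, the short exact sequences $0\to\overline{\HC}_{n-1}\to\overline{\HH}_n\to\overline{\HC}_n\to0$ give the relation between the graded Euler characteristics of $\overline{\HH}$ and $\overline{\HC}$. Combined with the Euler grading operator (the Connes $B$ operator is homogeneous, and on reduced cyclic homology the Goodwillie-type argument gives $B$ compatible with multiplication by degree), one obtains
\[
\sum_k k a_k x^k = x\frac{d}{dx}\sum_{s\ge1}\log\det C_\Lambda(x^s)\cdot(\text{Möbius-adjusted}),
\]
precisely as in Etingof--Ginzburg. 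Concretely, $\chi_{\overline{\HH}}$ computed on $\Lambda$ expresses $\sum_k a_k \sum_{d} x^{kd}\cdot(\ldots)$ through $\log\det C_\Lambda$.

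Finally, take $-\log$ of both sides of the claimed identity and compare: $\sum_k a_k\sum_{m}x^{km}/m=\sum_s\log\det C_\Lambda(x^s)$. This is a formal manipulation once Step 1 gives $\log\det C_\Lambda(x)=\sum_m\frac1m\sum_k(\ldots)$. I would copy their computation verbatim, checking only that the reduced (degree $>0$) parts are what enter, which is ensured by the correction term from $\Lambda_0$ vanishing.
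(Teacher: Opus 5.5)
There is a genuine gap, and it is not merely technical. Steps 1 and 2 cannot produce the numbers $a_k$ at all, because the graded Euler characteristic of Hochschild homology is trivial here.

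To see this, compute $\HH_*(\Lambda)$ with the normalized Hochschild complex relative to the separable algebra $D=\Lambda/R\cong K^{|Q_0|}$, where $R=\rad\Lambda_0\oplus\Lambda_{>0}$. The degree-$l$ part of the $n$th term has dimension equal to the coefficient of $x^l$ in $\tr(C_\Lambda(x)E(x)^n)$, with $E=C_\Lambda-I$. Acyclicity makes this vanish for $n\gg l$. Hence $\chi_{\HH_*(\Lambda)}(x)=\tr\bigl(C_\Lambda(x)(I+E(x))^{-1}\bigr)=|Q_0|$, and so $\chi_{\overline{\HH}_*(\Lambda)}=0$.

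A concrete check: for $\Lambda=K[y]$ with $y$ in degree $1$, one has $h_{\HH_0}=1/(1-x)$ and $h_{\HH_1}=x/(1-x)$, so $\chi=1$. But $-x\frac{d}{dx}\log\det C_\Lambda(x)=-x/(1-x)$, which differs from $1$ by more than a constant. So the formula you expect in Step 1 is false.

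The sequences of Step 2 force the same conclusion: $0\to\overline{\HC}_{n-1}\to\overline{\HH}_n\to\overline{\HC}_n\to0$ telescope to $\chi_{\overline{\HH}}=\chi_{\overline{\HC}}-\chi_{\overline{\HC}}=0$. What you would actually need is the homologically weighted sum $\sum_n(-1)^n n\,h_{\overline{\HH}_n}(x)=-\chi_{\overline{\HC}}(x)$. That sum is not determined by the chain groups of any resolution. Weighting by internal degree (the Euler derivation) only applies $x\frac{d}{dx}$ and does not help. So the ``M\"obius-adjusted'' identity in Step 2 is asserted rather than derived, and it is precisely the content of the theorem.

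The missing idea is to compute $\chi_{\overline{\HC}}$ directly from a complex for cyclic homology whose chain groups are cyclic coinvariants. That is where the factor $1/k$, and hence the logarithm, comes from. The paper uses Igusa's complex $(R\otimes_D)^k/(1-t)$ for the relative cyclic homology $\HC_*(\Lambda,R)$, and counts dimensions as follows:
\begin{enumerate}
\item A tensor basis of $(R\otimes_D)^k$ is permuted up to sign by $t$.
\item The number of basis elements of period $p\mid k$ is obtained by M\"obius inversion from the coefficients of $\tr(E(x^{k/p})^p)$.
\item Each orbit contributes one dimension to the quotient, except orbits of odd period when $k$ is even, which contribute nothing in characteristic zero.
\end{enumerate}
This is combined with $\log\det C_\Lambda(x)=\sum_k(-1)^{k+1}\tr(E(x)^k)/k$. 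That identity holds because acyclicity makes $E(0)$ nilpotent and makes the degree-$l$ part of $(R\otimes_D)^k$ vanish for $k\gg l$, so the Euler characteristics of the complex and of its homology agree. The result is $\chi_{\HC_*(\Lambda,R)}(x)=\sum_k\log\det C_\Lambda(x^k)\sum_{d\mid k}\mu(d)/d$, together with its Dirichlet-inverse form. One then identifies $\HC_*(\Lambda,R)\cong\overline{\HC}_*(\Lambda)$, using $\HC_{\mathrm{odd}}(D)=0$ and the fact that $D$ is concentrated in degree $0$.

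Putting $\rad\Lambda_0$ into $R$ is what makes your proposed passage to an associated graded algebra unnecessary. That passage would change cyclic homology, and would at best preserve Euler characteristics via a spectral-sequence argument you have not set up. Your Step 3 is fine and matches the paper's last step, but it needs the logarithmic formula as input.
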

We achieve this by generalizing a result due to Igusa \cite{IGUSA1992101} that is a ``logarithmic'' version of the one in (A) due to Etingof and Ginzburg \cite{Etingof-Ginzburg'07}. 
More precisely, we check that Igusa's proof also works with these more general assumptions, and then we establish that this logarithmic version implies the one in the theorem above.

For our results relating to (B) and (C), we achieve our generalizations by replacing the notion of an almost Koszul algebra with a ``higher'' version which does not require the degree zero part of the algebra to be semisimple, only of finite global dimension: Almost $d$-$T$-Koszul algebras were introduced in \cite{HS} as a common generalization of the almost Koszul algebras of \cite{BBK02} and trivial extensions $\Delta(A)$ of so-called $d$-representation finite algebras $A$. 
Similarly to the case of a Koszul or almost Koszul algebra $\Lambda$, there is a notion of a dual algebra $\Lambda^!$. 
In the case of a trivial extension $\Delta(A)$ of a $d$-representation finite algebra $A$, the almost Koszul dual $\Delta(A)^!$ is the higher preprojective algebra $\Pi_{d+1}(A)$.

Roughly speaking, one can think of $d$-representation finite algebras as algebras of global dimension $d$ for which the Serre functor of their bounded derived categories behaves in a fashion that generalizes the behaviour of the Serre functor on the bounded derived category of representation finite hereditary algebras; see \cite{HIO14}. 
In both the classical case of $d = 1$ and for $d \geq 2$, the higher preprojective algebra of a $d$-representation finite algebra $A$ can be defined as the free tensor algebra over $A$ of a bimodule closely related to the Serre functor of $A$, namely $\Ext^{d}_A(D(A), A)$; see \cite{CB99} and \cite{IO13}, respectively.
Note that both of these are central classes of algebras that have been much studied; see e.g.\ \cite{Iya11, Iyama-Oppermann, HI11b, HI11, AO14, Grant-Iyama'20, DJW19, DJL21}.
Additionally, examples of both include classes of algebras that have been of interest to parts of theoretical physics since the early nineties; see e.g.\ \cite{Di-Francesco-Zuber, Ocneanu} and \cite{Caorsi-Cecotti}. 

Moreover, there are open conjectures and questions concerning these and related classes of algebras that can, in principle, be answered by studying their Hochschild cohomology. 
Indeed, recent work by Chan, Iyama and Marczinzik has shown that twisted fractionally Calabi--Yau algebras of finite global dimension can be characterized in terms of $d$-representation finite algebras and their higher preprojective algebras.
Moreover, they show that a conjecture stating that all $d$-representation finite algebras have acyclic quivers is equivalent to the same being true for twisted fractionally Calabi--Yau algebras.  
By \cite{Hap89}, it is known that, up to some assumptions, the first Hochschild cohomology of an algebra can detect cycles in its quiver.  

In \cite{Chan-et-al}, the same group showed that an algebra of finite global dimension is twisted fractionally Calabi--Yau if and only if its trivial extension is a twisted periodic algebra, thus connecting the twisted fractionally Calabi--Yau conjecture from \cite{HI11b} to the periodicity conjecture of Erdmann and Skowro\'{n}ski \cite{Erdmann-Skowronski'08}.
Both of the latter conjectures claim that certain algebra automorphisms have finite order as elements of the relevant outer automorphism group, and this too is something the first Hochschild cohomology can say something about by virtue of it being the space of outer derivations of the algebra.

We note that the algebras studied in \cite{Etingof-Eu'06,  Evans-Pugh'12} and \cite{Morigi'22} are related to higher preprojective algebras of $d$-representation finite algebras in the sense of Iyama and collaborators; see e.g.\ \cite{Iya11, Iyama-Oppermann, IO13}. 
Indeed, in the case of \cite{Etingof-Eu'06}, this is true by the fact that $d = 1$ recovers the case of representation finite hereditary algebras, i.e.\ those given by ADE Dynkin quivers.
The type $\mathbb{A}$ algebras considered in \cite{Evans-Pugh'12} and \cite{Morigi'22} are exactly the $(d+1)$-preprojective algebras of the higher type $\mathbb{A}$ algebras introduced by \cite{Iyama-Oppermann} in the case $d = 2$; moreover, the other types are evidently related to higher preprojective algebras even if they are not always themselves such algebras; see \cite{Ocneanu} and \cite{Haden'24}.

This suggests that it is natural to generalize this method to general higher preprojective algebras of $d$-representation finite algebras. 
Moreover, we note that there are other immediate benefits:
By the work of \cite{Dugas'12}, it is known that the higher preprojective algebra of a $d$-representation finite algebra always satisfies (D1) for $m = d + 1$. 
As for (D2), this is not known to always hold for higher preprojective algebras and is thus something that must be established case by case. 
Do note, however, that if an algebra satisfies (D1), property (D2) becomes equivalent to the aforementioned periodicity conjecture of Erdmann and Skowro\'{n}ski \cite{Erdmann-Skowronski'08} holding for that algebra.

Assume now and for the remainder of this introduction that the base field $K$ is algebraically closed. The following is our first step towards a replacement for (B).

\begin{prop}[See \cref{prop: graded version of Happel's result}]\label{iprop: graded version of Happel's result}
Let $A$ be an $\ell$-homogeneous $d$-representation finite algebra and write the integer $i$ as $i = (d+2)q + r$ for integers $q, r$ such that $0 \leq r < d + 2$. 
Then the graded minimal projective bimodule resolution $P^
{\bullet}(\Pi)$ of $\Pi :=  \Pi_{d+1}(A)$ satisfies that $P^{-i}(\Pi)$ is generated in degree $q\ell$ if $r = 0$ and in degrees $q\ell$ and $q\ell + 1$ otherwise.
\end{prop}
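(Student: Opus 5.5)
The plan is to build the minimal graded projective bimodule resolution of $\Pi = \Pi_{d+1}(A)$ from two ingredients. The first is the standard resolution of $\Pi$ as a tensor algebra $T_A(E)$, where $E = \Ext^d_A(D(A),A)$. The second is the periodicity $\Omega^{d+2}_{\Pi^e}\Pi \cong {}_1\Pi_{\mu^{-1}}$ from \cite{Dugas'12}, i.e.\ (D1) with $m = d+1$.

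\textbf{Grading conventions and the first period.} Since $A$ is $\ell$-homogeneous, $\Pi$ is graded by tensor degree, with $\Pi_0 = A$ and $\Pi_1 = E$. I will use the convention (which I expect the paper fixes) that $E$ sits in degree $1$, so that $\Pi = \oplus_{j \geq 0} \Pi_j$ with $\Pi_j = E^{\otimes_A j}$. The bimodule resolution of $T_A(E)$ is
\[
0 \to \Pi \otimes_A E \otimes_A \Pi \to \Pi \otimes_A \Pi \to \Pi \to 0 .
\]
Replacing $E$ and $A$ by their projective $A^e$-resolutions gives a total complex. Since $A$ has global dimension $d$, the bimodule $A$ has projective dimension $d$ over $A^e$. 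The point to check is that $E$, viewed as an $A$-bimodule, has a projective resolution of length $d$ whose terms, after inducing up to $\Pi^e$, are generated in degree $1$. The terms coming from $A$ are generated in degree $0$. Hence in homological degrees $0,\dots,d+1$ the total complex has terms generated in degrees $0$ and $1$, with degree $0$ only in homological degree $0$ (the degree-$0$ generators would sit in homological degrees $0,\dots,d$ coming from $A$). This is where I must compare with the precise form of the claim.

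\textbf{Sharpening via Koszulity.} The claim for $q=0$ is: degree $0$ for $i=0$, and degrees $0$ and $1$ for $1 \le i \le d+1$. That is exactly what the naive total complex gives, so the tensor-algebra resolution, made minimal, should suffice. Minimality only removes summands, so generation degrees can only shrink; I will not need exactness in each degree.

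A cleaner alternative route is the almost $d$-$T$-Koszul property of \cite{HS} for $\Delta(A)$, whose dual is $\Pi$. That gives a "$T$-linear" resolution, and it is the route I would try if the tensor-algebra bookkeeping becomes messy.

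\textbf{Propagating by periodicity.} Next I would show, as a graded statement, that $\Omega^{d+2}_{\Pi^e}\Pi \cong {}_1\Pi_{\mu^{-1}}\langle \ell \rangle$, the graded refinement of Dugas's result. The shift is read off from the last term. The kernel of $P^{-(d+1)} \to P^{-d}$ is generated in the degree where the Serre functor lands: $E^{\otimes \ell}$-type considerations combined with $\ell$-homogeneity give that the top of $\Pi$ as a bimodule sits at degree $\ell - 1$, in analogy with $h-2$ in \cref{iex: Etingof-Eu}. The $d+2$ syzygy is then generated in degree $\ell$.

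With this in hand, twisting by $\mu^{-1}$ is degree-preserving and exact, because $\mu$ is a graded automorphism. Hence $P^{-(i+d+2)} \cong P^{-i}\langle \ell \rangle$ up to twist, and induction on $q$ gives generation degrees $q\ell$ for $r=0$ and $q\ell, q\ell+1$ otherwise.

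\textbf{Main obstacle.} The expected obstacle is establishing the graded shift exactly $\ell$ in the periodicity isomorphism. This requires identifying the degree of the socle, namely $\Pi_{\ell-1}$ as the top graded piece, using $\ell$-homogeneity: $\tau_d^{-(\ell-1)}$ takes projectives to injectives. It also requires checking that the degree-$(\ell-1)$ piece of the last syzygy corresponds to $D\Pi$ twisted. I would first try to deduce this from the graded Frobenius structure $D\Pi \cong {}_1\Pi_{\mu}\langle -(\ell-1)\rangle$ combined with the duality $\Hom_{\Pi^e}(-,\Pi^e)$ applied to the first $d+2$ terms.
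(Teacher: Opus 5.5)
\textbf{The first step has a genuine gap.} Inserting $A^e$-projective resolutions into $0\to\Pi\otimes_A E\otimes_A\Pi\to\Pi\otimes_A\Pi\to\Pi\to 0$ does not give a resolution of $\Pi$.

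\begin{itemize}
\item If $Q_\bullet\to A$ is a projective $A^e$-resolution, then $\Pi\otimes_A Q_\bullet\otimes_A\Pi$ computes $\Pi\otimes^{\mathbb{L}}_A\Pi$, not $\Pi\otimes_A\Pi$, and the same holds for the $E$-term. The substitution is legitimate only when the generating bimodule is projective on each side over $A$. Here $\Pi_A\cong\bigoplus_j\tau_d^{-j}A$ has the non-projective injective summand $\tau_d^{-(\ell-1)}A$, for $A$ not semisimple.
\item In fact your total complex can never be exact. It is a bounded complex of finitely generated projective $\Pi^e$-modules (bounded because $\gldim A^e<\infty$). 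But $\Pi$ has infinite projective dimension over the selfinjective algebra $\Pi^e$; otherwise $\Pi$ would be a projective bimodule, hence separable, hence semisimple.
\item The $d$-cluster tilting property kills the relevant $\Tor^A_i$ only for $0<i<d$. So at best you get exactness below homological degree $d$.
\item Nothing in your argument controls homological degree $d$. Exactness there is exactly what you need to conclude that the minimal $P^{-(d+1)}(\Pi)$ is a summand of your degree-$(d+1)$ term.
\end{itemize}

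So ``minimality only removes summands'' is unavailable precisely at $i=d+1$, and ``I will not need exactness'' gets it backwards: exactness is what makes the minimal resolution a summand. Your bookkeeping is also inconsistent: degree-$0$ generators occur in homological degrees $0,\dots,d$, not only in degree $0$.

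\textbf{The second step inherits this gap.} Reading off the shift $\ell$ from the socle of $P^{-(d+1)}$ requires $P^{-(d+1)}$ to be generated in degree $1$ only. The paper proceeds in the opposite order. It takes the graded periodicity $\Omega^{d+2}_{\Pi^e}\Pi\cong{}_1\Pi_{\mu^{-1}}\langle\ell\rangle$ directly from Theorem 1.2 of \cite{AO14}, namely $\Omega^{d+2}_{\Pi^e}\Pi\cong\Hom_{\Pi^e}(\Pi,\Pi^e)\langle 1\rangle$, combined with $\Pi$ being graded Frobenius of highest degree $\ell-1$. It never constructs the bimodule resolution.

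Instead it works with the one-sided minimal resolution of $S=\topM\Pi$:
\begin{itemize}
\item Tensoring the bimodule periodicity with $S$ gives $\Omega^{d+2}S\cong S\langle\ell\rangle$.
\item Socle and selfinjectivity arguments then force $P^{-(d+1)}(S)$ to be generated in degree $1$.
\item The degree-$0$ part is the minimal $A$-resolution of $S$, which gives degree-$0$ generators for $i\le d$ since $\gldim A=d$.
\item Dualizing with $\Hom_\Pi(-,\Pi)$ gives the degree-$1$ generators for $1\le i\le d+1$.
\end{itemize}
The graded version of Happel's isomorphism, $\Ext^i_{\Gr\Pi^e}(\Pi,\Hom_K(S,S)\langle j\rangle)\cong\Ext^i_{\Gr\Pi}(S,S\langle j\rangle)$, then transfers these generator degrees to $P^{-i}(\Pi)$.

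Your almost Koszul fallback would likewise only give a one-sided resolution. It would still need such a transfer step to reach the bimodule level.
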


Note that the $\ell$-homogeneous assumption here is a condition on the orbits of the indecomposable projectives of $A$ with respect to the action of the Serre functor of $A$. 
In particular, if $d = 1$ and $A$ is of type $\mathbb{A}_n$, then $A$ is $\ell$-homogeneous if $n$ is odd and the quiver of $A$ has e.g.\ a bipartite orientiation. 
The preceding proposition is proved by using that this assumption in combination with Theorem 1.2 of \cite{AO14} allows us to deduce that $\Omega_{\Pi^{e}}^{d + 2}\Pi \cong \Pi_{\mu^{-1}}\langle \ell \rangle$, following which we apply some of the ideas in section 3.1 and 3.2 of \cite{AIR15}.

The bounds on the degrees of the generators can be improved when considering Hochschild homology. 
Hence, it is the following that will effectively serve as a version of the ``piece-wise linearity'' in (B). 

\begin{prop}[See \cref{prop: HH_* bounds on degrees}]\label{iprop: HH_* bounds on degrees}
Let $A$ be an $\ell$-homogeneous $d$-representation finite algebra, let $i$ be a non-negative integer, and write the integer $i$ as $i = (d+2)q + r$ for integers $q, r$ such that $0 \leq r < d + 2$. 
Then $$\HH_{i}(\Pi_{d+1}(A)) = \bigoplus_{j = q\ell}^{q\ell + \ell - 1}\HH_{i, j}(\Pi_{d+1}(A))$$ if $r = 0$  and $$\HH_{i}(\Pi_{d+1}(A)) = \bigoplus_{j = q\ell + 1}^{q\ell +\ell}\HH_{i, j}(\Pi_{d+1}(A))$$ otherwise. 
\end{prop}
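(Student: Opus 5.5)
The plan is to compute $\HH_i(\Pi)$ with $\Pi := \Pi_{d+1}(A)$ directly from the complex $\Pi \otimes_{\Pi^e} P^\bullet(\Pi)$, where $P^\bullet(\Pi)$ is the graded minimal projective bimodule resolution from \cref{iprop: graded version of Happel's result}. The bounds then come from two sources: the degrees of the generators of $P^{-i}(\Pi)$, and the degrees in which $\Pi$ itself lives.

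\textbf{The easy containment.} Each term $P^{-i}(\Pi)$ is a direct sum of graded bimodules $\Pi e_a \otimes e_b \Pi \langle j\rangle$, where $j$ is a generator degree. Applying $\Pi \otimes_{\Pi^e} -$ turns such a summand into $e_b \Pi e_a \langle j \rangle$. For $\ell$-homogeneous $A$, the tensor grading of $\Pi$ is concentrated in degrees $0, \dots, \ell - 1$: the top degree is $\ell-1$, since the Nakayama/Serre orbits of the projectives all have the same length $\ell$. By \cref{iprop: graded version of Happel's result}, the generators of $P^{-i}(\Pi)$ sit in degree $q\ell$ when $r = 0$. So the $i$-th term of the complex, and hence $\HH_i(\Pi)$, lives in degrees $q\ell, \dots, q\ell + \ell - 1$ in that case, which is the first claim. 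For $r \neq 0$ the generators sit in degrees $q\ell$ and $q\ell+1$, so the same argument only gives degrees $q\ell, \dots, q\ell + \ell$.

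\textbf{The remaining case $r \neq 0$.} What is left is to show $\HH_{i, q\ell}(\Pi) = 0$ when $r \neq 0$.

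1. Restrict to the degree-$q\ell$ strand of the complex. Only degree-$q\ell$ generators contribute to it, each paired with $e_b \Pi_0 e_a = e_b A e_a$.
2. Identify this strand, using the construction of $P^\bullet(\Pi)$ along the lines of section 3.1 and 3.2 of \cite{AIR15}. The part of the resolution generated in degree $q\ell$, for indices $(d+2)q \le i \le (d+2)q + d + 1$, should be $\Pi \otimes_A P^\bullet(A) \otimes_A \Pi$ with the right action twisted by $\mu^{-q}$ and shifted by $\langle q\ell \rangle$. Here $P^\bullet(A)$ is the minimal projective bimodule resolution of $A$, which has length $d$. The degree-$(q\ell+1)$ generators then come from the cone involving $\Ext^d_A(DA, A)$.
3. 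Conclude that the degree-$q\ell$ strand in homological degrees $(d+2)q + r$ is the complex $A_{\sigma} \otimes_{A^e} P^{-r}(A)$ for an automorphism $\sigma$ induced by $\mu^{q}$, possibly truncated at the ends.
4. Show its homology vanishes for $r \ge 1$, i.e.\ $\Tor^{A^e}_r(A_\sigma, A) = 0$ for $r \geq 1$. Since $A$ has an acyclic quiver, order the vertices so that $e_b A e_a \ne 0$ only for $b \succeq a$. The complex then splits into summands indexed by pairs of vertices connected by paths, and there are no oriented cycles. I would run a filtration argument by this order, or reduce to $A/\rad A$ via the radical filtration of $A_\sigma$, to show that only the semisimple part contributes, and only in homological degree $0$. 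The case $r = d+1$ must be checked separately: there the degree-$q\ell$ generators should be absent or get cancelled by the differential from the next period.

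\textbf{Main obstacle.} Step 2, pinning down precisely the degree-$q\ell$ strand and its differential, is the hard part. The twist by the Nakayama automorphism and the boundary behaviour between consecutive periods (when $r = d+1$ and $r = 0$) must be controlled carefully. I would first try to extract this strand directly from the mapping-cone description of $P^\bullet(\Pi)$ together with the isomorphism $\Omega^{d+2}_{\Pi^e}\Pi \cong \Pi_{\mu^{-1}}\langle \ell\rangle$. This reduces everything to a single period, $0 \le i \le d+1$, after which the general $q$ follows by twisting and shifting.
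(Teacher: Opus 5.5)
The gap is in Step 4. Your easy containment and overall strategy match the paper: the remaining task is to kill the lowest internal degree by recognising the lowest strand of the Hochschild complex as Hochschild homology of $A=\Pi_0$.

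\textbf{Step 4.} To get $\Tor^{A^e}_r(A_\sigma,A)=0$ for $r\ge 1$ you assume the quiver of $A$ is acyclic. That is not a hypothesis of the statement, and for $d$-representation finite algebras it is not known; the paper's introduction mentions it as an open conjecture. When the quiver is acyclic your order argument can be completed. An automorphism permutes the vertices by some $\pi$ compatible with the path order. A nonzero chain in degree $r\ge1$ of $A_\sigma\otimes_{E^e}(\rad A)^{\otimes_E r}$, with $E=A/\rad A$, would force $j\prec\pi(j)\prec\pi^2(j)\prec\cdots$, which is impossible since $\pi$ has finite order. Your radical-filtration alternative does not by itself give vanishing, because the semisimple subquotients have nonzero $\Tor$ in positive degrees.

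\textbf{How the paper handles this.} The paper uses no quiver here. It treats only $1\le i\le d+1$, where $\sigma=1$, and uses $\HH_{i,0}(\Pi)\cong\HH_i(A)=0$ for $i>0$ because $\gldim A<\infty$. It then passes to larger $i$ via $\Omega^{d+2}_{\Pi^e}\Pi\cong{}_1\Pi_{\mu^{-1}}\langle\ell\rangle$. You are right that this periodicity introduces a twist, and the paper's ``essentially suffices'' passes over it. So for $q\ge1$, in either route, the input really needed is the twisted vanishing $\Tor^{A^e}_r(A_\sigma,A)=0$. For general $A$ you would have to prove this, e.g.\ by extending the finite-global-dimension vanishing to twisted coefficients, rather than by appealing to acyclicity.

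\textbf{Step 2 is easier than you expect.} It needs neither mapping cones nor the identification with $\Pi\otimes_AP^\bullet(A)\otimes_A\Pi$:
\begin{enumerate}[(i)]
\item The part of $P^\bullet(\Pi)$ in homological degrees $\ge(d+2)q$ is the minimal graded resolution of $\Omega^{(d+2)q}_{\Pi^e}\Pi\cong{}_1\Pi_{\mu^{-q}}\langle q\ell\rangle$, and all its terms are generated in degrees $\ge q\ell$.
\item Hence its degree-$q\ell$ part is a minimal projective $A^e$-resolution of $A_\sigma$ with $\sigma=\mu^{-q}|_A$; here you use that $\mu$ is graded.
\item For bimodules $M$ concentrated in degrees $\ge q\ell$ one has $(\Pi\otimes_{\Pi^e}M)_{q\ell}=A\otimes_{A^e}M_{q\ell}$.
\end{enumerate}
Together these give $\HH_{(d+2)q+r,\,q\ell}(\Pi)\cong\Tor^{A^e}_r(A_\sigma,A)$ for all $r\ge1$ at once. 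There is no boundary problem at $r=d+1$, since $\mathrm{pd}_{A^e}A_\sigma\le d$. For $q=0$ this is exactly the paper's observation that the degree-$0$ strand is a bimodule resolution of $A$.
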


We now present part of our result relating to (C), albeit in a simplified form which also requires the $\ell$-homogeneous assumption. 
For this, note that if $A$ is a $K$-algebra of finite global dimension, then $\Phi_A$ is here the Coxeter transformation of $A$, and recall that it is the linear transformation induced on  $\K_0(\D^b(\modu A))$ by the action of the Auslander--Reiten translate of $A$, i.e.\ by the action of the Serre functor of $A$ desuspended by $[-1]$.  

\begin{prop}[See \cref{prop: product of graded cartan matrices formula}]\label{iprop: product of graded cartan matrices formula}
Let $A$ be an $\ell$-homogeneous $d$-representation finite algebra. 

Then 
$$\det(C_{\Pi_{d+1}(A)}(x)C_{\Delta(A)}((-1)^{d+1}x)) = \det(I - ((-1)^{d-1} x\Phi^{-1}_{A})^{\ell}).$$ 
\end{prop}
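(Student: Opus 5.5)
We compute the two graded Cartan matrices separately in terms of $A$, and then multiply. Let $\Pi := \Pi_{d+1}(A) = T_A(E)$ with $E := \Ext^{d}_A(D(A), A)$ placed in degree $1$. As a graded right (or left) $A$-module, $\Pi = \oplus_{j \geq 0} E^{\otimes_A j}$. Since $A$ is $d$-representation finite, we have $E^{\otimes_A j} \cong \tau_d^{-j}(A)$ as $A$-modules. Moreover, these modules are the images of $A$ under the autoequivalence $\nu_d^{-j} = (\nu^{-1}[d])^j$ of $\D^b(\modu A)$, since $\tau_d^{-j}A$ is concentrated in degree zero. Hence
\[
C_{\Pi}(x) = \sum_{j \geq 0} x^j\, C_A\, (\Phi')^{j},
\]
where $\Phi'$ is the matrix of the class map of $\nu_d^{-1}$ on $\K_0$, i.e.\ $\Phi' = (-1)^{d-1}\Phi_A^{-1}$ up to transposition conventions. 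Here we use that $\nu = \tau[1]$ induces $-\Phi_A$. The $\ell$-homogeneity assumption says precisely that $\tau_d^{-j}A = 0$ for $j \geq \ell$, while $\nu_d^{-\ell}$ sends each indecomposable projective to a shifted projective. The sum therefore truncates to $0 \leq j \leq \ell-1$. Writing $y := (-1)^{d-1}x\Phi_A^{-1}$, this gives
\[
C_\Pi(x) = C_A \sum_{j=0}^{\ell-1} y^j .
\]

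Next we compute $C_{\Delta(A)}(z)$ with the grading $\Delta(A) = A \oplus D(A)$, where $D(A)$ sits in degree $1$. This gives $C_{\Delta(A)}(z) = C_A + z\, C_A^{T}$, or in suitable conventions $C_A + zC_{DA}$. We use the standard identity $\Phi_A = -C_A^{T}C_A^{-1}$, up to the paper's convention. This lets us write
\[
C_{\Delta(A)}(z) = (I - z\Phi_A)\,C_A' ,
\]
where $C_A'$ is $C_A$ or $C_A^{T}$ as appropriate. Since $\det C_A = \pm 1$ because $\gldim A < \infty$, only the factors involving $\Phi_A$ matter for the determinant.

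Finally we multiply. Substituting $z = (-1)^{d+1}x$ gives
\[
I - (-1)^{d+1}x\Phi_A = -(-1)^{d}x\Phi_A\bigl(I - (-1)^{d}x^{-1}\Phi_A^{-1}\bigr),
\]
which is messy. The cleaner route is to rewrite $C_{\Delta(A)}$ as $C_A''(I - y)$ up to the unimodular factor $\Phi_A$. Then the product of determinants telescopes as
\[
\det\Bigl(\sum_{j=0}^{\ell-1}y^j\Bigr)\det(I-y) = \det(I - y^\ell).
\]
The determinants of $C_A$ and $\Phi_A$ then contribute only $\pm 1$, and these signs must combine to $+1$. One checks this by evaluating at $x = 0$, where the left side becomes $\det C_A^2 \cdot$(unit) and the right side is $1$. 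Since we need only determinants, we may freely take transposes, which removes the ambiguity between row and column conventions.

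The main obstacle is the sign and convention bookkeeping in this last step. We must show that the substitution $x \mapsto (-1)^{d+1}x$ in $C_{\Delta(A)}$ produces exactly the factor $I - y$ with the same $y$ as in $C_\Pi$, rather than $I + y$ or $I - y^{-1}$. This means fixing one convention for $\Phi_A$ relative to $C_A$ and verifying $E^{\otimes j} \leftrightarrow (\Phi')^j$ and $D(A) \leftrightarrow -\Phi_A$ on the same side. We will also need to justify the truncation using $\ell$-homogeneity. This amounts to showing that $\nu_d^{-\ell}(A)$ lies in $\add A[-?]$ in such a way that its class is $\pm$ a permutation of the projectives. We do not need this class explicitly, only that the geometric series stops at $\ell-1$.
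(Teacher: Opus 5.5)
Your proposal is correct and is essentially the paper's argument: write $C_{\Pi_{d+1}(A)}(x)=\sum_{j=0}^{\ell-1}y^jC_A$ with $y=(-1)^{d-1}x\Phi_A^{-1}$, factor the trivial-extension Cartan matrix through $I-y$, telescope, and use $(\det C_A)^2=1$. (The paper passes through a general formula with the permutation matrix $P$, which you sensibly skip in the $\ell$-homogeneous case.)

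The bookkeeping you leave open is settled, as in the paper, by transposing $C_{\Delta(A)}$, not by a ``unimodular factor $\Phi_A$''; no such factor turns $I-t\Phi_A$ into $I-t\Phi_A^{-1}$, and your abandoned displayed factorization also has a sign slip. With $\Phi_A=-C_A^TC_A^{-1}$ one gets $C_{\Delta(A)}((-1)^{d+1}x)^T=C_A^T-(-1)^dxC_A=(I-y)C_A^T$. This is exactly the factor matching $C_{\Pi_{d+1}(A)}$, so the sign is $I-y$ and not $I+y$.
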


Keeping in mind that $\Delta(A)$ is almost $(d+1)$-$A$-Koszul and $\Pi_{d+1}(A)$ is its dual algebra, it is fairly evident how this is a version of (C) above. 

Let $\phi_A(x)$ denote the Coxeter polynomial of $A$. 
The following result is one reason for why the preceding proposition is often quite easy to use in practice.  
\begin{prop}[See \cref{prop: det of graded Cartan matrix of trivial extension}]\label{iprop: det of graded Cartan matrix of trivial extension}
Let $A$ be a basic $K$-algebra of finite global dimension and let $\Delta(A)$ be the trivial extension of $A$. 
Then $$\det C_{\Delta(A)}(x) =   \phi_A(x)\det C_{A}.$$ 
\end{prop}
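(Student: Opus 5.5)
The plan is to compute $C_{\Delta(A)}(x)$ explicitly in terms of the Cartan matrix $C_A$ and then factor the determinant. Grade $\Delta(A) = A \oplus D(A)$ by putting $A$ in degree $0$ and $D(A)$ in degree $1$. This is the grading under which $\Delta(A)$ is almost $(d+1)$-$A$-Koszul, with degree zero part $A$. Let $e_1,\dots,e_n$ be a complete set of primitive orthogonal idempotents of the basic algebra $A$. Then
\[
e_i \Delta(A) e_j = e_i A e_j \oplus e_i D(A) e_j,
\]
and $e_i D(A) e_j \cong D(e_j A e_i)$, so $\dim e_i D(A) e_j = \dim e_j A e_i$. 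Consequently
\[
C_{\Delta(A)}(x) = C_A + x\, C_A^T,
\]
up to the convention of whether the Cartan matrix is indexed by $\dim e_iAe_j$ or its transpose. The formula has the same shape in either convention.

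Next, factor out $C_A$. Since $A$ has finite global dimension, $C_A$ is invertible over $\mathbb{Z}$: the classes of the projectives and the classes of the simples both form bases of $\K_0(\modu A)$, and $C_A$ is the transition matrix between them. Hence
\[
\det C_{\Delta(A)}(x) = \det C_A \cdot \det\bigl(I + x\, C_A^{-1} C_A^T\bigr).
\]
Recall that the Coxeter transformation is represented, in the basis of projectives or simples, by $\Phi_A = -C_A^{T}C_A^{-1}$ or a conjugate/transpose of it such as $-C_A^{-1}C_A^T$. This depends on conventions, but all these choices are similar to one another or to their transposes, and so have the same characteristic polynomial. Therefore
\[
\det\bigl(I + x\, C_A^{-1}C_A^T\bigr) = \det(I - x\Phi_A) = x^n \det(x^{-1}I - \Phi_A),
\]
and this equals $\phi_A(x)$ by the palindromicity of the Coxeter polynomial. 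Alternatively, $\det(I - x\Phi_A) = \det(xI - \Phi_A^{-1})\det(-\Phi_A)$. Here $\det \Phi_A = (-1)^n$, since $\det C_A^T = \det C_A$. Hence $\det(I - x\Phi_A) = \det(xI - \Phi_A^{-1})$, which is the characteristic polynomial of $\Phi_A^{-1}$. It coincides with $\phi_A(x)$ because $\Phi_A^{-1} = -C_A C_A^{-T}$ is similar to the transpose of $\Phi_A$ (conjugate by $C_A$). This last route avoids needing palindromicity as a separate fact, so I would use it.

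The only real obstacle is bookkeeping of conventions: which grading on $\Delta(A)$ is meant, whether $C_A$ is defined by $\dim e_iAe_j$ or $\dim e_jAe_i$, and which matrix represents $\Phi_A$. I would fix these to match the conventions used earlier in the paper, for instance those in the statement of \cref{iprop: product of graded cartan matrices formula}. I would then check that each alternative choice changes matrices only by transposition or conjugation, which leaves every determinant above unchanged.
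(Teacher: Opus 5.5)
Your proposal is correct and follows the paper's proof. Both arguments compute $C_{\Delta(A)}(x) = C_A + xC_A^T$, factor out $\det C_A$, and identify $\det(I + xC_A^TC_A^{-1}) = \det(I - x\Phi_A)$ with $\phi_A(x)$. The paper finishes by citing self-reciprocity, $\phi_A(x) = x^s\phi_A(x^{-1})$, whereas your preferred route via $\det(-\Phi_A) = 1$ and $\Phi_A^{-1} = C_A\Phi_A^TC_A^{-1}$ proves that self-reciprocity along the way, which is a nice self-contained touch.
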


We now illustrate how this all comes together by way of a concrete example. 
Note that in the following, at no point do we explicitly compute a resolution, apply a functor and compute (co)homology. 
In fact, we get by with explicitly computing only the zeroth Hochschild cohomology, i.e.\ the center of the algebra. 

\begin{example}
Let $Q$ be an orientation of a Dynkin diagram of type $\mathbb{D}_4$. 
Then $KQ$ is a $1$-representation finite algebra that is $\ell$-homogeneous for $\ell = 3$ by Proposition 3.2 of \cite{HI11b}.
Consequently, $A = KQ \otimes_K KQ$ is then a $3$-homogeneous $2$-representation finite  algebra by Corollary 1.5 of \cite{HI11b}, and we now want to compute the Hochschild homology of its $(2 + 1)$-preprojective algebra 
$$\Lambda := \Pi_{2 + 1}(A) \cong A \oplus \Ext^{2}_A(DA, A) \oplus \Ext^{2}_A(DA, A)^{\otimes_A 2} \oplus \cdots.$$
By using a K\"{u}nneth formula and that $K$ is a field, $\Lambda$ can be shown to be isomorphic as a graded algebra to the Segre product of the preprojective algebra $$\Pi(KQ) \cong KQ \oplus \Ext^1_{KQ}(D(KQ), KQ) \oplus \Ext^1_{KQ}(D(KQ), KQ)^{\otimes_{KQ} 2} \oplus \cdots$$ 
with itself, i.e.\ we have $\Lambda \cong \oplus_{i\geq 0} \Pi(KQ)_i \otimes_K \Pi(KQ)_i$; see also \cite{Thi20} or \cite{HIO14} for related results. 
Note that we are here using the (higher) preprojective gradings, and that these are different from the ones used in the example before. 
Indeed, e.g.\ here the degree $0$ part of $\Pi(KQ)$ is all of $KQ$ and not just $KQ_0$, the latter denoting the semisimple part of $KQ$. 

Using this isomorphism and the description of Nakayama automorphism of $\Pi(KQ)$ as in e.g.\ \cite{Eu'07P}, it is straightforward to see that the Nakayama automorphism of $\Lambda$ can be chosen to be the identity. 
In other words, $\Lambda$ is a symmetric algebra, and in fact, it is a graded symmetric algebra, meaning that, since $\ell - 1$ is the highest degree of $\Lambda$, $D(\Lambda)$ and $\Lambda \langle {-\ell+1} \rangle$ are isomorphic as graded bimodules.  
Hence, we obtain that $\Omega^{4}_{\Lambda^{e}}\Lambda \cong \Lambda \langle 3 \rangle$ holds by using our \cref{iprop: graded version of Happel's result}. 
Moreover, in addition to formulas similar to the ones we had in the previous example, we also have $\HH^{i}(\Lambda) \cong D(\HH_{i}(\Lambda))\langle 2 \rangle$ for all $i$. 

Using results from \cite{Happel'97} that we recall in \cref{sec: some linear algebra}, we can show that $$\phi_A(-x) = (1 - x) (1 - x^{3})^{5}.$$ 
Moreover, we obtain that $\Phi_A^3 = -I$ by e.g.\ Proposition 2.4 of \cite{HI11b}. 
By combining our \cref{iprop: product of graded cartan matrices formula} and \cref{iprop: det of graded Cartan matrix of trivial extension}, we see that 
\begin{align*}
\det C_\Lambda (x)
& = \phi_A(-x)^{-1} \det(I - x^3I) \\
& = (1 - x)^{-1}(1 - x^3)^{-5} (1 - x^3)^{16}\\
& = (1 - x)^{-1}(1 - x^3)^{11}.
\end{align*}
Consequently, using our \cref{ithm: etingof-ginzburg} yields that 
\begin{align*}
\chi_{\overline{\HC}_*(\Lambda)}(x) 
& = \frac{x}{1 - x} - \frac{11x^3}{1 - x^3}\\
& = \frac{1}{1 - x^3}(x + x^2 + x^3 - 11x^3)\\
& = \frac{1}{1 - x^3}(x + x^2 - 10x^3).
\end{align*}

Moreover, we obtain the following diagram from the Connes long exact sequence. 

\[
\begin{tikzcd}
& & &\\
 & 0 \dar & \\
0 \leq \deg \leq \ell - 1 & \overline{\HH}_0(\Lambda) \dar["B_0"] \rar[equal, shorten = 2mm, shift right] & C \dar["\sim" {rotate=90, anchor=north}] &\\
1 \leq \deg \leq \ell & \overline{\HH}_1(\Lambda) \dar["B_1"] \rar[equal, shorten = 2mm, shift right] & C \rar[phantom, "\oplus"] & X \dar["\sim" {rotate=90, anchor=north}]\\
1 \leq \deg \leq \ell & \overline{\HH}_2(\Lambda) \dar["B_2"] \rar[equal, shorten = 2mm, shift right] & D(C)\langle \ell \rangle \dar["\sim" {rotate=90, anchor=north}] \rar[phantom, "\oplus"] & D(X)\langle \ell \rangle\\
1 \leq \deg \leq \ell & \overline{\HH}_3(\Lambda) \dar["B_3"] \rar[equal, shorten = 2mm, shift right] & D(C)\langle \ell \rangle \rar[phantom, "\oplus"] & L \langle \ell \rangle \dar["\sim" {rotate=90, anchor=north}]\\
\ell \leq \deg \leq 2\ell - 1 & \overline{\HH}_4(\Lambda)
\dar["B_4"] \rar[equal, shorten = 2mm, shift right]  & C \langle \ell \rangle \dar["\sim" {rotate=90, anchor=north}] \rar[phantom, "\oplus"] & L \langle \ell \rangle\\
 \ell + 1 \leq \deg \leq 2\ell & \overline{\HH}_5(\Lambda)  \rar[equal, shorten = 2mm, shift right] & C\langle \ell \rangle \rar[phantom, "\oplus"] & X \langle \ell \rangle
\end{tikzcd}
\]

We obtain the degree bounds in the left column by our \cref{iprop: HH_* bounds on degrees}. 
From these bounds, we observe that $L\langle \ell \rangle$ is trivial except in degree $\ell$, and that both $C$ and $X \cong D(X)\langle \ell \rangle$ are trivial in degrees less than $1$ and greater than $\ell - 1$. 
Similarly to in the first example, we thus deduce that $C, D(C)\langle \ell \rangle, L \langle \ell \rangle$ and $X \cong D(X)\langle \ell \rangle$ are only non-trivial in disjoint sets of degrees. 

As a consequence, we deduce that $h_L(x) = 10$ from our computation of the Euler characteristic above. 
Moreover, by using the description of $\Lambda$ as a Segre product, one can compute that $h_{\HH^0(\Lambda)}(x) = 1 + 4^2 x^{2}$, thus implying that $h_{\HH_0(\Lambda)}(x) =  4^2 +  x^{2}$ so that $h_C(x) = x^2$. 
Hence, $h_{D(C)\langle \ell\rangle} (x) = x$. 
Consequently, we see that $h_X(x) = 0$ and so $X$ must be trivial. 
With this we know $\HH_i(\Lambda)$ for $i = 0, \ldots, 4$. 
Moreover, by using that $\Omega^4_{\Lambda^{e}} \Lambda \cong \Lambda\langle 3 \rangle$, we have that $\HH_{i + 4}(\Lambda) \cong \HH_i (\Lambda) \langle \ell \rangle$ holds for $i \geq 1$. 
We also know the cyclic homology and can obtain the Hochschild cohomology by using the formula $\HH^{i}(\Lambda) \cong D(\HH_{i}(\Lambda))\langle -2 \rangle$ for all $i$ that holds as a consequence of $\Lambda$ being graded symmetric of highest degree $2$.
\end{example}

We note that while this example could be handled using existing methods, we believe that this would ultimately be more cumbersome than what is done here: Indeed, to use existing methods, one could use the fact that $A$ here is Koszul in combination with one of the main results of \cite{Grant-Iyama'20} to easily deduce that $\Lambda$ is an almost Koszul algebra in the sense of \cite{BBK02}. 
However, one would then need to compute the determinant of the graded Cartan matrix of the almost Koszul dual $\Lambda^!$. 
While in this case $\Lambda^! \cong \Delta(A^!)$ where $A$ is considered as a classical Koszul algebra as in \cite{BGS96}, this nevertheless seems trickier than what we do above, and it is perhaps much more difficult in general:
Indeed, one of the conjectural results Morigi makes use of in \cite{Morigi'22} is a formula for the determinant of the graded Cartan matrix of $\Lambda^!$; see Conjecture 5.3.0.2 of \cite{Morigi'22}. 

In fact, we can prove that ``our'' version of this conjecture holds when the parameter $s$ in Conjecture 5.3.0.2 of \cite{Morigi'22} is even: Namely, by considering $\Lambda = \Pi_{2 + 1}(A)$ as a higher almost Koszul algebra with respect to $A$, we can use \cref{iprop: det of graded Cartan matrix of trivial extension} in combination with Corollary 1.13 from \cite{Ladkani'12}.
Moreover, we can thus prove that the ``even case'' of the results in \cite{Morigi'22} hold without assuming the conjectures that \cite{Morigi'22} does. 
Using the general form of \cref{iprop: product of graded cartan matrices formula}, we believe we can also show the ``odd case'' of the results in \cite{Morigi'22}, although this is slightly more involved. 
This will be done in a followup paper. 

We now come to the final result we highlight in this introduction. 
Namely, in the last section of the paper, we compute the cyclic homology, and the Hochschild homology and cohomology for the higher preprojectives of the $2$-hereditary algebras obtained as tensor products of representation finite hereditary algebras of type $\mathbb{A}_n$.
Recall that by \cite{HI11b}, we know that each tensor factor must have the same odd parameter $n = 2\ell - 1$, and the quiver of each factor must be oriented symmetrically as in Proposition 3.2 of \cite{HI11b}.

\begin{thm}[See \cref{thm: cychlic and hochschild homology of higher preproj of type A x A} and \cref{thm: hochschild cohomology of higher preproj of type A x A}] \label{ithm: (co)homology of higher preproj of tensor products of type A}
Let $\Pi = \Pi_{2 + 1}(K\mathbb{A}_{2\ell - 1}^{\otimes 2})$ with $\ell \geq 2$ and $\mathbb{A}_{2\ell - 1}$ oriented symmetrically, and let $\Pi$ be endowed with the higher preprojective grading.
In addition, assume that the base field $K$ is of characteristic zero.
Then the cyclic homology of $\Pi$ is given by 
\[
\begin{array}{lcl}
\HC_0(\Pi) \cong \Pi/\rad \Pi,
 & \qquad &
\HC_1(\Pi) \cong 0,\\
\HC_2(\Pi) \cong X_2,
& & 
\HC_3(\Pi) \cong 0,\\
\HC_4(\Pi) \cong D(X_2)\langle 2\ell\rangle,
& & 
\HC_5(\Pi)  \cong 0,\\
\HC_6(\Pi)  \cong 0,
& &
\HC_7(\Pi) \cong L\langle 2\ell \rangle,\\
\HC_8(\Pi) \cong 0,
& &
\HC_{i + 8}(\Pi) \cong \HC_{i}(\Pi)\langle 2\ell \rangle, \quad i \geq 1,
\end{array}
\]
where $h_{\Pi/\rad \Pi}(x) = (2\ell - 1)^2$, $h_{X_2}(x) = \sum_{i = 1}^{\ell}x^{i}$ and $h_{L}(x) = \frac{(2\ell - 2)^2}{2}$. 

Moreover, the Hochschild homology of $\Pi$ is given by 
\[
\begin{array}{lcl}
\HH_0(\Pi)  \cong \Pi/\rad \Pi, & \qquad &
\HH_1(\Pi)  \cong 0, \\
\HH_2(\Pi)  \cong X_2, & & \HH_3(\Pi)  \cong X_2,\\
\HH_4(\Pi)  \cong D(X_2)\langle 2\ell\rangle, & &
\HH_5(\Pi)  \cong D(X_2)\langle 2\ell\rangle,\\
\HH_6(\Pi)  \cong 0, & &
\HH_7(\Pi)  \cong L\langle 2\ell \rangle, \\
\HH_{8}(\Pi)  \cong L\langle 2\ell \rangle, & & \HH_{i + 8}(\Pi) \cong \HH_{i}(\Pi)\langle 2\ell \rangle, \quad i \geq 1,\\
\end{array}
\]
where $\Pi/\rad \Pi, X_2$ and $L$ are the same as before. 

Finally, the Hochschild cohomology of $\Pi$ is given by
\[
\begin{array}{lcl}
\HH^0(\Pi)  \cong Z, 
& \qquad &
\HH^1(\Pi)  \cong X_2\langle - 1\rangle, \\
\HH^2(\Pi)  \cong 0, 
& & 
\HH^3(\Pi)  \cong L\langle - 1\rangle,\\
\HH^4(\Pi)  \cong L\langle - 1\rangle, 
& &
\HH^5(\Pi)  \cong 0,\\
\HH^6(\Pi)  \cong D(X_2)\langle -1 \rangle, 
& &
\HH^7(\Pi)  \cong D(X_2)\langle -1 \rangle, \\
\HH^{8}(\Pi)  \cong  X_2\langle -2\ell - 1\rangle,
& & 
\HH^{i + 8}(\Pi) \cong \HH^{i}(\Pi)\langle 2\ell \rangle, \quad i \geq 1,\\
\end{array}
\]
where $h_{Z}(x) = \sum_{i = 0}^{\ell- 1}x^{i}$ and $X_2$ and $L$ are the same as before.
\end{thm}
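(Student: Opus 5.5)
I will follow the template of the $\mathbb{D}_4$ example, now for $A = K\mathbb{A}_{2\ell-1}^{\otimes 2}$ with both factors oriented symmetrically.

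\emph{Step 1: homogeneity and periodicity.} By Proposition 3.2 and Corollary 1.5 of \cite{HI11b}, $A$ is an $\ell$-homogeneous $2$-representation finite algebra. \cref{iprop: graded version of Happel's result} and the argument behind it then give $\Omega^4_{\Pi^e}\Pi \cong \Pi_{\mu^{-1}}\langle \ell\rangle$. I then identify $\mu$ via the Segre product description $\Pi \cong \oplus_i \Pi(K\mathbb{A}_{2\ell-1})_i\otimes \Pi(K\mathbb{A}_{2\ell-1})_i$ together with the known Nakayama automorphism of the type $\mathbb{A}$ preprojective algebra \cite{Eu'07P}. Since $\mu$ is induced by the diagram involution on each factor, $\mu^2=1$, and unlike the $\mathbb{D}_4$ case $\mu\neq 1$. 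This yields $\Omega^8_{\Pi^e}\Pi\cong \Pi\langle 2\ell\rangle$, hence the 8-periodicity $\HH_{i+8}\cong \HH_i\langle 2\ell\rangle$ for $i\ge1$ and the analogous statements for $\HC$ and $\HH^*$. Using that $\Pi^e$ is Frobenius and that $D\Pi\cong \Pi_\mu\langle -(\ell-1)\rangle$, I obtain the duality $\HH_i(\Pi)\cong D\HH_{7-i}(\Pi)\langle 2\ell\rangle$ for $1\le i\le 6$. I also obtain Hochschild cohomology from homology by formulas of the form $\HH^i\cong \HH_{2-i}\langle-1\rangle$ and $\HH^i\cong\HH_{10-i}\langle -2\ell-1\rangle$, derived exactly as in \cref{iex: Etingof-Eu}.

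\emph{Step 2: the Euler characteristic.} The Coxeter polynomial of $A$ comes from that of $\mathbb{A}_{2\ell-1}$ through the tensor formula for Coxeter transformations (\cref{sec: some linear algebra}), namely $\Phi_A=-\Phi_{1}\otimes\Phi_{1}$. Since $\Phi_{\mathbb{A}_{2\ell-1}}$ has eigenvalues the primitive-ish $2\ell$-th roots of unity $\neq1$, $\Phi_A^\ell$ is determined, and I can compute $\phi_A(x)$ and $\det(I-(-x\Phi_A^{-1})^\ell)$. Feeding these into \cref{iprop: product of graded cartan matrices formula} and \cref{iprop: det of graded Cartan matrix of trivial extension} gives $\det C_\Pi(x)$ as a product of factors $(1-x^k)^{\pm1}$. \cref{ithm: etingof-ginzburg} then gives $\chi_{\overline{\HC}_*(\Pi)}(x)$ as a rational function with denominator $1-x^{2\ell}$.

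\emph{Step 3: the Connes diagram.} I compute $Z=\HH^0(\Pi)$ directly from the Segre product. Central elements are the $\mu$-invariant central combinations, giving one element in each degree $0,\dots,\ell-1$, so $h_Z=\sum_{i=0}^{\ell-1}x^i$. By duality, $\HH_0=\Pi/[\Pi,\Pi]$ has Hilbert series $(2\ell-1)^2$ concentrated in degree $0$; it is plausible that the twist kills the top degree, in contrast to the symmetric case. This gives $\HH_0\cong\Pi/\rad\Pi$ and $C=\overline{\HH}_0=0$, so $\HH_1=0$. I then write the Connes sequence over one period $0,\dots,8$ with unknowns $X_2,L,\ldots$, applying the duality from Step 1 and the degree windows of \cref{iprop: HH_* bounds on degrees}: $[q\ell,q\ell+\ell-1]$ for $r=0$ and $[q\ell+1,q\ell+\ell]$ otherwise. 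These windows separate the unknowns into disjoint degree ranges, so the Euler characteristic determines $h_{X_2}=\sum_{i=1}^\ell x^i$, $h_L=(2\ell-2)^2/2$, and the vanishing of the remaining pieces. Cohomology then follows from the formulas in Step 1.

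\emph{Main obstacle.} The hardest step is verifying the degree separation. With $\mu\ne 1$, the windows for $r\neq0$ coincide, so separating $X_2$ from $D(X_2)\langle2\ell\rangle$ and $L\langle2\ell\rangle$ has to rely on the shift by $\ell$ between periods $q$, combined with the sign pattern in $\chi$. I would also need to pin down precisely the $\mu$-twisted duality shifts. I would first test the whole computation for $\ell=2$ by hand.
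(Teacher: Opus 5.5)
\textbf{There is a genuine gap in Step 3.} Steps 1 and 2 match the paper; computing $\det(I-(-x\Phi_A^{-1})^{\ell})$ from eigenvalues alone is fine and gives the same $(1-x^\ell)(1-x^{2\ell})^{2\ell(\ell-1)}$. The problem is that the degree windows do \emph{not} separate all the unknowns. $C=\overline{\HH}_0$, $X_1=\overline{\HC}_1$ and $X_2=\overline{\HC}_2$ all live in degrees $1,\dots,\ell-1$. For instance, $X_1$ and $X_2$ both sit in $\overline{\HH}_2$, and their duals $D(X_i)\langle 2\ell\rangle$ sit in $\HH_5$, whose window is $[\ell+1,2\ell]$. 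These three enter $\chi_{\overline{\HC}_*(\Pi)}$ with signs $+,-,+$. So in that range the Euler characteristic only yields $h_C-h_{X_1}+h_{X_2}=\sum_{i=1}^{\ell-1}x^i$. It does kill $X_3$ (the coefficient of $x^\ell$ is $0$) and it does give $\dim L$ (from the coefficient of $x^{2\ell}$).

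You therefore need two further inputs, and your proposal supplies neither correctly.
\begin{enumerate}[(1)]
\item You need $C=0$. Your ``by duality'' fails once $\mu\neq 1$: $D\HH_0(\Pi)\cong\Hom_{\Pi^e}(\Pi,D\Pi)\cong\Hom_{\Pi^e}(\Pi,{}_1\Pi_\mu)\langle-\ell+1\rangle$ is the $\mu$-twisted center $\{z:\lambda z=z\mu(\lambda)\}$, not $Z(\Pi)$. For example, $1$ does not lie in it, because the Nakayama permutation is nontrivial. So $h_Z$ tells you nothing about $\HH_0$. The paper instead computes $\Pi/[\Pi,\Pi]$ directly on the Segre product, showing that every non-idempotent cycle is zero modulo commutators.
\item The genuinely missing idea is to use $Z$ to bound $\HH_3$. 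The duality $\stHH^{0,j}(\Pi)\cong\stHH_{3,j+1}(\Pi)$ (CY Frobenius dimension $m=3$), together with $\stHH^0$ being a quotient of $\HH^0=Z$ with $h_Z=\sum_{i=0}^{\ell-1}x^i$, gives $\dim\HH_{3,j}\le 1$ for every $j$. Hence $X_2=\overline{\HH}_3$ (using $X_3=0$) has dimension at most one in each degree. Combined with $C=0$ and the Euler characteristic, this forces $X_1=0$ and $h_{X_2}=\sum_{i=1}^{\ell-1}x^i$.
\end{enumerate}
Your ``main obstacle'' is therefore misplaced. $X_2$, $D(X_2)\langle 2\ell\rangle$ and $L\langle 2\ell\rangle$ are already separated by the windows, lying in $[1,\ell-1]$, $[\ell+1,2\ell-1]$ and $\{2\ell\}$ respectively. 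It is $C$, $X_1$ and $X_2$ that collide.

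\textbf{Three smaller problems.}
\begin{itemize}
\item Your cohomology formulas use the CY Frobenius dimension $2$ from the ADE example. Here $\Omega^4_{\Pi^e}\Pi\cong\Pi_{\mu^{-1}}\langle\ell\rangle$ gives $m=3$, so the correct formulas are $\HH^i(\Pi)\cong\HH_{3-i}(\Pi)\langle-1\rangle$ for $i=1,2$ and $\HH^i(\Pi)\cong\HH_{11-i}(\Pi)\langle-2\ell-1\rangle$ for $3\le i\le 10$. With yours you would get $\HH^1(\Pi)\cong\HH_1(\Pi)\langle -1\rangle=0$ instead of $X_2\langle-1\rangle$.
\item Your description of $Z$ as ``$\mu$-invariant central combinations'' is not an argument. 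The paper reduces the computation to $Z(\Pi)=Z(\Pi_2(K\mathbb{A}_n))\overline{\otimes}Z(\Pi_2(K\mathbb{A}_n))$ via $Z_{K\mathbb{A}_n}(\Pi_2(K\mathbb{A}_n))=Z(\Pi_2(K\mathbb{A}_n))$.
\item Since $D(X_2)\langle 2\ell\rangle\subseteq\HH_5$ must lie in degrees $\ge\ell+1$, $X_2$ vanishes in degree $\ell$. So no correct computation yields $\sum_{i=1}^{\ell}x^i$; the correct series is $\sum_{i=1}^{\ell-1}x^i$, as in the paper's cohomology theorem. The upper limit $\ell$ in the statement appears to be a typo.
\end{itemize}
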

To do this, we recall a description of the higher preprojective algebra of a tensor product of two $d$-representation finite algebras as a Segre product. 
Using this description, we compute $\HH_0$ and $\HH^0$ of such a higher preprojective algebra $\Pi$. 
The result then follows by computing the graded Euler characteristic of $\Pi$ and applying our version of the method. 
In particular, as before it is the case that at no point do we explicitly compute a resolution, apply a functor and compute (co)homology. 

In a second followup paper, we will compute the Hochschild (co)homology and the cyclic homology of higher preprojective algebras of the remaining $2$-representation finite algebras that arise as tensor products of ADE Dynkin quiver algebras. 
The example above is, of course, one particular instance of such an algebra.

This paper is organized as follows. 
In \cref{sec: preliminaries}, we review graded algebras, Hochschild (co)homology, and cyclic homology. 
\cref{sec:product formula for the euler characteristic} consists of showing \cref{ithm: etingof-ginzburg} by showing --- mutatis mutandis --- that Igusa's proof of the logarithmic version of \cref{ithm: etingof-ginzburg} also works in our greater generality. 
To provide a foundation for the remainder of the paper, \cref{sec: On Frobenius algebras and d-hereditary algebras} reviews Frobenius algebras, $d$-hereditary algebras, and stable Hochschild (co)homology. 
Moreover, we end this section by showing \cref{iprop: graded version of Happel's result} and \cref{iprop: HH_* bounds on degrees}.
In \cref{sec: some linear algebra} we begin by recalling some facts about Coxeter polynomials, following which we show \cref{iprop: det of graded Cartan matrix of trivial extension} and \cref{iprop: product of graded cartan matrices formula}.
Finally, in \cref{sec:computing homology}, we begin by recalling a description of higher preprojective algebras of $d$-hereditary algebras obtained via tensor products as Segre products.
We then prove \cref{ithm: (co)homology of higher preproj of tensor products of type A} by computing the zeroth Hochschild homology, the zeroth Hochschild cohomology, and the graded Euler characteristic of the algebras involved and then applying our version of the method. 
\end{example}

\subsection{Conventions and notation}
We let $\Lambda$ denote a $K$-algebra over a field $K$. 
In the sequel, the field $K$ will often be assumed to be of characteristic zero, and in, certain (sub)section(s), perfect or even algebraically closed. Although it should usually be of little consequence in this paper, we note that we work with right modules unless something else is stated.    

\section{Preliminaries}\label{sec: preliminaries}
In this section, we begin by setting notation and recalling some facts about graded algebras. 
Following this, we review Hochschild homology and cohomology and recall how a grading on $\Lambda$ induces gradings on its Hochschild homology and cohomology. 
Then we recall a construction of cyclic homology.
Note, however, that we simplify things by assuming that the base field has characteristic zero since we will need to assume that in our applications anyway. 
Finally, we recall how one can introduce reduced forms of Hochschild and cyclic homology in case $\Lambda$ is a graded algebra. 

\subsection{Graded vector spaces}
Let $V = \oplus_{i \ge 0} V_i$ be a positively graded --- i.e.\ $\mathbb{N}$-graded --- $K$-vector space and recall that the \textit{Hilbert series of} $V$ is given by the formal power series 
\[
h_V(x) := \sum_{i \ge 0} \dim_K (V_i) x^{i} \in \mathbb{Z}[[x]],
\]
assuming that each vector space $V_i$ is finite dimensional.

Let $V^* =\{ V^{(j)} \}_{j \in \mathbb{Z}}$ be a $\mathbb{Z}$-indexed family of graded vector spaces $V^{(j)}$. Furthermore, assume that each homogeneous space $V_i^{(j)}$ is finite dimensional, and that for each homogeneous degree $i$, there is only a finite number of indices $j$ for which $V_i^{j} \ne 0$. Then the \emph{graded Euler characteristic} of $V^*$ is
$$
\chi_{V^*}(x) = \sum_{j \in \mathbb{Z}} (-1)^j h_{V^{(j)}}(x).
$$

\subsection{Graded algebras and modules}

Assume now that $\Lambda = \oplus_{i \ge 0}\Lambda_i$ is a positively graded algebra. 
We let $\Gr \Lambda$ denote the category consisting of graded $\Lambda$-modules and homogeneous homomorphisms of degree $0$, and we let $\gr \Lambda$ denote its subcategory consisting of finitely presented modules. 
Recall that the latter is abelian if and only if $\Lambda$ is graded right coherent. 

Recall that given a graded $\Lambda$-module $M$, we let $M \langle j \rangle$ denote the $j$\textit{-th graded shift of} $M$, i.e.\ $M \langle j \rangle$ has the same underlying $\Lambda$-module structure as $M$, but its grading is defined by setting $M\langle j \rangle_i := M_{i - j}$.
The following result allows us to describe ungraded extensions in terms of graded ones. 

\begin{lem}[{\cite[Corollary 2.4.7]{Nastasescu-Van-Oystaeyen}}] \label{lem: ext in terms of graded ext}
Let $M$ and $N$ be graded $\Lambda$-modules. If $M$ is finitely generated and there is a projective resolution of $M$ such that all syzygies are finitely generated, then
\[
\Ext_{\Lambda}^{i}(M,N) \cong \bigoplus_{j\in \mathbb{Z}}\Ext_{\Gr \Lambda}^{i}(M,N\langle -j \rangle)
\]
for all $i \geq 0$.
\end{lem}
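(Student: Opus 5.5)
The isomorphism will come from computing the graded Hom-spaces degree by degree and then summing them. I would first do degree $0$, i.e.\ the statement for $\Hom$, and then pass to higher $\Ext$ by means of a suitable projective resolution.

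For $i=0$, recall that an ungraded homomorphism $f\colon M\to N$ decomposes into homogeneous components $f_j$ of degree $j$. Here $f_j$ sends $M_k$ to $N_{k+j}$, so it is a degree-zero morphism $M\to N\langle -j\rangle$. This gives an injective map
\[
\bigoplus_{j}\Hom_{\Gr\Lambda}(M,N\langle -j\rangle)\to\Hom_\Lambda(M,N).
\]
To see it is surjective, I use that $M$ is finitely generated. Choose homogeneous generators $m_1,\dots,m_t$. The value $f(m_s)$ has only finitely many nonzero homogeneous components, so only finitely many $f_j$ can be nonzero on the generators, and each $f_j$ is $\Lambda$-linear because the multiplication respects the grading. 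Hence $f=\sum_j f_j$ is a finite sum, and $\Hom_\Lambda(M,N)=\bigoplus_j\Hom_{\Gr\Lambda}(M,N\langle -j\rangle)$. This holds for every finitely generated graded $M$.

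For higher $i$, I take a graded projective resolution $P^\bullet\to M$ in $\Gr\Lambda$ whose terms are finitely generated. The hypothesis on the syzygies lets me build one inductively: at each stage I cover the finitely generated graded syzygy by a finite sum of shifts $\Lambda\langle a\rangle$. Graded projectives are also projective as ungraded modules, so $P^\bullet$ is an ungraded projective resolution as well. Applying the degree-$0$ step termwise gives an isomorphism of complexes
\[
\Hom_\Lambda(P^\bullet,N)\cong\bigoplus_j\Hom_{\Gr\Lambda}(P^\bullet,N\langle -j\rangle),
\]
compatible with the differentials because the decomposition into homogeneous components is natural. Cohomology commutes with direct sums, so taking cohomology yields the claimed isomorphism for all $i\geq 0$.

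The main obstacle is making sure finite generation is used at every level. The termwise Hom decomposition needs each $P^{-n}$ to be finitely generated, and this is exactly where the assumption on the syzygies enters; without it the right-hand side would be a direct product rather than a direct sum. I also need the graded projectives $\Lambda\langle a\rangle$ to be projective in $\Gr\Lambda$, which follows from $\Hom_{\Gr\Lambda}(\Lambda\langle a\rangle,X)\cong X_{-a}$ being exact in $X$.
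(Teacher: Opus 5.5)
Your argument is correct, and it is the standard proof behind the cited result \cite[Corollary 2.4.7]{Nastasescu-Van-Oystaeyen}; the paper itself gives no proof. For finitely generated graded $M$ you identify $\Hom_\Lambda(M,-)$ with $\bigoplus_j \Hom_{\Gr\Lambda}(M,-\langle -j\rangle)$, and then compute both sides using a single graded resolution by finitely generated projectives.

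The one step you assert without justification is that the syzygies of the graded resolution you build are finitely generated, since the hypothesis is about some other resolution. This follows from the generalized Schanuel lemma. The terms of the given resolution are finitely generated, because each is an extension of two finitely generated syzygies. Hence each of your syzygies is a direct summand of a finitely generated module, and so is itself finitely generated.

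A minor point: with the paper's shift convention, $\Hom_{\Gr\Lambda}(\Lambda\langle a\rangle,X)\cong X_a$ rather than $X_{-a}$. This does not affect the argument.
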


Similarly, for graded modules $M$ and $N$ we have
$$
\Tor_i^\Lambda(M, N) \cong \bigoplus_{j \in \mathbb{Z}} \Tor_i^{\Gr(\Lambda)}(M, N\langle -j \rangle),
$$
where $\Tor_i^{\Gr \Lambda}$ is the derived functor of the degree $0$ part of the (graded) tensor product of two graded $\Lambda$-modules; see e.g.\ page 30 of \cite{Nastasescu-Van-Oystaeyen} or Section 22.12 of \cite{stacks-project} for the definition of the grading on the tensor product of graded modules. 
In other words, if $M \in \Gr \Lambda$, then $\Tor^{\Gr \Lambda}_0(\Lambda, M\langle -j \rangle) = M_j$.
Moreover, if $\Lambda$ is e.g.\ finite dimensional and $M, N \in \gr \Lambda$, then $D\Ext^{i}_{\Gr \Lambda}(M, D(N)\langle j\rangle) \cong \Tor_{i}^{\Gr \Lambda}(M, N\langle -j\rangle)$.

If $\Lambda = \oplus_{i = 0}^{a}\Lambda_i$ is a basic finite dimensional algebra with $e_1, e_2, \ldots, e_n \in \Lambda_0$ a complete list of primitive orthogonal idempotents for the degree zero component of the algebra, the \emph{graded Cartan matrix of} $\Lambda$ is given by the matrix polynomial
\[
C_{\Lambda}(x) = \sum_{k  = 0}^{a}C^{(k)} x^{k},
\]
where $C^{(k)}$ is the matrix given by
$$C^{(k)}_{i,j} := \dim_K(e_i \Lambda_k e_j).$$
We note that any finite dimensional algebra may be viewed as a graded algebra concentrated in degree zero, in which case the graded Cartan matrix defined above is equal to the integer matrix $C^{(0)}$, which we call the \emph{(ungraded) Cartan matrix} of the underlying algebra. 
Furthermore, when $\Lambda$ is an infinite dimensional graded algebra, the definition of the graded Cartan matrix, mutatis mutandis, also makes sense up to some assumptions on $\Lambda$, e.g.\ if we assume that $\Lambda$ is locally finite dimensional --- meaning that each of its homogeneous components is finite dimensional --- and that $\Lambda_0$ is basic. In this case, the graded Cartan matrix of $\Lambda$ has entries in $\mathbb{Z}[[x]]$. 

Note that when $\Lambda_0 = K$, the graded Cartan matrix of $\Lambda$ is simply the Hilbert series of $\Lambda$ as a positively graded vector space. 

\subsection{Hochschild homology and cohomology}
Since $\Lambda$ is an algebra over a field, one can define its Hochschild homology and cohomology via derived functors. 
Hence, recall that the \textit{enveloping algebra of} $\Lambda$ is given by $\Lambda^{e} := \Lambda^{\op} \otimes_K \Lambda$, and that $\Lambda$-bimodules can be understood as (right) modules over $\Lambda^{e}$. 
Given this, the $i$th Hochschild homology and cohomology of $\Lambda$ can then be given by, respectively,  
\[
\HH_{i}(\Lambda) := \Tor_{i}^{\Lambda^{e}}(\Lambda, \Lambda)
\]
and
\[
\HH^{i}(\Lambda) := \Ext^{i}_{\Lambda^{e}}(\Lambda, \Lambda).
\]

In case $\Lambda$ is a graded algebra, we consider the grading on $\Lambda^{e}$ obtained by setting $$\Lambda^{e}_i := \oplus_{j + k = i} \Lambda_j \otimes_K \Lambda_k,$$
i.e.\ a K{\"u}nneth grading.
Provided $\Lambda$ satisfies sufficient ``finiteness'' assumptions allowing one to apply e.g.\ \cref{lem: ext in terms of graded ext}, this then induces a grading on $\HH_{i}$ and $\HH^{i}$  by setting 
\[
\HH_{i,j}(\Lambda) := \Tor_{i}^{\Gr \Lambda^{e}}(\Lambda, \Lambda \langle {-j} \rangle)
\]
and
\[
\HH^{i,j}(\Lambda) := \Ext^{i}_{\Gr \Lambda^{e}}(\Lambda, \Lambda \langle -j \rangle).
\]
\subsection{Cyclic homology} \label{sec:cyclic homology}

We now recall the construction of cyclic homology, following \cite{loday}. We note that although cyclic homology can be defined for algebras over any field (or even over a commutative ring), the construction we present here applies only to the case where the field $K$ has characteristic zero.  

There is a complex of $K$-vector spaces, called the \textit{Hochschild complex}, of the form 
$$
\cdots \overset{b}{\to} \Lambda^{\otimes3} \overset{b}{\to} \Lambda^{\otimes 2} \overset{b}{\to} \Lambda,
$$
where $\Lambda$ is in homological degree zero. Here $b$ is the Hochschild boundary, as defined in \cite{loday}. We consider the cyclic operator $t$ on $\Lambda^{\otimes i + 1}$, whose action on elementary tensors is defined by
$$
t(a_0 \otimes \ldots \otimes a_i) = (-1)^i a_i \otimes a_0 \otimes \ldots \otimes a_{i - 1}.
$$
It can be shown that $b$ induces a well defined map $\Lambda^{\otimes i + 1}/(1 - t) \to \Lambda^{\otimes i}/(1 - t)$, so there is a complex
$$
\cdots \overset{b}{\to} \Lambda^{\otimes 3}/(1 - t) \overset{b}{\to} \Lambda^{\otimes 2} /(1 - t) \overset{b}{\to} \Lambda,
$$
known as the \textit{Connes complex}, which is denoted by $C^\lambda(\Lambda)$. 
The $i$th cyclic homology of $\Lambda$ is defined to be $\HC_i(\Lambda) = \Hm_i(C^\lambda(\Lambda))$.
If $\Lambda$ is a graded algebra, then the vector spaces and maps in the Connes complex are also graded, and thus $\HC_i(\Lambda)$ admits a natural grading. 

We also consider the notion of relative cyclic homology. For an ideal $I \subseteq \Lambda$, the quotient map $\Lambda \to \Lambda/I$ induces a chain map $p \colon C^\lambda(\Lambda) \to C^\lambda(\Lambda/I)$.\textit{ The relative cyclic homology of} $\Lambda$ \textit{and} $I$, denoted $\HC_*(\Lambda, I)$, is the homology of the kernel of $p$. Thus $\HC_*(\Lambda, I)$ fits into a long exact sequence:
$$
\cdots \to \HC_i(\Lambda, I) \to \HC_i(\Lambda) \to \HC_i(\Lambda/I) \to \HC_{i - 1}(\Lambda, I) \to \cdots
$$
Much like cyclic homology, relative cyclic homology also admits a natural grading if $\Lambda$ is graded and $I$ is a homogeneous ideal.

\subsection{Reduced Hochschild and cyclic homology} \label{sec:reduced homology}
In this subsection, we consider a notion of reduced Hochschild or cyclic homology for a positively graded algebra $\Lambda$. Note thus that the inclusion map $\Lambda_0 \hookrightarrow \Lambda$ is a split monomorphism of algebras, and hence it induces a (split) monomorphism $\HH_*(\Lambda_0) \to \HH_*(\Lambda)$ in Hochschild homology. 
Thus, for any $i$, we may view $\HH_i(\Lambda_0)$ as a subspace of $\HH_i(\Lambda)$, and we define the $i$\textit{th reduced Hochschild homology of} $\Lambda$ to be $\overline{\HH}_i(\Lambda) = \HH_i(\Lambda)/\HH_i(\Lambda_0)$. Similarly, the $i$\textit{th reduced cyclic homology of} $\Lambda$ is defined to be $\overline{\HC}_i(\Lambda) = \HC_i(\Lambda)/\HC_i(\Lambda_0)$. Note that this notion is not necessarily the same as what Loday refers to as ``reduced'' Hochschild or cyclic homology in \cite{loday}.

In \cref{sec:product formula for the euler characteristic}, it will be useful to know how the homogeneous components of $\overline{\HC}_*(\Lambda)$ are related to those of $\HC_*(\Lambda)$. 
We therefore note that the inclusion map $\Lambda_0 \hookrightarrow \Lambda$ induces an isomorphism $$\HC_*(\Lambda_0) \overset{\cong}{\to} \HC_*(\Lambda)_0$$ in homogeneous degree zero, as can be seen from the construction of cyclic homology in the preceding  subsection. We thus see that the $k$th homogeneous component of the reduced cyclic homology of $\Lambda$ is
\begin{equation} \label{eq:value of reduced cyclic homology}
\overline{\HC}_{*, k}(\Lambda) \cong \begin{cases}
    0 & \text{if $k = 0$} \\
    \HC_{*, k}(\Lambda) & \text{if $k > 0$},
\end{cases}
\end{equation}
where the case $k > 0$ follows from the fact that $\HC_*(\Lambda_0)$ is concentrated in homogeneous degree zero.

If the field $K$ has characteristic zero, then for any $i$ there is a short exact sequence
$$
0 \to \overline{\HC}_{i - 1}(\Lambda) \to \overline{\HH}_i(\Lambda) \to \overline{\HC}_i(\Lambda) \to 0,
$$
by \cite[Theorem 4.1.13]{loday}. Note, however, that the cited source denotes $\overline{\HH}_i(\Lambda)$ and $\overline{\HC}_i(\Lambda)$ by $\overset{\approx}{\HH}_i(\Lambda)$ and $\overset{\approx}{\HC}_i(\Lambda)$, respectively. Splicing these short exact sequences together, we obtain the Connes long exact sequence:
$$
0 \to \overline{\HH}_0(\Lambda) \to \overline{\HH}_1(\Lambda) \to \overline{\HH}_2(\Lambda) \to \overline{\HH}_3(\Lambda) \to \cdots
$$

\section{A formula for the graded Euler characteristic} \label{sec:product formula for the euler characteristic}
In \cref{sec:computing homology}, we need to know the value of $\chi_{\overline{\HC}_*(\Lambda)}$, the graded Euler characteristic of the reduced cyclic homology of an algebra $\Lambda$.
In order to deduce the coefficients of this power series, we use an approach similar to one employed by Etingof and Eu \cite{Etingof-Eu'06}: in the case where $\Lambda$ is the preprojective algebra of an ADE Dynkin quiver, they use the formula
\begin{equation} \label{eq:product formula for coefficients of euler characteristic}
\prod_{k = 1}^\infty (1 - x^k)^{-a_k} = \prod_{s = 1}^\infty \det C_\Lambda(x^s),
\end{equation}
where $a_k$ is the coefficient of $x^k$ in the power series $\chi_{\overline{\HC}_*(\Lambda)}(x)$. In order to show that this formula also holds for the algebras that we are interested in in this paper, we will use the following formula that is due to Igusa \cite{IGUSA1992101}.
\begin{equation} \label{eq:igusa's formula - initial statement}
\log \det C_\Lambda(x) = \sum_{k = 1}^\infty \chi_{\HC_{*}(\Lambda, R)}(x^k) \sum_{d | k} \frac{d \mu(d)}{k}
\end{equation}
Here, $\mu$ is the M\"obius function (see \cref{sec:dirichlet convolution}), and $R$ is a suitable ideal of $\Lambda$. Note that in Igusa's paper, $R$ is chosen to be the Jacobson radical of $\Lambda$. However, Igusa's proof of this formula makes some assumptions that are not satisfied for the algebras we investigate in this paper. 
In particular, it assumes that the radical of the algebra is equal to the vector space spanned by the homogeneous elements of strictly positive degree. In this section, we therefore show that Igusa's formula holds for a more general class of graded algebras which includes the algebras under consideration in this paper.

Our proof follows Igusa's original proof quite closely, and in cases where the arguments would be identical to the ones given by Igusa, we sometimes omit some details, instead focusing on the parts that need to be modified in order to work in our more general context. We also present proofs of some lemmas that are stated but not proven in Igusa's paper. We note that although Igusa's results are stated for $\mathbb{N}^m$-graded algebras, we work with algebras graded by the natural numbers --- i.e.\ positively graded algebras --- as this is sufficient for our applications. However, our proofs can be readily adapted to the $\mathbb{N}^m$-graded case.

Throughout this section, let $\Lambda = \bigoplus_{i \geq 0} \Lambda_i$ be a positively graded $K$-algebra satisfying the following assumptions:
\begin{enumerate}[(i)]
    \item $\Lambda$ is locally finite dimensional, i.e.\ each homogeneous component $\Lambda_i$ of $\Lambda$ is finite dimensional.
    \item The homogeneous component of $\Lambda$ of degree zero is given by a finite acyclic quiver with admissible relations.
\end{enumerate}
Moreover, we assume that $K$ has characteristic zero.
We let $R = \rad(\Lambda_0) \oplus \Lambda_{>0}$, where $\Lambda_{>0}$ is the subspace of $\Lambda$ spanned by all homogeneous elements of positive degree. Additionally, we let $D = \Lambda/R \cong \Lambda_0/\rad(\Lambda_0)$. Note that the quotient map $\Lambda \to D$ admits a right inverse graded algebra homomorphism, which may be constructed using the composition
$$
D \overset{\cong}{\to} \Lambda_0/\rad(\Lambda_0) \to \Lambda_0 \hookrightarrow \Lambda,
$$
where the middle map is a right inverse of the quotient map $\Lambda_0 \to \Lambda_0/\rad(\Lambda_0)$. The existence of such a map $D \to \Lambda$ allows us to view $R$ as a graded $D$-bimodule.

For $k \ge 1$, we denote by $(R \otimes_D)^k$ the tensor product
$$
(\underbrace{R \otimes_D R \otimes_D \cdots \otimes_D R}_k) \otimes_{D^e} D.
$$
Moreover, we let $t$ be the operator on $(R \otimes_D)^k$ whose action on elementary tensors is defined by
$$
t((a_1 \otimes a_2 \otimes \ldots \otimes a_k) \otimes 1) = (-1)^{k + 1} (a_k \otimes a_1 \otimes \ldots \otimes a_{k - 1}) \otimes 1.
$$
Then we have the following result.

\begin{prop}[{\cite[Corollary 1.2]{IGUSA1992101}}]
    \label{prop:igusa's complex for relative cyclic homology}

The relative cyclic homology $\HC_*(\Lambda, R)$ is the homology of a complex of the following form.
\begin{equation} \label{eq: igusa's complex for relative cyclic homology}
R \otimes_{D^e} D \longleftarrow (R \otimes_D)^2 /(1 - t) \longleftarrow (R \otimes_D )^3/(1 - t) \longleftarrow \cdots
\end{equation}
    
\end{prop}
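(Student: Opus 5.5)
We follow Igusa's argument and reduce to the Connes complex of a tensor algebra. The key structural input is that $\Lambda$ is the quotient of a tensor algebra over the semisimple algebra $D$. Indeed, $D$ is separable over $K$: it is a product of copies of $K$, because $\Lambda_0$ is given by an acyclic quiver with admissible relations, so $\Lambda_0/\rad(\Lambda_0)\cong K^{n}$. Together with the splitting $D\to\Lambda$ constructed above, this gives a decomposition $\Lambda = D\oplus R$ of graded $D$-bimodules.

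The plan is to compute $\HC_*(\Lambda,R)$ with the $D$-normalized Hochschild and Connes complexes. Since $D$ is separable, the Hochschild complex of $\Lambda$ can be replaced by the complex $C_k = \Lambda\otimes_D R^{\otimes_D k}\otimes_{D^e} D$. The standard argument is to take the relative bar resolution over $D^e$, which is a projective resolution of $\Lambda$ over $\Lambda^e$ because $D^e$ is semisimple. Taking $\Lambda$-coinvariants of this resolution gives a complex quasi-isomorphic to the usual Hochschild complex, and the quasi-isomorphism commutes with the cyclic operator.

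Next we use characteristic zero. There, cyclic homology is the homology of the $t$-coinvariants (the Connes complex), and the $D$-normalized version has the same property; this can be seen from the Loday comparison, for instance via the $(b,B)$ bicomplex together with the normalized/$D$-relative comparison. For the quotient $D=\Lambda/R$, the analogous complex is just $D$ in degree $0$. The kernel of the induced map $C^\lambda(\Lambda)\to C^\lambda(D)$ is then identified with the complex whose degree-$(k-1)$ term is $(R\otimes_D)^k/(1-t)$. To see this, decompose $\Lambda\otimes_D R^{\otimes k} = (D\otimes_D R^{\otimes k})\oplus R^{\otimes k+1}$. After passing to coinvariants, both summands become $(R\otimes_D)^{\cdot}$ modulo the appropriate $t$ relations, and a shift of the index lets the two pieces cancel. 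This yields exactly the complex \eqref{eq: igusa's complex for relative cyclic homology}, with the differential induced by multiplication in $R$.

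The main obstacle is this final identification of the kernel: showing that the relative complex collapses to cyclic words in $R$ alone, with the stated sign convention for $t$. I would handle it Igusa's way. First I would check that the $D$-relative Connes complex of $\Lambda$ in degree $k$ is $(\Lambda\otimes_D)^{k+1}/(1-t)$. Second, I would note that $\Lambda\otimes_D\cdots$ expands multilinearly in $D\oplus R$. Finally, I would use the $D$-normalization, which kills tensors with a $D$-entry in a non-initial position; combined with cyclic invariance, this kills every tensor containing a $D$-entry except $D$ itself in degree $0$. The assumptions (i) and (ii) are used only to guarantee that $D$ is separable and that everything is locally finite, so the argument works degree by degree.
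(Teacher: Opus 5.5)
The paper gives no argument for this proposition beyond citing Igusa, so your sketch has to stand on its own. Its first half is sound. $D\cong K^n$ is separable, so the $D$-relative cyclic module $\Lambda^{\otimes_D(\ast+1)}\otimes_{D^e}D$ computes $\HC_*(\Lambda)$ and $\HC_*(D)$ compatibly. In characteristic zero you may then use its Connes complex, which I denote $C^\lambda_D(-)$. You must use the unnormalized relative complex here. The normalized complex $\Lambda\otimes_D R^{\otimes_D k}\otimes_{D^e}D$ carries no cyclic operator, because degenerate chains are not $t$-stable; for example, $t(a_0\otimes a_1\otimes 1)=1\otimes a_0\otimes a_1$.

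The gap is exactly at the step you flag as the main obstacle, and the mechanism you propose there is not valid. You cannot quotient by degenerate chains and pass to $t$-coinvariants at the same time. Acyclicity of the degenerate subcomplex says nothing about its image in the Connes complex. Your argument proves too much: for $\Lambda=D$ it would make $C^\lambda_D(D)$ equal to $D$ in degree $0$. In fact $C^\lambda_D(D)$ is $D$ in every even degree with zero differential, in accordance with $\HC_{2i}(D)\cong D$. The complex that is ``just $D$ in degree $0$'' is the normalized Hochschild complex of $D$, not its cyclic complex. The remark that the two summands cancel after an index shift is likewise not an argument.

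What is true is the following. In the kernel of $C^\lambda_D(\Lambda)\to C^\lambda_D(D)$, the classes of words with all entries in $R$ form a subcomplex, since $R$ is an ideal; this is the complex of the statement. The classes of words mixing entries from $D$ and $R$ form a complementary subcomplex $M_*$. The proposition is equivalent to $M_*$ being acyclic, and that is what must be proved.

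The missing idea is the standard comparison for non-unital algebras (as in \cite{loday}), carried out over $D$. For $m\geq 1$, the reduced normalized Hochschild complex of $\Lambda$ over $D$ has $m$th term $(R\otimes_D)^m\oplus(R\otimes_D)^{m+1}$. The reduced normalized $(b,B)$-bicomplex $\bar{\mathcal B}_D(\Lambda)/\bar{\mathcal B}_D(D)$ is isomorphic to the total complex of the cyclic bicomplex $CC_D(R)$ of the non-unital algebra $R$ over $D$. The columns of $CC_D(R)$ alternate between $b$ and $-b'$, and its rows alternate between $1-t$ and the norm map $N$. Under this isomorphism, Connes' $B$ becomes $N$, and the $D$-summand of $\Lambda$ in the first tensor position produces the $b'$-columns and the maps $1-t$.

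Since $t^k=1$ on $(R\otimes_D)^k$ and $\mathbb{Q}\subseteq K$, the rows of $CC_D(R)$ are exact in positive degrees. Hence projecting $\operatorname{Tot} CC_D(R)$ onto its first column and passing to $t$-coinvariants is a quasi-isomorphism onto the complex in the statement. Comparing the Connes and $(b,B)$ descriptions naturally for $\Lambda$ and for $D$, and applying the five lemma, then identifies $\HC_*(\Lambda,R)$ with the homology of that complex. It is this row-exactness in characteristic zero, not normalization, that lets you discard every word containing an entry from $D$.
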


\subsection{Dimensional computations}
In this subsection, we prove several results concerning the dimensions of  the vector spaces appearing in the complex \eqref{eq: igusa's complex for relative cyclic homology}, with the goal of ultimately using this complex to compute the graded Euler characteristic of $\HC_*(\Lambda, R)$. In working towards this goal, it will be useful to have a better understanding of the structure of the space $(R \otimes_D)^k$.
We therefore note that
$$D \cong  \bigoplus_{i = 1}^n e_iK ,$$
where $e_1, \ldots, e_n$ are idempotents corresponding to the vertices of the quiver of $\Lambda_0$. 
For $D$-modules $M_D$ and $_D N$, we have
$$
M \otimes_D N \cong \bigoplus_{i = 1}^n Me_i \otimes_K e_i N.
$$
Moreover, for any $D$-bimodule $L$, we have
$$
L \otimes_{D^e} D \cong \bigoplus_{i = 1}^n e_i L e_i.
$$
In particular, it follows that
\begin{equation}\label{eq:iterated tensor product}
(R \otimes_D)^k \cong  \bigoplus_{i_1, \ldots, i_k} e_{i_1} R e_{i_2} \otimes_K \cdots \otimes_K e_{i_k} R e_{i_1}.
\end{equation}

As a consequence of \cref{prop:igusa's complex for relative cyclic homology}
and \eqref{eq:iterated tensor product}, we can already determine the degree zero component of $\HC_*(\Lambda, R)$:

\begin{lem}\label{lem:relative cyclic homology vanishes in homogeneous degree zero}
The relative cyclic homology $\HC_*(\Lambda, R)$ vanishes in homogeneous degree zero.
\end{lem}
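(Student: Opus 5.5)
By \cref{prop:igusa's complex for relative cyclic homology}, $\HC_*(\Lambda, R)$ is the homology of the complex \eqref{eq: igusa's complex for relative cyclic homology}. This complex is a complex of graded vector spaces with degree-preserving differentials, because $R$ is a homogeneous ideal and $D$ sits in degree zero. It therefore suffices to show that each term $(R\otimes_D)^k/(1-t)$, with $k \ge 1$, vanishes in homogeneous degree zero. Here we read $(R\otimes_D)^1 = R\otimes_{D^e} D$. Since $(R\otimes_D)^k/(1-t)$ is a quotient of $(R\otimes_D)^k$, we will in fact show that $(R\otimes_D)^k$ itself has zero degree-zero component.

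By \eqref{eq:iterated tensor product}, we have
\[
(R\otimes_D)^k \cong \bigoplus_{i_1,\ldots,i_k} e_{i_1}Re_{i_2}\otimes_K \cdots \otimes_K e_{i_k}Re_{i_1}.
\]
The grading is the Künneth grading, since $D$ is concentrated in degree zero. Because $R$ is positively graded, the degree-zero part of a summand is
\[
e_{i_1}R_0e_{i_2}\otimes_K\cdots\otimes_K e_{i_k}R_0e_{i_1},
\]
where $R_0 = \rad(\Lambda_0)$. So the claim reduces to showing that this tensor product is zero for every cyclic sequence $(i_1,\dots,i_k)$.

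For this we use assumption (ii). Write $\Lambda_0 = KQ/I$ with $Q$ finite and acyclic, and $I$ admissible. Then $\rad(\Lambda_0)$ is spanned by the images of paths of positive length. Hence $e_iR_0e_j \neq 0$ requires a path of positive length between the vertices $i$ and $j$, in a fixed direction determined by the convention for composing paths. Suppose all factors $e_{i_1}R_0e_{i_2}, \ldots, e_{i_k}R_0e_{i_1}$ were nonzero. Concatenating the corresponding positive-length paths would give a closed path of positive length through $i_1$, that is, an oriented cycle in $Q$. This contradicts acyclicity. So at least one factor vanishes, and with it the whole tensor product over $K$. Consequently $(R\otimes_D)^k_0 = 0$ for all $k\ge 1$, and the complex, and hence $\HC_*(\Lambda,R)$, vanishes in homogeneous degree zero.

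The only delicate point is bookkeeping. We must check that the identification \eqref{eq:iterated tensor product} respects the grading. We must also check that the idempotents $e_i$ used in $D$ correspond, via the chosen section $D\to\Lambda_0$, to the vertex idempotents, so that $e_iR_0e_j$ really is spanned by paths between $i$ and $j$. Both follow from the construction of the section $D \to \Lambda$. Everything else is immediate.
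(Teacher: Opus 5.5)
Your proposal is correct and follows essentially the same route as the paper: reduce via Igusa's complex to the vanishing of the degree-zero part of $(R\otimes_D)^k$, identify that part through \eqref{eq:iterated tensor product} with $\bigoplus e_{i_1}R_0e_{i_2}\otimes_K\cdots\otimes_K e_{i_k}R_0e_{i_1}$ where $R_0=\rad(\Lambda_0)$, and conclude from acyclicity of the quiver of $\Lambda_0$. Your concatenation-of-paths argument simply spells out the step the paper states in one line.
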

\begin{proof}
By \cref{prop:igusa's complex for relative cyclic homology}, it suffices to show that $(R\otimes_D)^k$ vanishes in degree zero for all $k \ge 1$. By \eqref{eq:iterated tensor product}, the degree zero component of $(R \otimes_D)^k$ is
$$
\bigoplus_{i_1, \ldots, i_k} e_{i_1} R_0 e_{i_2}  \otimes \ldots \otimes e_{i_k}R_0 e_{i_1}.
$$
But $R_0$ is equal to $\rad(\Lambda_0)$, which is the ``arrow ideal'' of $\Lambda_0$, so this direct sum vanishes since the quiver of $\Lambda_0$ is acyclic.
\end{proof}

We let $E(x) = C_\Lambda(x) - I_n$, where $C_\Lambda(x)$ is the graded Cartan matrix of $\Lambda$ and $I_n$ is the $n \times n$ identity matrix. 
Then the following result allows us to determine the dimension of the homogeneous components of $(R \otimes_D)^k$.

\begin{lem}[{\cite[Lemma 3.1]{IGUSA1992101}}] \label{lem:dimension of homogeneous component of tensor product}
    The dimension of the degree $l$ component of $(R \otimes_D)^k$ is equal to the coefficient of $x^l$ in $\tr(E(x)^k)$.
\end{lem}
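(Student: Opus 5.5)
The plan is to compute both sides explicitly via the decomposition \eqref{eq:iterated tensor product} and match them term by term. First I identify the entries of $E(x)$. Since $D \cong \Lambda_0/\rad(\Lambda_0)$ and the idempotents $e_1,\dots,e_n$ are those of the vertices of the quiver of $\Lambda_0$, the graded Cartan matrix has entries $C_\Lambda(x)_{i,j} = \sum_{l \ge 0} \dim_K(e_i \Lambda_l e_j)x^l$. Assumption (i) guarantees these are well-defined power series.

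Next I decompose $\Lambda = D \oplus R$ as graded $D$-bimodules, using the splitting $D \to \Lambda$ constructed above. Because $D$ is spanned by the $e_i$ and sits in degree zero, $e_i D e_j$ is $K$ in degree $0$ when $i=j$ and $0$ otherwise. Hence $\dim_K(e_i R_l e_j) = \dim_K(e_i\Lambda_l e_j) - \delta_{ij}\delta_{l0}$. Summing over $l$ gives
\[
\sum_{l\ge 0}\dim_K(e_iR_le_j)\,x^l = C_\Lambda(x)_{i,j}-\delta_{ij} = E(x)_{i,j}.
\]
One point needs care here. The $e_i$ used for the Cartan matrix must be the images of the vertex idempotents under the splitting, so that $e_i\Lambda e_j$ and $e_iRe_j$ refer to the same idempotents. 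Otherwise the computation is routine.

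Now I compute the Hilbert series of $(R\otimes_D)^k$. By \eqref{eq:iterated tensor product}, it is the sum over $(i_1,\dots,i_k)$ of the tensor products $e_{i_1}Re_{i_2}\otimes_K\cdots\otimes_K e_{i_k}Re_{i_1}$. The grading on a tensor product of graded spaces is additive, so the Hilbert series of each summand is the product of the Hilbert series of the factors, namely $E(x)_{i_1,i_2}E(x)_{i_2,i_3}\cdots E(x)_{i_k,i_1}$. This uses that each factor is locally finite and concentrated in nonnegative degrees, so the products of power series make sense.

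Finally, summing over all tuples $(i_1,\dots,i_k)$ gives exactly $\tr(E(x)^k)$ by the definition of matrix multiplication and the trace. Reading off the coefficient of $x^l$ then proves the lemma.

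I do not expect a real obstacle. The only delicate part is the bookkeeping behind \eqref{eq:iterated tensor product}, namely that the grading on $(R\otimes_D)^k$ is the one induced from the tensor factors. This holds because $D$ is concentrated in degree $0$, so tensoring over $D$ or $D^e$ does not shift degrees.
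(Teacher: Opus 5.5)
Your proposal is correct and follows essentially the paper's proof. You identify $E(x)_{i,j}$ as the Hilbert series of $e_iRe_j$ via the graded splitting of $\Lambda \to D$, and then use multiplicativity of Hilbert series under $\otimes_K$ together with \eqref{eq:iterated tensor product}; the only difference is that the paper organizes the second step as an induction on $k$ over the $(i_1,i_{k+1})$ entries of $E(x)^k$, while you expand $\tr(E(x)^k)$ directly as a sum over closed index sequences.
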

\begin{proof}
We claim that for any indices $i_1$ and $i_{k + 1}$, the dimension of
\begin{equation} \label{eq:direct sum of tensor products}
\bigoplus_{i_2, \ldots, i_k} (e_{i_1} R e_{i_2} \otimes_K \ldots \otimes_K e_{i_k} R e_{i_{k + 1}})_l
\end{equation}
is equal to the coefficient of $x^l$ in the $(i_1, i_{k + 1})$ entry of $E(x)^k$. Given this claim, the result follows from \eqref{eq:iterated tensor product} by setting $i_{k + 1}$ equal to $i_1$ and taking the sum of the dimension of \eqref{eq:direct sum of tensor products} over all possible values of $i_1$.

We prove the claim by induction on $k$. Since the quotient map $\Lambda \to D$ splits as a map of graded algebras, the dimension of the degree $l$ component of $e_i R e_j$ must be equal to the coefficient of $x^l$ in the $(i, j)$ entry of $C_\Lambda(x) - C_D(x)$, where $C_D(x)$ is the graded Cartan matrix of $D$. But $C_D(x)$ is the identity matrix, so $C_\Lambda(x) - C_D(x)$ is equal to $E(x)$. This proves the claim in the case $k = 1$.

Now suppose that the claim holds for $k -1$, where $k > 1$. We compute the dimension of \eqref{eq:direct sum of tensor products} as
$$
\sum_{m \in \mathbb{N}} \sum_{i_2} \dim(e_{i_1} R e_{i_2})_m \cdot \dim \left(\bigoplus_{i_3, \ldots, i_k} e_{i_2} R e_{i_3} \otimes_K  \cdots \otimes_K e_{i_k} R e_{i_{k + 1}} \right)_{l - m }.
$$
For a power series $p(x)$ over a ring $A$, we let $\Coeff_{x^l}(p(x)) \in A$ denote the coefficient of $x^l$ in $p(x)$. Then by the inductive assumption, the sum above can be rewritten as
\begin{align*}
& \sum_{m} \sum_{i_2} \Coeff_{x^m}(E(x)_{i_1, i_2}) \Coeff_{x^{l - m}} ((E(x)^{k - 1})_{i_2, i_{k + 1}}) \\
= & \sum_{m} \sum_{i_2} \Coeff_{x^m}(E(x))_{i_1, i_2} \Coeff_{x^{l - m}} ((E(x)^{k - 1}))_{i_2, i_{k + 1}},
\end{align*}
where on the last line we are viewing $E(x)$ and $E(x)^{k - 1}$ as power series over the matrix ring $M_n(\mathbb Z)$, 
so that $\Coeff_{x^m}(E(x))$ and $\Coeff_{x^{l - m}}(E(x)^{k - 1})$ denote matrices over $\mathbb Z$. The sum indexed by $i_2$ is just the definition of the $(i_1, i_{k +1})$ entry of the product of $\Coeff_{x^m}(E(x))$ and $\Coeff_{x^{l - m}}(E(x)^{k - 1})$, so we get
\begin{align*}
    & \sum_m (\Coeff_{x^m} (E(x)) \Coeff_{x^{l-m}} (E(x)^{k - 1}))_{i_1, i_{k + 1}} \\
    =& \left ( \sum_m \Coeff_{x^m} (E(x)) \Coeff_{x^{l-m}} (E(x)^{k - 1}) \right )_{i_1, i_{k + 1}} \\
    =& \Coeff_{x^l}(E(x) E(x)^{k - 1})_{i_1, i_{k + 1}} \\
    =& \Coeff_{x^l} (E(x)^k)_{i_1, i_{k + 1}} \\
    =& \Coeff_{x^l} ((E(x)^k)_{i_1, i_{k + 1}}),
\end{align*}
as desired.
\end{proof}

In order to compute the dimension of the homogeneous components of $(R \otimes_D)^k/(1 - t)$, we construct an appropriate basis for $(R \otimes_D)^k$ as follows. We first choose a homogeneous basis for $e_i R e_j$ for each $i$ and $j$.  Then by \eqref{eq:iterated tensor product}, there is a homogeneous basis for $(R \otimes_D)^k$  given by elements of the form $(b_1 \otimes b_2 \otimes \ldots \otimes b_k) \otimes 1$, where $b_i$ is a basis element of $e_{j_i} R e_{j_{i + 1}}$, for some indices $j_1, j_2, \ldots, j_{k +1}$ where $j_{k + 1} = j_1$. This basis is closed up to sign under the action of $t$. Moreover, each basis element is fixed up to sign by $t^p$ for some $p > 0$, and we call the smallest such $p$ the \textit{period of the basis element}. We observe that the period of a basis element is necessarily a divisor of $k$.

\begin{lem}[{\cite[Lemma 3.3]{IGUSA1992101}}]
\label{lem:counting basis elements of period p}

For a divisor $p$ of $k$, the number of basis elements of $(R \otimes_D)^k$ of period $p$ and of homogeneous degree $l$ is equal to the coefficient of $x^l$ in
$$
\sum_{d | p} \mu(d) \tr(E(x^{dk/p})^{p/d}).
$$

\end{lem}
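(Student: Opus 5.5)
The plan is a Möbius inversion argument. For each divisor $p$ of $k$, let $N_p(x)$ be the series whose coefficient of $x^l$ counts basis elements of $(R\otimes_D)^k$ of period $p$ and degree $l$. Let $F_p(x)$ count basis elements (of degree $l$) that are fixed up to sign by $t^p$. A basis element is fixed up to sign by $t^p$ exactly when its period divides $p$. Hence
\[
F_p(x) = \sum_{q \mid p} N_q(x),
\]
and Möbius inversion over the divisor lattice of $p$ gives
\[
N_p(x) = \sum_{d\mid p} \mu(d) F_{p/d}(x).
\]
It therefore suffices to show that $F_{q}(x) = \tr(E(x^{k/q})^{q})$ for every divisor $q$ of $k$. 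Substituting $q = p/d$ then yields exactly the claimed expression $\sum_{d\mid p}\mu(d)\tr(E(x^{dk/p})^{p/d})$.

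To compute $F_q$, I describe the basis elements fixed by $t^q$. A basis element is $(b_1\otimes\cdots\otimes b_k)\otimes 1$ with each $b_i$ a chosen homogeneous basis element of some $e_{j_i}Re_{j_{i+1}}$ and $j_{k+1}=j_1$. Applying $t^q$ cyclically rotates the tuple $(b_1,\dots,b_k)$ by $q$ positions, up to sign. Since the $b_i$ come from a fixed basis, the element is fixed up to sign precisely when $b_{i+q}=b_i$ for all $i$, indices taken mod $k$. So it is determined by the block $(b_1,\dots,b_q)$ repeated $k/q$ times. For this block:
\begin{itemize}
\item the idempotent indices must close up, $j_{q+1}=j_1$;
\item the degree of the element is $(k/q)$ times the degree of the block.
\end{itemize}
Conversely, any such closed block of length $q$ gives a fixed basis element.

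Hence $F_q(x)$ equals the generating function of basis elements of $(R\otimes_D)^q$, counted by degree, with $x$ replaced by $x^{k/q}$. By \cref{lem:dimension of homogeneous component of tensor product}, that generating function is $\tr(E(x)^q)$, so $F_q(x) = \tr(E(x^{k/q})^q)$. Here one uses that the basis of $(R\otimes_D)^q$ from \eqref{eq:iterated tensor product} is exactly the set of closed blocks, and that each coefficient is finite by local finiteness (i) together with the vanishing of $R$ in degree zero along cycles.

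The main point requiring care is the assertion that $t^q$ fixes a basis element up to sign if and only if the tuple is $q$-periodic. This needs the signs to be harmless and the chosen bases to be genuinely distinct elements. Both follow because the tensor basis elements are linearly independent and $t$ permutes them up to $\pm1$, so "fixed up to sign" reduces to equality of the underlying tuples.
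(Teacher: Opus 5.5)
Your proposal is correct and follows the paper's proof: the paper also applies M\"obius inversion to the count of basis elements whose period divides $p$, and identifies those with basis elements of $(R\otimes_D)^p$ of degree $pl/k$, counted via \cref{lem:dimension of homogeneous component of tensor product}. Your block-repetition bijection spells out that identification, which the paper states without detail; for that step local finiteness alone already makes each count finite, and the acyclicity of the quiver of $\Lambda_0$ is not needed.
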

\begin{proof}
Let $F(p)$ denote the number of basis elements of degree $l$ in $(R \otimes_D)^k$ whose period is exactly $p$, and let $G(p)$ be the number of degree $l$ basis elements whose period is a divisor of $p$. Then $G(p)$ is equal to the number of basis elements in $(R \otimes_D)^{p}$ of degree $\frac{p l}{k}$, i.e.\ the coefficient of $x^{pl/k}$ in $\tr(E(x)^{p})$ by \cref{lem:dimension of homogeneous component of tensor product}; equivalently, $G(p)$ is the coefficient of $x^l$ in $\tr(E(x^{k/p})^p)$. Since $G(p) = \sum_{d|p} F(d),$ the claim now follows from the M\"{o}bius inversion formula (see \cref{sec:dirichlet convolution}).
\end{proof}

By using \cref{lem:counting basis elements of period p}, we can now prove the following result, which allows us to determine the dimension of the homogeneous components of $(R \otimes_D)^k / (1 - t)$. 

\begin{lem}[{\cite[Lemma 3.4]{IGUSA1992101}}]
    \label{lem:dimension of quotient space in complex}
The dimension of the homogeneous component of $(R \otimes_D)^k/(1 - t)$ of degree $l$ is equal to the coefficient of $x^l$ in
$$
\sum_{p |  k} \frac{1}{p} \sum_{d | p} \mu(d) \tr(E(x^{dk/p})^{p/d})
$$
if $k$ is odd, or in
$$
\sum_{\substack{p | k \\ p \text{ even}}} \frac{1}{p} \sum_{d | p} \mu(d) \tr(E(x^{dk/p})^{p/d})
$$
if $k$ is even.
\end{lem}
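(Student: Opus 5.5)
The plan is to compute the dimension of the coinvariants $(R\otimes_D)^k/(1-t)$ orbit by orbit, using the homogeneous basis constructed above. That basis is closed up to sign under $t$, so $(R\otimes_D)^k$ splits, degree by degree, as a direct sum of $t$-stable subspaces. Each summand is spanned by one $t$-orbit of basis elements.

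First I will analyse a single orbit. Let $\beta = (b_1\otimes\cdots\otimes b_k)\otimes 1$ have period $p$, so that $\beta, t\beta, \ldots, t^{p-1}\beta$ span a $p$-dimensional $t$-stable subspace $V$ on which $t^p\beta = \varepsilon\beta$ with $\varepsilon = \pm 1$. Since $t$ is invertible on $V$, the image of $(1-t)$ on $V$ contains every $t^i\beta - t^{i+1}\beta$. Hence $V/(1-t)V$ is spanned by the class of $\beta$, and that class satisfies $[\beta] = \varepsilon[\beta]$. Because $\operatorname{char} K = 0$ (and even in characteristic $\neq 2$), $V/(1-t)V$ is one-dimensional if $\varepsilon = 1$ and zero if $\varepsilon = -1$. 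Concretely, $t$ acts on $V$ by a companion matrix of $y^p - \varepsilon$, and $1$ is an eigenvalue exactly when $\varepsilon = 1$.

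Next I will compute the sign $\varepsilon$. Each application of $t$ contributes the factor $(-1)^{k+1}$. Moreover $t^p$ permutes the tensor factors cyclically back to $\beta$ exactly, because $\beta$ has period $p$ and the basis elements of each $e_iRe_j$ are distinct. So $t^p\beta = (-1)^{(k+1)p}\beta$, which gives $\varepsilon = 1$ whenever $k$ is odd, and $\varepsilon = (-1)^p$ when $k$ is even.

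Finally I will count orbits in degree $l$. The period is constant on an orbit, so the number of orbits of period $p$ in degree $l$ is $\tfrac1p$ times the number of basis elements of period $p$ and degree $l$. By \cref{lem:counting basis elements of period p}, that number is the coefficient of $x^l$ in $\sum_{d|p}\mu(d)\tr(E(x^{dk/p})^{p/d})$. I will then sum over the divisors $p$ of $k$ whose orbits contribute, namely all $p$ when $k$ is odd and only even $p$ when $k$ is even, which yields exactly the two stated formulas.

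The only delicate point is the sign bookkeeping. One must check that $t^p\beta$ equals $\beta$ up to precisely the sign $(-1)^{(k+1)p}$, with no extra sign coming from the permutation itself, since $t$ is defined with an explicit sign and no Koszul sign rule is involved. It must also be checked that elements of different orbits are linearly independent modulo $(1-t)$; this follows from the direct sum decomposition into $t$-stable subspaces.
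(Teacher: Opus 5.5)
Your proposal is correct and follows essentially the same route as the paper. Both arguments decompose $(R\otimes_D)^k$ into the spans $V$ of $t$-orbits of basis elements, show that an orbit of period $p$ contributes $1$ to the coinvariants unless $k$ is even and $p$ is odd (in which case it contributes $0$), and count orbits as $\frac{1}{p}$ times the number given by \cref{lem:counting basis elements of period p}. The only difference is presentational: the paper treats the parity cases separately using an explicit spanning set for $(1-t)V$, whereas you handle them uniformly through the sign $t^p\beta = (-1)^{(k+1)p}\beta$ and the companion matrix of $y^p-\varepsilon$.
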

\begin{proof}
For a basis element $b$ in $(R \otimes_D)^k$, we let the \emph{orbit} of $b$ be the set of basis elements that are equal to $\pm t^j (b)$ for some $j$. Let $\{b_1, \ldots, b_p\}$ be an orbit of basis elements, and let $V$ be the subspace of $(R \otimes_D)^k$ spanned by this orbit. We note that
$$
V/( 1- t) = V/W,
$$
where $W$ is the subspace spanned by the set
\begin{equation} \label{eq:spanning set for W}
\{ b_1 + (-1)^k b_2, \ b_2 + (-1)^k b_3, \ \ldots, \  b_{p - 1} + (-1)^k b_p, \ (-1)^k b_1 + b_p \},
\end{equation}
assuming an appropriate ordering of the elements of the orbit. By \cref{lem:counting basis elements of period p}, it suffices to show that $\dim V/W = 0$  if $k$ is even and $p$ is odd, and $\dim V/W = \frac{1}{p} \dim V = 1$ (i.e.\ $\dim W = p - 1$) otherwise.

First suppose that $k$ is even and $p$ is odd. Then in $V/W$, we have
$$
[b_1] = -[b_2] = [b_3] = \ldots = (-1)^p [b_1] = -[b_1].
$$
Since the characteristic of the field $K$ is zero (and thus, in particular, not equal to $2$), we can deduce that $[b_1] = 0$, and hence $V/(1 - t) = 0.$

Next we suppose that $k$ and $p$ are both even. Then the first $p - 1$ elements of the spanning set of $W$ given in \eqref{eq:spanning set for W} are linearly independent, while the last element can be written as a linear combination of the others as follows:
$$
b_1 + b_p = \sum_{i = 1}^{p - 1} (-1)^{i - 1} (b_i + b_{i + 1})
$$
This shows that $\dim W = p -1$.

Finally, suppose $k$ is odd. Then the first $p - 1$ elements of the spanning set in \eqref{eq:spanning set for W} are again linearly independent, and we have
$$
-b_1 + b_p = -\sum_{i = 1}^{p  - 1} (b_i - b_{i + 1}),
$$
which again shows that $\dim W = p - 1$, as desired.
\end{proof}

\subsection{Proof of Igusa's formula}

\Cref{lem:dimension of quotient space in complex} allows us to compute the graded Euler characteristic of $\HC_*(\Lambda, R)$. In order to connect the Euler characteristic to the logarithm of $\det C_\Lambda(x)$ and prove formula \eqref{eq:igusa's formula - initial statement}, we need the following result.

\begin{prop}[{\cite[Proposition 2.1]{IGUSA1992101}}]\label{prop:log of determinant equals trace series}
    $$
    \log \det C_\Lambda(x) = \sum_{k = 1}^\infty (-1)^{k + 1} \frac{\tr(E(x)^k)}{k}.
    $$
\end{prop}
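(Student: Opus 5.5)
The plan is to regard both sides as formal power series in $\mathbb{Q}[[x]]$, to show they have the same constant term, and then to show they have the same derivative with respect to $x$. The first task is to make sense of both sides.

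\textbf{Constant terms of $C_\Lambda(x)$ and $E(x)$.} Order the vertices $e_1,\dots,e_n$ of the acyclic quiver of $\Lambda_0$ so that arrows only go from smaller to larger indices (or the reverse, depending on conventions). Then $C^{(0)}_{i,j}=\dim e_i\Lambda_0 e_j$ vanishes unless $i\le j$, and $e_i\Lambda_0e_i=Ke_i$ because there are no oriented cycles. Hence $C_\Lambda(0)=C^{(0)}$ is unitriangular and $E(0)=C^{(0)}-I_n$ is strictly triangular, so $E(0)^n=0$.

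\textbf{Both sides are well defined.} Write $E(x)=E(0)+xF(x)$. Expanding $E(x)^{nm}$, every monomial in $E(0)$ and $xF(x)$ of length $nm$ either contains at least $m$ factors $xF(x)$, or contains a run of $n$ consecutive copies of $E(0)$ and so vanishes. Therefore $E(x)^{nm}\in x^mM_n(\mathbb{Z}[[x]])$. This gives x-adic convergence of $\sum_k(-1)^{k+1}\tr(E(x)^k)/k$ in $\mathbb{Q}[[x]]$. Moreover $\det C_\Lambda(x)$ has constant term $\det C^{(0)}=1$, so $\log\det C_\Lambda(x)$ makes sense. In addition, $C_\Lambda(x)=I+E(x)$ is invertible in $M_n(\mathbb{Z}[[x]])$ with inverse $\sum_{k\ge0}(-1)^kE(x)^k$, which converges for the same reason.

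\textbf{Constant terms agree.} The left-hand side has constant term $\log 1=0$. The right-hand side has constant term $\sum_k(-1)^{k+1}\tr(E(0)^k)/k=0$, since $E(0)$ is nilpotent.

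\textbf{Derivatives agree.} For the left-hand side, Jacobi's formula gives
\[
\tfrac{d}{dx}\log\det C_\Lambda(x)=\tr\bigl(C_\Lambda(x)^{-1}C_\Lambda'(x)\bigr)=\tr\bigl((I+E)^{-1}E'\bigr).
\]
This is a polynomial identity in the entries, so it holds over any commutative ring, in particular over $\mathbb{Q}[[x]]$. For the right-hand side, differentiate term by term; this is legitimate by the x-adic convergence above. Using the product rule and cyclicity of the trace,
\[
\tfrac{d}{dx}\tr(E^k)=\sum_{j=0}^{k-1}\tr(E^jE'E^{k-1-j})=k\,\tr(E^{k-1}E').
\]
Hence the derivative of the right-hand side is
\[
\sum_{k\ge1}(-1)^{k+1}\tr(E^{k-1}E')=\tr\Bigl(\sum_{k\ge0}(-1)^kE^kE'\Bigr)=\tr\bigl((I+E)^{-1}E'\bigr).
\]
Over $\mathbb{Q}$, a power series is determined by its constant term and its derivative, so the two sides are equal.

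\textbf{Main obstacle.} The only delicate point is the convergence and invertibility bookkeeping in the second step. This is exactly where the acyclicity of the quiver of $\Lambda_0$ enters: it replaces Igusa's assumption that $E(0)=0$ with the weaker statement that $E(0)$ is nilpotent. Once that is in place, the rest is the standard identity $\log\det=\tr\log$.
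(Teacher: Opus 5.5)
Your proof is correct, and it takes a different route from the paper's. The paper does not reprove the identity. It defers to Igusa's argument, which, as the paper notes, needs only two things: both sides must be well-defined power series, and the eigenvalues of $E(0)$ must vanish. The paper then checks these. For the left side it uses $\det C_{\Lambda_0}=1$, and strict triangularity of $E(0)$ gives the spectrum. For the right side it argues combinatorially on $\bigoplus e_{i_1}Re_{i_2}\otimes\cdots\otimes e_{i_k}Re_{i_1}$ via \cref{lem:dimension of homogeneous component of tensor product}: for $k$ large relative to $l$, a degree-$l$ elementary tensor would contain $N+1$ consecutive degree-zero factors, i.e.\ a path longer than any path in the quiver of $\Lambda_0$. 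Your pigeonhole argument on words in $E(0)$ and $xF(x)$ is the same idea at the matrix level. It yields the sharper statement $E(x)^{nm}\in x^mM_n(\mathbb{Z}[[x]])$, which in one stroke gives convergence of the trace series, the Neumann series for $C_\Lambda(x)^{-1}$, and the legitimacy of termwise differentiation. The identity itself you prove purely formally, via Jacobi's formula and uniqueness of antiderivatives in characteristic zero, rather than through eigenvalues. So you never need the spectrum of $E(x)$ or analytic convergence for small $x$, and your argument is self-contained rather than resting on an external reference. The paper's tensor-level formulation has one advantage: it directly gives the vanishing of the degree-$l$ part of $(R\otimes_D)^k$ for large $k$, which is reused in the proof of \cref{thm:igusa main theorem} to see that Igusa's complex is bounded in each degree. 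Your bound recovers this as well. By \cref{lem:dimension of homogeneous component of tensor product}, those dimensions are the coefficients of $\tr(E(x)^k)$, and these vanish in degree $l$ once $k\geq n(l+1)$.
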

\begin{proof}
This can be shown using the same proof as in \cite{IGUSA1992101}, and we refer the reader to that article for more details. However, the proof in said article only works if two conditions are satisfied: both sides of the equality must be well defined power series, and the eigenvalues of $E(0)$ must all be zero. We therefore check that these conditions are satisfied under our assumptions.

To see that the left hand side of the equality is well defined, note that the constant term of $C_\Lambda(x)$ is $C_{\Lambda_0}$, the (ungraded) Cartan matrix of $\Lambda_0$. Since $\Lambda_0$ is given by an acyclic quiver with relations, the determinant of $C_{\Lambda_0}$ is equal to $1$. 
Then the constant term of $\det C_\Lambda(x)$ is $1$, so the power series
$$
\log\det C_\Lambda(x) = \sum_{k = 1}^\infty (-1)^{k + 1} \frac{(\det C_\Lambda(x) - 1)^k}{k}
$$
is well defined. 

For the right hand side of the equality, we recall from \cref{lem:dimension of homogeneous component of tensor product} and \eqref{eq:iterated tensor product} that the coefficient of $x^l$ in $\tr(E(x)^k)$ is the dimension of the degree $l$ component of
$$
\bigoplus_{i_1, \ldots, i_k} e_{i_1} R e_{i_2} \otimes \cdots \otimes e_{i_k} R e_{i_1}.
$$
Any homogeneous element of $e_{i_1} R e_{i_2} \otimes \cdots \otimes e_{i_k} R e_{i_1}$ of degree $l$ may be written as a linear combination of elementary tensors of the form $b_1 \otimes \cdots \otimes b_k$, where $b_j \in e_{i_j} R e_{i_{j + 1}}$ (where we set $i_{k+1} =i_1$) are homogeneous elements such that $\sum_j |b_j| = l$. 
Furthermore, if an element $b_j$ is of degree zero, it may be assumed to be a nontrivial path from $e_{i_j}$ to $e_{i_{j +1}}$ in the quiver of $\Lambda_0$, since such an element $b_j$ must be contained in $R_0 = \rad (\Lambda_0)$.
Since the quiver of $\Lambda_0$ is acyclic, there is an upper bound $N$ on the lengths of paths in the quiver. If $k$ is sufficiently large compared to $l$, there must exist some consecutive indices $j, j + 1, \ldots, j + N$ such that the elements $b_j, \ldots, b_{j + N}$ are all of degree zero. But then these elements define a path in the quiver of $\Lambda_0$ whose length is at least $N + 1$, which is impossible. Thus the degree $l$ component of $\bigoplus_{i_1, \ldots, i_k} e_{i_1} R e_{i_2} \otimes \cdots \otimes e_{i_k} R e_{i_1}$ vanishes for sufficiently large values of $k$, and hence the monomial $x^l$ only appears in a finite number of terms of the infinite sum $\sum_{k = 1}^\infty (-1)^{k + 1} \frac{\tr(E(x)^k)}{k}$, which shows that the sum is a well defined power series. 

Finally, since $\Lambda_0$ is given by an acyclic quiver, the vertices of the quiver can be ordered in such a way that the Cartan matrix of $\Lambda_0$ is upper triangular, with ones on the main diagonal. Then $E(0) = C_{\Lambda_0} - I_{n}$ is strictly upper triangular, hence its eigenvalues are zero.
\end{proof}

The last result needed for the proof of formula \eqref{eq:igusa's formula - initial statement} is the following number-theoretic lemma, which is well known according to \cite{IGUSA1992101}. We include a proof for the convenience of the reader.

\begin{lem}[{\cite[Lemma 3.2]{IGUSA1992101}}]
\label{lem: number theoretical formula for even n}

If $n$ is an even positive integer, then
$$
\sum_{d | n} \frac{\mu(d)}{d} = -\sum_{\mathclap{\substack{d | n \\ d\text{ even}}}} \frac{\mu(d)}{d}.
$$

\end{lem}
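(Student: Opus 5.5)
Write $n = 2^a m$ with $a \geq 1$ and $m$ odd. The plan is to split the divisors of $n$ according to their $2$-adic valuation and to use that $\mu$ vanishes on non-squarefree integers. Every squarefree divisor $d$ of $n$ is either odd, in which case $d \mid m$, or of the form $d = 2d'$ with $d'$ odd, in which case $d' \mid m$. Every other divisor has $\mu(d) = 0$ and contributes nothing to either side of the claimed identity. In particular, the even divisors that actually contribute are exactly the $d = 2d'$ with $d' \mid m$. Here we use that $n$ is even, so that $2d' \mid n$ for every $d' \mid m$.

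For these even divisors, multiplicativity of $\mu$ together with $\gcd(2, d') = 1$ gives $\mu(2d') = -\mu(d')$. Hence
$$
\sum_{\substack{d \mid n \\ d \text{ even}}} \frac{\mu(d)}{d} = \sum_{d' \mid m} \frac{-\mu(d')}{2d'} = -\frac{1}{2}\sum_{d' \mid m} \frac{\mu(d')}{d'}.
$$
Writing $S := \sum_{d' \mid m} \mu(d')/d'$ for the contribution of the odd divisors, the left hand side of the lemma is
$$
\sum_{d \mid n} \frac{\mu(d)}{d} = S - \frac{1}{2}S = \frac{1}{2}S,
$$
while the right hand side is $-(-\tfrac{1}{2}S) = \tfrac{1}{2}S$. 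The two sides therefore agree.

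There is no real obstacle here. The only point requiring care is the bookkeeping that the squarefree even divisors of $n$ correspond bijectively to the divisors of the odd part $m$ via $d' \mapsto 2d'$, which is precisely where the hypothesis that $n$ is even is used.
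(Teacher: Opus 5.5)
Your proof is correct and takes essentially the same route as the paper. Both arguments pass to the odd part $m$, match the even divisors that contribute with $2d'$ for $d' \mid m$, and use $\mu(2d') = -\mu(d')$. The only difference is that you compute $\sum_{d\mid n}\mu(d)/d = S - \tfrac12 S$ directly, which also covers non-squarefree $n$ with no separate reduction, whereas the paper first reduces to squarefree $n$ and then uses multiplicativity of $F(k)=\sum_{d\mid k}\mu(d)/d$ to get $F(2m)=F(2)F(m)$.

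One wording fix: $d'\mapsto 2d'$ is a bijection from the divisors of $m$ onto the even divisors of $n$ of $2$-adic valuation exactly one, not onto the squarefree even divisors. This is harmless, since the extra non-squarefree terms have $\mu = 0$.
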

\begin{proof}
For any positive integer $k$, we let $F(k) = \sum_{d | k} \frac{\mu(d)}{d}$, which we note is a multiplicative arithmetic function of $k$. (For example, it can be seen that $F = \frac{1}{\operatorname{id}(-)} \cdot(\mu * \operatorname{id})$, where $*$ denotes Dirichlet convolution; see \cref{sec:dirichlet convolution}.) Since $\mu(d) = 0$ if $d$ is divisible by the square of a prime, it is enough to prove the equality in the case where $n$ is a square-free number. So let $n$ be a square-free even number, and write $n = 2m$ for an odd number $m$. Then we have
\begin{align*}
    \sum_{d | n} \frac{\mu(d)}{d} &= F(n) \\
    &= F(2m) \\
    &= F(2)F(m) \\
    &= \frac{1}{2} \sum_{d' | m} \frac{\mu(d')}{d'} \\
    &= -\sum_{d' |m} \frac{\mu(2d')}{2d'},
\end{align*}
where the last equality uses the fact that $\mu(d') = -\mu(2d')$ since $d'$ is odd. Now we note that even divisors of $n = 2m$ are precisely of the form $2d'$ for $d'$ a divisor of $m$, so we get
$$
 -\sum_{d' |m} \frac{\mu(2d')}{2d'} = -\sum_{\mathclap{\substack{d | n \\ d \text{ even}}}} \frac{\mu(d)}{d},
$$
which completes the proof.
\end{proof}

We are now ready to prove that Igusa's formula holds under our assumptions.

\begin{thm}[{\cite[Theorem 3.5]{IGUSA1992101}}]
\label{thm:igusa main theorem}
We have the following equalities:
\begin{equation} \label{eq:igusa's main theorem - formula for euler characteristic}
\chi_{\HC_*(\Lambda, R)}(x) = \sum_{k = 1}^\infty \log\det C_\Lambda(x^k)\sum_{d| k} \frac{\mu(d)}{d}
\end{equation}
\begin{equation} \label{eq:igusa's main theorem - formula for logarithm of determinant}
\log\det C_\Lambda(x) = \sum_{k = 1}^\infty \chi_{\HC_*(\Lambda, R)}(x^k)\sum_{d |k} \frac{d \mu(d)}{k}
\end{equation}
In particular, the right hand sides are well defined power series.

\end{thm}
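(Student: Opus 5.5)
Since \cref{prop:igusa's complex for relative cyclic homology} gives $\HC_*(\Lambda,R)$ as the homology of the complex \eqref{eq: igusa's complex for relative cyclic homology}, I first compute its graded Euler characteristic from the terms of that complex. Each homogeneous degree $l$ meets only finitely many nonzero terms, as in the proof of \cref{prop:log of determinant equals trace series}, so the Euler characteristic is well defined. Placing $(R\otimes_D)^k/(1-t)$ in homological degree $k-1$, this gives
\[
\chi_{\HC_*(\Lambda,R)}(x)=\sum_{k\ge1}(-1)^{k-1}\,h_{(R\otimes_D)^k/(1-t)}(x).
\]
I then substitute the formula of \cref{lem:dimension of quotient space in complex}. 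Writing $k=pm$ with $p\mid k$, and then $p=dq$ with $d\mid p$, the general term becomes $\frac{1}{p}\mu(d)\tr(E(x^{dm})^{q})$. I reindex by $N=dm$, the exponent of the substitution, and by $q$, the power of $E$. The goal is to show that, for fixed $N$, the coefficient in front of $\tr(E(x^N)^q)$ assembles into $(-1)^{q+1}\frac{1}{q}\sum_{d\mid N}\frac{\mu(d)}{d}$. Summing over $q$ and using \cref{prop:log of determinant equals trace series} with $x$ replaced by $x^N$ then gives exactly \eqref{eq:igusa's main theorem - formula for euler characteristic}.

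The main obstacle is the bookkeeping of signs and parity, and this is where \cref{lem: number theoretical formula for even n} is used. For fixed $N$, the pairs $(d,m)$ with $dm=N$ contribute with $p=dq$ and $k=dqm=Nq$, so the sign is $(-1)^{Nq-1}$. The parity condition is that if $k=Nq$ is even, only even $p=dq$ are allowed. I split into three cases.
\begin{itemize}
\item If $N$ is odd, then $k$ and $q$ have the same parity. If $q$ is odd, $k$ is odd, every $d$ is allowed, and the sign is $+1=(-1)^{q+1}$. If $q$ is even, $p=dq$ is automatically even, so again every $d$ is allowed and the sign is $-1=(-1)^{q+1}$.
\item If $N$ is even and $q$ is even, $p=dq$ is even, so every $d$ is allowed. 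The sign is $-1=(-1)^{q+1}$.
\item If $N$ is even and $q$ is odd, only even $d$ are allowed, and the sign is $-1$. Here \cref{lem: number theoretical formula for even n} turns $-\sum_{d\mid N,\,d\text{ even}}\mu(d)/d$ into $+\sum_{d\mid N}\mu(d)/d$, which matches $(-1)^{q+1}=+1$.
\end{itemize}
All rearrangements are legitimate because each coefficient involves only finitely many terms: $\tr(E(x^N)^q)$ has no terms of degree less than $N$, and the vanishing argument controls large $q$.

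For the second formula \eqref{eq:igusa's main theorem - formula for logarithm of determinant}, I invert the first. Set $f(k)=\sum_{d\mid k}\mu(d)/d$, so that $f=\frac{1}{\mathrm{id}}(\mu*\mathrm{id})$, and set $g(k)=\sum_{d\mid k}d\mu(d)/k$, so that $g=\frac{1}{\mathrm{id}}(\mathrm{id}\cdot\mu * 1)$. The task is to check that $\sum_{ab=n}f(a)g(b)=\delta_{n,1}$. Multiplying through by $n$, this amounts to $(\mu*\mathrm{id})*(\mathrm{id}\cdot\mu*1)=\delta$. This holds because $\mathrm{id}\cdot\mu$ is the Dirichlet inverse of $\mathrm{id}$ and $\mu$ is the Dirichlet inverse of $1$. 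Substituting the first formula at $x^{b}$ into the right-hand side of the second and collecting terms at $\log\det C_\Lambda(x^{n})$ then yields $\log\det C_\Lambda(x)$. The only extra point is that $\chi_{\HC_*(\Lambda,R)}$ has zero constant term by \cref{lem:relative cyclic homology vanishes in homogeneous degree zero}, and $\log\det C_\Lambda$ also has zero constant term, since $\det C_{\Lambda_0}=1$. Hence these infinite sums over $k$ are locally finite, well defined power series, and the interchange of summations is justified.
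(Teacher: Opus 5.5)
Your proposal is correct and follows the same route as the paper. Both reduce the second formula to the first via the mutually inverse arithmetic functions $f$ and $g$ (the paper cites \cref{lem:convolution and inversion for power series} for this), and both compute the Euler characteristic termwise from the complex using \cref{lem:dimension of quotient space in complex}, \cref{lem: number theoretical formula for even n} and \cref{prop:log of determinant equals trace series}. The paper defers the sign and parity bookkeeping to Igusa, whereas you carry it out explicitly, and your three-case check is accurate.
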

\begin{proof}
In order to show that the right hand sides are well defined, it suffices to check that $\log\det C_\Lambda(x)$ and $\chi_{\HC_*(\Lambda, R)}(x)$ have no constant term. In the case of $\log \det C_\Lambda(x)$, this follows directly from the definition of the logarithm of a power series with constant term 1, and the case of $\chi_{\HC_*(\Lambda, R)}(x)$ follows since $\HC_*(\Lambda, R)$ vanishes in homogeneous degree zero by \cref{lem:relative cyclic homology vanishes in homogeneous degree zero}.

It can be shown that the arithmetic functions
$$F(k) = \sum_{d | k} \frac{\mu(d)}{d}$$
and
$$G(k) = \sum_{d | k} \frac{d \mu(d)}{k}$$
are mutual inverses with respect to Dirichlet convolution (see \cref{sec:dirichlet convolution}). Then by \cref{lem:convolution and inversion for power series}, it suffices to prove that the equality \eqref{eq:igusa's main theorem - formula for euler characteristic} is true.

In the proof of \cref{prop:log of determinant equals trace series}, we saw that the degree $l$ component of $(R \otimes_D )^k$ vanishes if $k$ is sufficiently large compared to $l$, which also holds for $(R \otimes_D)^k /(1 - t)$. Then the graded Euler characteristic of $\HC_*(\Lambda, R)$ is equal to the graded Euler characteristic of \eqref{eq: igusa's complex for relative cyclic homology}, since that complex is bounded and finite dimensional in each homogeneous degree. The graded Euler characteristic of \eqref{eq: igusa's complex for relative cyclic homology} can be computed using \cref{lem:dimension of quotient space in complex},  and an argument given in \cite{IGUSA1992101} using \cref{lem: number theoretical formula for even n} and \cref{prop:log of determinant equals trace series} shows that the resulting power series is equal to the right hand side of \eqref{eq:igusa's main theorem - formula for euler characteristic}.
\end{proof}

\subsection{Proof of the product formula}

\cref{thm:igusa main theorem} concerns the graded Euler characteristic of the relative cyclic homology $\HC_*(\Lambda, R)$, whereas the formula we are interested in using the theorem to prove --- that is, formula \eqref{eq:product formula for coefficients of euler characteristic} --- concerns the graded Euler characteristic of the reduced cyclic homology $\overline{\HC}_*(\Lambda)$.  We therefore require the following result.

\begin{lem} \label{lem:reduced vs relative cyclic homology}
The relative cyclic homology $\HC_*(\Lambda, R)$ is isomorphic to the reduced cyclic homology $\overline{\HC}_*(\Lambda)$.
\end{lem}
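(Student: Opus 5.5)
We use the long exact sequence of relative cyclic homology for the ideal $R$,
$$
\cdots \to \HC_i(\Lambda, R) \to \HC_i(\Lambda) \to \HC_i(\Lambda/R) \to \HC_{i-1}(\Lambda, R) \to \cdots,
$$
together with the fact that $\Lambda/R = D$ is semisimple (a product of copies of $K$). The aim is to show two things: the map $\HC_*(\Lambda) \to \HC_*(D)$ is a split surjection, and its kernel is exactly the image of $\HC_*(\Lambda_0)$ complemented appropriately, so that $\HC_*(\Lambda,R)$ identifies with $\HC_*(\Lambda)/\HC_*(\Lambda_0) = \overline{\HC}_*(\Lambda)$.

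\emph{Step 1: the relative sequence splits into short exact sequences.} The algebra map $s\colon D \to \Lambda$ constructed above (landing in $\Lambda_0$) is a section of the quotient $\pi\colon \Lambda \to D$. By functoriality of the Connes complex, $\HC_*(\pi)\circ\HC_*(s) = \mathrm{id}$, so $\HC_*(\Lambda) \to \HC_*(D)$ is split surjective. Hence all connecting maps vanish, and we obtain split short exact sequences
$$
0 \to \HC_*(\Lambda,R) \to \HC_*(\Lambda) \to \HC_*(D) \to 0.
$$
These are compatible with the internal grading.

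\emph{Step 2: reduce to degree zero.} By \cref{lem:relative cyclic homology vanishes in homogeneous degree zero}, $\HC_*(\Lambda,R)$ vanishes in homogeneous degree $0$. The relative Connes complex is the kernel of $C^\lambda(\Lambda)\to C^\lambda(D)$, and $C^\lambda(D)$ is concentrated in degree $0$. So in each degree $k>0$ we get $\HC_{*,k}(\Lambda,R)\cong \HC_{*,k}(\Lambda)$, which by \eqref{eq:value of reduced cyclic homology} equals $\overline{\HC}_{*,k}(\Lambda)$.

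In degree $0$ both sides vanish: $\overline{\HC}_{*,0}(\Lambda)=0$ by \eqref{eq:value of reduced cyclic homology}, and $\HC_{*,0}(\Lambda,R)=0$ by \cref{lem:relative cyclic homology vanishes in homogeneous degree zero}. Summing over $k$ gives an isomorphism of graded vector spaces $\HC_*(\Lambda,R)\cong\overline{\HC}_*(\Lambda)$.

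\emph{Step 3: naturality.} It would be cleaner to realise the isomorphism through a canonical map, namely
$$
\HC_*(\Lambda,R)\to\HC_*(\Lambda)\to\HC_*(\Lambda)/\HC_*(\Lambda_0).
$$
For degrees $k>0$ this composite is the identification from Step 2, since $\HC_*(\Lambda_0)$ lives in degree $0$. In degree $0$ both source and target are zero. So this composite is an isomorphism, which is all that the Euler characteristic argument needs.

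\emph{Main obstacle.} The only point needing care is the degree-$0$ statement, i.e.\ that the inclusion $\HC_*(\Lambda_0)\to\HC_*(\Lambda)_0$ is an isomorphism. This holds because the degree-$0$ part of $C^\lambda(\Lambda)$ is exactly $C^\lambda(\Lambda_0)$: a tensor of total degree $0$ has all factors in $\Lambda_0$. Everything else follows from the lemmas already established.
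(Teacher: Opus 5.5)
Your proof is correct and is essentially the paper's argument. Both identify $\HC_{*,k}(\Lambda,R)$ with $\HC_{*,k}(\Lambda)=\overline{\HC}_{*,k}(\Lambda)$ for $k>0$ because $D$ is concentrated in degree $0$, and then use that both sides vanish in degree $0$. The only difference is that the paper extracts the positive-degree identification from the long exact sequence (using $\HC_{\mathrm{odd}}(D)=0$), whereas you read it off directly at chain level, which makes your Step 1 unnecessary though harmless.
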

\begin{proof}
Recall from \cref{sec:cyclic homology} that relative cyclic homology fits into a long exact sequence:
$$
\cdots \to \HC_k(\Lambda, R) \to \HC_k(\Lambda) \to \HC_k(D)  \to \HC_{k - 1}(\Lambda, R) \to \cdots
$$
Since $D \cong K^{n}$ for $n = \lvert \Lambda_0 \rvert$, we have $\HC_k(D) \cong \HC_k(K)^{n}$, which vanishes for odd $k$ (see e.g.\ page 59 of \cite{loday}). Hence, the long exact sequence above yields the following bounded exact sequence for any $i$.
\begin{equation*} \label{eq:bounded exact sequence for relative cyclic homology}
0 \to \HC_{2i}(\Lambda, R) \to \HC_{2i}(\Lambda) \to \HC_{2i}(D) \to \HC_{2i - 1}(\Lambda, R) \to \HC_{2i - 1}(\Lambda) \to 0
\end{equation*}
The algebra $D$ is concentrated in homogeneous degree zero, so $\HC_{2i, l}(D)$ vanishes for $l > 0$. Thus we can see from the exact sequence above that $\HC_{*, l}(\Lambda, R)$ is isomorphic to $\HC_{*, l}(\Lambda)$ --- and hence also to $\overline{\HC}_{*, l}(\Lambda)$, by \eqref{eq:value of reduced cyclic homology} --- for $ l > 0$. To complete the proof, it now suffices to observe that $\HC_*(\Lambda, R)$ and $\overline{\HC}_*(\Lambda)$ both vanish in homogeneous degree zero, by \cref{lem:relative cyclic homology vanishes in homogeneous degree zero} and \eqref{eq:value of reduced cyclic homology}.
\end{proof}

We can now prove the formula we will use to deduce the coefficients of $\chi_{\overline{\HC}_*(\Lambda)}(x)$.

\begin{thm}[See \cite{Etingof-Ginzburg'07}, but also \cite{Etingof-Eu'06}]\label{thm: etingof-ginzburg}   
Let $\chi_{\overline{\HC}_*(\Lambda)}(x) = \sum_{k \geq 1} a_k x^k$.
Then  
\begin{equation} \label{eq:product formula (statement in theorem)}
\prod\limits_{k=1}^\infty(1-x^k)^{-a_k}=\prod\limits_{s=1}^\infty\det C_\Lambda(x^s).
\end{equation}
\end{thm}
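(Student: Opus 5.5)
Take logarithms of both sides of \eqref{eq:product formula (statement in theorem)} and compare the resulting power series without constant term. The right-hand side has constant term $1$, since $\det C_\Lambda(0)=\det C_{\Lambda_0}=1$, as observed in the proof of \cref{prop:log of determinant equals trace series}. Hence both logarithms are well defined, and it suffices to prove
\[
\sum_{k\ge 1} -a_k\log(1-x^k) \;=\; \sum_{s\ge1}\log\det C_\Lambda(x^s).
\]
Since $\log$ turns infinite products of series with constant term $1$ into sums, and both products converge $x$-adically, this identity is equivalent to \eqref{eq:product formula (statement in theorem)}. Exponentiating recovers the product formula.

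To rewrite the left-hand side, use $-\log(1-y)=\sum_{m\ge1} y^m/m$. Setting $\chi(x):=\chi_{\overline{\HC}_*(\Lambda)}(x)=\sum_k a_kx^k$, this gives
\[
\sum_k -a_k\log(1-x^k)=\sum_{k}\sum_{m} \frac{a_k x^{km}}{m}=\sum_{m\ge1}\frac{\chi(x^m)}{m}.
\]
By \cref{lem:reduced vs relative cyclic homology}, $\chi=\chi_{\HC_*(\Lambda,R)}$. Formula \eqref{eq:igusa's main theorem - formula for logarithm of determinant} of \cref{thm:igusa main theorem} then expresses each term $\log\det C_\Lambda(x^s)$ as
\[
\sum_{k\ge1}\chi(x^{sk})\,G(k), \qquad G(k)=\sum_{d\mid k}\frac{d\mu(d)}{k}.
\]
Summing over $s$ and regrouping by $N=sk$, which is legitimate since every term has order at least $N$, we get
\[
\sum_s\log\det C_\Lambda(x^s)=\sum_{N\ge1}\chi(x^N)\sum_{k\mid N}G(k).
\]

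The remaining step is the arithmetic identity $\sum_{k\mid N}G(k)=1/N$. Write $G=\tfrac{1}{\operatorname{id}}\cdot(\operatorname{id}\ast\mu)$. Multiplying by $\operatorname{id}$ is a ring homomorphism for Dirichlet convolution, so
\[
\operatorname{id}\cdot(G\ast 1)=(\operatorname{id}\ast\mu)\ast\operatorname{id}=\operatorname{id}\ast(\mu\ast 1)=\operatorname{id}\ast\varepsilon=\operatorname{id},
\]
where $\varepsilon$ is the Dirichlet identity. Hence $(G\ast1)(N)=1/N$, as claimed. Equivalently, $G=\tfrac1{\operatorname{id}}\ast\mu$ is the Möbius inverse of $1/\operatorname{id}$, and one can check this directly from the facts in \cref{sec:dirichlet convolution}. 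With this identity the two sides agree term by term.

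The main point requiring care is convergence: each regrouping of double sums must be justified in the $x$-adic topology. This holds because $\chi$ has no constant term, by \cref{lem:relative cyclic homology vanishes in homogeneous degree zero}, so $\chi(x^N)$ has order at least $N$ and each coefficient receives only finitely many contributions. Likewise, the infinite product $\prod_s\det C_\Lambda(x^s)$ converges, since $\det C_\Lambda(x^s)\equiv 1 \bmod x^s$. Beyond this bookkeeping, the argument is formal.
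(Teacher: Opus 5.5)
Your reduction matches the paper's. You take logarithms and identify $\chi_{\overline{\HC}_*(\Lambda)}$ with $\chi_{\HC_*(\Lambda,R)}$ via \cref{lem:reduced vs relative cyclic homology}. You then substitute \eqref{eq:igusa's main theorem - formula for logarithm of determinant}, regroup by $N=sk$, and reduce to $\sum_{k\mid N}G(k)=1/N$. Your $x$-adic justification of the regroupings is cleaner than the paper's appeal to absolute convergence.

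The problem is the displayed proof of that identity, which is wrong at three points.
\begin{enumerate}
\item $\frac{1}{\operatorname{id}}\cdot(\operatorname{id}\ast\mu)$ is not $G$. Its value at $k$ is $\frac1k\sum_{d\mid k}d\,\mu(k/d)=\sum_{e\mid k}\mu(e)/e=\varphi(k)/k$, with $\varphi$ Euler's totient function. This is the Dirichlet inverse of $G$, the function denoted $F$ in the proof of \cref{thm:igusa main theorem}. Already at $k=2$ it gives $1/2$, whereas $G(2)=-1/2$.
\item $(\operatorname{id}\ast\mu)\ast\operatorname{id}$ equals $\operatorname{id}\ast\operatorname{id}\ast\mu$, not $\operatorname{id}\ast(\mu\ast 1)$.
\item The chain ends in $\operatorname{id}\cdot(G\ast1)=\operatorname{id}$. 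That would say $(G\ast1)(N)=1$, contradicting the conclusion $1/N$ you then draw.
\end{enumerate}
So, as written, this step does not establish the identity.

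The repair is the sentence you add afterwards, which is correct and suffices on its own. Directly from the definition, $G(k)=\sum_{d\mid k}\mu(d)\,\frac{1}{k/d}$, so $G=\mu\ast\frac{1}{\operatorname{id}}$. Hence $G\ast1=\frac{1}{\operatorname{id}}\ast(\mu\ast1)=\frac{1}{\operatorname{id}}$.

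The ring-homomorphism trick also works if applied correctly. One has $\operatorname{id}\cdot G=(\operatorname{id}\cdot\mu)\ast1$. Therefore $\operatorname{id}\cdot(G\ast1)=(\operatorname{id}\cdot\mu)\ast\operatorname{id}\ast1=\bigl(\operatorname{id}\cdot(\mu\ast1)\bigr)\ast1=\varepsilon\ast1=1$.

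With the faulty display replaced by either of these, your proof is complete. This one-line Dirichlet-ring argument is shorter than the paper's, which shows that $n\mapsto\sum_{m\mid n}G(m)$ is multiplicative and then evaluates it on prime powers by induction.
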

\begin{proof}
It is sufficient to show that the logarithms of both sides of the equality are equal. 
Hence, we compute
\begin{align*}
\log \prod_{k = 1}^{\infty}(1 - x^k)^{-a_k} 
& = 
- \sum_{k = 1}^{\infty}a_k \log(1 - x^k)\\
& = 
\sum_{k = 1}^{\infty}a_k \sum_{n = 1}^{\infty} \frac{1}{n} x^{kn}.
\end{align*}
For the right hand side, we compute
\begin{align*}
\log \prod_{s = 1}^{\infty}\det C_\Lambda(x^s)
& = 
\sum_{s = 1}^{\infty}\log \det C_\Lambda(x^s) \\
& = 
\sum_{s = 1}^{\infty} \sum_{m = 1}^{\infty} \frac{1}{m} \sum_{d\lvert m} d\mu(d) \sum_{k = 1}^{\infty} a_k x^{kms} \\
& = 
\sum_{n = 1}^{\infty} \sum_{m \lvert n} \frac{1}{m} \sum_{d\lvert m} d\mu(d) \sum_{k = 1}^{\infty} a_k x^{kn},
\end{align*}
where we use \cref{thm:igusa main theorem}
in the second equality. 
Since convergence is absolute for small values of $x$, we can reverse the order of summation, and thus it is enough to show that 
\[
\frac{1}{n} = F(n) := \sum_{m\lvert n } \frac{1}{m} \sum_{d\lvert m} d\mu(d).
\]
For this, we proceed by showing that $F$ is a multiplicative arithmetic function and by computing its values on powers of prime numbers. To see that $F$ is multiplicative, we first note that the arithmetic function
$$
G(m) = \sum_{d | m} d \mu(d)
$$
is the Dirichlet convolution (see \cref{sec:dirichlet convolution}) of the multiplicative function $\operatorname{id}\cdot \mu$ with the constant function $1$. Then $F$ is equal to the Dirichlet convolution $1 * (\frac{1}{\operatorname{id}} \cdot G)$. Since the Dirichlet convolution of two multiplicative functions is again multiplicative, it follows that $F$ is a multiplicative function.

We now use induction to verify that 
\[
\frac{1}{p^m} = F(p^m)
\]
holds for a prime number $p$ and a natural number $m \ge 0$. 
Note that the case $m = 0$ holds since 
\[
F(1) = \frac{1}{1} \cdot 1 \cdot \mu(1) = 1,
\]
as $\mu(1) = 1$.

For the inductive step, we assume that $F(p^m) = \frac{1}{p^{m}}$ for some $m \ge 0$, and  we compute 
\begin{align*}
\sum_{k\lvert p^{m + 1}} \frac{1}{k}\sum_{d\lvert k} d\mu(d) 
& = 
\sum_{k\lvert p^{m}} \frac{1}{k}\sum_{d\lvert k} d\mu(d) 
+ \frac{1}{p^{m+1}}\sum_{d\lvert p^{m+1}}d \mu(d)\\
& = 
\frac{1}{p^m}
+ \frac{1}{p^{m+1}}\sum_{d\lvert p^{m+1}}d \mu(d)
\end{align*}
and note that we are done if we can show that the rightmost term equals $\frac{1}{p^{m+1}}(1-p)$, which it indeed does as 
\begin{align*}
\sum_{d\lvert p^{m+1}}d \mu(d)\ 
& = 
1 + p \mu(p) + p^2 \mu(p^2) + \cdots + p^{m+1}\mu(p^{m+1}) \\
& = 1 - p + 0 + \cdots + 0\\
& = 1 - p,
\end{align*}
since $\mu(n) = 0$ if $n$ is divisible by a square of a prime. 

Finally, writing $n$ as a product of powers of distinct primes $n = \prod_i p_i^{e_i}$, we see that 
\begin{align*}
F(n) = \prod_i F(p_i^{e_i}) = \prod_i \frac{1}{p_i^{e_i}} = \frac{1}{n},
\end{align*}
and hence the logarithms of the two sides of \eqref{eq:product formula (statement in theorem)} are indeed equal as claimed.
\end{proof}

\section{On Frobenius algebras, $d$-hereditary algebras, and stable Hochschild (co)homology}\label{sec: On Frobenius algebras and d-hereditary algebras}
In this section, we begin by reviewing Frobenius algebras, including graded Frobenius algebras, Calabi--Yau Frobenius algebras as in \cite{Eu-Schedler'09}, and twisted periodic algebras. 
Following this, we recall stable Hochschild (co)homology and some formulas that hold for the Hochschild (co)homology of periodic CY Frobenius algebras.
We then review twisted fractionally Calabi--Yau algebras as in \cite{HI11b} before recalling the $d$-representation finite algebras introduced by Iyama and others \cite{Iyama-Oppermann} and their higher preprojective algebras.
Finally, we prove \cref{iprop: graded version of Happel's result} and \cref{iprop: HH_* bounds on degrees} by using Theorem 1.2 of \cite{AO14} in combination with some of the ideas in section 3.1 and 3.2 of \cite{AIR15}.

\subsection{Frobenius algebras}\label{subsection: Frobenius algebras}
Recall that if $M$ is a $\Lambda$-module, then \textit{the syzygy of} $M$ is denoted by $\Omega M$ and may be defined as the kernel of the projective cover of $M$. Also recall that this definition is extended inductively by setting $\Omega^{m + 1}M := \Omega(\Omega^m M)$ for  a natural number $m$.
\textit{Cosyzygies of} $M$ are denoted by $\Omega^{-i} M$ for $i$ a natural number, and are defined dually.

Recall that $\Lambda$ is a \textit{Frobenius algebra} if there is an isomorphism of bimodules $D\Lambda \cong {}_{1}\Lambda_{\mu}$ for $\mu$ an algebra automorphism of $\Lambda$, in which case the latter is called the \textit{Nakayama automorphism of} $\Lambda$; see e.g.\ \cite{Erdmann-Skowronski'08}. 
Recall moreover that every Frobenius algebra is selfinjective and finite dimensional, and that every basic selfinjective finite dimensional algebra is Frobenius. 

We now want to review a graded version of the above.
Hence, recall that a positively graded algebra $\Lambda$ is a \textit{graded Frobenius algebra of highest degree} $a$ if $D\Lambda \cong {}_{1}\Lambda_{\mu} \langle -a\rangle $ for $\mu$ a graded algebra automorphism of $\Lambda$. 
In this case, $\Lambda$ is also Frobenius in the usual sense and $\mu$ is its Nakayama automorphism. 
Note that twisting by the Nakayama automorphism commutes with forming (co)syzygies. 
If $\mu$ can be chosen to be the identity, we say that $\Lambda$ is a \textit{graded symmetric algebra of highest degree} $a$.

Trivial extensions provide a crucial example of the latter notion: Recall that given a finite dimensional algebra $A$, the \textit{trivial extension of} $A$ is defined to be $\Delta(A) := A \oplus D(A)$ as a vector space, and we can make $\Delta(A)$ an $\mathbb{N}$-graded symmetric algebra of highest degree $1$ by putting $A$ in degree $0$, putting $D(A)$ in degree $1$, and endowing $\Delta(A)$ with the multiplication $(a,f)\cdot (b, g) := (ab, ag + fb)$ for $(a,f), (b,g) \in \Delta(A)$. 
Note that one can also think of a trivial extension as a quotient of a tensor algebra over $A$ in the following way: $\Delta(A) \cong T_A D(A)/\langle D(A)^{\otimes_A 2} \rangle$.

Note now that for the rest of this subsection we only consider the graded case since we can usually recover the ungraded case from the graded one by considering an algebra $\Lambda$ with a trivial grading. 

When $\Lambda$ is selfinjective, $\gr \Lambda$ is a Frobenius category and one can thus obtain a triangulated category in the form of \textit{the stable category of finitely generated graded $\Lambda$-modules} $\stgrmodu \Lambda$, i.e.\ the category obtained from $\gr \Lambda$ by taking the quotient by the ideal of morphisms that factor through the projective objects. 
Recall that the shift functor of $\stgrmodu \Lambda$ as a triangulated category is given by the co-syzygy functor $\Omega^{-1}$.
Moreover, also recall that while
\[
\Hom_{\stgrmodu \Lambda} (\Omega^{i}X,  Y \langle j \rangle) \cong \Ext_{\gr \Lambda}^{i}(X, Y \langle j \rangle)
\]
holds for $i > 0$, for $i = 0$ one has that $\Hom_{\stgrmodu \Lambda} (X, Y \langle j \rangle)$ is a quotient of $\Hom_{\gr \Lambda} (X, Y \langle j \rangle)$; see e.g.\ \cite{Buchweitz'86}. 
Note, moreover, that one can consider this formula for negative values of $i$, which we will do for Hochschild cohomology. 
Indeed, by using that $D\Tor_i^{\Lambda}(M,DN) \cong \Ext^{i}_{\Lambda}(M,N)$ holds when working with a $K$-algebra $\Lambda$ and extending the usual definition of $\otimes$ and $\Tor$ via the formula above, one obtains non-positive versions of $\Tor$ and $\Ext$ given by, respectively, 
\[
\stTor^{\Lambda}_i(M, N) := D\Hom_{\stmodu \Lambda}(\Omega^{i}M, D(N))
\]
and 
\[
\stExt_{\Lambda}^{i}(M, N) := \Hom_{\stmodu \Lambda}(\Omega^{i}M, N)
\]
for $M, N \in \stmodu \Lambda$.

Consequently, one obtains non-positive versions of both Hochschild homology and cohomology given by, respectively, 
\[
\stHH_{i,j}(\Lambda) := D\Hom_{\stgrmodu \Lambda^{e}} (\Omega^{i}_{\Lambda^{e}} \Lambda, D(\Lambda) \langle j \rangle) 
\]
and 
\[
\stHH^{i,j}(\Lambda) := \Hom_{\stgrmodu \Lambda^{e}} (\Omega^{i}_{\Lambda^{e}} \Lambda, \Lambda \langle -j \rangle).
\]

Finally, recall that $\stgrmodu \Lambda$ is also endowed with a Serre functor given by $\Omega(-) \otimes_\Lambda D\Lambda \cong \Omega(-)_{\mu} \langle -a \rangle$; see e.g.\ \cite{ARS97} and Proposition I.2.3 of  
\cite{Reiten-VanDenBergh'02}. 

\subsection{Twisted periodic algebras}
We now recall a key class of selfinjective algebras.
\begin{defin}[{\cite[Definition 3.1]{Chan-et-al}}] 
\begin{enumerate}[(i)]
    \item A $\Lambda$-module $M$ is \textit{periodic }if there is some integer $d > 0$ such that $\Omega^d M \cong M$ holds in $\modu \Lambda$.
    \item The algebra $\Lambda$ is \textit{(bimodule)} \textit{periodic} if it is periodic as a $\Lambda^{e}$-module, i.e.\ if there is some $d > 0$ such that $\Omega^{d}_{\Lambda^{e}} \Lambda \cong \Lambda$ holds in $\modu \Lambda^{e}$. In this case, $\Lambda$ is called $d$-periodic.
    \item The algebra $\Lambda$ is \textit{twisted (bimodule) periodic} if there is some $d > 0$ and an algebra automorphism $\phi$ of $\Lambda$ such that $\Omega^{d}_{\Lambda^{e}} \Lambda \cong {}_{1}\Lambda_{\phi}$ holds in $\modu \Lambda^{e}$. In this case, $\Lambda$ is called twisted $d$-periodic and $\phi$ is said to be the associated twist. 
\end{enumerate}
\end{defin} 

Note that it has been conjectured by Erdmann and Skowronsk\'{i} that all twisted periodic algebras are in fact periodic; see e.g.\ \cite{Erdmann-Skowronski'08}.

\subsection{Calabi--Yau Frobenius algebras and Hochschild (co)homology}\label{subsection: CY Frobenius and HH^*}
We now recall a class of algebras that are in some sense 
as close as possible to being Calabi--Yau in the sense of \cite{Gin06} while also being Frobenius.
\begin{defin}[{\cite[Definition 2.3.6]{Eu-Schedler'09}}]\label{def: CY Frobenius}
A Frobenius algebra $\Lambda$ is \textit{Calabi--Yau Frobenius of dimension} $m$ if $\Lambda$ regarded as a bimodule satisfies $\Omega_{\Lambda^{e}}^{m+1}\Lambda \cong \Hom_{\Lambda^{e}}(\Lambda, \Lambda^{e})$ in $\stmodu \Lambda^{e}$ for some $m \in \mathbb{Z}$.
\end{defin}

The following is essentially part (ii) of Lemma 2.1.35 of  \cite{Eu-Schedler'09} adapted to our setup. This result thus translates the definition above into an equivalent one involving the Nakayama automorphism of $\Lambda$. 
We include a proof for the convenience of the reader. 
\begin{prop}[See {\cite[Lemma 2.1.35 (ii)]{Eu-Schedler'09}}]\label{prop: CY Frob characterization}
Let $\Lambda$ be a Frobenius algebra and let $m$ be an integer not equal to $-1$.
Then $\Lambda$ is Calabi--Yau Frobenius of dimension $m$ if and only if $\Omega^{-m-1}_{\Lambda^{e}}\Lambda \cong D\Lambda \cong {}_{1}\Lambda_{\mu}$ holds in $\stmodu \Lambda^{e}$.
\end{prop}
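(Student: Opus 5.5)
The plan is to reduce both conditions to statements about the stable category $\stmodu \Lambda^{e}$ and compare them there. The first step is to identify $\Hom_{\Lambda^{e}}(\Lambda, \Lambda^{e})$. Since $\Lambda$ is Frobenius, so is $\Lambda^{e} = \Lambda^{\op}\otimes_K \Lambda$, whose Nakayama automorphism is built from $\mu$ on each tensor factor. Hence $\Hom_{\Lambda^{e}}(-,\Lambda^{e}) \cong \Hom_{\Lambda^{e}}(-, D(\Lambda^{e}))$ up to twisting by the Nakayama automorphism of $\Lambda^{e}$, i.e.\ it agrees with $D(-)$ up to a twist. Applied to $\Lambda$, this gives $\Hom_{\Lambda^{e}}(\Lambda,\Lambda^{e}) \cong D(\Lambda)$ with the bimodule structure twisted on both sides. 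I would carry this out concretely: $\Hom_{\Lambda^{e}}(\Lambda,\Lambda^e)$ identifies with the "bimodule center" $\{x\in \Lambda\otimes\Lambda : \lambda x = x\lambda\}$. Using $D\Lambda\cong {}_1\Lambda_\mu$ and the standard isomorphism $\Lambda\otimes_K \Lambda \cong \Hom_K(D\Lambda,\Lambda)$, this should yield $\Hom_{\Lambda^{e}}(\Lambda,\Lambda^{e}) \cong {}_{1}\Lambda_{\mu^{-1}} \cong {}_{\mu}\Lambda_{1}$ as bimodules. This is the Frobenius analogue of the inverse dualizing complex, and it is the key computation.

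With this in hand, the stated condition $\Omega^{m+1}_{\Lambda^{e}}\Lambda \cong \Hom_{\Lambda^{e}}(\Lambda,\Lambda^{e})$ becomes $\Omega^{m+1}_{\Lambda^{e}}\Lambda \cong {}_1\Lambda_{\mu^{-1}}$ in $\stmodu\Lambda^{e}$. Since $\Omega$ is an autoequivalence of $\stmodu \Lambda^{e}$ commuting with twisting by $\mu$ (as noted in \cref{subsection: Frobenius algebras}), I would apply the functor $(-)\otimes_\Lambda {}_1\Lambda_{\mu}$, which sends ${}_1\Lambda_{\mu^{-1}}$ to $\Lambda$ and $\Omega^{m+1}\Lambda$ to $\Omega^{m+1}({}_1\Lambda_\mu)$. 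Then I would apply $\Omega^{-m-1}$ to obtain ${}_1\Lambda_\mu \cong \Omega^{-m-1}_{\Lambda^{e}}\Lambda$. Each step is reversible, which gives the equivalence. The isomorphism $D\Lambda\cong {}_1\Lambda_\mu$ is just the Frobenius hypothesis.

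The role of $m\neq -1$ needs to be checked. When $m=-1$, the object $\Omega^{0}\Lambda = \Lambda$ and isomorphisms in $\stmodu$ versus $\modu$ differ in that one must discard projective summands. I expect the hypothesis is there to ensure the objects are considered without projective summands so that stable isomorphism is meaningful, or to avoid an issue with $\Omega^{0}$ not being a genuine syzygy. I would handle this by working throughout with objects without projective summands, using that $\Lambda$ is non-projective as a bimodule unless $\Lambda$ is separable.

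The main obstacle is the first step: correctly identifying $\Hom_{\Lambda^{e}}(\Lambda,\Lambda^{e})$ with ${}_1\Lambda_{\mu^{-1}}$, and in particular getting the direction of the twist right ($\mu$ versus $\mu^{-1}$, and on which side). I would verify this by checking that tensoring with $D\Lambda\cong{}_1\Lambda_\mu$ inverts it, via $\Hom_{\Lambda^e}(\Lambda,\Lambda^e)\otimes_\Lambda D\Lambda \cong D\Hom_{\Lambda^e}(\Lambda,\Lambda^e)^{\vee}$-type adjunctions, and then $\Lambda$.
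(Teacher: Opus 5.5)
Your proposal is correct and follows the paper's approach. The paper's proof, like yours, comes down to the identification $\Hom_{\Lambda^{e}}(\Lambda,\Lambda^{e})\cong{}_{1}\Lambda_{\mu^{-1}}$, but it gets there by dualizing, $D\Hom_{\Lambda^{e}}(\Lambda,\Lambda^{e})\cong\Lambda\otimes_{\Lambda^{e}}D\Lambda^{e}\cong D\Lambda\otimes_{\Lambda}D\Lambda\cong{}_{1}\Lambda_{\mu^{2}}$, whereas you use $\Lambda\otimes_K\Lambda\cong\Hom_K(D\Lambda,\Lambda)$ to reach $\Hom_\Lambda({}_{1}\Lambda_{\mu},\Lambda)\cong{}_{1}\Lambda_{\mu^{-1}}$ directly (this checks out, twist direction included). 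Your rearrangement via the commuting autoequivalences $\Omega$ and $-\otimes_\Lambda{}_{1}\Lambda_{\mu}$ of $\stmodu\Lambda^{e}$ spells out a step the paper leaves implicit, and your worry about $m\neq -1$ and projective summands is unnecessary: everything takes place in $\stmodu\Lambda^{e}$, and neither argument uses that hypothesis.
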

\begin{proof}
Note that 
\begin{align*}
D\Hom_{\Lambda^{e}}(\Lambda, \Lambda^{e}) 
& \cong \Lambda \otimes_{\Lambda^{e}} D\Lambda^{e} \\
& \cong D\Lambda \otimes_\Lambda \Lambda \otimes_{\Lambda} D\Lambda \\
& \cong {}_{1}\Lambda_{\mu} \otimes_\Lambda \Lambda \otimes_{\Lambda} {}_{1}\Lambda_{\mu} \\
& \cong {}_{1}\Lambda_{\mu^{2}}
\end{align*}
so that 
\begin{align*}
\Hom_{\Lambda^{e}}(\Lambda, \Lambda^{e}) 
& \cong D({}_{1}\Lambda_{\mu^{2}})\\
& \cong D({}_{\mu^{-2}}\Lambda{}_{1})\\
& \cong {}_{1}D(\Lambda)_{\mu^{-2}}\\
& \cong {}_{1}\Lambda_{\mu^{-1}}.
\end{align*}
\end{proof}

\begin{remark}
Note that the definition in \cite{Eu-Schedler'09} requires only that the isomorphism holds in $\stmodu \Lambda^{e}$. 
However, since $\Lambda$ is for us a finite dimensional algebra over a field, the isomorphisms in \cref{def: CY Frobenius} and in \cref{prop: CY Frob characterization} will lift to ones in $\modu \Lambda^{e}$ as long as $\Lambda$ considered as a bimodule does not have any bimodule projective summands.
If $\Lambda$ is a ring-indecomposable algebra, i.e.\ its identity is the only non-zero central idempotent, then this only happens if $\Lambda$ is separable, in which case $\Lambda$ would be semisimple.
Consequently, if $\Lambda$ is a ring-indecomposable non-semisimple Calabi--Yau Frobenius algebra of dimension $m \geq 0$, then $\Lambda$ is twisted periodic with twist given by $\mu^{-1}$, i.e.\ the inverse of the Nakayama automorphism of $\Lambda$.   
\end{remark}

Since the Serre functor of $\stgrmodu \Lambda$ is given by $\Omega(-) \otimes_\Lambda \Lambda_{\mu} \langle -a\rangle$ whenever $\Lambda$ is a graded Frobenius algebra of highest degree $a$, one obtains some particularly nice duality formulas involving Hochschild homology and cohomology:

\begin{prop}[See {\cite[Theorem 2.3.27]{Eu-Schedler'09}}] \label{prop: duality formulas}
Let $\Lambda$ be Calabi--Yau Frobenius for some integer $m \geq -1$. 
We then have 
\[
\stHH^{i}(\Lambda) \cong \stHH_{m - i}(\Lambda) 
\]
and 
\[
\stHH^{i}(\Lambda) \cong D\stHH^{2m + 1 - i}(\Lambda).
\]

If we also assume that $\Lambda$ is graded Frobenius of highest degree $a$ and that $$\Omega^{m+1} \Lambda \cong {}_{1}\Lambda_{\mu^{-1}} \langle a + 1\rangle,$$ then we have that 
\[
\stHH^{i,j}(\Lambda) \cong \stHH_{m - i, j+1}(\Lambda) 
\]
and 
\[
\stHH^{i,j}(\Lambda) \cong D\stHH^{2m + 1 - i,-j-2}(\Lambda).
\]
\end{prop}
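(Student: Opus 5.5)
The plan is to work entirely in the stable category $\stmodu \Lambda^{e}$ (respectively $\stgrmodu \Lambda^{e}$) and to combine Serre duality there with the Calabi--Yau Frobenius isomorphism of \cref{prop: CY Frob characterization}. Since $\Lambda$ is Frobenius with Nakayama automorphism $\mu$, the enveloping algebra $\Lambda^{e}$ is Frobenius as well. Its Nakayama functor $\nu = - \otimes_{\Lambda^{e}} D\Lambda^{e}$ sends a bimodule $M$ to $D\Lambda \otimes_\Lambda M \otimes_\Lambda D\Lambda$. The Serre functor of the stable category is then $S = \Omega_{\Lambda^{e}} \circ \nu$, so that
\[
\Hom(X, Y) \cong D\Hom(Y, \Omega\nu X).
\]
Exactly as in the proof of \cref{prop: CY Frob characterization}, we have $\nu\Lambda \cong {}_{1}\Lambda_{\mu^{2}}$. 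We will also use repeatedly that twisting by $\mu$ is an exact autoequivalence commuting with $\Omega$.

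For the first formula, start from $\stHH^{i}(\Lambda) = \Hom(\Omega^{i}\Lambda, \Lambda)$. Serre duality gives
\[
\stHH^{i}(\Lambda) \cong D\Hom(\Lambda, \Omega^{i+1}\nu\Lambda) \cong D\Hom(\Omega^{-i-1}\Lambda, {}_{1}\Lambda_{\mu^{2}}).
\]
Next, by \cref{prop: CY Frob characterization} we have $\Omega^{-m-1}\Lambda \cong {}_{1}\Lambda_{\mu} \cong D\Lambda$. Twisting once more by $\mu$ yields
\[
{}_{1}\Lambda_{\mu^{2}} \cong (\Omega^{-m-1}\Lambda)_{\mu} \cong \Omega^{-m-1}({}_{1}\Lambda_{\mu}) \cong \Omega^{-m-1}D\Lambda.
\]
Substituting and shifting by $\Omega^{m+1}$ gives $D\Hom(\Omega^{m-i}\Lambda, D\Lambda)$, which is $\stHH_{m-i}(\Lambda)$ by definition. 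For the second formula, write $\stHH_{m-i}(\Lambda) = D\Hom(\Omega^{m-i}\Lambda, D\Lambda)$ and replace $D\Lambda$ by $\Omega^{-m-1}\Lambda$. This gives
\[
\stHH_{m-i}(\Lambda) \cong D\Hom(\Omega^{2m+1-i}\Lambda, \Lambda) = D\stHH^{2m+1-i}(\Lambda),
\]
and combining with the first formula finishes the ungraded case. The hypothesis $m \geq -1$ is needed only to invoke \cref{prop: CY Frob characterization} and so that the shifts make sense. The case $m = -1$, where $\Omega^{0}\Lambda \cong D\Lambda$, is handled by the same computation.

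For the graded statement we repeat the argument verbatim in $\stgrmodu \Lambda^{e}$, keeping track of the internal shifts. The inputs are $D\Lambda \cong {}_{1}\Lambda_{\mu}\langle -a\rangle$ and the hypothesis $\Omega^{m+1}\Lambda \cong {}_{1}\Lambda_{\mu^{-1}}\langle a+1\rangle$. From these,
\[
\Omega^{-m-1}\Lambda \cong {}_{1}\Lambda_{\mu}\langle -a-1\rangle \cong D\Lambda\langle -1\rangle .
\]
The graded Nakayama functor of $\Lambda^{e}$ gives $\nu\Lambda \cong {}_{1}\Lambda_{\mu^{2}}\langle -2a\rangle$, because $D\Lambda^{e}$ carries a shift of $-2a$ under the Künneth grading. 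Running the two chains above with $\Lambda\langle -j\rangle$ in place of $\Lambda$ should collect the shifts into $j \mapsto j+1$ for the first formula and $j \mapsto -j-2$ for the second.

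The main obstacle is this bookkeeping of the internal degrees. In particular, one must get the correct sign and size of the shift in the graded Serre functor of $\stgrmodu \Lambda^{e}$. One must also match the convention $\stHH_{i,j} = D\Hom(\Omega^{i}\Lambda, D\Lambda\langle j\rangle)$ against $\stHH^{i,j} = \Hom(\Omega^{i}\Lambda, \Lambda\langle -j\rangle)$, since dualising negates degrees. I would first verify the shift of $\nu\Lambda$ directly from $D(\Lambda^{\op}\otimes\Lambda) \cong D\Lambda^{\op}\otimes D\Lambda$, then do the first formula carefully, and finally obtain the second formula by composing the first with the identification $D\Lambda \cong \Omega^{-m-1}\Lambda\langle 1\rangle$.
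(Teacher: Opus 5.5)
Your proposal is correct and follows essentially the paper's argument: Serre duality in $\stgrmodu \Lambda^{e}$ with $\nu\Lambda \cong {}_{1}\Lambda_{\mu^{2}}\langle -2a\rangle$, then rewriting ${}_{1}\Lambda_{\mu^{2}}$ via the Calabi--Yau Frobenius isomorphism and absorbing the cosyzygy. The inputs you list ($\Omega^{-m-1}\Lambda \cong D\Lambda\langle -1\rangle$ and the $-2a$ shift) do give exactly $j \mapsto j+1$, and, after composing with $D\Lambda \cong \Omega^{-m-1}\Lambda\langle 1\rangle$, $j \mapsto -j-2$. The differences are minor: you derive the second formula explicitly from the first, where the paper only says it is ``similar''. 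You also do the ungraded case directly, which is slightly cleaner than the paper's reduction to a trivial grading, because the shifted hypothesis $\Omega^{m+1}\Lambda \cong {}_{1}\Lambda_{\mu^{-1}}\langle a+1\rangle$ can never hold for a trivially graded non-semisimple algebra.
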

\begin{proof}
Note that it suffices to show the graded versions since the ungraded ones follow by assuming that $\Lambda$ is graded trivially. 
Moreover, note that we only show the first of the graded isomorphisms since showing the second one is similar.

Hence, we begin by observing that since $\Lambda$ is graded Frobenius of highest degree $a$ and with Nakayama automorphism $\mu$, we have that $\Lambda^{e}$ is graded Frobenius of highest degree $2a$ and with Nakayama automorphism $\mu^{-1} \otimes \mu$.
Consequently, we can use that $\stgrmodu \Lambda^{e}$ has a Serre functor and compute as follows.

\begin{align*}
\stHH^{i,j}(\Lambda) 
& = \Hom_{\stgrmodu \Lambda^{e}}(\Omega^{i}_{\Lambda^{e}}\Lambda, \Lambda \langle {-j} \rangle)\\
& \cong D\Hom_{\stgrmodu \Lambda^{e}}(\Lambda \langle {-j}  \rangle, \Omega^{i+1}_{\Lambda^{e}} {}_{1}\Lambda_{\mu^{2}} \langle -2a \rangle)\\
& \cong D\Hom_{\stgrmodu \Lambda^{e}}(\Omega^{-i - 1}_{\Lambda^{e}}\Lambda ,  {}_{1}\Lambda_{\mu^{2}} \langle {-2a + j}  \rangle)\\
& \cong 
D\Hom_{\stgrmodu \Lambda^{e}}(\Omega^{- i - 1}_{\Lambda^{e}}\Lambda ,  \Omega^{-(m + 1)}_{\Lambda^{e}}{}_{1}\Lambda_{\mu} \langle {-a + 1 + j} \rangle)\\
& \cong D\Hom_{\stgrmodu \Lambda^{e}}(\Omega^{m- i}_{\Lambda^{e}}\Lambda \langle {- j - 1} \rangle,  {}_{1}\Lambda_{\mu} \langle -a \rangle)\\
& \cong D\Hom_{\stgrmodu \Lambda^{e}}(\Omega^{m- i}_{\Lambda^{e}}\Lambda \langle {- j - 1} \rangle,  D(\Lambda ))\\
& = \stHH_{m-i,{j + 1}}(\Lambda)
\end{align*}

Here, the first isomorphism uses that $\stgrmodu \Lambda^{e}$ has a Serre functor, the third uses that $\Lambda$ is Calabi--Yau Frobenius of dimension $m$, the fifth holds since $\Lambda$ is graded Frobenius of highest degree $a$, whereas the second and fourth are  using that $\Omega$ and $\langle 1 \rangle$ are equivalences on $\stgrmodu \Lambda^{e}$.
\end{proof} 

If we also assume that the Nakayama automorphism is of finite order, more can be said. 

\begin{prop}\label{prop: HH formula for periodic + CY Frob}
Let $\Lambda$ be Calabi--Yau Frobenius for some integer $m \geq -1$ and assume that $\mu^n = 1$ holds for some positive integer $n$.
We then have 
\[
\stHH_{i}(\Lambda) \cong D \stHH_{n(m + 1) - 1 - i}(\Lambda).
\]

If we also assume that $\Lambda$ is graded Frobenius of highest degree $a$ and that $\Omega^{m+1}_{\Lambda^{e}} \Lambda \cong {}_{1}\Lambda_{\mu^{-1}} \langle a + 1\rangle$, then we have that 
\[
\stHH_{i,j}(\Lambda) \cong D \stHH_{n(m + 1) - 1 - i,n(a+1) - j }(\Lambda).
\]
\end{prop}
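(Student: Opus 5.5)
The plan is to combine the Calabi--Yau Frobenius duality of \cref{prop: duality formulas} with the twisted periodicity $\Omega^{m+1}_{\Lambda^{e}}\Lambda \cong {}_{1}\Lambda_{\mu^{-1}}$ (holding in $\stmodu \Lambda^{e}$ by \cref{prop: CY Frob characterization}). Since $\mu^n = 1$, iterating the periodicity gives
\[
\Omega^{n(m+1)}_{\Lambda^{e}}\Lambda \cong {}_{1}\Lambda_{\mu^{-n}} \cong \Lambda
\]
in $\stmodu \Lambda^{e}$. Here we use that twisting by an automorphism commutes with syzygies, so $\Omega^{m+1}({}_{1}\Lambda_{\mu^{-k}}) \cong {}_{1}\Lambda_{\mu^{-k-1}}$. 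Thus stable Hochschild (co)homology is $n(m+1)$-periodic: $\stHH_{i}(\Lambda)\cong \stHH_{i+n(m+1)}(\Lambda)$, and similarly for $\stHH^i$.

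The duality itself comes from the two formulas of \cref{prop: duality formulas}. From $\stHH^{k}\cong \stHH_{m-k}$ and $\stHH^{k}\cong D\stHH^{2m+1-k}$ we get
\[
\stHH_{i}\cong \stHH^{m-i} \cong D\stHH^{m+1+i}\cong D\stHH_{-1-i}.
\]
Applying periodicity, $\stHH_{-1-i}\cong \stHH_{n(m+1)-1-i}$, which is the ungraded statement. Alternatively, one can argue directly with the Serre functor of $\stmodu\Lambda^{e}$, as in the proof of \cref{prop: duality formulas}, using $D\Lambda \cong {}_{1}\Lambda_{\mu}$ and $\Omega^{-(m+1)}{}_1\Lambda_\mu\cong {}_1\Lambda_{\mu^2}$. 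I would use whichever route keeps the bookkeeping cleanest. The case $m=-1$ is excluded from \cref{prop: CY Frob characterization}, but there $n(m+1)-1-i = -1-i$ and no periodicity is needed.

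For the graded version the same chain works, tracking internal degrees. From the graded formulas in \cref{prop: duality formulas},
\[
\stHH_{i,j}\cong\stHH^{m-i,j-1}\cong D\stHH^{m+1+i,-j-1}\cong D\stHH_{-1-i,-j}.
\]
Iterating the graded periodicity gives
\[
\Omega^{k(m+1)}_{\Lambda^{e}}\Lambda\cong{}_1\Lambda_{\mu^{-k}}\langle k(a+1)\rangle,
\]
so $\Omega^{n(m+1)}_{\Lambda^e}\Lambda\cong\Lambda\langle n(a+1)\rangle$. Here $\mu$ is a graded automorphism, so twisting commutes with shifts. This yields $\stHH_{i,j}\cong\stHH_{i+n(m+1),\,j+n(a+1)}$. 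The sign convention of the shift has to be checked against the definition $\stHH_{i,j}=D\Hom(\Omega^i\Lambda, D\Lambda\langle j\rangle)$. Substituting then gives $\stHH_{i,j}\cong D\stHH_{n(m+1)-1-i,\,n(a+1)-j}$.

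The main obstacle is the degree bookkeeping. Each step of the chain must use the shifts in \cref{prop: duality formulas} with the correct signs, and the periodicity must move internal degree in the direction that produces $n(a+1)-j$ rather than $-n(a+1)-j$. I would verify this first by computing $\Hom_{\stgrmodu\Lambda^e}(\Omega^{i+n(m+1)}\Lambda, D\Lambda\langle j'\rangle)$ explicitly via $\Omega^{n(m+1)}\Lambda\cong\Lambda\langle n(a+1)\rangle$ and the fact that $\langle 1\rangle$ is an autoequivalence.
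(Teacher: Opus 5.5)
Your argument is correct, and your index and shift bookkeeping all checks out. Your main route differs from the paper's; your ``alternative'' is close to what the paper does.

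\textbf{The paper's route.} The paper does not compose the formulas of \cref{prop: duality formulas}. It applies the Serre functor of $\stgrmodu \Lambda^{e}$ once, directly to $\stHH_{i,j}(\Lambda) = D\Hom(\Omega^{i}_{\Lambda^{e}}\Lambda, D(\Lambda)\langle j\rangle)$, using $D\Lambda \cong \Lambda_{\mu}\langle -a\rangle$. This gives $\Hom_{\stgrmodu \Lambda^{e}}(\Lambda\langle j\rangle, \Omega^{i+1}_{\Lambda^{e}}\Lambda_{\mu}\langle -a\rangle)$. Only then does it insert $\Lambda \cong \Omega^{-n(m+1)}_{\Lambda^{e}}\Lambda\langle n(a+1)\rangle$ inside the $\Hom$ and shift back.

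\textbf{What the comparison shows.} That intermediate expression is already $\Hom(\Lambda\langle j\rangle, \Omega^{i+1}_{\Lambda^e} D\Lambda) \cong D\stHH_{-1-i,-j}(\Lambda)$. So the reflection $\stHH_{i,j}\cong D\stHH_{-1-i,-j}$, which you reach via $\stHH^{m-i,j-1}$ and $D\stHH^{m+1+i,-j-1}$, is pure Serre duality. It holds for any graded Frobenius algebra, and the Calabi--Yau input in your three steps cancels out, which is why $m$ disappears from $-1-i$.

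The paper's route makes it visible that the CY hypothesis and $\mu^n = 1$ enter only through the periodicity $\Omega^{n(m+1)}_{\Lambda^{e}}\Lambda \cong \Lambda\langle n(a+1)\rangle$. Your route is more modular. However, it relies on the second formula of \cref{prop: duality formulas}, whose proof the paper omits as ``similar''. That formula does hold, via $\Lambda_{\mu^2} \cong \Omega^{-2(m+1)}_{\Lambda^{e}}\Lambda\langle 2a+2\rangle$ and one application of the Serre functor.

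\textbf{The shift direction you flagged.} It comes out as you wrote. Since $\Omega^{i+n(m+1)}_{\Lambda^{e}}\Lambda \cong \Omega^{i}_{\Lambda^{e}}\Lambda\langle n(a+1)\rangle$, one gets $\stHH_{i+n(m+1),\,j+n(a+1)}(\Lambda) \cong \stHH_{i,j}(\Lambda)$. This yields $n(a+1)-j$ rather than $-n(a+1)-j$.
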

\begin{proof}
As before, we only show the claim in the graded case. 
For this, we proceed as follows:
\begin{align*}
\stHH_{i,j}(\Lambda) & = D\Hom_{\stgrmodu \Lambda^{e}}(\Omega^{i}_{\Lambda^{e}} \Lambda, D(\Lambda) \langle j \rangle)\\
& \cong D\Hom_{\stgrmodu \Lambda^{e}}(\Omega^{i}_{\Lambda^{e}} \Lambda, \Lambda_{\mu} \langle -a+j \rangle)\\
& \cong \Hom_{\stgrmodu \Lambda^{e}}(\Lambda_{\mu} \langle -a+j \rangle, \Omega^{i+1}_{\Lambda^{e}} \Lambda_{\mu^2}\langle -2a\rangle)\\
& \cong \Hom_{\stgrmodu \Lambda^{e}}(\Lambda \langle j \rangle, \Omega^{i+1}_{\Lambda^{e}} \Lambda_{\mu}\langle -a\rangle)
\end{align*}

Now, note that by our assumptions on $\Lambda$ and $\mu$, we have 
\[
\Lambda \cong \Lambda_{\mu^{n}} \cong \Omega^{-n(m+1)}_{\Lambda^{e}} \Lambda \langle n(a+1) \rangle.
\]
Consequently, we deduce that 
\begin{align*}
\stHH_{i,j}(\Lambda) 
& \cong \Hom_{\stgrmodu \Lambda^{e}}(\Lambda \langle j \rangle, \Omega^{i+1}_{\Lambda^{e}} \Lambda_{\mu}\langle -a\rangle)\\
& \cong \Hom_{\stgrmodu \Lambda^{e}}(\Lambda \langle j \rangle, \Omega^{i+1 - n(m+1)}_{\Lambda^{e}} \Lambda_{\mu}\langle -a + n(a+1)\rangle)\\
& \cong \Hom_{\stgrmodu \Lambda^{e}}(\Omega^{n(m+1) -i-1}_{\Lambda^{e}}\Lambda \langle -n(a+1) + j \rangle,  \Lambda_{\mu}\langle -a \rangle)\\
& \cong \Hom_{\stgrmodu \Lambda^{e}}(\Omega^{n(m+1) -i-1}_{\Lambda^{e}}\Lambda \langle -n(a+1) + j \rangle,  D\Lambda)\\
& \cong D\stHH_{n(m+1) - 1 - i, n(a+1) - j }(\Lambda).
\end{align*}
\end{proof}

\subsection{Twisted fractionally Calabi--Yau algebras}
The algebras this subsection is named for were introduced by Herschend and Iyama in \cite{HI11b}. 
Let $A$ be a finite dimensional algebra and 
recall that when the global dimension of $A$ is finite, $\D^b(\modu A)$ --- i.e.\ the bounded derived category of $A$ --- has a Serre functor given by the derived Nakayama functor \mbox{$\nu(-) = - \otimes^{\mathbb{L}}_{A} DA$}. 

Observe that a $K$-algebra automorphism $\phi$ of $A$ induces an autoequivalence of $\D^b(A)$ given by 
\[
- \otimes_{A}^{\mathbb{L}} A_{\phi} \colon \D^b (\modu A) \rightarrow \D^b (\modu A).
\]

\begin{defin}\label{def: (twisted) fractionally CY }
Let $A$ be a finite dimensional $K$-algebra of finite global dimension. 
Then $A$ is said to be \textit{twisted fractionally Calabi--Yau} (or \textit{twisted} $\frac{m}{\ell}$-CY) if there exists an isomorphism 
\[
\nu^{\ell}(-) \cong (-) \otimes_{A}^{\mathbb{L}}A_\phi[m]
\]
of functors for some integers $\ell \neq 0$ and $m$ and a $K$-algebra automorphism $\phi$ of $\Lambda$. 
When $\phi$ can be chosen equal to the identity automorphism of $A$, we say that $A$ is \textit{fractionally CY} (or $\frac{m}{\ell}$-\textit{CY}).
\end{defin}

\subsection{$d$-representation finite algebras}
The algebras mentioned in the title of this subsection were introduced in \cite{Iya11} and \cite{IO13}. 
Roughly speaking, one can think of $d$-representation finite algebras as algebras of global dimension $d$ for which the Serre functor of their bounded derived categories behaves in a fashion that generalizes the behaviour of the Serre functor on the bounded derived category of representation finite hereditary algebras.\footnote{
Note that there is a corresponding notion of $d$-representation infinite algebras that generalizes representation infinite hereditary algebras; see \cite{HIO14}.
}
In particular, these two classes of algebras coincide for $d = 1$. 
We now recall some definitions and basic results from \cite{Iya11, Iyama-Oppermann} and \cite{HI11b}.
In doing so, we establish the necessary background for some applications of \cref{thm: etingof-ginzburg} that we will develop and present in the following sections. 

Throughout the rest of this subsection, we let $A$ be a finite dimensional algebra. 

\begin{defin}\label{def: d-representation finite}
A finite dimensional algebra $A$ is said to be \textit{$d$-representation finite} if $\gldim A \leq d$ and $\modu A$ contains a \textit{$d$-cluster tilting module}, 
i.e.\ a module $M \in \modu A$ such that $\mathcal{M} := \add M$ satisfies 
\begin{align*}
\mathcal{M} 
& = \{X \in \modu A \, \lvert \, \Ext^{i}_A (\mathcal{M}, X) = 0 \, \text{for} \, 1 \leq i \leq d - 1 \, \}\\
& = \{X \in \modu A \, \lvert \, \Ext^{i}_A (X, \mathcal{M}) = 0 \, \text{for} \, 1 \leq i \leq d - 1 \, \}.
\end{align*}
\end{defin}
Since the $\Ext$-vanishing conditions above are vacuous when $d = 1$, one sees that having a $1$-cluster tilting module is equivalent to being representation finite.

Recall that when the global dimension of $A$ is finite, $\D^b(\modu A)$ has a Serre functor given by the derived Nakayama functor \mbox{$\nu(-) = - \otimes^{\mathbb{L}}_{A} DA$}. 
Note that following e.g.\ \cite{HIO14}, we use the notation $\nu_d = \nu(-)[-d]$, where $[1]$ denotes the shift functor of $\D^b(\modu A)$.
Using this, one also has the following criterion for $d$-representation finiteness formulated in terms of the subcategory
\[
\mathcal{U} = \add\{\nu_d^i A \mid i \in \mathbb{Z}\} \subseteq \D^b(\modu A).
\]
    
\begin{thm}[See {\cite[Theorem 3.1]{IO13}}] \label{nrepfindef}
Assume $\gldim A \leq d$. The following are equivalent:
\begin{enumerate}[(i)]
    \item $A$ is $d$-representation finite.
    \item $DA \in \U$.
    \item $\nu \U = \U$.
\end{enumerate}
\end{thm}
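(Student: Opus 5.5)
The plan is to prove the cycle (iii)$\Rightarrow$(ii)$\Rightarrow$(iii) by formal manipulations in $\D^b(\modu A)$, and then to connect (i) and (ii) using the higher Auslander--Reiten theory of \cite{Iya11}. Throughout I will use $\nu = \nu_d[d]$, and that $\nu$, $\nu_d$ and $[1]$ are mutually commuting autoequivalences. It follows that $\U$ is stable under $\nu_d^{\pm1}$ by construction.

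\textbf{(iii)$\Rightarrow$(ii).} Since $A \in \U$, we get $DA = \nu A \in \nu\U = \U$.

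\textbf{(ii)$\Rightarrow$(iii).} Because $\nu\U = (\nu_d\U)[d] = \U[d]$, condition (iii) is equivalent to $\U[d] = \U$. From $DA = \nu A = \nu_d(A)[d] \in \U$ I get $A[d] \in \nu_d^{-1}\U = \U$. Applying $\nu_d^i$ and additivity then gives $\U[d] \subseteq \U$. For the reverse inclusion I need $A[-d] \in \U$. Here I would use that $\gldim A \le d$, so each indecomposable of $\U$ has bounded cohomology. In a Krull--Schmidt setting, the inclusion $\U[d] \subseteq \U$ being strict would force infinitely many $\nu_d$-orbits of indecomposables, or an unbounded shift. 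The finiteness I would appeal to comes from $DA = \nu_d^{-j}$-translates: writing each indecomposable summand of $DA$ as $\nu_d^{k}P$, we get $\nu_d^{k+1}P = P'[-d]$-type relations, and running these backwards yields $A[-d] \in \U$.

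\textbf{(i)$\Rightarrow$(ii).} By \cite{Iya11}, if $A$ is $d$-representation finite, the $d$-cluster tilting module is $M = \bigoplus_{i\ge0}\tau_d^{-i}A$. Moreover $\tau_d^{-} = H^0(\nu_d^{-1}-)$ coincides with $\nu_d^{-1}$ on nonprojective summands of $M$, which is the vanishing of the other cohomologies guaranteed by the Ext-vanishing on $\add M$. Every indecomposable injective lies in $\add M$, so each summand of $DA$ is of the form $\nu_d^{-i}P$ and hence lies in $\U$.

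\textbf{(ii)$\Rightarrow$(i).} Set $\mathcal{M} = \U \cap \modu A$. First I will show that every indecomposable of $\U$ is a stalk complex $X[kd]$ with $X \in \modu A$, using that $\nu_d^{\pm1}$ sends $\U$ into itself, that $DA$ and $A$ lie in it, and the bound $\gldim A\le d$. Then I will prove $\Hom_{\D}(\U,\U[i]) = 0$ for $0<i<d$, using Serre duality $\Hom(X,Y[i]) \cong D\Hom(Y,\nu_d X[d-i])$. Finally, that $\mathcal{M}$ is $d$-cluster tilting follows by the standard argument: given $X$ with $\Ext^{1..d-1}(\mathcal{M},X)=0$, resolve $X$ by $\mathcal{M}$-approximations and use $\gldim\le d$ to split.

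I expect this last direction to be the main obstacle, specifically showing that objects of $\U$ are concentrated in degrees divisible by $d$. That is where the hypothesis $\gldim A\le d$ must genuinely be used. I would try an induction along $\nu_d^{-i}A$, showing at each step that $H^j$ vanishes for $j\neq 0$ unless the object is an injective shifted by $-d$.
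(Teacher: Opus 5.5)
The paper gives no proof of this statement; it only quotes \cite[Theorem 3.1]{IO13}. Your implications (iii)$\Rightarrow$(ii) and (i)$\Rightarrow$(ii) are fine, the latter up to a slip. $\tau_d^{-}$ agrees with $\nu_d^{-1}$ on the non-\emph{injective} summands of $M$, not the non-projective ones, and this also needs $\Hom_A(DA,X)=0$, not just the $\Ext$-vanishing. For (ii)$\Rightarrow$(iii), your orbit idea works once phrased as a pigeonhole. $\U$ has at most $\lvert A\rvert$ $\nu_d$-orbits of indecomposables. Once $\U[d]\subseteq\U$, the shift $[d]$ induces an injective, hence bijective, self-map of this finite set, which gives $\U[-d]\subseteq\U$; no cohomology bounds are needed.

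The genuine gap is in (ii)$\Rightarrow$(i), which carries all the content. Your rigidity step does not work as stated, because Serre duality only trades $\Hom(\U,\U[i])$ for $D\Hom(\U,\U[d-i])$. What gives vanishing is that, since $\gldim A\le d$, $\nu_d$ preserves $\D^{\ge 0}$ and $\nu_d^{-1}$ preserves $\D^{\le 0}$. Hence $\nu_d^mA\in\D^{\ge d}$ for $m\ge 1$ and $\nu_d^mA\in\D^{\le 0}$ for $m\le 0$, so $\Hom(\nu_d^aA,\nu_d^bA[i])\cong\Hm^i(\nu_d^{b-a}A)=0$ for $0<i<d$. The same two facts dispose of what you call the main obstacle, without any induction:
\begin{itemize}
\item each indecomposable injective $I$ has the form $\nu_d^{-\ell_P}P$ with $\ell_P\ge 0$ and $P$ uniquely determined;
\item the assignment $I\mapsto P$ is injective, since two injectives in one orbit would put one of them in $\D^{\le -d}$, and hence bijective;
\item then $\nu_d^{-i}P=\nu_d^{\ell_P-i}I\in\D^{\le 0}\cap\D^{\ge 0}$ for $0\le i\le\ell_P$, while $\nu_d^{-\ell_P-1}P$ is a projective shifted by $[d]$.
\end{itemize}
Thus $\U=\add\{M[kd]\mid k\in\mathbb{Z}\}$ with $M=\bigoplus_P\bigoplus_{i=0}^{\ell_P}\nu_d^{-i}P$.

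The real missing idea is maximality. Resolving $X$ by right $\add M$-approximations gains nothing. The kernel of an approximation $M_0\to X$ satisfies exactly the same vanishing $\Ext^i_A(M,-)=0$, $0<i<d$, as $X$ does, and $\gldim A\le d$ by itself splits nothing. You have to use the $\nu_d$-action instead.

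Let $X$ be indecomposable with $\Ext^i_A(M,X)=0$ for $0<i<d$. Then $\Hom_{\D}(\U,X[i])=0$ for $0<i<d$. Moreover, $\nu_d^{-1}X=\RHom_A(DA,X)[d]$ has cohomology only $\Hom_A(DA,X)$ in degree $-d$ and $\Ext^d_A(DA,X)$ in degree $0$. The connecting morphism of the truncation triangle lies in an $\Ext^{d+1}_A$, which vanishes. So by indecomposability one of two things happens:
\begin{itemize}
\item $\nu^{-1}X\cong Z$ is a module with $\Ext^{j}_A(Z,A)=0$ for all $j>0$, because $\nu Z\cong X$ is a module. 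Then $Z$ is projective and $X\cong\nu Z$ is injective.
\item $\nu_d^{-1}X\cong\tau_d^{-}X$ is again an indecomposable module with the same vanishing.
\end{itemize}
By the description of $\U$, $\Hom_A(A,\nu_d^{-m}X)\cong\Hom_{\D}(\nu_d^mA,X)=0$ for $m\gg 0$. Hence this process must stop at an injective, so $X\in\U\cap\modu A=\add M$. The other orthogonal reduces to this case by Serre duality and $\nu\U=\U$. Without an argument of this kind, your proof of (ii)$\Rightarrow$(i) is incomplete.
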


Consequently, an algebra $A$ with $\gldim A \leq d$ is $d$-representation finite if and only if for any indecomposable projective $A$-module $P_i$ one has an integer \mbox{$\ell_i \geq 0$} such that $\nu_d^{-\ell_i + 1}(P_i)$ is indecomposable injective.
Note that, following \cite{HI11b}, if there is some integer $\ell$ such that $\ell_i = \ell$ for all $i$, then $A$ is said to be \textit{$\ell$-homogeneous}. 
Recall that the $d$\textit{-Auslander-Reiten translate} of $A$ is defined as $\tau_d := \tau \circ \Omega^{d-1}$. 
In particular, one has that $\tau_d(-) \cong \Hm^0 (\nu_d(-))$.

We recall the following from \cite{HI11b}. 

\begin{thm}[See  {\cite[Theorem 1.1 (a)]{HI11b}}]\label{thm: d-rf implies twisted fr CY} Let $A$ be a finite dimensional algebra that is ring-indecomposable $d$-representation finite. 
Then $A$ is twisted fractionally CY.
\end{thm}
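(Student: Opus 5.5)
The plan is to use the homogeneity data coming from \cref{nrepfindef}. Since $A$ is $d$-representation finite, each indecomposable projective $P_i$ has an integer $\ell_i \geq 0$ with $\nu_d^{-\ell_i+1}(P_i) \cong I_{\sigma(i)}$ indecomposable injective, for some permutation $\sigma$ of the vertices. Since $\nu(P_{\sigma(i)}) = I_{\sigma(i)}$, this gives
\[
\nu_d^{-\ell_i}(P_i) \cong \nu_d^{-1}\nu(P_{\sigma(i)}) \cong P_{\sigma(i)}[d],
\]
that is, $\nu^{\ell_i}(P_{\sigma(i)}) \cong P_i[\ell_i d - d]$. The aim is to assemble these objectwise isomorphisms into a single isomorphism of functors of the form in \cref{def: (twisted) fractionally CY }.

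First, I will use that $A$ is ring-indecomposable to show that all $\ell_i$ are related. Herschend and Iyama show that ring-indecomposability forces the $\nu_d$-orbits to interact so that the $\ell_i$ have a common behaviour along $\sigma$-orbits. Take $L$ to be a common multiple of the lengths obtained by iterating along the finite permutation $\sigma$. Iterating $\nu^{\ell_{i}}$ around a $\sigma$-cycle then gives an $N$ and an $m$, independent of $i$, with
\[
\nu^{N}(P_i) \cong P_i[m]
\]
for all $i$. Ring-indecomposability is what guarantees that the ratio $m/N$ is the same for every $i$; otherwise one only gets different ratios on different blocks. I would argue this via the arrows $\Hom(P_i,P_j) \neq 0$: $\nu$ is an equivalence, so it preserves morphisms, and nonzero morphisms between stalk complexes in different degrees vanish for projectives. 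Connectedness of the quiver then propagates a single shift.

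Second, I will upgrade the objectwise isomorphisms to an isomorphism of functors. The functor $F = \nu^N[-m]$ is a standard derived equivalence given by the two-sided tilting complex $T = (DA)^{\otimes^{\mathbb{L}} N}[-m]$. Because $F(A) \cong A$ in $\D^b(\modu A)$, the complex $T$ has cohomology concentrated in degree $0$. Hence $T$ is isomorphic to a bimodule that is projective on each side and satisfies $T_A \cong A_A$. Standard Rickard/Yekutieli theory then says that such a bimodule is ${}_{\phi}A_{1}$, equivalently $A_\phi$, for an automorphism $\phi$. This yields $\nu^N \cong - \otimes^{\mathbb{L}}_A A_\phi[m]$.

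The main obstacle is the first step, namely obtaining a uniform shift across all indecomposable projectives, which requires the twist $\sigma$ and connectedness to be handled carefully. If this turns out to be hard to control, I would simply cite \cite[Theorem 1.1 (a)]{HI11b}, of which the statement is a restatement.
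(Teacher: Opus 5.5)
Your proposal is correct. The paper contains no proof of this statement: it is quoted from \cite[Theorem 1.1(a)]{HI11b}. So your fallback is exactly what the paper does, and the rest of your proposal is a self-contained argument that the paper does not give.

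The two inputs you isolate are the right ones:
\begin{enumerate}[(1)]
\item The Iyama--Oppermann criterion, which gives $\nu^{\ell_i}(P_{\sigma(i)})\cong P_i[(\ell_i-1)d]$ with $\sigma$ a permutation.
\item The standard fact that a two-sided tilting complex $T$ with $T_A\cong A_A$ in $\D^b(\modu A)$ is isomorphic to ${}_{\phi}A_1\cong A_{\phi^{-1}}$ for some automorphism $\phi$. Your ``equivalently $A_\phi$'' should read $A_{\phi^{-1}}$, which is harmless.
\end{enumerate}

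The middle step should be reordered. Iterating along a $\sigma$-orbit $O$ gives $\nu^{N_O}(P_i)\cong P_i[m_O]$ with $N_O=\sum_{j\in O}\ell_j$ and $m_O=d\sum_{j\in O}(\ell_j-1)$. Both numbers depend on $O$, so they are not ``independent of $i$''. First take $N$ to be a common multiple of the $N_O$, so that $\nu^N(P_i)\cong P_i[m_i]$ with $m_i=(N/N_O)m_O$. Only then apply your observation that $0\neq\Hom(P_i,P_j)\cong\Hom(P_i,P_j[m_j-m_i])$ forces $m_i=m_j$. This propagates through the connected quiver of the ring-indecomposable algebra $A$. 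With this ordering, your appeal to an unspecified result of Herschend and Iyama about a ``common behaviour'' of the $\ell_i$ is unnecessary and should be dropped.

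Compared with the bare citation, your route buys transparency:
\begin{itemize}
\item It shows that nothing beyond these two facts is needed.
\item It pinpoints where ring-indecomposability enters. For example, $K\mathbb{A}_2\times K\mathbb{A}_3$ is $1$-representation finite, but its blocks have ratios $m/N$ equal to $\frac{1}{3}$ and $\frac{1}{2}$, and for a fixed projective this ratio is an invariant, so no single pair $(N,m)$ works.
\item In the basic $\ell$-homogeneous setting of the paper's applications it collapses further. There $\nu^{\ell}(A)\cong\bigoplus_i\nu^{\ell}(P_{\sigma(i)})\cong A[(\ell-1)d]$ directly, the permutation $\sigma$ is absorbed into the twist, and no connectedness argument is needed. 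This recovers that such $A$ are twisted $\frac{d(\ell-1)}{\ell}$-CY.
\end{itemize}
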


We now recall a simplified version of another result from \cite{HI11b} that will be crucial for one of our applications. 

\begin{prop}[{\cite[Corollary 1.5]{HI11b}} with $k = 2$]
Let $K$ be a perfect field and $\ell$ a positive integer. 
If $A_i$ is $\ell$-homogeneous $d_i$-representation finite for $i = 1,2$, then 
\[
A_1 \otimes_K A_2
\]
is $\ell$-homogeneous $d_1 + d_2$-representation finite. 
\end{prop}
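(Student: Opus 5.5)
This is Corollary 1.5 of Herschend--Iyama with $k=2$, so one option is simply to cite it. Below is the argument I would reconstruct, using the characterisation after \cref{nrepfindef}: $A$ with $\gldim A\le d$ is $d$-representation finite if and only if each indecomposable projective $P_i$ admits $\ell_i\ge 0$ with $\nu_d^{-\ell_i+1}(P_i)$ indecomposable injective.

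Set $A=A_1\otimes_K A_2$ and $d=d_1+d_2$. First I would check $\gldim A\le d$. Since $K$ is perfect, $A_1/\rad A_1\otimes_K A_2/\rad A_2$ is semisimple, so $\rad A=\rad A_1\otimes A_2+A_1\otimes \rad A_2$. Tensoring minimal projective resolutions of simples $S_1\otimes S_2$ then gives resolutions of length at most $d_1+d_2$. Every simple $A$-module is a summand of some $S_1\otimes S_2$, which gives the bound.

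Next I would compute the Nakayama functor. We have $DA\cong DA_1\otimes_K DA_2$ as bimodules. Over a field, the derived tensor product is compatible with the external tensor product $\D^b(\modu A_1)\times \D^b(\modu A_2)\to \D^b(\modu A)$. Hence
\[
\nu_A(X_1\otimes X_2)\cong \nu_{A_1}X_1\otimes \nu_{A_2}X_2 ,
\]
and therefore $\nu_d(X_1\otimes X_2)\cong \nu_{d_1}X_1\otimes \nu_{d_2}X_2$, with the shifts adding up.

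The indecomposable projectives of $A$ are summands of $P^{(1)}_i\otimes P^{(2)}_j$. By $\ell$-homogeneity of both factors,
\[
\nu_d^{-\ell+1}(P^{(1)}_i\otimes P^{(2)}_j)\cong \nu_{d_1}^{-\ell+1}P^{(1)}_i\otimes \nu_{d_2}^{-\ell+1}P^{(2)}_j\cong I^{(1)}_{i'}\otimes I^{(2)}_{j'},
\]
which is an injective $A$-module. The same $\ell$ works for both factors, and this is exactly where the common value of $\ell$ is used. Applying this to each summand, every indecomposable projective $P$ of $A$ satisfies that $\nu_d^{-\ell+1}P$ is injective. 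It is indecomposable because $\nu_d$ is an autoequivalence of $\D^b(\modu A)$. So $A$ is $d$-representation finite and $\ell$-homogeneous, and by \cref{nrepfindef} this amounts to $\nu\U=\U$.

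The main obstacle is indecomposability. If $K$ is algebraically closed, $P^{(1)}_i\otimes P^{(2)}_j$ is already indecomposable projective and nothing more is needed. Over a perfect field it may split, and then one must argue summand-wise using that $\nu_d$ is an autoequivalence, as above.

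A second subtlety is that $\ell_i$ should be intrinsically defined, namely that no smaller power lands on an injective. I would handle this with the degree bookkeeping of \cite{HI11b}. For $d$-representation finite algebras, $\nu_d^{-k}P$ is concentrated in degree $0$ for $0\le k<\ell_i$, and it is non-injective for $k<\ell_i-1$ by $\tau_d$-theory. Tensoring then shows that the product is non-injective at earlier steps. I expect this step to need the most care.
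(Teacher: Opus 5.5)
Your reconstruction is correct, but it is not the argument behind the citation. The paper gives no proof of its own. \cite{HI11b} deduces its Corollary 1.5 from two ingredients: the characterisation of $\ell$-homogeneous $d$-representation finite algebras as the twisted $\frac{d(\ell-1)}{\ell}$-Calabi--Yau algebras of global dimension at most $d$, and the fact that this twisted fractionally Calabi--Yau property is stable under tensor products (the exponents add up to $(d_1+d_2)(\ell-1)$), together with additivity of global dimension over a perfect field.

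You instead work object by object. You evaluate $\nu_d^{-(\ell-1)}$ only on indecomposable projectives, via the K\"unneth isomorphism $\nu_d(X_1\otimes X_2)\cong\nu_{d_1}X_1\otimes\nu_{d_2}X_2$, and then invoke the criterion $DA\in\U$ of \cref{nrepfindef}. This bypasses the substantial part of \cite{HI11b} for the factors: passing from $\ell$-homogeneity to the twisted fractionally Calabi--Yau property, which requires producing a twist and a natural isomorphism of functors rather than isomorphisms on projectives. Your argument also never uses ring-indecomposability. In exchange, the route of \cite{HI11b} gives the stronger, functorial conclusion that $A_1\otimes_K A_2$ is itself twisted $\frac{(d_1+d_2)(\ell-1)}{\ell}$-Calabi--Yau.

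To close your final step cleanly, note that $\nu_d^{-(\ell-1)}$ is an autoequivalence. Hence $Q\mapsto\nu_d^{-(\ell-1)}Q$ is injective on isoclasses of indecomposable projectives, and by counting it is a bijection onto the indecomposable injectives. So $DA\in\U$.

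Your ``second subtlety'' is easier than you fear, and the tensoring argument you sketch for it would not suffice as stated. If $P^{(1)}_i\otimes P^{(2)}_j$ splits, non-injectivity of $\nu_{d_1}^{-k}P^{(1)}_i\otimes\nu_{d_2}^{-k}P^{(2)}_j$ says nothing about each indecomposable summand separately. Instead, use that for $\gldim A\le d$ the functor $\nu_d^{-1}=\RHom_A(DA,-)[d]$ sends complexes with cohomology in non-positive degrees to complexes of the same kind. Suppose $\nu_d^{-c}Q$ were injective for some $c<\ell-1$. Then $\nu_d^{-c-1}Q$ would be a projective module shifted by $[d]$. Consequently every $\nu_d^{-c'}Q$ with $c'>c$ would have cohomology only in degrees $\le -d<0$, contradicting that $\nu_d^{-(\ell-1)}Q$ is a module. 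Thus $\ell_Q=\ell$ holds automatically for every algebra of global dimension at most $d\ge 1$, and no $\tau_d$-theory is needed.
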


As in the classical theory of hereditary algebras, there is also a notion of (higher) preprojective algebras. 
For a $d$-hereditary algebra $A$, we denote the \textit{$(d+1)$-preprojective algebra of} $A$ by $\Pi_{d+1}A$.
Recall from Lemma 2.13 of \cite{IO13} that
\begin{align*}
\Pi_{d+1}A 
& 
\cong T_A(\tau_d^{-1}A) \\
& 
\cong \bigoplus_{i \geq 0} \tau^{-i}_{d}A.
\end{align*}
Note that one can consider $\Pi_{d+1}A$ to be a graded algebra by putting $(\tau_d^{-1}A)^{\otimes_A^{i}} \cong \tau_d^{-i}A$ in degree $i$, and we call this the \textit{preprojective} or \textit{higher preprojective  grading}.

The following proposition recalls various properties of $(d+1)$-preprojective algebras that we make use of in the sequel. 

\begin{prop}\label{prop: properties of higher preprojective algebras of d-RF algebras}
Let $A$ be a $d$-representation finite algebra. 
Then $\Pi_{d+1}(A)$ is  
\begin{enumerate}[(i)]
    \item finite dimensional by Lemma 2.13 of \cite{IO13};
    \item selfinjective by Corollary 3.4 of \cite{IO13};
    \item graded Frobenius of highest degree $a = \ell-1$ if $A$ is also assumed to be $\ell$-homogeneous by Proposition 2.4 of \cite{HI11b}.
\end{enumerate}
If the base field $K$ is algebraically closed, we also have that $\Pi_{d+1}(A)$ is
\begin{enumerate}[(i)]
  \setcounter{enumi}{3}
    \item $(d+1)$-Calabi--Yau Frobenius by Theorem 1.2 of \cite{AO14} if $A$ is basic or $\ell$-homogeneous; but see also \cite{Dugas'12} and \cite{Grant-Iyama'20}; and 
    \item bimodule stably $\langle 1 \rangle$-twisted $(d+1)$-Calabi--Yau in the sense of \cite{AO14}, and, if $A$ is $\ell$-homogeneous, it satisfies that $\Omega^{d+2}_{\Pi^{e}}\Pi \cong {}_{1}\Pi_{\mu^{-1}} \langle \ell \rangle$.
\end{enumerate}
\end{prop}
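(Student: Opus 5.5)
Items (i)--(iv) are quoted directly from the cited sources, so the plan is to argue only the final assertion of (v), namely that $\Omega^{d+2}_{\Pi^{e}}\Pi \cong {}_{1}\Pi_{\mu^{-1}}\langle \ell\rangle$ when $A$ is $\ell$-homogeneous. Here $\Pi = \Pi_{d+1}(A)$ carries its higher preprojective grading. The approach is to combine three things: the graded Calabi--Yau statement of Theorem 1.2 of \cite{AO14}, the graded Frobenius property in (iii), and the computation already carried out in the proof of \cref{prop: CY Frob characterization}, now with degree shifts tracked.

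The first step is to unpack ``bimodule stably $\langle 1\rangle$-twisted $(d+1)$-Calabi--Yau'' from \cite{AO14}. I expect it to say that
\[
\Omega^{d+2}_{\Pi^{e}}\Pi \cong \Hom_{\Pi^{e}}(\Pi,\Pi^{e})\langle 1\rangle
\]
holds in $\stgrmodu \Pi^{e}$. The second step is to compute $\Hom_{\Pi^{e}}(\Pi,\Pi^{e})$ as a graded bimodule. By (iii) we have $D\Pi \cong {}_{1}\Pi_{\mu}\langle -a\rangle$ with $a=\ell-1$. Repeating the chain of isomorphisms in the proof of \cref{prop: CY Frob characterization} gradedly gives
\[
D\Hom_{\Pi^{e}}(\Pi,\Pi^{e}) \cong D\Pi\otimes_\Pi D\Pi \cong {}_{1}\Pi_{\mu^{2}}\langle -2a\rangle .
\]
Dualizing, and using $D({}_{1}\Pi_{\mu^{2}}) \cong {}_{1}(D\Pi)_{\mu^{-2}} \cong {}_{1}\Pi_{\mu^{-1}}\langle -a\rangle$, yields
\[
\Hom_{\Pi^{e}}(\Pi,\Pi^{e}) \cong {}_{1}\Pi_{\mu^{-1}}\langle a\rangle .
\]
Inserting this into the first step gives $\Omega^{d+2}_{\Pi^{e}}\Pi \cong {}_{1}\Pi_{\mu^{-1}}\langle a+1\rangle = {}_{1}\Pi_{\mu^{-1}}\langle \ell\rangle$ in the stable graded category.

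The third step lifts this stable isomorphism to an isomorphism of graded bimodules, following the Remark after \cref{prop: CY Frob characterization}. Both sides have no nonzero projective summands. The syzygy $\Omega^{d+2}_{\Pi^{e}}\Pi$ has none by minimality. For ${}_{1}\Pi_{\mu^{-1}}$, write $\Pi$ as a product of its blocks; no block is separable, since each block of $\Pi$ is non-semisimple when $d\ge1$ (because $\tau_d^{-1}A\neq 0$ in a connected component). A stable isomorphism between graded modules without projective summands over the graded Frobenius algebra $\Pi^{e}$ lifts to a genuine isomorphism, which is the standard Krull--Schmidt argument in $\gr\Pi^{e}$.

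I expect the main obstacle to be bookkeeping rather than conceptual. One must check that the conventions of \cite{AO14} for the twist $\langle 1\rangle$, for $\Pi^{\vee}$ versus $D\Pi$, and for left versus right twisting by the Nakayama automorphism match ours, since a sign error would produce $\langle -\ell\rangle$ or $\mu$ instead of $\mu^{-1}$. As a sanity check I would test the case $d=1$ of type $\mathbb{A}$ against the classical $\Omega^{3}_{\Pi^{e}}\Pi\cong{}_{1}\Pi_{\mu^{-1}}\langle h\rangle$ recalled in the introduction, translated to the preprojective grading.
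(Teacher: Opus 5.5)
Your argument for the last assertion of (v) is the paper's. You read Theorem 1.2 of \cite{AO14} as $\Omega^{d+2}_{\Pi^{e}}\Pi \cong \Hom_{\Pi^{e}}(\Pi,\Pi^{e})\langle 1\rangle$ and rerun the computation from the proof of \cref{prop: CY Frob characterization}, with the shifts supplied by (iii). Your bookkeeping is correct: it gives $\Hom_{\Pi^{e}}(\Pi,\Pi^{e})\cong {}_{1}\Pi_{\mu^{-1}}\langle \ell-1\rangle$, and hence the shift $\langle \ell\rangle$.

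There are two caveats.

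First, (iii) and (iv) are not pure quotations, and the paper supplies short arguments for both.
\begin{itemize}
\item For (iii), the paper combines three ingredients: the Frobenius property from Proposition 2.4 of \cite{HI11b}; the fact that $\socM \Pi \subseteq \Pi_{\ell-1}$, since by $\ell$-homogeneity every indecomposable projective $\Pi$-module has highest degree $\ell-1$; and Lemma 2.4 of \cite{HS}.
\item For (iv), \cite{AO14} only provides the stable Calabi--Yau isomorphism. \cref{def: CY Frobenius} presupposes that $\Pi$ is Frobenius, and this is exactly where the hypothesis ``basic or $\ell$-homogeneous'' is used: basic plus selfinjective gives Frobenius, and otherwise (iii) does.
\end{itemize}

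Second, your lifting step does not appear in the paper's proof; the paper addresses this point only in the Remark after \cref{prop: CY Frob characterization}. Its justification also fails in a degenerate case.
\begin{itemize}
\item Take $A=K$, which is $1$-homogeneous and $d$-representation finite under \cref{def: d-representation finite}. Then $\tau_d^{-1}A=0$ and $\Pi=K$ is separable.
\item Consequently $\Omega^{d+2}_{\Pi^{e}}\Pi=0$ while ${}_{1}\Pi_{\mu^{-1}}\langle \ell\rangle\neq 0$, so the isomorphism holds only stably.
\item Your claim that every block of $\Pi$ is non-semisimple when $d\geq 1$ is therefore false as stated.
\end{itemize}
You need to exclude semisimple $A$, in line with the Remark. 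For $\ell$-homogeneous $A$, a semisimple block forces $\ell=1$, so every projective is injective and $A$ is semisimple. Once semisimple $A$ is excluded, your Krull--Schmidt lifting argument goes through.
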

\begin{proof}
Let $\Pi := \Pi_{d+1}(A)$.
Part \textit{(i)} and \textit{(ii)} are both immediate from the results cited in the formulation above.

For \textit{(iii)}, we note that $A$ being $\ell$-homogeneous means that the highest degree of any indecomposable projective $\Pi$-module $P$ is equal to $a = \ell - 1$. Consequently, $\socM \Pi \subseteq \Pi_{\ell - 1}$. Moreover, Proposition 2.4 of \cite{HI11b} implies that $\Pi$ is Frobenius. Hence, the result now follows from e.g.\ Lemma 2.4 of \cite{HS}. 

For \textit{(iv)}, this is almost immediate from \cite{AO14} as it only remains to show that $\Pi$ is Frobenius. For the latter, we proceed as follows.
If $A$ is basic, then $\Pi$ is also basic, and \textit{(i)} and \textit{(ii)} together imply that $\Pi$ is Frobenius. Alternatively, if $A$ is $\ell$-homogeneous, then $\Pi$ is Frobenius by \textit{(iii)}.

For \textit{(v)}, we note that Theorem 1.2 \cite{AO14} states that $\Pi_{d+1}(A)$ is bimodule stably $\langle 1 \rangle$-twisted $(d+1)$-Calabi--Yau, meaning, by definition, that $\Omega^{d+2}_{\Pi^{e}} \Pi \cong \Hom_{\Pi^e}(\Pi, \Pi^{e})\langle1\rangle$ holds. 
Hence, \cref{prop: CY Frob characterization} together with \textit{(iii)} above allows us to deduce that $$\Omega^{d+2}_{\Pi^{e}}\Pi \cong {}_{1}\Pi_{\mu^{-1}} \langle \ell \rangle$$ holds in case $A$ is $\ell$-homogeneous.
\end{proof}

\subsection{Higher preprojective algebras, bimodule resolutions, and Hochschild homology}
In this section, we establish some results involving degrees of generators in projective resolutions that allow us to simplify the computation of Hochschild homology for all higher preprojective algebras of $d$-representation finite algebras in a similar fashion to what is done in \cite{Etingof-Eu'06}, \cite{Evans-Pugh'12} and \cite{Morigi'22}.
In particular, as discussed in the introduction in relation to (B), all of \cite{Etingof-Eu'06, Evans-Pugh'12} and \cite{Morigi'22} use that the algebras they work with have explicitly describable graded minimal projective resolutions of the algebras considered as bimodules. 
In some sense, these resolutions are available since the algebras in question are almost Koszul as in \cite{BBK02}. 

Now, while a higher preprojective algebra $\Pi_{d+1}A$ of a $d$-representation finite algebra $A$ is not almost Koszul in general\footnote{Indeed, $\Pi_{d+1}A$ is almost Koszul if and only if $A$ is $d$-representation finite and Koszul by \cite{Grant-Iyama'20}.}, it is nevertheless not unreasonable to expect that the minimal graded projective resolution of $\Pi_{d+1}A$ should be ``nice'' when the latter is considered with the (higher) preprojective grading. Namely, this is the case since a higher preprojective algebra $\Pi_{d+1}A$ is the dual algebra to the trivial extension of $A$, a higher almost Koszul algebra in the sense of \cite{Haugland-Sandoy'26}.
Note thus that for the remainder of this section, we consider $\Pi_{d+1}A$ to be endowed with the (higher) preprojective grading. Also recall that, unless otherwise noted, we work with right modules.

It turns out that it is sufficient for our purposes to know the degrees that the terms of the minimal graded projective bimodule resolution of $\Pi_{d+1}(A)$ are generated in. 
To achieve this, we use one of the main results of \cite{AO14} in combination with some of the ideas in sections 3.1 and 3.2 of \cite{AIR15} to determine the degrees of generators of terms in the minimal projective resolution of the top of $\Pi_{d+1}(A)$. 
By using a graded version of a result from \cite{Hap89}, we can use this to obtain the desired information about the minimal graded projective bimodule resolution of $\Pi_{d+1}(A)$. Note that the relevant result in \cite{AO14} requires that $K$ be algebraically closed, and so we assume this for the remainder of this section.

\begin{prop}\label{prop: degrees of generators of terms of bimodule projective resolution}
Let $A$ be an $\ell$-homogeneous $d$-representation finite algebra. Moreover, let $S := \topM \Pi_{d+1}(A)$ and let $P^{\bullet}(S)$ be the minimal graded projective resolution of $S$.
If we write the integer $i$ as $i = (d+2)q + r$ for integers $q, r$ such that $0 \leq r < d + 2$, then $P^{-i}(S)$ is generated in degree $q\ell$ if $r = 0$ and in degrees $q\ell$ and $q\ell + 1$ otherwise.  
Moreover, we have that $\Omega^{d+2}(S) \cong S \langle \ell \rangle$. 
\end{prop}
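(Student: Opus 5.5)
Here $\Pi := \Pi_{d+1}(A)$ carries the higher preprojective grading. Then $\Pi_0 = A$ and $S = \topM \Pi = A/\rad A$ sits in degree $0$. The plan is to resolve $S$ in two stages: first over $A$ inside degree zero, then glue using the structure of $\Pi$ as a tensor algebra. The guiding model is the argument of sections 3.1--3.2 of \cite{AIR15}.

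\emph{Stage one: an exact sequence in degrees $0$ and $1$.} Since $\gldim A \leq d$, $S$ has a minimal projective $A$-resolution
\[
0 \to Q^{-d} \to \cdots \to Q^{0} \to S \to 0 .
\]
Apply $-\otimes_A \Pi$. Because $\Pi \cong \bigoplus_{i\ge 0}\tau_d^{-i}A$ as a right $A$-module, we get
\[
\Tor_j^A(S,\Pi)=\bigoplus_{i\geq 0}\Tor_j^A(S,\tau_d^{-i}A).
\]
The $d$-representation finite structure of $A$, specifically $\tau_d^{-i}A \cong \nu_d^{-i}A$ in $\U$ and the vanishing $\Ext^{j}_A(D(\tau_d^{-i}A), A)=0$ for $0<j<d$, should kill the intermediate $\Tor$'s. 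The only surviving homology is then in the top spot, namely
\[
\Tor_d^A(S,\Pi)\cong \Ext^d_A(DA,\ldots),
\]
which is concentrated in a single degree shift.

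Concretely, I expect the complex $Q^{\bullet}\otimes_A\Pi$ to have homology $S$ at position $0$ and $\Omega^{d+1}$-type homology at position $-d$. I then add one more projective term generated in degree $1$, coming from $Q^{-d}\otimes \tau_d^{-1}A$ via the identification $\Pi_{\geq 1}=\tau_d^{-1}A\otimes_A \Pi$. This should give an exact sequence of graded $\Pi$-modules
\[
0 \to \Omega^{d+2}S \to P^{-(d+1)} \to \cdots \to P^{0}\to S\to 0,
\]
with $P^{-i}$ generated in degrees $0$ and $1$ for $1\le i\le d+1$, and $P^0 = Q^0\otimes_A\Pi$ generated in degree $0$. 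Minimality follows because the differentials have entries in $\rad$: they lie either in $\rad A$ in degree $0$ or in degree $1$.

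\emph{Stage two: identify $\Omega^{d+2}S$.} I would compute $\Omega^{d+2}S$ directly as the kernel above. Alternatively, and more cleanly, I would use \cref{prop: properties of higher preprojective algebras of d-RF algebras}(v):
\[
\Omega^{d+2}_{\Pi^e}\Pi \cong {}_1\Pi_{\mu^{-1}}\langle \ell\rangle .
\]
Tensoring the minimal bimodule resolution with $S$ over $\Pi$ preserves exactness, since all terms are projective on each side. The result is a projective resolution of $S$ with
\[
\Omega^{d+2}S \cong S\otimes_\Pi \Pi_{\mu^{-1}}\langle\ell\rangle \cong S_{\mu^{-1}}\langle\ell\rangle .
\]
Since $\mu$ is graded, it preserves $\Pi_0=A$ and permutes the simples, so $S_{\mu^{-1}}\cong S$ as a graded module, because $S$ is the sum of all simples. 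This gives $\Omega^{d+2}S\cong S\langle\ell\rangle$. A consistency check is that the last term should be generated in degree $\ell$, matching the highest degree $\ell-1$ of $\Pi$ in \cref{prop: properties of higher preprojective algebras of d-RF algebras}(iii) plus one.

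\emph{Conclusion.} Splicing by periodicity, $P^{-(i+(d+2)q)}\cong P^{-i}\langle q\ell\rangle$, which yields the claimed degrees for $i=(d+2)q+r$.

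\emph{Main obstacle.} The hard step is Stage one: verifying that the terms $P^{-r}$ for $1\le r\le d+1$ are generated only in degrees $0$ and $1$. Equivalently, one must show that
\[
\Ext^r_{\gr\Pi}(S,S\langle j\rangle)=0 \quad\text{unless } j\in\{0,1\}.
\]
I would attack this via the graded analogue of the AIR15 argument. That argument identifies $\Ext^{r}_{\gr \Pi}(S, S\langle j\rangle)$ with $\Ext$-groups over $A$ between $S$ and the modules $\tau_d^{-j}$-twisted simples, and then uses the $d$-cluster tilting vanishing together with $\gldim A\le d$ to force $j\in\{0,1\}$ whenever $r\le d+1$.
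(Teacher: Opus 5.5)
Your Stage two matches the paper: you tensor $\Omega^{d+2}_{\Pi^e}\Pi\cong{}_1\Pi_{\mu^{-1}}\langle\ell\rangle$ with $S$, which identifies $\Omega^{d+2}S$ up to projective summands (harmless, since $S\langle\ell\rangle$ has none). Your final periodicity splice also matches. The gap is Stage one, which carries the entire content of the degree bounds. The mechanism you describe there is not merely unfinished; it is false.

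First, $\Hm^0(Q^\bullet\otimes_A\Pi)=S\otimes_A\Pi=\Pi/(\rad A)\Pi$, not $S$. Its degree-one part $\Pi_1/(\rad A)\Pi_1$ is nonzero as soon as $\Pi_1\neq 0$, so generators in degree $1$ are already needed in $P^{-1}$.

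Second, the groups $\Tor^A_j(S,\Pi)$ for $0<j<d$ do not vanish. By minimality, $\Tor^A_1(S,M)=0$ with $S=A/\rad A$ forces $M$ to be projective. But $\Pi_1\cong\tau_d^{-1}A$ has no projective summands, since its indecomposable summands are $\tau_d^{-1}$-translates of non-injective indecomposable projectives. The $d$-cluster tilting vanishing controls $\Ext$ between objects of $\add M$, not $\Tor$ against simples.

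So the picture of homology only in positions $0$ and $-d$, repaired by one extra term generated in degree $1$, is wrong. Degree-one generators occur in every $P^{-i}$ with $1\le i\le d+1$, as the paper shows. Your fallback, that $\Ext^r_{\gr\Pi}(S,S\langle j\rangle)=0$ for $j\notin\{0,1\}$ should follow from a graded version of \cite{AIR15}, only restates what has to be proved.

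The missing idea uses facts you already have:
\begin{itemize}
\item $\Omega^{d+2}S\cong S\langle\ell\rangle$;
\item $\Pi$ is selfinjective;
\item $\Pi$ is graded Frobenius of highest degree $\ell-1$, so an indecomposable projective generated in degree $c$ has simple socle in degree $c+\ell-1$.
\end{itemize}
In a minimal resolution over a selfinjective algebra, the differential cannot be injective on an indecomposable summand $Q$ of $P^{-i}$ with $i\geq 1$, since its image would be a nonzero direct summand lying in the radical. Hence $\socM Q\subseteq\Omega^{i+1}S$.

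For $i=d+1$ this places $\socM Q$ in degree $\ell$, so $Q$ is generated in degree $1$. Propagating backwards: if $P^{-(i+1)}$ is generated in degrees $\le b$, then $\Omega^{i+1}S$ lives in degrees $\le b+\ell-1$, so every summand of $P^{-i}$ is generated in degree $\le b$. With the trivial lower bound, this confines the generators of $P^{-i}$, $0\le i\le d+1$, to degrees $0$ and $1$. Your closing consistency check points in this direction, but the socle argument must be run on $P^{-(d+1)}$, not on the projective cover of $S\langle\ell\rangle$.

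To see that both degrees actually occur, as the paper proves, use two facts. The degree-zero strand of $P^\bullet(S)$ is the minimal $A$-resolution of $S$, of length $\gldim A=d$, which gives degree-zero generators for $0\le i\le d$. The functor $\Hom_\Pi(-,\Pi)$ turns $P^{[-(d+1),0]}(S)$ into the start of a minimal resolution of $(S\langle\ell\rangle)^*\cong D(S)\langle -1\rangle$. Its lowest strand, in degree $-1$, yields degree-one generators of $P^{-i}(S)$ for $1\le i\le d+1$.
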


\begin{proof} 
Let $\Pi := \Pi_{d+1}(A)$.
We begin by showing the second claim above. 
For this, we recall that \cref{prop: properties of higher preprojective algebras of d-RF algebras} \textit{(v)} entails that $\Omega^{d+2}\Pi \cong {}_{1}\Pi_{\mu^{-1}}$, and so we are done by applying $- \otimes_\Pi S$ and using that $M \otimes_\Pi \Omega^{d+2}_{\Pi^{e}}\Pi \cong \Omega^{d+2}M$ holds in $\stmodu \Pi$ for every $\Pi$-module $M$. Note that the latter claim follows by using that the bimodule projective resolution of a finite dimensional algebra $\Lambda$ is a splicing of split short exact sequences when considered as a complex of left or right modules, and that $M \otimes_{\Lambda} P$ is a right projective $\Lambda$-module whenever $P$ is a projective $\Lambda^{e}$-module.

We now show the first claim. For this, we begin by observing that the second claim implies that it suffices to establish the degrees of the generators of $P^{-i}(S)$ for $0 \leq i \leq d + 1$, and the remainder of the proof thus consists of doing just that by essentially using some of the ideas of section 3.1 and 3.2 of \cite{AIR15}.

Since $\Omega^{d+2} (S)\cong S\langle \ell \rangle$, we can deduce that $P^{-(d+1)}(S)$ must be generated in degree $1$ since $\Pi$ is graded Frobenius of highest degree $\ell - 1$ by \cref{prop: properties of higher preprojective algebras of d-RF algebras} (iii) due to our assumption that $A$ is $\ell$-homogenous. 
Indeed, on the one hand, we see that since $S \langle \ell \rangle$ must lie in the socle of $P^{-(d+1)}(S)$, we can deduce that $P^{-(d+1)}(S)$ must have summands that are generated in degree $1$. On the other hand, since $\Pi$ is selfinjective, the restriction of the differential starting in $P^{-(d+1)}(S)$ to an indecomposable summand of $P^{-(d+1)}(S)$ cannot be a monomorphism, thus implying that no summand of $P^{-(d+1)}(S)$ can be generated in degree different from $1$.

Now observe that, as a consequence of what we have just shown, we have that $P^{-i}(S)$ for $i \leq d$ is generated at most in degrees $0$ and $1$; see e.g.\ Lemma 2.9 \textit{(5)} of \cite{HS}.
Since the degree $0$ part of $P^{\bullet}(S)$ restricts to a minimal projective resolution of $S$ as an $A$-module and $\gldim A = d$, we have that $P^{-i}(S)$ must have generators in degree $0$ for each $0 \leq i \leq d$. 

It now only remains to show that it also has generators in degree $1$ for $1 \leq i \leq d + 1$.
We now proceed to deduce this by using that $(-)^* := \Hom_{\Pi}(-, \Pi)$ sends $P^{[-(d+1), 0]}$ to a complex $Q^{-i} := P^{i -(d + 1)}(S)^*$ giving the first $d+2$ terms of a graded minimal projective resolution of $(S\langle \ell \rangle)^*$ considered as a left $\Pi$-module.

Observe then that 
\begin{align*}
(S\langle \ell \rangle)^* 
& = \Hom_{\Pi}(S\langle \ell \rangle, \Pi) \\
& \cong \Hom_{\Pi}(S\langle 1 \rangle, \Pi\langle -\ell + 1 \rangle) \\
& \cong \Hom_{\Pi}(S_{\mu^{-1}}\langle 1 \rangle, \Pi_{\mu^{-1}}\langle -\ell + 1 \rangle) \\
& \cong \Hom_{\Pi}(S_{\mu^{-1}}\langle 1 \rangle, D\Pi ) \\
& \cong \Hom_{\Pi^{\op}}(\Pi, D(S_{\mu^{-1}}\langle 1 \rangle))\\
& \cong D(S_{\mu^{-1}}\langle 1 \rangle).
\end{align*}
Hence, we have that $Q^{[-(d+1), 0]}$ is a graded minimal (left) projective resolution of $$(S\langle \ell \rangle)^* \cong D(S_{\mu^{-1}} \langle 1 \rangle) \cong D(S)\langle -1 \rangle,$$ where in the last isomorphism we use that $S_{\mu^{-1}} \cong S$ and that $D(-\langle j) \rangle \cong D(-)\langle - j\rangle$.
We deduce then that the arguments we gave above establish that $Q^{-i} = P^{i -(d + 1)}(S)^*$ has generators in degree $-1$ for $0 \leq i \leq d$, thus implying that $P^{-i}(S)$ has generators in degree $1$ for $1 \leq i \leq d +1$. 
\end{proof}

The following is now an easy consequence by using a graded version of Lemma 1.5 of \cite{Hap89}.

\begin{prop}\label{prop: graded version of Happel's result}
Let $A$ be an $\ell$-homogeneous $d$-representation finite algebra and write the integer $i$ as $i = (d+2)q + r$ for integers $q, r$ such that $0 \leq r < d + 2$.  
Then the graded minimal projective bimodule resolution $P^
{\bullet} := P^{\bullet}(\Pi_{d+1}(A))$ of $\Pi := \Pi_{d+1}(A)$ satisfies that $P^{-i}$ is generated in degree $q\ell$ if $r = 0$ and in degrees $q\ell$ and $q\ell + 1$ otherwise. 
\end{prop}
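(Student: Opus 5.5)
We transfer the statement of \cref{prop: degrees of generators of terms of bimodule projective resolution} from the one-sided resolution of $S = \topM \Pi$ to the bimodule resolution. The tool is a graded version of Lemma 1.5 of \cite{Hap89}. For a finite dimensional basic algebra $\Pi$ over an algebraically closed field, it says the following: the multiplicity of the indecomposable projective bimodule $\Pi e_a \otimes_K e_b \Pi$ in the $i$th term of the minimal projective bimodule resolution of $\Pi$ equals $\dim \Ext^{i}_{\Pi}(S_a, S_b)$, where $S_a, S_b$ are simple $\Pi$-modules. Equivalently, the $i$th term is $\bigoplus_{a,b} \Pi e_a \otimes_K \Ext^i_\Pi(S_a,S_b)^{*} \otimes_K e_b\Pi$ up to orientation conventions.

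In the graded setting, the summand $\Pi e_a \otimes_K e_b \Pi\langle j\rangle$ should occur in $P^{-i}$ with multiplicity $\dim \Ext^i_{\gr \Pi}(S_a, S_b\langle j\rangle)$. Here $S_a, S_b$ are the graded simple modules concentrated in degree $0$, and we use the K\"unneth grading on $\Pi^{e}$. So the first step is to establish this graded Happel lemma.

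The plan for that step follows Happel's argument with gradings kept track of. First, take the minimal graded bimodule resolution $P^\bullet \to \Pi$. Apply $S_b \otimes_\Pi -$ (or $-\otimes_\Pi S$ on the appropriate side). Since $P^\bullet$ is split as a complex of one-sided modules, this yields a graded projective resolution of $S_b$. Minimality is preserved: the differentials of $P^\bullet$ have image in $\rad(\Pi^e) P^\bullet$, so after tensoring with a semisimple module in degree $0$ the differentials land in the radical. Also, $S_b \otimes_\Pi (\Pi e_a \otimes_K e_c \Pi)\langle j\rangle \cong (S_b e_a) \otimes_K e_c\Pi\langle j\rangle$, which is nonzero exactly when $a=b$ and is then $e_c\Pi\langle j\rangle$. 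Hence the multiset of graded summands of $P^{-i}$ is determined by the graded minimal resolutions of all the simples $S_b$. In particular, the degrees in which $P^{-i}$ is generated are exactly the union over $b$ of the degrees in which the $i$th term of the minimal graded resolution of $S_b$ is generated. That is the set of degrees of generators of $P^{-i}(S)$, since $S = \bigoplus_b S_b$.

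We then conclude directly from \cref{prop: degrees of generators of terms of bimodule projective resolution}. Write $i=(d+2)q+r$. The term $P^{-i}(S)$ is generated in degree $q\ell$ if $r=0$, and in degrees $q\ell$ and $q\ell+1$ otherwise, so the same holds for $P^{-i}$. The problem is cast in terms of $\Pi$, which is a basic algebra after passing to a Morita equivalent one. Its degree-zero part $A$ is not semisimple, but since $S$ is still concentrated in degree $0$, the degree bookkeeping is unaffected. If $\Pi$ is not basic, we replace it by its graded basic algebra, which is graded Morita equivalent and has the same degree data.

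The main obstacle is checking the graded Happel lemma carefully when $\Pi_0 = A$ is not semisimple. Two points need care. First, we need the right notion of minimality: the radical of $\Pi^{e}$ is not simply the positive-degree part. Second, we need that the tensored complex is genuinely minimal. For both, I would argue with tops: $\topM(P^{-i}) \cong \Tor_i^{\Pi^e}(\Pi, \Pi^e/\rad \Pi^e)$ computed gradedly. Here $\Pi^e/\rad\Pi^e \cong \bigoplus_{a,b} S_a^{\op}\otimes S_b$ is concentrated in degree $0$ because $K$ is algebraically closed. This Tor identifies with $\Tor^\Pi_i(S_a, S_b)$ via the standard change-of-rings isomorphism, and this avoids subtleties about radicals.
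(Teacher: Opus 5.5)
Your proposal is correct and follows the paper's route: a graded version of Happel's Lemma 1.5 transfers the generator degrees of $P^{-i}(S)$ from \cref{prop: degrees of generators of terms of bimodule projective resolution} to the bimodule resolution. The only difference is in how that lemma is proved. The paper uses the degree-zero adjunction isomorphism $\Ext^{i}_{\Gr \Pi^{e}}(\Pi, \Hom_K(S,S)\langle j\rangle)\cong \Ext^{i}_{\Gr \Pi}(S, S\langle j\rangle)$, whereas you check directly that $S\otimes_\Pi P^{\bullet}$ is a minimal graded projective resolution of $S$; your $\Tor$/change-of-rings remark is just the dual of the paper's argument.
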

\begin{proof}
Note that if we let $S = \topM \Lambda$, we have $\Hom_k(S, S) \cong \topM_{\Lambda^{e}} (\Lambda \otimes_k \Lambda)$. Moreover, recall that the proof of Lemma 1.5 of \cite{Hap89} proceeds by using the isomorphism
\begin{align*}
\Ext^{i}_{\Lambda^{e}}(\Lambda, \Hom_K(S,S))
& \cong \Ext^{i}_{\Lambda}(S, S). 
\end{align*}
This isomorphism can be proven using standard $\otimes$-$\Hom$-adjunction as follows.
\begin{align*}
\RHom_{\Lambda}(M, N) 
& \cong \RHom_{\Lambda}(\Lambda \otimes_{\Lambda^{e}}^{\mathbb{L}} (M \otimes_K \Lambda), N)\\
& \cong \RHom_{\Lambda^{e}}(\Lambda, \RHom_{\Lambda}(M \otimes_K \Lambda, N))\\
& \cong \RHom_{\Lambda^{e}}(\Lambda, \RHom_{K}(M, \RHom_\Lambda(\Lambda, N)))\\
& \cong \RHom_{\Lambda^{e}}(\Lambda, \RHom_{K}(M, N))\\
& \cong \RHom_{\Lambda^{e}}(\Lambda, \Hom_K(M,N))
\end{align*}
Consider now the case where $M$ and $N$ are graded $\Lambda$-modules, and observe that all of the isomorphisms in the above are homogeneous of (internal) degree zero. 
Consequently, we deduce that there is an isomorphism as follows.
\begin{align*}
\Ext^{i}_{\Gr \Lambda^{e}}(\Lambda, \Hom_K(S, S)\langle j\rangle)
& \cong \Ext^{i}_{\Gr \Lambda}(S, S\langle j \rangle) 
\end{align*}
Evidently, the conclusion follows by using \cref{prop: degrees of generators of terms of bimodule projective resolution}.
\end{proof}

\begin{remark}
Almost $d$-$T$-Koszul algebras were introduced in \cite{HS} as a common generalization of the almost Koszul algebras of \cite{BBK02} and trivial extensions of $d$-representation finite algebras. 
Similarly to the case of a Koszul or almost Koszul algebra $\Lambda$, there is a notion of a dual algebra $\Lambda^!$. 
In the case of a trivial extension $\Delta(A)$ of a $d$-representation finite algebra $A$, the almost Koszul dual $\Delta(A)^!$ is the higher preprojective algebra $\Pi_{d+1}(A)$.
Now, in forthcoming work by the second author and Johanne Haugland, it is shown that some classes of almost $d$-$T$-Koszul algebras come equipped with certain useful projective resolutions: namely, if an algebra $\Lambda$ is almost $d$-Koszul algebra with respect to $\Lambda_0$, then one can in some cases construct a particularly nice $\Lambda$-projective resolution of $\Lambda_0$.
In the case where $\Lambda \simeq \Pi_{d+1}(A)$ for a $d$-representation finite algebra $A$, the minimal projective resolution of $\Lambda$ considered as a bimodule can be seen to be a ``lift'' of the minimal projective resolution of $\Lambda_0$ considered as a $\Lambda$-module that the second author constructs together with Haugland. 
\end{remark}

We now end this section with a result showing that the degree $q\ell$ part of the resolution above often yields only trivial contributions to the Hochschild homology of $\Pi_{d+1}(A)$.

\begin{prop}\label{prop: HH_* bounds on degrees}
Let $A$ be an $\ell$-homogeneous $d$-representation finite algebra, let $i$ be a non-negative integer, and write $i = (d+2)q + r$ for integers $q, r$ such that $0 \leq r < d + 2$. 
Then $$\HH_{i}(\Pi_{d+1}(A)) = \bigoplus_{j = q\ell}^{q\ell + \ell - 1}\HH_{i, j}(\Pi_{d+1}(A))$$ if $r = 0$  and $$\HH_{i}(\Pi_{d+1}(A)) = \bigoplus_{j = q\ell + 1}^{q\ell +\ell}\HH_{i, j}(\Pi_{d+1}(A))$$ otherwise. 
\end{prop}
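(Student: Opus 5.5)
We compute $\HH_i(\Pi)$, with $\Pi := \Pi_{d+1}(A)$, as the homology of the complex $P^{\bullet}\otimes_{\Pi^{e}}\Pi$, where $P^{\bullet}$ is the graded minimal projective bimodule resolution. By \cref{prop: graded version of Happel's result}, we have $P^{-i}\cong\bigoplus_k \Pi\otimes_K V_k\otimes_K\Pi\langle -j_k\rangle$ with $j_k\in\{q\ell\}$ if $r=0$ and $j_k\in\{q\ell,q\ell+1\}$ otherwise. Here $V_k$ denotes the one-dimensional bimodule $e_a\otimes e_b$, so that the summand is $\Pi e_a\otimes e_b\Pi$. Applying $-\otimes_{\Pi^{e}}\Pi$ turns each summand into $e_b\Pi e_a\langle j_k\rangle$. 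Since $\Pi$ is concentrated in degrees $0,\dots,\ell-1$ (it is graded Frobenius of highest degree $\ell-1$ by \cref{prop: properties of higher preprojective algebras of d-RF algebras} (iii)), the chain group in homological degree $i$ lives in degrees $[q\ell,q\ell+\ell-1]$ if $r=0$ and in $[q\ell,q\ell+\ell]$ otherwise. The homology inherits these bounds, which settles the case $r=0$ immediately.

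For $r\neq 0$ it remains to show that $\HH_{i,q\ell}(\Pi)=0$. In internal degree $q\ell$, only the degree-$0$ part of summands generated in degree $q\ell$ contributes. The plan is to identify the degree-$q\ell$ strand of the resolution with a shifted copy of a bimodule resolution of $A=\Pi_0$. Restricting $P^{\bullet}$ to its internal degree $q\ell$ generators (tensored down to degree-$0$ coefficients) should give, using the periodicity $\Omega^{d+2}_{\Pi^e}\Pi\cong{}_1\Pi_{\mu^{-1}}\langle\ell\rangle$ from \cref{prop: properties of higher preprojective algebras of d-RF algebras} (v), the twisted complex ${}_{1}(\cdot)_{\mu^{-q}}$ of the degree-$0$ part. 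By the analysis in the proof of \cref{prop: degrees of generators of terms of bimodule projective resolution}, the degree-$0$ part is the minimal projective bimodule resolution of $A$, of length $d$. Consequently, the degree-$q\ell$ strand of $P^\bullet\otimes_{\Pi^e}\Pi$ is $P^\bullet(A)\otimes_{A^e}A_{\mu^{-q}}$ shifted by $(d+2)q$, placed in positions $r=0,\dots,d$. Its homology in position $r$ is then $\Tor^{A^e}_r(A,{}_1A_{\mu^{-q}})\langle q\ell\rangle$, a twisted Hochschild homology of $A$.

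The crux is the vanishing of $\HH_r(A,{}_1A_{\mu^{-q}})$ for $r\ge1$. (For $r=d+1$ there is no degree-$q\ell$ generator at all, since $P^{-(d+1)}(S)$ is generated only in degree $q\ell+1$, so that case is free.) For $1\le r\le d$, the plan is to use that $\mu$ restricts to an automorphism of $A$, that $\Pi_0=A$ has acyclic quiver (being $d$-representation finite; if this is not available, use instead that the twist permutes idempotents and the complex $\bigoplus e_b A e_a$ over generators $e_a\otimes e_b$ with arrows forcing $a\ne b$ has no loops), and that the degree-$0$ minimal resolution has generators $e_a\otimes e_b$ with $\Ext^r_A(S_a,S_b)\ne0$. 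Under the twist, such a generator contributes $e_b A e_{\mu(a)}$ in degree $0$, where only paths appear. Homology is computed degreewise in path length. Because $A$ is directed and basic, there is a grading by a height function compatible with the twist up to permutation, and Happel's argument (Hochschild homology of an acyclic quiver algebra in positive degrees vanishes, as $\HH_*(A)\cong\HH_*(A/\rad A)$) adapts to the twisted coefficients.

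This twisted-coefficients vanishing is the main obstacle. I would first try to reduce to the trivially graded path-length filtration, mimicking the proof of \cref{lem:relative cyclic homology vanishes in homogeneous degree zero}: filter by $\rad A$ to reduce to $A/\rad A$, which is semisimple and has no higher Tor. Once this is established, the degree-$q\ell$ component vanishes for $r\ne0$, and the stated bounds follow.
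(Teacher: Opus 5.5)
Your reduction is the paper's, carried out more carefully. The bounds come from the generator degrees in \cref{prop: graded version of Happel's result}, and for $r\neq 0$ the lowest degree $q\ell$ is computed by the degree-$0$ strand. You are right that for $q\ge 1$ the periodicity $\Omega^{d+2}_{\Pi^e}\Pi\cong{}_1\Pi_{\mu^{-1}}\langle\ell\rangle$ twists this strand. What must vanish is therefore $\Tor^{A^e}_r(A,{}_1A_{\mu^{-q}})$ for $1\le r\le d$, not just $\HH_r(A)$. The paper treats only $q=0$: there $\HH_{i,0}(\Pi)\cong\HH_i(A)=0$ because $\gldim A<\infty$, a vanishing result (due to Keller over perfect fields) that needs no acyclicity. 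It then reaches $q\ge1$ with the word ``essentially''.

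\textbf{The gap.} You never prove the twisted vanishing, and your main argument assumes $A$ has an acyclic quiver ``being $d$-representation finite''. That is not a theorem: it is exactly the open conjecture recalled in the paper's introduction. Your fallbacks do not replace it.
\begin{itemize}
\item Finite global dimension does give no loops. But that does not kill summands $e_bAe_{\pi(a)}$ when longer oriented cycles exist, so the chain complex itself need not vanish.
\item Filtering ${}_1A_{\mu^{-q}}$ by powers of $\rad A$ gives semisimple subquotients. However, $\Tor^{A^e}_r(A,-)$ of a semisimple bimodule is a $\Tor^A_r$ between simple modules, which is nonzero for suitable $1\le r\le d$. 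So the $E_1$-page does not vanish.
\item \cref{lem:relative cyclic homology vanishes in homogeneous degree zero}, which you want to mimic, works only because acyclicity kills the chain groups themselves.
\end{itemize}

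\textbf{How to close it.} If you assume the quiver of $A$ is acyclic (this covers the paper's application to $K\mathbb{A}_n^{\otimes 2}$), your height-function idea works once made precise:
\begin{itemize}
\item After composing with an inner automorphism, which does not change ${}_1A_{\mu^{-q}}$ up to isomorphism, $\mu^{-q}|_A$ sends $e_a$ to $e_{\pi(a)}$ for a quiver automorphism $\pi$.
\item Let $h(x)$ be the length of a longest path ending at $x$. Then $h\circ\pi=h$, and arrows strictly raise $h$.
\item A nonzero summand of the $r$th chain group with $r\ge1$ would combine the positive-length path forced by $\Ext^r_A(S_a,S_b)\neq0$ with a path from $e_bAe_{\pi(a)}$. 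This gives a positive-length path from some vertex $x$ to $\pi^{\pm1}(x)$, which is impossible.
\end{itemize}
For general $d$-representation finite $A$, you need a twisted analogue of the finite-global-dimension vanishing. One way is to run Keller's argument through the $A_\infty$-Koszul dual $\Ext^*_A(A/\rad A,A/\rad A)$. There the twist becomes a degree-preserving automorphism permuting the idempotents, so the reduced twisted Hochschild complex sits in non-negative total degrees.
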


\begin{proof}
Let $\Pi := \Pi_{d+1}(A)$.
Since $\Omega^{d+2}_{\Pi^{e}}\Pi \cong \Pi_{\mu^{-1}} \langle \ell \rangle$ holds by \cref{prop: properties of higher preprojective algebras of d-RF algebras} \textit{(v)}, it essentially suffices to prove the statement for $0 \leq i \leq d + 1$. 
If $i = 0$, the claim follows by noting that $P ^{0}(\Pi)$ is generated in degree zero and that $\Pi$ is positively graded of highest degree $\ell -1$.

Now suppose that $1 \le i \le d + 1$. Since the preceding proposition implies that $P ^{-i}(\Pi)$ is then generated in degrees $0$ and $1$, we would be done if we could now show that $\HH_{i,0}(\Pi)$ is trivial.
However, we can see that this does indeed hold by using that $$\HH_{i,0}(\Pi) \cong \Hm^{-i} (P^{\bullet}_{0} \otimes_{\Pi^{e}} \Pi_0),$$ that $\Pi_0 = A$, 
that $P^{\bullet}_{0}$ is a minimal projective bimodule resolution of $A$, 
and that $$P^{\bullet}_{0} \otimes_{\Pi^{e}} \Pi_0 \cong P^{\bullet}_{0} \otimes_{A^{e}} A.$$ 
Indeed, from this we deduce that 
\begin{align*}
\HH_{i,0}(\Pi) & \cong \Hm^{-i} (P^{\bullet}_0 \otimes_{A^{e}} A)\\
& \cong \HH_i(A) \\
& = 0
\end{align*}
for $i > 0$ since $\gldim A < \infty$. 
\end{proof}

\section{Some linear algebra}\label{sec: some linear algebra}

In this section, we first recall some background on Coxeter polynomials of tensor product algebras. 
These results are certainly not new and our exposition is based in part on \cite{Happel'97}. Using these results, we obtain an easy computation of the Coxeter polynomials of $K\mathbb{A}_n \otimes K\mathbb{A}_n$. 

Following this, we study the relationship between the Coxeter polynomial of a finite dimensional algebra $A$ and the graded Cartan determinant of its trivial extension $\Delta (A)$. 
Specializing to the case that $A$ is $d$-representation finite, we show a result involving the graded Cartan determinants of $\Delta(A)$ and $\Pi_{d+1}(A)$ that can be viewed as a ``higher almost Koszul'' version (as in \cite{HS}) of results from \cite{BBK02} and \cite{BGS96}.
\subsection{Coxeter polynomials of tensor products of algebras}
Recall that for a basic finite dimensional algebra $A$ of finite global dimension, the \textit{Coxeter matrix of} $A$ is
$$
\Phi_A = -C_{A}^T C_{A}^{-1},
$$
where $C_{A}$ is the (ungraded) Cartan matrix of ${A}$. The \textit{Coxeter polynomial of} ${A}$,  denoted $\phi_{A}(x)$, is the characteristic polynomial of $\Phi_{A}$.

In order to determine the Coxeter polynomial of a tensor product of two finite dimensional algebras, we need the following two simple results. We do not claim that the results are novel (in particular, similar statements appear in \cite{Happel'97}), but we include proofs for the sake of completeness. 
Note that $K$ is assumed to be perfect for the remainder of this subsection.

\begin{lem} \label{lem:coxeter matrix of tensor product}
    Let ${A}$ and ${B}$ be basic finite dimensional algebras of finite global dimension, and let $\Phi_{A}$ and $\Phi_{B}$ denote their Coxeter matrices. Then the Coxeter matrix of the tensor product ${A} \otimes {B}$ is
$$\Phi_{{A} \otimes {B}} = - \Phi_{A} \otimes \Phi_{B},
$$
where the symbol $\otimes$ on the right hand side denotes the Kronecker product of matrices.
\end{lem}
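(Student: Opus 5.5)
The plan is to first show that the Cartan matrix is multiplicative, $C_{A\otimes B} = C_A \otimes C_B$, and then do a short Kronecker-product computation.

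For the Cartan matrix, let $e_1,\dots,e_n$ and $f_1,\dots,f_m$ be complete sets of primitive orthogonal idempotents of $A$ and $B$. Since $K$ is perfect and $A$, $B$ are basic, the elements $e_i\otimes f_j$ form a complete set of primitive orthogonal idempotents of $A\otimes B$. The key input is that $(A/\rad A)\otimes(B/\rad B)$ is again a product of copies of $K$. Strictly, one needs $A/\rad A\cong K^n$, i.e.\ split basic algebras, which holds for instance when $K$ is algebraically closed. More generally, perfectness guarantees that $\rad A\otimes B + A\otimes\rad B$ is the radical of $A\otimes B$, so the $e_i\otimes f_j$ remain primitive.

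Ordering the index pairs $(i,j)$ lexicographically, we get
\[
(e_i\otimes f_j)(A\otimes B)(e_{i'}\otimes f_{j'}) \cong e_iAe_{i'}\otimes_K f_jBf_{j'}.
\]
Taking dimensions, the Cartan matrix is $C_{A\otimes B}=C_A\otimes C_B$. The factor $A\otimes B$ also has finite global dimension, because $\gldim(A\otimes B)\le \gldim A+\gldim B$ over a perfect field. Hence $C_{A\otimes B}$ is invertible and the Coxeter matrix $\Phi_{A\otimes B}$ is defined.

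The rest is the mixed-product property of Kronecker products, together with $(X\otimes Y)^T = X^T\otimes Y^T$ and $(X\otimes Y)^{-1}=X^{-1}\otimes Y^{-1}$:
\begin{align*}
\Phi_{A\otimes B} &= -(C_A\otimes C_B)^T(C_A\otimes C_B)^{-1} \\
&= -(C_A^TC_A^{-1})\otimes(C_B^TC_B^{-1}) \\
&= -\bigl((-\Phi_A)\otimes(-\Phi_B)\bigr) \\
&= -\Phi_A\otimes\Phi_B.
\end{align*}

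The main obstacle is justifying the idempotent decomposition. This is where perfectness of $K$ and basicness are used: the tensor product must stay basic and the $e_i\otimes f_j$ must be primitive, rather than splitting further. The cleanest route is to use the result that for a perfect field a tensor product of separable algebras is separable, and hence semisimple.

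If $A/\rad A$ is not split, this step needs care with the convention for $C_A$, namely whether it records dimensions or composition multiplicities. With the paper's dimension convention, I would restrict to the case where $A/\rad A$ and $B/\rad B$ are products of $K$. That is implicitly the case in the applications, where $K$ is algebraically closed in the relevant sections.
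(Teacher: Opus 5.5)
Your proposal is correct and follows the paper's proof essentially verbatim: you use the idempotents $e_i\otimes f_j$, obtain $C_{A\otimes B}=C_A\otimes C_B$ from $\dim (e_i\otimes f_j)(A\otimes B)(e_{i'}\otimes f_{j'})=\dim e_iAe_{i'}\cdot\dim f_jBf_{j'}$, and then apply the same mixed-product computation. One small correction: your remark that perfectness alone keeps the $e_i\otimes f_j$ primitive is false in the non-split case (e.g.\ $\mathbb{C}\otimes_{\mathbb{R}}\mathbb{C}\cong\mathbb{C}\times\mathbb{C}$, a point the paper's own proof also glosses over), since what is really needed is $A/\rad A\cong K^n$ and $B/\rad B\cong K^m$, which is exactly the restriction you end up adopting and which holds for the algebras $KQ/I$ the paper uses, regardless of whether $K$ is perfect.
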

\begin{proof}
    With respect to some choice of a complete set of primitive orthogonal idempotents $e_1, \ldots, e_n$ of ${A}$, the Cartan matrix of ${A}$ is given by the formula
    $$
    C_{A} = (\dim e_i {A} e_j)_{ij}.
$$
Now by choosing a complete set of primitive orthogonal idempotents $e_1', \ldots, e_m'$ of ${B}$, we obtain a set of idempotents $\{ e_{i_1}\otimes e_{i_2}' \}_{i_1, i_2}$ of ${A} \otimes {B}$. Note in particular that these idempotents are primitive since $K$ is assumed to be perfect.
With respect to this choice of idempotents, the Cartan matrix of ${A} \otimes {B}$ is
$$
C_{{A} \otimes {B}} = (\dim((e_{i_1} \otimes e_{i_2}') ({A} \otimes {B}) (e_{j_1} \otimes e_{j_2}')))_{(i_1, i_2), (j_1, j_2)},
$$
where the tuples $(i_1, i_2)$ and $(j_1, j_2)$ are regarded as a row index and a column index, respectively. Now we note that this matrix may be rewritten as
$$
(\dim(e_{i_1} {A} e_{j_1}) \dim (e_{i_2}' {B} e_{j_2}') )_{(i_1, i_2), (j_1, j_2)},
$$
which is the Kronecker product of $C_{A}$ and $C_{B}$. We thus see that $C_{{A} \otimes {B}} = C_{A} \otimes C_{{B}}$. Using this, along with the fact that Kronecker products commute with matrix inversion, transposition,  and multiplication, we now see that
\begin{align*}
    \Phi_{{A} \otimes {B}} &= -C_{{A} \otimes {B}}^T C_{{A} \otimes {B}}^{-1} \\
    &= -(C_{A} \otimes C_{B})^T (C_{{A}} \otimes C_{{B}})^{-1} \\
    &= -(C_{A}^T C_{A}^{-1}) \otimes (C_{B}^T C_{B}^{-1}) \\
    &= -(-\Phi_{A}) \otimes (-\Phi_{B} ) \\
    &= - \Phi_{A} \otimes \Phi_{B},
\end{align*}
as desired.
\end{proof}

The following result, in conjunction with the previous one, allows us to compute the Coxeter polynomial of a tensor product of algebras.

\begin{lem} \label{lem:characteristic polynomial of kronecker product}
    Let ${M}$ and ${N}$ be square matrices over a field, and denote their characteristic polynomials by $p_{M}(x)$ and $p_{N}(x)$. Suppose that $\lambda_1\ldots, \lambda_m$ are all the roots of $p_{M}(x)$, counted with multiplicity, and similarly that $\mu_1, \ldots, \mu_n$ are the roots of $p_{N}(x)$. Then the characteristic polynomial of the Kronecker product ${M} \otimes {N}$ is
    $$
    p_{{M} \otimes {N}}(x) = \prod_{i, j} (x - \lambda_i \mu_j).
    $$
\end{lem}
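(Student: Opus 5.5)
The plan is to pass to an algebraic closure $\overline{K}$ of the base field and triangularize both matrices simultaneously via Schur (Jordan) form. The characteristic polynomial of a matrix is unchanged by extending scalars, and the Kronecker product commutes with extension of scalars. So we may assume the field is algebraically closed, and then $p_M(x)=\prod_i(x-\lambda_i)$ and $p_N(x)=\prod_j(x-\mu_j)$ split completely.

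First, choose invertible matrices $S$ and $T$ such that $S^{-1}MS=U$ and $T^{-1}NT=V$ are upper triangular. The diagonal of $U$ is $\lambda_1,\ldots,\lambda_m$ and that of $V$ is $\mu_1,\ldots,\mu_n$, each counted with multiplicity. Next, apply the mixed product rule $(A\otimes B)(C\otimes D)=AC\otimes BD$, already used in the proof of \cref{lem:coxeter matrix of tensor product}. It gives $(S\otimes T)^{-1}=S^{-1}\otimes T^{-1}$, and hence
\[
(S\otimes T)^{-1}(M\otimes N)(S\otimes T)=U\otimes V .
\]
Thus $M\otimes N$ is similar to $U\otimes V$ and has the same characteristic polynomial.

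Finally, check that the Kronecker product of two upper triangular matrices is upper triangular, taking the lexicographic ordering of index pairs $(i,k)$ used to define the Kronecker product. The entry of $U\otimes V$ in position $((i,k),(j,l))$ is $U_{ij}V_{kl}$. If $(i,k)>(j,l)$ lexicographically, then either $i>j$, so $U_{ij}=0$, or $i=j$ and $k>l$, so $V_{kl}=0$; in both cases the entry vanishes. The diagonal entries are $U_{ii}V_{kk}=\lambda_i\mu_k$. The characteristic polynomial of a triangular matrix is the product of $(x-\text{diagonal entry})$, so
\[
p_{M\otimes N}(x)=\prod_{i,j}(x-\lambda_i\mu_j).
\]

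There is no real obstacle here. The only points requiring care are the reduction to an algebraically closed field, which is needed for triangularization, and keeping the index ordering consistent so that triangularity is genuinely preserved.
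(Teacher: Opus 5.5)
Your proof is correct and follows essentially the paper's route: triangularize over an algebraic closure, then observe that a Kronecker product of upper triangular matrices is upper triangular (in lexicographic order) with diagonal entries $\lambda_i\mu_j$. The paper first reduces to the case of single Jordan blocks, whereas your global triangularization, via $(S\otimes T)^{-1}=S^{-1}\otimes T^{-1}$, makes that step unnecessary. You also handle the extension of scalars explicitly, which the paper leaves implicit.
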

\begin{proof}
To prove the result, it is enough to show that the eigenvalues of ${M} \otimes {N}$ are all of the form $\lambda_i \mu_j$, and that the algebraic multiplicity of the eigenvalue $\lambda_i \mu_j$ is the product of the multiplicities of $\lambda_i$ and $\mu_j$.  To see that this is true, it suffices to check the case where ${M}$ and ${N}$ are both in Jordan normal form with only one Jordan block each, and with eigenvalue $\lambda$ and $\mu$, respectively. Then ${M} \otimes {N}$ is an upper triangular matrix whose diagonal entries are all equal to $\lambda \mu$. The result follows.
\end{proof}

Using the two preceding results, we now compute the Coxeter polynomial of the tensor product of $K\mathbb{A}_n$ with itself.

\begin{prop}
The Coxeter polynomial of $K\mathbb{A}_n \otimes K \mathbb{A}_n$ is
$$
( 1 + x)(1 - (-x)^{n + 1})^{n - 1}.
$$
\end{prop}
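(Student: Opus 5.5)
The plan is to compute the Coxeter polynomial of $K\mathbb{A}_n$ first, and then combine \cref{lem:coxeter matrix of tensor product} with \cref{lem:characteristic polynomial of kronecker product}. It is well known (and independent of orientation, since $\mathbb{A}_n$ is a tree) that
$$\phi_{K\mathbb{A}_n}(x) = 1 + x + \cdots + x^n = \frac{x^{n+1}-1}{x-1}.$$
Its roots are therefore exactly the nontrivial $(n+1)$-st roots of unity $\zeta^k$, $1\le k\le n$, where $\zeta = e^{2\pi i/(n+1)}$. We will work over $\mathbb{C}$; this is harmless because the characteristic polynomial is an integer polynomial. If a reference is wanted, this can be checked via the Coxeter element of type $\mathbb{A}_n$, which has order $h=n+1$ and exponents $1,\dots,n$.

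By \cref{lem:coxeter matrix of tensor product}, $\Phi_{K\mathbb{A}_n\otimes K\mathbb{A}_n} = -\Phi\otimes\Phi$ with $\Phi := \Phi_{K\mathbb{A}_n}$. By \cref{lem:characteristic polynomial of kronecker product}, $\Phi\otimes\Phi$ has eigenvalues $\zeta^{j+k}$ for $1\le j,k\le n$, counted with multiplicity. Hence $-\Phi\otimes\Phi$ has eigenvalues $-\zeta^{j+k}$, and the Coxeter polynomial is
$$\prod_{j,k=1}^{n}(x+\zeta^{j+k}).$$

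The main step is to organize these $n^2$ factors by the residue $s\equiv j+k \pmod{n+1}$. For $s=0$ we need $k\equiv -j$, and for each $j$ there is exactly one such $k$ in $\{1,\dots,n\}$, so there are $n$ pairs. For $s\ne 0$ the excluded choices are $k = s-j\equiv 0$, i.e.\ $j=s$, so there are $n-1$ pairs. The product therefore equals
$$(x+1)^n\prod_{s=1}^{n}(x+\zeta^s)^{n-1} = (x+1)\prod_{s=0}^{n}(x+\zeta^s)^{n-1}.$$

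It remains to identify the last product. Since $\prod_{s=0}^n (y-\zeta^s) = y^{n+1}-1$, substituting $y=-x$ gives
$$\prod_{s=0}^n(x+\zeta^s) = (-1)^{n+1}\bigl((-x)^{n+1}-1\bigr) = (-1)^n\bigl(1-(-x)^{n+1}\bigr).$$
Raising this to the power $n-1$ introduces the sign $(-1)^{n(n-1)}=1$, so it disappears. This yields $(1+x)(1-(-x)^{n+1})^{n-1}$.

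The only real obstacle is pinning down $\phi_{K\mathbb{A}_n}$. If it cannot be cited, I would compute $\det(xI-\Phi)$ for the linear orientation directly: there $C$ is upper unitriangular of ones, so $C^{-1}$ is the bidiagonal matrix $I-N$, and a cofactor recursion in $n$ gives $\phi_n = (1+x)\phi_{n-1} - x\phi_{n-2}$. This recursion is solved by $\frac{x^{n+1}-1}{x-1}$.
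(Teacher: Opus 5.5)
Your proposal is correct and follows essentially the same route as the paper: identify the roots of $\phi_{K\mathbb{A}_n}$ as the nontrivial $(n+1)$th roots of unity, apply the two Kronecker-product lemmas, and count how often each root of unity arises as a product (multiplicity $n$ for $1$ and $n-1$ otherwise). The only cosmetic difference is that you count residues of $j+k$ directly for the eigenvalues of $-\Phi\otimes\Phi$, whereas the paper first computes $p_{\Phi\otimes\Phi}(x)=(x-1)(x^{n+1}-1)^{n-1}$ and then substitutes $-x$, tracking the sign $(-1)^{n^2}$.
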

\begin{proof}
We note that the Coxeter polynomial of $K\mathbb A_n$ is
\begin{align*}
\phi_{K\mathbb A_n}(x) &= 1 + x + \ldots + x^n \\
&= \frac{x^{n + 1} - 1}{x - 1} \\
&= \prod_{u \in U_{n + 1}\backslash\{1\}} (x - u),
\end{align*}
where $U_{n + 1}$ denotes the group of $(n + 1)$th roots of unity; see e.g.\ page 5 of \cite{de-la-Pena-Lenzing'08}. Then by \cref{lem:characteristic polynomial of kronecker product}, the characteristic polynomial of the Kronecker product $\Phi_{K\mathbb A_n} \otimes \Phi_{K \mathbb A_n}$ is
$$
p_{\Phi_{K\mathbb A_n} \otimes \Phi_{K \mathbb A_n}}(x) = \prod_{u, v \in U_{n + 1}\backslash\{1\} } (x - uv).
$$

In order to determine the multiplicities of the roots of this polynomial, we consider the multiplication map
$$
U_{n + 1}\backslash \{1\} \times U_{n + 1}\backslash \{1\} \to U_{n + 1}.
$$
For a root of unity $u \in U_{n + 1}$, the multiplicity of $u$ as a root of $p_{\Phi_{K\mathbb A_n} \otimes \Phi_{K \mathbb A_n}}(x)$ is equal to the number of pre-images it has under this map. The pre-images of $u$ are all the elements of the form $(v, uv^{-1})$ with $v \in U_{n + 1}$, $v \ne 1$, and $v \ne u^{-1}$. Thus the number of pre-images is $n - 1$ if $u \ne 1$, or $n$ if $u = 1$. Hence, the characteristic polynomial of $\Phi_{K\mathbb A_n} \otimes \Phi_{K \mathbb A_n}$ is
\begin{align*}
    p_{\Phi_{K\mathbb A_n} \otimes \Phi_{K \mathbb A_n}}(x) &= (x - 1)^n \prod_{u\in U_{n + 1}\backslash \{1\}} (x - u)^{n - 1} \\
    &= (x - 1) \prod_{u \in U_{n  + 1}} (x - u)^{n - 1} \\
    &= (x - 1) (x^{n + 1} - 1)^{n - 1}.
\end{align*}

Now we recall from \cref{lem:coxeter matrix of tensor product}
 that the Coxeter matrix of $K \mathbb{A}_n \otimes K \mathbb{A}_n$ is $- \Phi_{K \mathbb{A}_n} \otimes \Phi_{K \mathbb{A}_n}$, and hence
 \begin{align*}
     \phi_{K \mathbb{A}_n}(x) &= \det(xI + \Phi_{K \mathbb{A}_n} \otimes \Phi_{K \mathbb{A}_n}) \\
     &= (-1)^{n^2}\det((-x)I - \Phi_{K \mathbb{A}_n} \otimes \Phi_{K \mathbb{A}_n}) \\
     &= (-1)^{n^2} p_{\Phi_{K \mathbb{A}_n} \otimes \Phi_{K \mathbb{A}_n}}(-x) \\
     &= (-1)^{n^2} (-x - 1)((-x)^{n + 1} - 1)^{n - 1} \\
     &= (-1)^{n^2 + n} (1 + x)(1 - (-x)^{n + 1})^{n - 1} \\
     &= (1 + x)(1 - (-x)^{n + 1})^{n - 1},
 \end{align*}
 as desired.
\end{proof}

\subsection{Graded Cartan determinants and Coxeter polynomials}
For the remainder of this section, we let $A$ be a basic finite dimensional algebra of finite global dimension, 
and we consider $\Delta(A)$ to have the trivial extension grading and $\Pi_{d+1}(A)$ to have the (higher) preprojective grading, i.e.\ the gradings induced by considering them as quotients of tensor algebras over $A$ with $A$ in degree $0$ and the generating bimodule in degree $1$.
Hence, $\Delta(A) = T_{A}D(A)/\langle D(A)^{\otimes_A 2}\rangle$ is graded by putting $A$ in degree $0$ and $D(A)$ in degree $1$. 
Similarly, $\Pi_{d+1}(A) = T_{A}(\tau_d^{-1}A))$ is graded by putting $A$ in degree $0$ and $\tau_d^{-1}A \cong \Ext^d_A(D(A), A)$ in degree $1$. 

\begin{prop}\label{prop: det of graded Cartan matrix of trivial extension}
Let $A$ be a basic finite dimensional $K$-algebra of finite global dimension. Then
$$\det C_{\Delta(A)}(x) = \phi_A(x) \det(C_{A}).$$ 
\end{prop}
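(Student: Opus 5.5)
The plan is to compute the graded Cartan matrix of $\Delta(A)$ explicitly in terms of $C_A$. The determinant is then a direct matrix calculation.

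Fix a complete set of primitive orthogonal idempotents $e_1,\dots,e_n$ of $A$. Since $\Delta(A)_0 = A$, these idempotents lie in degree zero of $\Delta(A)$, and they are the ones defining $C_{\Delta(A)}(x)$.

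The degree $0$ part of $\Delta(A)$ is $A$. Hence $C^{(0)}_{i,j} = \dim e_i A e_j = (C_A)_{i,j}$.

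The degree $1$ part is $D(A)$, with bimodule structure $(a\cdot f\cdot b)(c) = f(bca)$. Therefore
$$e_i D(A) e_j = D(e_j A e_i),$$
and so $C^{(1)}_{i,j} = \dim e_j A e_i = (C_A^T)_{i,j}$. This yields
$$C_{\Delta(A)}(x) = C_A + x\, C_A^T.$$
The only point needing care is the transpose: the convention $e_i\Lambda_k e_j$ must be applied consistently, and one should check that the duality swaps the indices. If the convention instead gave $C_A^T + xC_A$, the determinant would be unchanged after transposing, so the result does not depend on this.

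Since $\gldim A<\infty$, the Cartan matrix $C_A$ is invertible over $\mathbb{Z}$. We then factor
$$C_A + xC_A^T = \bigl(I + x C_A^T C_A^{-1}\bigr) C_A = (I - x\Phi_A)\,C_A,$$
using $\Phi_A = -C_A^TC_A^{-1}$. Taking determinants gives
$$\det C_{\Delta(A)}(x) = \det(I - x\Phi_A)\det C_A.$$

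It remains to identify $\det(I - x\Phi_A)$ with $\phi_A(x) = \det(xI - \Phi_A)$. These are reciprocal polynomials, namely $\det(I - x\Phi_A) = x^n\phi_A(1/x)$. So the final step is to show that $\phi_A$ is self-reciprocal; I expect this to be the main obstacle, though it is standard.

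For this, note first that
$$\Phi_A^{-1} = -C_A C_A^{-T} = C_A^{T}\,\Phi_A^{T}\,C_A^{-T}.$$
To verify the second equality, compute $C_A^T\Phi_A^TC_A^{-T} = -C_A^T C_A^{-T} C_A C_A^{-T}$, which is the transpose of $-C_A^{-1}C_AC_A^{-1}C_A$ conjugated appropriately. Simpler still, $\Phi_A^{-1}$ is similar to $\Phi_A^T$ via $C_A$: we have $C_A^{-1}\Phi_A^{-1}C_A = -C_A^{-1}C_AC_A^{-T}C_A = -C_A^{-T}C_A = \Phi_A^{T}$. Hence $\Phi_A$ and $\Phi_A^{-1}$ have the same characteristic polynomial.

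Also $\det\Phi_A = (-1)^n\det C_A^T/\det C_A = (-1)^n$. Then
$$x^n\phi_A(1/x) = \det(I - x\Phi_A) = \det(-x\Phi_A)\det\bigl(xI - \Phi_A^{-1}\bigr)\cdot x^{-n}\cdot x^{n}.$$
Simplifying, $\det(-x\Phi_A) = x^n$, so the right-hand side becomes $\det(xI - \Phi_A^{-1}) = \phi_A(x)$. Tracking these signs and powers is routine. This completes the proof.
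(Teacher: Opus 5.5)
Your proof is correct and follows the paper's route. Like the paper, you identify $C_{\Delta(A)}(x) = C_A + xC_A^T$, factor out $C_A$ to get $\det(I - x\Phi_A)\det C_A$, and then use self-reciprocity of $\phi_A$; the paper only cites that fact, whereas you prove it via $C_A^{-1}\Phi_A^{-1}C_A = \Phi_A^T$ and $\det\Phi_A = (-1)^n$. One small fix: the trailing factor $x^{n}$ in your last display is spurious, since $\det(I - x\Phi_A) = \det(-x\Phi_A)\,x^{-n}\det(xI - \Phi_A^{-1})$, and this equals $\det(xI - \Phi_A^{-1}) = \phi_A(x)$ as you conclude.
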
 
\begin{proof}
It is straightforward to check that $C_{\Delta(A)}(x) = C_{A} + xC_{A}^T$. 
Let $s$ be the number of vertices in the quiver of $A$.
Hence, we compute
\begin{align*}
\det(C_{\Delta(A)}(x))
& = 
\det(C_{A} + xC_{A}^T)\\
& = 
\det(I + xC_{A}^TC_{A}^{-1})\det(C_{A})\\
& = 
x^{s}\det(x^{-1}I + C_{A}^TC_{A}^{-1})\det(C_{A})\\
& = 
\phi_{A}(x)\det(C_{A}),
\end{align*}
where in the last step we use that the Coxeter polynomial of an algebra is self-reciprocal, i.e. satisfies that $\phi_{A}(x) = x^{s}\phi_{A}(x^{-1})$.
\end{proof}
We note that it is well known that $\det(C_A) = \pm 1$ holds if $\gldim A < \infty$. 
Moreover, it is also well known that $\det(C_A) = 1$ holds if $A$ is triangular.

Recall that we let $\lvert A \rvert$ denote the number of simple modules of $A$.
Let $A$ now be a basic $d$-representation finite algebra with presentation $A = KQ/I$ for a quiver $Q$ and an admissible ideal $I$.
Hence, for each indecomposable projective $P_i := e_i A$, there is an $\ell_i$ such that $\nu_d^{-\ell_i + 1}(P_i)$ is indecomposable injective, and thus $\nu_d^{-\ell_i}(P_i) \cong P_{\pi(i)}[d]$ for some permutation $\pi$ defined on the vertices of $A$. 
We can then let $P$ be the matrix corresponding to the induced action on $$\K_0(\D^b(\modu A))$$ by the permutation $\pi$.
We now relabel the vertices $Q_0$ of $A = KQ/I$ such that $Q_0 = \{1, \ldots, \lvert A\rvert\}$ holds, and let $E_i$ be the matrix with all entries being $0$ except for the entry in row $i$ and column $i$, which equals $1$. 
In other words, $(E_i)_{jk} = \delta_{ij}\delta_{ik}$ where $\delta_{jk}$ is the Kronecker delta. 

\begin{prop}\label{prop: product of graded cartan matrices formula}
Let $A$ be $d$-representation finite, and assume $A = KQ/I$ for a quiver $Q$ and an admissible ideal $I$.
Given the setup in the paragraph above, 
$$\det(C_{\Pi_{d+1}(A)}(x)C_{\Delta(A)}((-1)^{d+1}x)) = \det(I - (-1)^{d} \sum_{i \in Q_0}x^{\ell_i}P E_i).$$ 

If $A$ is also $\ell$-homogeneous,  
then 
$$\det(C_{\Pi_{d+1}(A)}(x)C_{\Delta(A)}((-1)^{d+1}x)) = \det(I - ((-1)^{d-1} x\Phi^{-1}_{A})^{\ell}).$$ 
\end{prop}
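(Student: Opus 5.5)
The plan is to express the graded Cartan matrix of $\Pi := \Pi_{d+1}(A)$ through the action of $\nu_d^{-1}$ on $\K_0(\D^b(\modu A))$, and then sum a truncated geometric series. Using $\Pi_k \cong \tau_d^{-k}A$ in the preprojective grading, the entry $(C^{(k)}_\Pi)_{ij} = \dim e_i\Pi_k e_j$ should be computable as $\dim \Hom_A(P_i, \tau_d^{-k}P_j)$, up to the transpose and left/right convention, which I would fix once at the start. Since determinants are transpose-invariant, an error there only risks an extra transpose on intermediate matrices, not on the final answer.

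\textbf{Step 1: identify $\tau_d^{-k}P_j$.} The standard facts for $d$-representation finite algebras (see \cref{nrepfindef} and the discussion after it) give $\tau_d^{-k}P_j \cong \nu_d^{-k}P_j$, a module, for $0 \le k \le \ell_j - 1$, and $\tau_d^{-k}P_j = 0$ for $k \ge \ell_j$. Moreover $\nu_d^{-\ell_j}P_j \cong P_{\pi(j)}[d]$. Because $\dim\Hom_A(P_i, M)$ is a linear function of the class $[M]$, given via $C_A$, it will suffice to know these classes.

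\textbf{Step 2: the action of $\nu_d$ on $\K_0$.} The Auslander--Reiten translate $\nu[-1]$ acts by $\Phi_A$. Hence $\nu_d = \nu[-1][-(d-1)]$ acts by $(-1)^{d+1}\Phi_A$, and $\nu_d^{-1}$ by $y\Phi_A^{-1}/x$ where $y := (-1)^{d+1}x$; equivalently, $x\,\nu_d^{-1}$ acts as $y\Phi_A^{-1}$. Writing $[P_j]$ as the $j$th standard basis vector, I get
\[
\sum_{k=0}^{\ell_j-1} x^k[\nu_d^{-k}P_j] \;=\; (I-y\Phi_A^{-1})^{-1}\bigl([P_j] - (y\Phi_A^{-1})^{\ell_j}[P_j]\bigr) \;=\; (I-y\Phi_A^{-1})^{-1}\bigl(I-(-1)^d x^{\ell_j}PE_j\bigr)e_j .
\]
Here I used $(y\Phi_A^{-1})^{\ell_j}[P_j] = x^{\ell_j}[P_{\pi(j)}[d]] = (-1)^d x^{\ell_j}[P_{\pi(j)}]$. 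Assembling the columns and applying the Cartan form gives
\[
C_\Pi(x) = C_A^{(T)}(I-y\Phi_A^{-1})^{-1}\Bigl(I-(-1)^d\sum_i x^{\ell_i}PE_i\Bigr),
\]
possibly up to transposes.

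\textbf{Step 3: compare with $C_{\Delta(A)}(y)$.} From the proof of \cref{prop: det of graded Cartan matrix of trivial extension} we have $C_{\Delta(A)}(y) = C_A + yC_A^T$. Also $\Phi_A^{-1} = -C_A C_A^{-T}$, so
\[
I - y\Phi_A^{-1} = (C_A^T + yC_A)\,C_A^{-T}.
\]
Taking determinants, $\det(I-y\Phi_A^{-1}) = \det C_{\Delta(A)}(y)\,/\det C_A$ after a transpose. The factors of $\det C_A$ then cancel against the $C_A$ in front of $C_\Pi(x)$, which yields the first formula.

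\textbf{Step 4: the $\ell$-homogeneous case.} Here $\sum_i x^{\ell}PE_i = x^{\ell}P$. Step 1 shows that $(y\Phi_A^{-1})^{\ell}$ and $(-1)^d x^{\ell}P$ agree on every basis vector $[P_j]$, so they are equal as matrices. Therefore
\[
I - (-1)^d x^\ell P = I - \bigl((-1)^{d-1}x\Phi_A^{-1}\bigr)^{\ell}.
\]

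\textbf{Main obstacle.} I expect the main difficulty to be bookkeeping rather than ideas: getting the sign of the $\K_0$ action of $\nu_d$ right, and keeping the transpose and Cartan-form conventions consistent. That consistency is what lets the $C_A$ factors cancel exactly, instead of leaving a stray $\det C_A^{\pm 2} = 1$ or a transpose. I would check the signs against the case $d = 1$ with $A$ of type $\mathbb{A}_2$ before writing the argument out in full.
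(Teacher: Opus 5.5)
Your proposal is correct and follows essentially the paper's route. The paper writes $C_{\Pi_{d+1}(A)}(x)=\sum_{i}\sum_{j=0}^{\ell_i-1}M^jC_AE_i$ with $M=(-1)^dxC_AC_A^{-T}$, which is the $\K_0$-action of $x\nu_d^{-1}$. It then telescopes by multiplying with $I-M$, uses $\nu_d^{-\ell_i}P_i\cong P_{\pi(i)}[d]$, and identifies $\det(I-M)$ with $\det C_{\Delta(A)}((-1)^{d+1}x)/\det C_A$.

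One small correction: the two $\det C_A$ factors do not cancel in any convention. They multiply to $(\det C_A)^2$, which equals $1$ only because $\gldim A<\infty$ forces $\det C_A=\pm1$; this is exactly the fact the paper invokes at that step.
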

\begin{proof}
Recall that $\Pi_{d+1}(A)$ is the tensor algebra $T_{A} (\tau_d^{-1} A)$, that $(\tau_d^{-1} A)^{\otimes_{A}^\ell} = 0$ for $\ell$ such that $\ell \geq \ell_i$ for any $i \in Q_0$, and that $(-1)^{d}C^{T}C^{-1}$ is the matrix acting on the Grothendieck group $\K_0(\D^b(\modu A))$ by left multiplication corresponding to $\nu_{d}^{-1}(-)$, where $\nu_d(-) := (- \otimes_{A}^{\mathbb{L}} DA)  \circ [-d]$.
Note that we have assumed the basis of $\K_0(\D^b(\modu A))$ is given by the simple modules, and that we are using column vectors. 
Now let
\[
M := (-1)^{d}xC_A C_A^{-T} = (-1)^{d} x(C_A^{T} C_A^{-1})^{-1}.
\]
Using this, one can check that 
\begin{align*}
C_{\Pi_{d+1}(A)}(x) 
& = \sum_{i \in Q_0} C_{A}E_i + M C_{A}E_i + \ldots + M^{\ell_i - 1}C_{A}E_i \\
& = \sum_{i \in Q_0} \sum_{j = 0}^{\ell_i - 1}M^{j}C_{A}E_i \\
\end{align*}

At this point, by using that 
\begin{align*}
C_{\Delta(A)}((-1)^{d+1}x) 
& = C_A - (-1)^{d}xC_A^T\\
& = C_{A}(I - (-1)^{d}xC_{A}^{-1}C_{A}^T)
\end{align*}
the claim follows by a few more computations. 
Hence, note that
\begin{align*}
\det(C_{\Pi_{d+1}(A)}(x)C_{\Delta(A)}((-1)^{d+1}x)) 
& =
\det(C_{\Pi_{d+1}(A)}(x)) \det(C_{\Delta(A)}((-1)^{d+1}x))\\
& = 
\det(C_{\Pi_{d+1}(A)}(x)) \det(C_{\Delta(A)}((-1)^{d+1}x))^T\\
& = 
\det(\sum_{i \in Q_0} \sum_{j = 0}^{\ell_i - 1} M^{j}C_A E_i)\det(I - M)\det(C_A)\\
& = 
\det(I - M)\det(\sum_{i \in Q_0} \sum_{j = 0}^{\ell_i - 1} M^{j}C_A E_i)\det(C_A)\\
& = 
\det(\sum_{i \in Q_0} C_A E_i - M^{\ell_i}C_A E_i)\det(C_A)\\
& = 
\det(\sum_{i \in Q_0} C_A E_i - ((-1)^{d}xC_A C^{-T})^{\ell_i}C_A E_i)\det(C_A)\\
& = 
\det(\sum_{i \in Q_0} C_A E_i - (-1)^{d}x^{\ell_i}C_A P E_i)\det(C_A)\\
& = 
\det(\sum_{i \in Q_0} E_i - (-1)^{d}x^{\ell_i}P E_i)\det(C_A)^2\\
& = 
\det(I - (-1)^{d}\sum_{i \in Q_0} x^{\ell_i}P E_i).\\
\end{align*}
For the penultimate equality, we use that $\gldim A < \infty$ implies that $\det C_A = \pm 1$.

In case $A$ is $\ell$-homogeneous, we obtain that 
\begin{align*}
\det(I - (-1)^{d}\sum_{i \in Q_0} x^{\ell_i}P E_i)
& = 
\det(I - (-1)^{d}\sum_{i \in Q_0} x^{\ell}P E_i)\\
& = 
\det(I -  (-1)^{d}x^{\ell}P \sum_{i \in Q_0}E_i)\\
& = 
\det(I -  (-1)^{d}x^{\ell}P I)\\
& = 
\det(I -  (-1)^{d}x^{\ell}(-1)^{d(\ell - 1)}(C_A C^{-T})^{\ell})\\
& = \det(I - ((-1)^{d}xC_A C^{-T})^{\ell})\\
& = \det(I - ((-1)^{d-1}x\Phi_{A}^{-1})^{\ell}).
\end{align*}

Here, we use that $\Phi_{A}^{-1} = -C_A C_A^{-T}$ to obtain the last equality.
\end{proof}

The next result is sometimes useful in combination with the preceding proposition. 

\begin{prop}
Let $A$ be an $\ell$-homogeneous $d$-representation finite algebra. Then $$(-1)^{d(\ell - 1) - \ell}\Phi_{A}^{\ell} = (-1)^{d(\ell-1)}(C_A^T C^{-1}_A)^{\ell}$$ is the permutation matrix corresponding to the Nakayama permutation of $\Pi_{d+1}(A)$.
\end{prop}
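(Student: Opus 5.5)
The plan is to read off the action of $\nu_d^{-\ell}$ on $\K_0(\D^b(\modu A))$ in two ways. The first uses the linear algebra recalled in the proof of \cref{prop: product of graded cartan matrices formula}. The second uses the $\ell$-homogeneity of $A$. Then I compare the resulting permutation with the Nakayama permutation of $\Pi := \Pi_{d+1}(A)$.

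\textbf{Step 1 (the sign identity).} Since $\Phi_A = -C_A^TC_A^{-1}$, we have
\[
\Phi_A^\ell = (-1)^\ell (C_A^TC_A^{-1})^\ell .
\]
Multiplying by $(-1)^{d(\ell-1)-\ell}$ gives the stated equality. This step is purely formal.

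\textbf{Step 2 (the matrix is a permutation matrix).} As recalled in the proof of \cref{prop: product of graded cartan matrices formula}, the matrix $(-1)^dC_A^TC_A^{-1}$ represents $\nu_d^{-1}$ on $\K_0$, in the basis of simples with column vectors. The matrix $C_A$ sends the class of $P_i$ to its expression in simples. So I would work in the basis of projectives $[P_i]$, where the column $C_Ae_i$ is the class $[P_i]$.

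By $\ell$-homogeneity, $\nu_d^{-\ell}(P_i) \cong P_{\pi(i)}[d]$. Its class in $\K_0$ is $(-1)^d[P_{\pi(i)}]$. Hence
\[
\bigl((-1)^dC_A^TC_A^{-1}\bigr)^{\ell} C_A = (-1)^d C_A P .
\]
I must be careful about which side the conjugation by $C_A$ sits on. The proof of \cref{prop: product of graded cartan matrices formula} used the equivalent form $(C_AC_A^{-T})^{\ell} C_A = (-1)^{d(\ell-1)}C_AP$. I would rederive the version for $\nu_d^{-1}$ rather than $\nu_d$ and reconcile the two forms.

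It follows that $(-1)^{d(\ell-1)}(C_A^TC_A^{-1})^\ell$ equals $C_AP^{\pm1}C_A^{-1}$ in the simple basis. Equivalently, it is a permutation matrix in the projective basis, and it may also be one in the simple basis. Which of these is literally meant is the first subtle point. I expect the statement to be read after identifying $\K_0$ with $\mathbb{Z}^{Q_0}$, so that the permutation is on vertices.

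\textbf{Step 3 (identifying $\pi$ with the Nakayama permutation).} The indecomposable projective $\Pi$-module is
\[
e_i\Pi = \bigoplus_{j=0}^{\ell-1}\tau_d^{-j}P_i .
\]
Its top degree component is $\tau_d^{-(\ell-1)}P_i = \nu_d^{-\ell+1}P_i$, which is indecomposable injective, say $I_{\sigma(i)}$. The socle of $e_i\Pi$ is then the simple $\Pi$-module at $\sigma(i)$, because $\Pi$ is graded Frobenius of highest degree $\ell-1$ by \cref{prop: properties of higher preprojective algebras of d-RF algebras} (iii). So $\sigma$ is the Nakayama permutation of $\Pi$, up to the inverse convention.

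On the other hand, $\nu_d^{-1}I_{\sigma(i)} = \nu^{-1}(I_{\sigma(i)})[d] = P_{\sigma(i)}[d]$, so $\pi = \sigma$.

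\textbf{Main obstacle.} The main difficulty is the bookkeeping of conventions: the basis (simple versus projective), the direction (left versus right modules, and $\pi$ versus $\pi^{-1}$), and the overall signs. I would fix these by testing the small case $A = K\mathbb{A}_1$ with $d=1$ and $\ell=1$, and the case $K\mathbb{A}_3$ with the symmetric orientation and $\ell=2$, before writing the general argument.
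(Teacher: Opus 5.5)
Your Steps 1 and 3 are fine, and computing $\nu_d^{-\ell}$ on $\K_0$ directly is a reasonable alternative to the paper's one-line citation of Proposition 2.4 of \cite{HI11b}. But Step 2 does not prove what is stated.

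The statement is an equality of concrete integer matrices: $(-1)^{d(\ell-1)}(C_A^TC_A^{-1})^{\ell}$ itself is the permutation matrix $P$. Your argument only gives $(-1)^{d(\ell-1)}(C_A^TC_A^{-1})^{\ell}=C_APC_A^{-1}$, or the analogous conjugate by $C_A^T$. That is, the matrix becomes a permutation matrix only after a change of basis. You explicitly leave open whether it is one in the simple basis, and propose reinterpreting the statement instead.

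That open question is the nontrivial content. The identity $C_APC_A^{-1}=P$ says $\dim e_{\pi(i)}Ae_{\pi(j)}=\dim e_iAe_j$ for all $i,j$. This does not follow from knowing only how $\nu_d^{-\ell}$ acts on projectives (or on injectives).

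A side remark on conventions. With the paper's conventions (right modules, $C_A=(\dim e_iAe_j)$, and $(-1)^dC_A^TC_A^{-1}$ representing $\nu_d^{-1}$), the column $C_Ae_i$ is $[I_i]$, while $[P_i]=C_A^Te_i$. Your displayed identity is still true, because $\nu_d^{-\ell}(I_j)\cong I_{\pi(j)}[d]$. Fixing this once removes the $P^{\pm1}$ hedge.

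The missing idea is that $\nu_d^{-\ell}$ also permutes the simple modules, and by the same permutation $\pi$. Put $F:=\nu_d^{-\ell}[-d]$.
\begin{enumerate}
\item By $\ell$-homogeneity, $F(A)\cong A$. Hence $\Hom(A,FX[i])\cong\Hom(F^{-1}A,X[i])\cong\Hom(A,X[i])$ for all $X$ and $i$.
\item So $F$ and $F^{-1}$ preserve the heart $\modu A$ of the standard t-structure, and $F$ restricts to an exact autoequivalence of $\modu A$. In particular, $F$ sends simples to simples.
\item Applying $F$ to the fact that $\Hom(P_i,S_j)\neq 0$ if and only if $i=j$ forces $F(S_j)\cong S_{\pi(j)}$.
\end{enumerate}
This is precisely the twisted fractionally Calabi--Yau property of $\ell$-homogeneous $d$-representation finite algebras from \cite{HI11b}, which the paper's citation supplies.

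Consequently, $\nu_d^{-\ell}$ is represented by $(-1)^dP$ in the basis of simples. So $\bigl((-1)^dC_A^TC_A^{-1}\bigr)^{\ell}=(-1)^dP$ literally, which is the claimed identity after the sign bookkeeping. Combined with your Step 3, which identifies $\pi$ with the Nakayama permutation of $\Pi_{d+1}(A)$, this completes the proof.
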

\begin{proof}
This follows by Proposition 2.4 of \cite{HI11b}.
\end{proof}

In \cite{de-la-Pena-Lenzing'08}, one can find a table listing the Coxeter polynomials of representation finite hereditary algebras. 
Note that the Coxeter polynomial is a derived invariant and every orientiation of a given ADE Dynkin diagram results in derived equivalent algebras.

Let $A$ be hereditary of type $\mathbb{A}_n$. 
Then $\phi_{A}(x) = (1 - x^{n+1})/(1 - x)$ by \cite{de-la-Pena-Lenzing'08}. 
If $n$ is odd and the quiver of $A$ is oriented symmetrically as in Proposition 3.2 of \cite{HI11b}, then $A$ is $\ell$-homogeneous with $\ell = (n + 1)/2$ and $-\Phi_A^{\ell}$ is the $n \times n$ matrix with ones along the antidiagonal and zeros elsewhere. 
This can be confirmed by considering the AR-quiver of $A$. 
To use \cref{thm: etingof-ginzburg}, we thus need to compute $$\det(I - ((-1)^{d-1} x\Phi_{A})^{-\ell}) = \det(I - x\Phi_{A}^{-\ell}).$$ 

\begin{prop}
Let $A$ be hereditary of type $\mathbb{A}_n$ for odd $n$ and with quiver oriented symmetrically.
Then 
$$\det(I - x^{\ell}\Phi_{A}^{-\ell}) = (1 + x^{\ell})(1 - x^{2\ell})^{\ell - 1}.$$
\end{prop}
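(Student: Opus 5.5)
The plan is to reduce the determinant to that of an explicit permutation matrix, using the description of $\Phi_A^{\ell}$ recalled just before the statement. Throughout, $n = 2\ell - 1$ and $A$ is hereditary of type $\mathbb{A}_n$ with the symmetric orientation. By the preceding discussion (see Proposition 3.2 of \cite{HI11b}), $A$ is $\ell$-homogeneous, and $-\Phi_A^{\ell} = J$, where $J$ is the $n \times n$ antidiagonal permutation matrix. Since $J^2 = I$, we have $J^{-1} = J$, and hence
\[
\Phi_A^{-\ell} = (\Phi_A^{\ell})^{-1} = (-J)^{-1} = -J .
\]
The determinant to compute therefore becomes $\det(I - x^{\ell}\Phi_A^{-\ell}) = \det(I + yJ)$ with $y := x^{\ell}$.

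Next I compute $\det(I + yJ)$ by decomposing $J$ into its cycles. As a permutation of $\{1,\ldots,2\ell-1\}$, $J$ sends $i \mapsto 2\ell - i$. It therefore has exactly one fixed point, the middle vertex $\ell$, and $\ell - 1$ disjoint transpositions $(i,\,2\ell-i)$ for $1 \le i \le \ell-1$. After simultaneously reordering rows and columns, which does not change the determinant, $I + yJ$ becomes block diagonal. It has one $1\times 1$ block $(1+y)$ and $\ell-1$ blocks $\begin{pmatrix} 1 & y \\ y & 1\end{pmatrix}$, each of determinant $1-y^2$. Multiplying the blocks gives
\[
\det(I + yJ) = (1+y)(1-y^2)^{\ell-1} = (1+x^{\ell})(1-x^{2\ell})^{\ell-1},
\]
which is the claimed formula. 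Equivalently, $J$ has eigenvalue $1$ with multiplicity $\ell$ and eigenvalue $-1$ with multiplicity $\ell-1$, so $\det(I+yJ) = (1+y)^{\ell}(1-y)^{\ell-1}$.

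The only step that is not routine is the input $-\Phi_A^{\ell} = J$, which the paper justifies via the AR-quiver. As a sanity check, and as an alternative argument avoiding the explicit form of $J$, I would use the eigenvalues. We have $\phi_A(x) = (1-x^{n+1})/(1-x)$, so the eigenvalues of $\Phi_A$ are the $2\ell$-th roots of unity $u \neq 1$. Then $u^{-\ell} = u^{\ell} = \pm 1$, and it equals $-1$ exactly when $u$ is a primitive-type root with $u^{\ell} \neq 1$. Among the $2\ell - 1$ roots $u \ne 1$, those with $u^{\ell} = 1$ are the $\ell$-th roots of unity other than $1$, of which there are $\ell-1$. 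The remaining $\ell$ roots have $u^{\ell} = -1$. Hence
\[
\det(I - x^{\ell}\Phi_A^{-\ell}) = \prod_{u}(1 - x^{\ell}u^{-\ell}) = (1+x^{\ell})^{\ell}(1-x^{\ell})^{\ell-1},
\]
which agrees with the first computation and does not depend on identifying $J$ explicitly.
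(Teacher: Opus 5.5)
Your proposal is correct. Your main computation is essentially the paper's proof. The paper also substitutes $\Phi_A^{-\ell} = -J_n$ and uses the cycle type of the antidiagonal permutation matrix. The only difference is that it goes through the characteristic polynomial: it writes $\det(I + x^{\ell}J_n) = x^{n\ell}\bigl(-p_{J_n}(-x^{-\ell})\bigr)$, using that $n$ is odd, and then $p_{J_n}(x) = (x-1)(x^2-1)^{\ell-1}$. Your block decomposition of $I + yJ$ reads the answer off directly and avoids the substitution $x \mapsto x^{-\ell}$.

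Your eigenvalue argument is a genuinely different and more economical route. It uses only $\phi_A(x) = (1-x^{2\ell})/(1-x)$, so it bypasses the AR-quiver identification $-\Phi_A^{\ell} = J_n$ entirely. Since the Coxeter polynomial does not depend on the orientation, it also shows that the identity holds for every orientation of $\mathbb{A}_{2\ell-1}$. The symmetric orientation is then needed only so that $A$ is $\ell$-homogeneous, which is what makes this particular determinant the relevant one in the paper's formula for $\det C_{\Pi}(x)$.

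What the permutation-matrix route gives in exchange is finer information: $\Phi_A^{\ell}$ itself is $-1$ times the Nakayama permutation matrix of the preprojective algebra. The paper reuses exactly this, via Kronecker products of these permutation matrices, in the $\mathbb{A}_n \times \mathbb{A}_n$ case.
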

\begin{proof}
Let $J_m$ be the $m \times m$ matrix with ones along the antidiagonal and zeros elsewhere. 
Then 
\begin{align*}
\det(I - x^{\ell}\Phi_{A}^{-\ell}) 
& =
\det(I - x^{\ell}(-J_n)) \\
& = 
x^{n\ell}\det(x^{-\ell}I + J_n) \\
& = 
x^{n\ell}(-p_{J_n} (-x^{-\ell})),
\end{align*}
where $p_{J_n}(x)$ is the characteristic polynomial of $J_n$. Note that we have used that $n$ is odd.

Since $J_n$ is a permutation matrix, we get that 
\[
-p_{J_n}(x) = (1 - x)(x^2 - 1)^{(n-1)/2} = (1 - x)(x^2 - 1)^{\ell - 1}
\]
and thus that 
\[
-p_{J_n}(-x) = (1+x)(x^2 - 1)^{(n-1)/2} = (1 + x)(x^2 - 1)^{\ell - 1}
\]
Hence, 
\begin{align*}
\det(I - x^{\ell}\Phi_{A}^{-\ell}) 
& = x^{n\ell}(1 + x^{-\ell})(x^{-2\ell} - 1)^{(n-1)/2}\\
& = x^{\ell}(1 + x^{-\ell})x^{(n-1)\ell}(x^{-2\ell} - 1)^{(n-1)/2}\\
& = (x^{\ell} + 1)(1 - x^{2\ell})^{(n-1)/2}\\
& = (1 + x^{\ell})(1 - x^{2\ell})^{\ell - 1},
\end{align*}
which is what was to be shown.
\end{proof}

In \cref{iex: Etingof-Eu}, we omitted showing how one computes the graded Euler characteristic. 
We now show how this can be done using our results. 
For the sake of simplicity, we demonstrate only in Dynkin type $\mathbb{A}_n$ with odd $n$ and with quiver oriented symmetrically.

\begin{example}
Let $A$ be of hereditary of type $\mathbb{A}_n$ for odd $n$ and with symmetric orientiation. 
Also let $\Pi := \Pi(A)$.
We now want to compute $\chi_{\overline{\HC}_*(\Pi)}(x) = \sum_{k \geq 1} a_k x^k$ by using \cref{thm: etingof-ginzburg}:

\begin{align*}
\prod\limits_{k=1}^\infty(1-x^k)^{-a_k}
& =
\prod\limits_{s=1}^\infty \det C_\Pi(x^s) \\
& =
\prod\limits_{s=1}^\infty \det\left(\frac{I - x^{s\ell}\Phi_A^{-\ell}}{I + x^s\Phi_{A}^{-1}}\right) \\
& = 
\prod\limits_{s=1}^\infty \frac{(1 + x^{s\ell})(1 - x^{s2\ell})^{\ell-1}}{\frac{1 - x^{s2\ell}}{1 - x^s}} \\
& = 
\prod\limits_{s=1}^\infty 
\frac{(1-x^s)(1 - x^{s2\ell})^{\ell-1}}{1 - x^{s\ell}}. \\
\end{align*}
Hence,  we get
\[
\chi_{\overline{\HC}_*(\Pi)}(x) = \frac{x^\ell}{1 - x^\ell} -\frac{x}{1 - x}  - (\ell - 1)\frac{x^{2\ell}}{1 - x^{2\ell}}.
\]
To compare with the result in \cite{Etingof-Eu'06}, we compute 
\begin{align*}
\chi_{\overline{\HC}_*(\Pi)}(x) 
& = 
\frac{x^\ell}{1 - x^\ell} -\frac{x}{1 - x}  - (\ell - 1)\frac{x^{2\ell}}{1 - x^{2\ell}}\\
& = 
-\frac{1}{1 - x^{2\ell}}\left(-x^{\ell} - x^{2\ell} + \sum_{i = 1}^{2\ell}x^{i} + (\ell - 1)x^{2\ell}\right)\\
& = 
-\frac{1}{1 - x^{2\ell}}\left(\sum_{i = 1}^{2\ell- 1}x^{i} -x^{\ell} + (\ell - 1)x^{2\ell}\right).
\end{align*}
The exponents $\{m_i\}_{1}^{n}$ associated to the root system $\mathbb{A}_n$ are given by $\{1,2,\ldots,n\}$. 
Moreover, we always have that $2\ell - 1 = n$, that $-(r_{+} - r_{-}) = - 1$, and that $\ell - 1 = r_{-}$; see \cite{Etingof-Eu'06} for the definition of the notation $r_{-}$ and $r_{+}$. 
Hence, we see that we obtain the same expression as that in \cite{Etingof-Eu'06} by making the substitution $x \mapsto x^2$. 
\end{example}

We need the following for our application in \cref{sec:computing homology}. 

\begin{prop} \label{prop: det of I - Cox for 2-RF of type A 2n + 1 times A}
Let $A$ be $2$-representation finite of type $\mathbb{A}_{n} \times \mathbb{A}_{n}$ for odd $n$ and where the quiver of each factor has a symmetric orientation.
Then 
$$\det(I - (-x)^{\ell}\Phi_{A}^{-\ell}) = (1 - x^{\ell})(1 - x^{2\ell})^{(n^2 - 1)/2}.$$
\end{prop}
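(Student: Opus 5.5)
The plan is to reduce everything to the one-factor computation already recorded before the preceding proposition, using \cref{lem:coxeter matrix of tensor product}. Write $B := K\mathbb{A}_n$ with the symmetric orientation, so that $A = B \otimes_K B$, and set $\ell = (n+1)/2$. This is the homogeneity parameter of $B$, and hence of $A$ by \cite[Corollary 1.5]{HI11b}. By \cref{lem:coxeter matrix of tensor product} we have $\Phi_A = -\Phi_B \otimes \Phi_B$. Since the Kronecker product is multiplicative and compatible with inverses, this gives
\[
\Phi_A^{-\ell} = (-1)^{\ell}\,\Phi_B^{-\ell} \otimes \Phi_B^{-\ell}.
\]

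Next I use the fact stated just before the previous proposition: $-\Phi_B^{\ell} = J_n$, the antidiagonal permutation matrix. Since $J_n^2 = I$, it follows that $\Phi_B^{-\ell} = (-J_n)^{-1} = -J_n$, and therefore $\Phi_B^{-\ell}\otimes\Phi_B^{-\ell} = J_n \otimes J_n$. Combining the signs,
\[
(-x)^{\ell}\Phi_A^{-\ell} = (-1)^{\ell}x^{\ell}(-1)^{\ell} J_n\otimes J_n = x^{\ell}\, J_n \otimes J_n .
\]
So the task reduces to computing $\det(I - x^{\ell} P)$ for the permutation matrix $P = J_n \otimes J_n$.

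The matrix $P$ is the permutation matrix of the involution $(i,j)\mapsto(n+1-i,\,n+1-j)$ on $\{1,\dots,n\}^2$. Because $n$ is odd, this involution has exactly one fixed point, namely $\bigl(\tfrac{n+1}{2},\tfrac{n+1}{2}\bigr)$. The remaining $n^2-1$ points fall into $(n^2-1)/2$ transpositions. For any permutation matrix, $\det(I - yP)$ factors over the cycles as $\prod_{\text{cycles } c}(1 - y^{|c|})$. This follows by reordering the basis so that $P$ is block diagonal with cyclic blocks, where each block contributes $1-y^{|c|}$. Taking $y = x^{\ell}$ yields
\[
(1-x^{\ell})(1-x^{2\ell})^{(n^2-1)/2},
\]
as claimed.

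The only delicate point is bookkeeping. First, one must check that the sign conventions, namely $\Phi_A = -\Phi_B\otimes\Phi_B$ and $-\Phi_B^{\ell}=J_n$, are used in the same basis convention. The ordering of idempotents $e_{i_1}\otimes e'_{i_2}$ from \cref{lem:coxeter matrix of tensor product} does not affect the determinant, since reordering is a simultaneous conjugation by a permutation matrix. Second, the two factors of $(-1)^{\ell}$ must cancel rather than combine into a sign on $x^{\ell}$. I expect this sign check to be the only real obstacle. Everything else is the cycle-type count of an explicit involution.
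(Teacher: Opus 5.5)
Your argument is correct. It differs from the paper's only in how you identify $(-1)^{\ell}\Phi_A^{-\ell}$ as the permutation matrix $J_n\otimes J_n$, which equals $J_{n^2}$ in the lexicographic ordering.

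The paper gets this from the representation-theoretic side. It asserts that the Nakayama permutation matrix of $\Pi_3(A)$ is the Kronecker product of the Nakayama permutation matrices of the two classical preprojective algebras of type $\mathbb{A}_n$. Implicitly it relies on the Segre product description and on the proposition identifying $(-1)^{d(\ell-1)-\ell}\Phi_A^{\ell}$ with that permutation matrix. From this it reads off $(-1)^{\ell}\Phi_A^{-\ell}=J_{n^2}$.

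You instead derive the same identity purely from \cref{lem:coxeter matrix of tensor product} together with the one-factor fact $-\Phi_B^{\ell}=J_n$, and you do the sign bookkeeping explicitly. This is more self-contained, uses no statement about Nakayama permutations of $\Pi_3(A)$, and makes the cancellation of the two factors $(-1)^{\ell}$ transparent. The paper's route, in turn, explains conceptually why the matrix is a permutation matrix at all: it is the Nakayama permutation of $\Pi_3(A)$.

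The final determinant evaluation is essentially the same in both. The paper computes the characteristic polynomial of $J_{n^2}$, while you factor $\det(I-yP)$ over the cycles of the involution $(i,j)\mapsto(n+1-i,n+1-j)$. Your version has the small advantage of not needing to check that $J_n\otimes J_n$ is literally the antidiagonal matrix.
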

\begin{proof}
As before, we let $J_m$ be the $m \times m$ matrix with ones along the antidiagonal and zeros elsewhere. 
Our assumptions imply that $A$ is $\ell$-homogeneous with $\ell = (n + 1)/2$, and that the Nakayama permutation matrix for $\Pi_3(A)$ is given by the Kronecker product of two copies of the Nakayama permutation matrix of the preprojective of $\mathbb{A}_{n}$ with bipartite orientation. 
In other words, $(-1)^{\ell}\Phi_{A}^{-\ell}$ is equal to $J_{n^2}$.
Then 
\begin{align*}
\det(I - (-x)^{\ell}\Phi_{A}^{-\ell}) 
& =
\det(I - x^{\ell}J_{n^2})) \\
& =
x^{n^2 \ell}\det(x^{-\ell}I - J_{n^2})) \\
& = x^{n^2 \ell}p_{J_{n^2}} (x^{-\ell}),
\end{align*}
where $p_{J_{n^2}}(x)$ is the characteristic polynomial of $J_{n^2}$. 

Since $J_{n^2}$ is a permutation matrix, we get that 
\[
p_{J_{n^2}}(x) = (x-1)(x^2 - 1)^{(n^2-1)/2}
\]
and thus that 
\[
p_{J_{n^2}}(x^{-\ell}) = (x^{-\ell} - 1)(x^{-2\ell} - 1)^{(n^2-1)/2}.
\]
Hence, 
\begin{align*}
\det(I - (-x)^{\ell}\Phi_{A}^{-\ell}) 
& = x^{n^2\ell}(x^{-\ell} - 1)(x^{-2\ell} - 1)^{(n^2 - 1)/2}\\
& = x^{\ell}(x^{-\ell} - 1)x^{(n^2-1)\ell}(x^{-2\ell} - 1)^{(n^2 - 1)/2}\\
& = (1 - x^{\ell})(1 - x^{2\ell})^{(n^2 - 1)/2},
\end{align*}
which is what was to be shown.
\end{proof}

\section{A computation of $\HC_* \Pi, \HH_* \Pi$ and $\HH^*\Pi$ for $\Pi = \Pi_{2+1}(K\mathbb{A}_n^{\otimes 2})$ for odd $n$ and $\mathbb{A}_n$ oriented symmetrically} \label{sec:computing homology}
In this section, we compute the cyclic homology, and the Hochschild homology and cohomology for the higher preprojectives of the $2$-hereditary algebras obtained as tensor products of representation finite hereditary algebras of type $\mathbb{A}_n$.
To do this, we recall a description of the higher preprojective algebra of a tensor product of two $d$-representation finite algebras as a Segre product. 
Using this, we compute $\HH_0$ and $\HH^0$ of such a higher preprojective algebra $\Pi$. 
Following this, we then compute the graded Euler characteristic of such a $\Pi$ by using the Euler characteristic formula from \cref{sec:product formula for the euler characteristic} and the results of \cref{sec: some linear algebra}.
Finally, we apply this graded Euler characteristic to the Connes exact sequence from \cref{sec:reduced homology} in combination with some duality formulas from \cref{subsection: CY Frobenius and HH^*} and compute $\HC_* \Pi, \HH_*\Pi$ and $\HH^* \Pi$.

\subsection{Segre products and higher preprojective algebras}
Roughly speaking, the next result says that the higher preprojective algebra of a $d$-representation finite algebra that arises as a tensor product is the Segre product of the higher preprojective algebras of the tensor factor algebras. 
Hence, we recall that if $\Lambda$ and $\Lambda'$ are two positively graded algebras, their \textit{Segre product} --- denoted $\Lambda \overline \otimes \Lambda'$ --- is the positively graded algebra given by $\oplus_{i \geq 0}\Lambda_i \otimes \Lambda'_i$.
Note that we assume the base field $K$ to be perfect for the remainder of this subsection.

\begin{prop}[See {\cite[Corollary 4.5]{Thi20}}]\label{prop: preproj of tensor product is Segre product of preproj} 
Let $A_1$ and $A_2$ be, respectively, $d_1$- and $d_2$-representation finite, set $d := d_1 + d_2$, and consider all (higher) preprojective algebras to be endowed with the relevant (higher) preprojective gradings.
Then $$\Pi_{d+1}(A_1 \otimes_K A_2)_i \cong \Pi_{d_1+1}(A_1) \overline \otimes \Pi_{d_2 + 1}(A_2).$$
\end{prop}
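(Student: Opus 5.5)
The plan is to work degree by degree from the tensor algebra description $\Pi_{d+1}(B)\cong T_B(\tau_d^{-1}B)\cong\bigoplus_{i\ge 0}\tau_d^{-i}B$ recalled from Lemma 2.13 of \cite{IO13}, applied to $B=A_1\otimes_K A_2$. Write $\nu_{d}$ for the derived $d$-shifted Nakayama functor of $B$, and similarly $\nu_{d_1},\nu_{d_2}$ for $A_1,A_2$. The first step is a Künneth-type identification
\[
\nu_B\cong \nu_{A_1}\otimes_K\nu_{A_2}.
\]
It comes from $D(A_1\otimes_K A_2)\cong DA_1\otimes_K DA_2$ as bimodules and from $(X_1\otimes_K X_2)\otimes^{\mathbb L}_{B}(Y_1\otimes_K Y_2)\cong (X_1\otimes^{\mathbb L}_{A_1}Y_1)\otimes_K(X_2\otimes^{\mathbb L}_{A_2}Y_2)$. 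Perfectness of $K$ guarantees that $B$ again has finite global dimension, so these derived functors behave as expected. Since the shifts add, $[-d]=[-d_1]\otimes[-d_2]$, and we obtain $\nu_d^{-i}(B)\cong \nu_{d_1}^{-i}(A_1)\otimes_K\nu_{d_2}^{-i}(A_2)$ in $\D^b(\modu B)$ for all $i\ge 0$.

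The next step is to pass to zeroth cohomology. For a $d_j$-representation finite algebra, $\nu_{d_j}^{-i}(A_j)$ is concentrated in degree $0$ whenever it is nonzero in that degree, because the objects of $\mathcal U$ are modules up to shift by multiples of $d_j$. In that case it equals $\tau_{d_j}^{-i}A_j$. By the Künneth formula, $\Hm^0(X\otimes_K Y)=\bigoplus_p \Hm^p X\otimes \Hm^{-p}Y$. This gives $\tau_d^{-i}B\cong \tau_{d_1}^{-i}A_1\otimes_K\tau_{d_2}^{-i}A_2$ as $B$-bimodules, and hence also as vector spaces in each degree $i$.

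The remaining step is to check compatibility with multiplication. The multiplication $\tau_d^{-i}B\otimes_B\tau_d^{-j}B\to\tau_d^{-(i+j)}B$ is induced by composing $\nu_d^{-1}$ (the isomorphisms $\nu_d^{-i}\circ\nu_d^{-j}\cong\nu_d^{-(i+j)}$), and the Künneth isomorphism respects this composition. Concretely, $\tau_d^{-1}B\cong\Ext^d_B(DB,B)\cong\Ext^{d_1}_{A_1}(DA_1,A_1)\otimes_K\Ext^{d_2}_{A_2}(DA_2,A_2)$. The tensor powers over $B=A_1\otimes A_2$ then factor as tensor powers over $A_1$ and over $A_2$ taken separately, which is precisely the Segre multiplication.

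I expect the main obstacle to be the vanishing of the mixed cohomology terms $\Hm^p\otimes\Hm^{-p}$ for $p\neq 0$. One must show that for each fixed $i$ both factors are modules rather than complexes with nonzero shifts. If for some $i$ one factor is $\tau^{-i}=0$ while the other is a shifted injective, the cross terms must be handled using the fact that $\nu_{d_j}^{-i}$ of an indecomposable projective lands in $\modu A_j[-kd_j]$ for some $k\ge 0$. With $p=kd_1$ and $-p=-k'd_2$ both nonnegative shifts in the opposite direction, only $p=0$ survives, and only when $k=k'=0$. If this bookkeeping becomes delicate, I would simply invoke \cite[Corollary 4.5]{Thi20}, which is exactly this statement.
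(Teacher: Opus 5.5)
Your argument is correct in substance, and its final step is the paper's whole proof. For $B=A_1\otimes_K A_2$, the Künneth formula together with $\gldim A_j\le d_j$ gives $\Ext^{d}_{B}(DB,B)\cong\Ext^{d_1}_{A_1}(DA_1,A_1)\otimes_K\Ext^{d_2}_{A_2}(DA_2,A_2)$ as $B$-bimodules: of the summands indexed by $p+q=d$, only $(p,q)=(d_1,d_2)$ survives. Then $(M_1\otimes_K M_2)^{\otimes_B i}\cong M_1^{\otimes_{A_1} i}\otimes_K M_2^{\otimes_{A_2} i}$ gives the degreewise isomorphism and the Segre multiplication at the same time.

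So your degree-by-degree detour through $\nu_d^{-i}$ and $\Hm^0$ is unnecessary. As written, it also contains errors you should remove.

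\begin{itemize}
\item It is not true in general that $\nu_{d_j}^{-i}(A_j)$ is concentrated in degree $0$ once it is nonzero there; this holds only in the homogeneous case. For the non-homogeneous $1$-representation finite algebra $A=K\mathbb{A}_2$ one has $\nu_1^{-1}(A)\cong I\oplus P[1]$, with $I$ the simple injective and $P$ the simple projective module.
\item In your last paragraph the shifts should be $[kd_j]$, not $[-kd_j]$, since $\nu_{d}^{-1}=\nu^{-1}\circ[d]$.
\item The bookkeeping ``$p=kd_1$, $-p=-k'd_2$'' only yields $kd_1=k'd_2$, which does not force $p=0$.
\end{itemize}

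The correct reason the mixed terms vanish is simpler and uses nothing about $\mathcal U$. We have $\nu_{d_j}^{-1}\cong -\otimes^{\mathbb L}_{A_j}\RHom_{A_j}(DA_j,A_j)[d_j]$, and this complex has cohomology only in degrees $-d_j,\dots,0$. Hence every $\nu_{d_j}^{-i}(A_j)$ with $i\ge 0$ has cohomology only in nonpositive degrees, so $\Hm^0$ of the tensor product over $K$ is $\Hm^0\otimes_K\Hm^0$. This needs only $\gldim A_j\le d_j$.
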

\begin{proof}
The proof is essentially the same as that of Corollary 4.5 of \cite{Thi20}, i.e.\ the statement follows by the description of a $(d+1)$-preprojective algebra $\Pi_{d+1}(A)$ as a tensor algebra over $A$ and the description of $\tau_d^{-1}(A_1 \otimes_K A_2)$ as the tensor product $\tau_{d_1}^{-1}(A_1) \otimes_K \tau_{d_2}^{-1}(A_2)$.
\end{proof}

Let $\Lambda = \oplus_{i\geq 0}\Lambda_i$ be a positively graded algebra. 
Recall that $Z\Lambda$ denotes the center of $\Lambda$, and let 
\[
Z_{\Lambda_0}(\Lambda) := \{ z \in \Lambda \, \lvert \,  z \cdot \lambda = \lambda \cdot z \, \, \mathrm{for} \, \mathrm{all} \,\, \lambda \in \Lambda_0 \}.
\]
Note that if $\Lambda = KQ/I$ is a finite dimensional algebra and the idempotents $e_i$ for $i \in Q_0$ are in $\Lambda_0$, then any $z \in Z_{\Lambda_0}(\Lambda)$ is a linear combination of cycles since $e_i z = z e_i$. 

\begin{prop}\label{prop: center of preproj of tensor product}
Let $\Lambda$ and $\Gamma$ be graded algebras. Then
$$Z(\Lambda \overline{\otimes} \Gamma) \subseteq Z_{\Lambda_0}(\Lambda) \overline{\otimes} Z_{\Gamma_0}(\Gamma).$$
\end{prop}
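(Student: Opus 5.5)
The plan is to work degree by degree and reduce centrality in the Segre product to a statement about each tensor factor separately. The key observation is that the degree zero part of $\Lambda \overline{\otimes} \Gamma$ is $\Lambda_0 \otimes_K \Gamma_0$. Moreover, for every $i \geq 0$ the homogeneous component $(\Lambda \overline{\otimes} \Gamma)_i = \Lambda_i \otimes_K \Gamma_i$ is a $(\Lambda_0 \otimes \Gamma_0)$-bimodule, and this bimodule structure is the tensor product of the $\Lambda_0$-bimodule $\Lambda_i$ and the $\Gamma_0$-bimodule $\Gamma_i$. Since the center of a graded algebra is a graded subspace, it suffices to take a homogeneous central element $z \in \Lambda_i \otimes \Gamma_i$ and show that $z \in Z_{\Lambda_0}(\Lambda)_i \otimes Z_{\Gamma_0}(\Gamma)_i$.

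The main step is as follows. Any such $z$ commutes in particular with every element of the form $\lambda \otimes 1$ with $\lambda \in \Lambda_0$ and every element $1 \otimes \gamma$ with $\gamma \in \Gamma_0$. Consider the $K$-linear map
$$
\delta_\lambda \colon \Lambda_i \to \Lambda_i, \qquad a \mapsto \lambda a - a\lambda .
$$
Then the condition $[\lambda\otimes 1, z] = 0$ says precisely that $z \in \ker(\delta_\lambda \otimes \mathrm{id}_{\Gamma_i})$. Over a field, tensoring with a vector space is exact, so
$$
\ker(\delta_\lambda \otimes \mathrm{id}) = \ker(\delta_\lambda) \otimes \Gamma_i .
$$
Intersecting over all $\lambda \in \Lambda_0$ gives $z \in \bigl(\bigcap_\lambda \ker \delta_\lambda\bigr) \otimes \Gamma_i = Z_{\Lambda_0}(\Lambda)_i \otimes \Gamma_i$. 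Here I use that an intersection of subspaces of the form $U_\alpha \otimes W$ equals $(\bigcap_\alpha U_\alpha) \otimes W$. This holds by choosing a basis of $W$ and comparing coefficients. In the same way, $z \in \Lambda_i \otimes Z_{\Gamma_0}(\Gamma)_i$.

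To conclude, I use the linear-algebra fact that $(U \otimes W) \cap (U' \otimes W') = (U\cap U') \otimes (W \cap W')$ for subspaces $U, U' \subseteq V$ and $W, W' \subseteq V'$. Applying this with $U = Z_{\Lambda_0}(\Lambda)_i$, $U' = \Lambda_i$, $W = \Gamma_i$ and $W' = Z_{\Gamma_0}(\Gamma)_i$ gives $z \in Z_{\Lambda_0}(\Lambda)_i \otimes Z_{\Gamma_0}(\Gamma)_i$. This fact can be verified by choosing bases adapted to the subspaces, or by reducing again to the basis-coefficient argument.

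The only step needing real care is this pair of intersection identities for tensor products of subspaces; everything else is formal. Note that we do not even need to use commutation with elements of positive degree, which is consistent with the statement being only an inclusion.
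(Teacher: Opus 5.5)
Your proof is correct and follows essentially the same route as the paper: commute $z$ with $\lambda \otimes 1$ and $1 \otimes \gamma$ for $\lambda \in \Lambda_0$, $\gamma \in \Gamma_0$, and compare coefficients against a basis. Your kernel identity $\ker(\delta_\lambda \otimes \mathrm{id}) = \ker(\delta_\lambda) \otimes \Gamma_i$ is the paper's linear-independence argument. The degree-by-degree reduction together with the identity $(U \otimes W) \cap (U' \otimes W') = (U \cap U') \otimes (W \cap W')$ spells out what the paper compresses into ``similarly, one may choose the $z_{2,j}$ in $Z_{\Gamma_0}(\Gamma)$'': both conditions hold at once, and $z$ lands in the Segre product rather than merely in $Z_{\Lambda_0}(\Lambda) \otimes Z_{\Gamma_0}(\Gamma)$.
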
 
\begin{proof} 
Let $z \in Z(\Lambda \overline{\otimes} \Gamma)$, and write $z = \sum_j z_{1,j} \otimes z_{2,j}$. Note that we can assume without loss of generality that $\{z_{2,j}\}$ is linearly independent over $K$. Now let $\lambda \in \Lambda_0$ be a homogeneous element of degree zero, and note that
$$
(\lambda \otimes 1) \cdot \sum_{j} z_{1, j} \otimes z_{2, j} = \sum \lambda z_{1, j} \otimes z_{2, j}
$$
and that
$$
\sum_j z_{1,j} \otimes z_{2,j} \cdot (\lambda \otimes 1)  = \sum_j z_{1,j} \lambda  \otimes z_{2,j},
$$
so that
$$
\sum_{j} (\lambda z_{1, j} - z_{1, j} \lambda)\otimes z_{2, j} = 0.
$$
We now deduce that $\lambda z_{1, j} = z_{1,j}\lambda$ since the $\{ z_{2, j}\}$ are assumed to be linearly independent, and hence $z_{1, j} \in Z_{\Lambda_0}(\Lambda)$. Similarly, one may choose the $\{z_{2, j}\}$ to be elements of $Z_{\Gamma_0}(\Gamma)$.
\end{proof}

\subsection{Preprojective algebras of type $\mathbb{A}$}\label{subsection: Preprojective algebras of type A}
For the remainder of the paper, we assume that the quiver of the preprojective algebra of type $\mathbb{A}_n$ is given as follows. 
\[
\begin{tikzcd}
1 \rar[bend left, "x_1"] & \cdots \lar[bend left, "y_1"] \rar[bend left, "x_{i - 2}"] 
& i - 1 \lar[bend left, "y_{i - 2}"] \rar[bend left, "x_{i - 1}"] & i \lar[bend left, "y_{i - 1}"] \rar[bend left, "x_{i}"]
& i + 1 \lar[bend left, "y_{i}"] \rar[bend left, "x_{i+1}"] & \cdots \lar[bend left, "y_{i+1}"] \rar[bend left, "x_{n - 1}"]
& n \lar[bend left, "y_{n-1}"]
\end{tikzcd}
\]

\begin{remark}\label{rem: signs in relations of classical preproj of type A}
While the quiver of the preprojective of a hereditary algebra $A$ is the same for the various different orientations of the underlying graph of the quiver of $A$, the same may not be true for the relations.
However, in the case of type $\mathbb{A}_n$, one can use base change arguments such as in \cite{Erdmann-Solberg-1} to show that the relations can be assumed to be of the following form: $x_1y_1, x_2y_2 - y_1 x_1, \ldots, x_i y_i - y_{i-1}x_{i-1}, x_{n-1}y_{n-1} - y_{n-2}x_{n-2}, y_{n-1}x_{n-1}$.
\end{remark}

The following is well known, but we include a proof for the sake of the reader.

\begin{prop}\label{prop: preproj and graded Frob}
Let $\Pi$ be the preprojective algebra of type $\mathbb{A}_n$. 
Then the following hold. 
\begin{enumerate}[(i)]
    \item $\Pi$ is a graded Frobenius algebra of highest degree $a = n - 1$ when endowed with the grading given by putting each arrow in degree $1$.
\item $\Pi$ has Nakayama permutation given by $i \mapsto n + 1 - i$.
\end{enumerate}
\end{prop}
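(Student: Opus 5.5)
We would argue directly from the presentation of $\Pi$ in \cref{rem: signs in relations of classical preproj of type A}, so that $\Pi = KQ/I$ with $Q$ the double quiver displayed above and all relations homogeneous of path-length two. This makes the path-length grading (each arrow in degree $1$) a well-defined positive grading.

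The first step is to describe the indecomposable projectives $e_i\Pi$ explicitly. Using the relations, every path starting at $i$ can be rewritten, up to sign, as a path that first goes right some number of times and then left, or the reverse. In the standard description, $e_i\Pi$ has a basis indexed by the vertices of a rectangle of size $i \times (n+1-i)$ in the Auslander--Reiten quiver of $K\mathbb{A}_n$. From this description we read off two things:
\begin{itemize}
\item the top degree of $e_i\Pi$ is $(i-1)+(n-i) = n-1$;
\item the socle of $e_i\Pi$ is one-dimensional, spanned by a path of length $n-1$ ending at vertex $n+1-i$.
\end{itemize}
Concretely, the socle element is $x_{n-1}\cdots$ composed appropriately, e.g.\ the path $i \to n \to n+1-i$ going right to $n$ and then left $i-1$ steps. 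Both facts can be checked by induction on $n$, or by a dimension count. The total dimension is $\sum_i i(n+1-i) = \binom{n+2}{3}$, which matches the known value $\dim \Pi(\mathbb{A}_n) = n(n+1)(n+2)/6$.

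The second step handles (ii). Since the socle of $e_i\Pi$ is simple and lies in $e_i\Pi e_{n+1-i}$, each $e_i\Pi$ is an injective envelope of $S_{n+1-i}$. Here we use that $\Pi$ is selfinjective, which follows from \cref{prop: properties of higher preprojective algebras of d-RF algebras} (ii) with $d=1$, or directly from the one-dimensional socles together with the equal dimensions of $e_i\Pi$ and $D(\Pi e_{n+1-i})$. This gives the Nakayama permutation $i \mapsto n+1-i$.

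The third step handles (i). Because $\Pi$ is basic and selfinjective, it is Frobenius, and every socle lies in degree $n-1$, so $\socM \Pi \subseteq \Pi_{n-1}$. We then invoke Lemma 2.4 of \cite{HS}, exactly as in the proof of \cref{prop: properties of higher preprojective algebras of d-RF algebras} (iii), to conclude that $D\Pi \cong {}_1\Pi_\mu\langle -(n-1)\rangle$ as graded bimodules. In other words, $\Pi$ is graded Frobenius of highest degree $n-1$.

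The main obstacle is the combinatorial basis claim in the first step, namely that the maximal nonzero paths from $i$ have length exactly $n-1$ and are unique up to scalar. We would handle it by showing that the commutativity relations $x_jy_j = y_{j-1}x_{j-1}$ let us move every "turn" of a path to the boundary. A path then vanishes exactly when it tries to pass beyond vertex $1$ or vertex $n$ after using the boundary relations $x_1y_1 = 0 = y_{n-1}x_{n-1}$, which bounds the number of rightward steps by $n-i$ and the number of leftward steps by $i-1$.
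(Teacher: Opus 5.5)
Your proof is correct, but it reaches the conclusion by a different route than the paper.

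\textbf{The paper's route.} The paper does no basis computation.
\begin{itemize}
\item For (i), it only observes that $x_1x_2\cdots x_{n-1}$ lies in the socle of $e_1\Pi$. It then uses Theorem 3.3 of \cite{Martinez-Villa'99} to conclude that all indecomposable projectives of $\Pi$ have the same Loewy length, so every socle sits in degree $n-1$. Lemma 2.4 of \cite{HS} then finishes the argument.
\item For (ii), it simply cites Proposition 2.2 of \cite{HI11b} and the Dynkin involutions described there.
\end{itemize}

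\textbf{Your route.} You compute every $e_i\Pi$ explicitly, using the rectangle basis of paths with $a\le n-i$ rightward and $b\le i-1$ leftward steps. From this you read off the socle of $e_i\Pi$: it is one-dimensional, lies in degree $n-1$ inside $e_i\Pi e_{n+1-i}$, and is spanned by $x_ix_{i+1}\cdots x_{n-1}y_{n-1}\cdots y_{n+1-i}$ in the paper's left-to-right convention. This single description gives both (ii), via selfinjectivity, and the hypothesis needed for Lemma 2.4 of \cite{HS}.

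\textbf{Comparison.} The paper's argument is shorter and outsources the combinatorics to the literature. Yours is nearly self-contained and gives strictly more information. For instance, it shows at once that $(x_iy_i)^m\neq 0$ if and only if $m\le\min(i-1,n-i)$. That is exactly parts (ii)--(iii) of \cref{prop: technical preproj result}, which the paper proves separately later, and whose proof itself relies on the present statement.

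\textbf{Linear independence.} This is the one step you should not leave to the rewriting argument. Moving turns to the boundary only shows two things: the rectangle elements span $e_i\Pi$, and all other paths vanish. What turns spanning into a basis is the count
\[
\sum_i i(n+1-i)=\binom{n+2}{3}=\dim\Pi .
\]
Here $\dim\Pi$ can be obtained, e.g., from $\Pi\cong\bigoplus_{k\ge 0}\tau^{-k}(K\mathbb{A}_n)$ being the direct sum of all indecomposable $K\mathbb{A}_n$-modules, each appearing once. So your route still rests on one standard input, just a more elementary one than the results the paper cites.
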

\begin{proof}
For the first claim, note that it is well-known that the highest degree of $\Pi$ is $n-1$ with this grading, but that this can also be checked by observing that $x_1 x_2 \cdots x_{n-1}$ lies in the socle of $e_1 \Pi$ and using \cite[Theorem 3.3]{Martinez-Villa'99} to deduce that all the projectives of $\Pi$ have the same Loewy length. 
Then the claim follows by e.g.\ \cite[Lemma 2.4]{HS}.

The second claim then follows by e.g.\ Proposition 2.2 of \cite{HI11b} and their explication on pages 454-455 of the relevant involutions defined on Dynkin quivers.
\end{proof}

\subsection{$\HH^0$ and $\HH_0$ of higher preprojective algebras of $K\mathbb{A}_{n}^{\otimes 2}$ for $n$ odd and $\mathbb{A}_{n}$ oriented symmetrically}
By \cite{HI11b} we know that for a tensor product of algebras of type $\mathbb{A}_n$ to be $2$-representation finite, each tensor factor must be $\ell$-homogeneous for the same $\ell$.
Hence, the parameter $n$ must be the same for each tensor factor, and each tensor factor must be \textit{oriented symmetrically}: i.e.\ meaning that if the quiver of type $\mathbb{A}_n$ is a subquiver of the quiver given in \cref{subsection: Preprojective algebras of type A}, then the involution $i \mapsto n + 1 - i$ does not change the orientation of the subquiver. 
Consequently, one also sees that $n$ must be odd if a quiver of type $\mathbb{A}_n$ is to have a symmetric orientation. 

Thus, we now need to recall and/or establish some technical results about the preprojective algebra of type $\mathbb{A}_n$ for odd $n$.  

\begin{prop}\label{prop: technical preproj result}
Let $\Pi$ be the preprojective algebra of type $\mathbb{A}_n$ for odd $n$, and assume that it is endowed with the grading given by putting each arrow in degree $1$.
Then the following hold. 
\begin{enumerate}[(i)]
    \item Any cycle at a vertex $i$ is of degree $2m$ for some natural number $m$, and is, modulo the relations, equal to the cycle $\sigma_{i}^{m} := (x_i y_i)^m$ up to a scalar.
    \item For $i \leq (n-1)/2+1$, we have $\sigma_{i}^{m} \neq 0$ if and only if $0 \leq m \leq i - 1$.
     \item For $i \geq (n-1)/2+1$, we have $\sigma_{i}^{m} \neq 0$ if and only if $0 \leq m \leq n - i$. 
\end{enumerate}
\end{prop}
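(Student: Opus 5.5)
The plan is to first establish a ``commutation'' property of the $2$-cycles, deduce (i) by an induction on path length, and then settle (ii) and (iii) by a vanishing argument together with a dimension count. Throughout we use the relations from \cref{rem: signs in relations of classical preproj of type A}. Set $\sigma_k := x_k y_k$ for $k<n$ and $\sigma_n := y_{n-1}x_{n-1}$, so $\sigma_1 = 0 = \sigma_n$ and $x_k y_k = y_{k-1}x_{k-1}$ for $1<k<n$. In other words, at each vertex there is, up to the relations, a \emph{single} $2$-cycle $\sigma_k$.

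The key preliminary step is that these $2$-cycles commute past arrows. For $x_k \colon k \to k+1$ we have
\[
x_k\sigma_{k+1} = x_k y_k x_k = \sigma_k x_k,
\]
and dually $y_k \sigma_k = \sigma_{k+1} y_k$. The degenerate endpoint cases, where a $\sigma$ is $0$ by a relation, are checked directly.

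For (i), note first that the quiver is bipartite with respect to the parity of the vertex label. Hence every cycle at $i$ has even length $2m$, i.e.\ degree $2m$. We then induct on $m$. A closed walk on the path graph $1 - 2 - \cdots - n$ must turn around at its farthest vertex $k$, so the path contains a consecutive $2$-cycle at $k$. By the relations this $2$-cycle equals $\pm\sigma_k$. Using the commutation step repeatedly, we slide this $\sigma_k$ to the beginning of the path, where it becomes $\sigma_i$. The remaining closed walk has length $2m-2$, and induction shows it is a scalar multiple of $\sigma_i^{m-1}$. This gives (i).

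For (ii) and (iii), we first reduce to (ii) via the involution $i \mapsto n+1-i$ exchanging $x_k \leftrightarrow y_{n-k}$. This involution preserves the relations up to sign, and it swaps the two ranges of $i$.

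\textbf{Vanishing.} Using $\sigma_i = y_{i-1}x_{i-1}$ and the commutation step, we rewrite
\[
\sigma_i^m = y_{i-1}\,\sigma_{i-1}^{m-1}\,x_{i-1}.
\]
Iterating, $\sigma_i^m$ factors through $\sigma_1^{m-i+1}$, which is $0$ whenever $m \geq i$ because $\sigma_1 = 0$.

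\textbf{Non-vanishing for $0 \leq m \leq i-1$.} By (i), $e_i\Pi e_i$ is spanned by $\sigma_i^0,\dots,\sigma_i^{i-1}$. It therefore suffices to know that $\dim e_i\Pi e_i = \min(i, n+1-i)$, which equals $i$ in the range of (ii). I would obtain this from the well-known Hilbert series of the preprojective algebra of type $\mathbb{A}_n$, namely $\dim (e_i\Pi e_j)_t = 1$ exactly for $t \equiv |i-j| \pmod 2$ with $|i-j| \leq t \leq \min(i+j, 2n+2-i-j)-2$. Alternatively, it can be read off from the graded Cartan formula $C_\Pi(x) = (I - x^\ell\Phi^{-\ell}_A)(I+x\Phi_A^{-1})^{-1}$ implicit in \cref{prop: product of graded cartan matrices formula}, specialized to $d=1$.

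A more self-contained alternative for non-vanishing is to use \cref{prop: preproj and graded Frob}. The path $\sigma_i^{i-1}$ equals a path $y\cdots y\,x\cdots x$ through vertex $1$, and one would show it extends to a nonzero socle element of degree $n-1$ in $e_i\Pi$. I expect this dimension/non-vanishing step to be the main obstacle, and I would try the Cartan-matrix count first.
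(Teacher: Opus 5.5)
Your proof is correct. The vanishing step and the treatment of (i) and (iii) are essentially the paper's; the decisive non-vanishing step goes by a different route.

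\textbf{Where you match the paper.} For (i), your sliding argument (an immediate backtrack in the closed walk is a $2$-cycle, pushed to the front using $\alpha\,\sigma_{t(\alpha)}=\sigma_{s(\alpha)}\,\alpha$) makes explicit what the paper only calls clear from the relations. One small correction: the backtrack at the farthest vertex $k$ is $x_{k-1}y_{k-1}=\sigma_{k-1}$, based at $k-1$ rather than at $k$; this is harmless. Your vanishing argument via $\sigma_i^m=y_{i-1}\sigma_{i-1}^{m-1}x_{i-1}$ and $\sigma_1=0$ is the paper's. Your reduction of (iii) to (ii) by the involution $x_k\leftrightarrow y_{n-k}$ fills in what the paper just calls ``similar''.

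\textbf{Where you differ.} For $\sigma_i^{i-1}\neq 0$, you import the Cartan matrix $\dim e_i\Pi e_j=\min(i,j,n+1-i,n+1-j)$ of $\Pi(\mathbb{A}_n)$ and compare it with the spanning set $\sigma_i^0,\dots,\sigma_i^{i-1}$. The paper instead uses only \cref{prop: preproj and graded Frob}:
\begin{itemize}
\item the middle vertex $j=(n+1)/2$ is fixed by the Nakayama permutation $i\mapsto n+1-i$;
\item so the socle of $e_j\Pi$ is a nonzero cycle of degree $n-1=2(j-1)$, hence a scalar multiple of $\sigma_j^{j-1}$ by (i);
\item then $\sigma_j^{j-1}=y_{j-1}\cdots y_i\,\sigma_i^{i-1}\,x_i\cdots x_{j-1}$ forces $\sigma_i^{i-1}\neq 0$ for every $i\le j$.
\end{itemize}

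\textbf{What each buys.} Your count gives slightly more: the $\sigma_i^m$ form a basis of $e_i\Pi e_i$, and the argument does not need $n$ odd. But it rests on an external formula. The paper's argument is internal, and it uses the oddness of $n$ precisely to have a Nakayama-fixed vertex. It also shows how to make your ``self-contained alternative'' work without first proving an analogue of (i) for paths from $i$ to $n+1-i$: work at the fixed vertex, where the socle element is itself a cycle.

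\textbf{The fallback formula.} The Cartan formula you quote as a fallback is not correct as written. For $d=1$, the proof of \cref{prop: product of graded cartan matrices formula} gives
$C_\Pi(x)=\sum_{j=0}^{\ell-1}x^j\Phi_A^{-j}C_A=(I-x\Phi_A^{-1})^{-1}(I-x^{\ell}\Phi_A^{-\ell})C_A$.
Note the sign and the factor $C_A$: already for $A=K$ (so $\Pi=K$, $\Phi_A=-1$, $\ell=1$) your version gives $(1+x)/(1-x)\neq 1$. Moreover, this formula refers to the preprojective grading with $\Pi_0=A$, so only its value at $x=1$ would be usable for your dimension count.
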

\begin{proof}
By the form of the quiver, it is clear that any cycle must be of even degree; and, similarly, by also considering the form of the relations, it is clear that the second part of \textit{(i)} follows as well. 

We now show \textit{(ii)}, and for this we proceed by cases. 
Now, it is straightforward to check that $\sigma_i^{m} = 0$ for $m \geq i$ as $\sigma_{i}^{i - 1}$ can be written as $y_{i - 1} y_{i - 2} \cdots y_1 x_1 \cdots x_{i - 2} x_{i - 1}$ so that if the exponent were greater, the result would be zero as we could rewrite to include the relation $x_1 y_1$.

We now argue that $\sigma_{i}^{i-1} \neq 0$. 
To begin with, note that by our assumption that $n$ is odd, we can apply \cref{prop: preproj and graded Frob} to $\Pi$. 
With this in mind, we observe that if $j = (n-1)/2 + 1$, then the Nakayama permutation maps $j$ to itself. 
Consequently, the socle of $e_j \Pi$ must include a (non-zero) cycle at $j$ since $\socM (e_j \Pi)e_k$ is non-trivial if and only if $k$ is the image of $j$ under the Nakayama permutation due to $\Pi$ being a Frobenius algebra; see e.g.\ \cite{Skowronski}. 
Moreover, this cycle must be of degree $n - 1$ by \cref{prop: preproj and graded Frob}.
This must then equal $\sigma_j^{j - 1}$ up to a scalar by part \textit{(i)} above.
It now suffices to observe that for any $i \leq j = (n-1)/2 + 1$ we have $\sigma
^{j-1}_j = y_{j-1} \cdots y_i \sigma_i^{i - 1} x_i \cdots x_{j-1}$. 

The case of $i \geq (n-1)/2 + 1$ is similar and its proof is thus omitted. 
Hence, we are done.  
\end{proof}

We are now ready to prove the following.

\begin{prop}\label{prop: center relative to degree 0 of preproj of type A_n}
Let $\Pi$ be the preprojective of $K\mathbb{A}_n$ with $n$ odd. Then $Z_{K\mathbb{A}_n}(\Pi) = Z(\Pi)$.
\end{prop}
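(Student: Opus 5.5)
The inclusion $Z(\Pi) \subseteq Z_{K\mathbb{A}_n}(\Pi)$ is immediate, since a central element commutes in particular with everything in $\Pi_0 = K\mathbb{A}_n$. For the reverse inclusion, I will describe $Z_{K\mathbb{A}_n}(\Pi)$ explicitly and then check that each of its elements commutes with all arrows. Here $\Pi$ carries the preprojective grading, with degree-zero part $K\mathbb{A}_n$ for the symmetric orientation. An element $z$ commuting with $\Pi_0$ commutes in particular with every idempotent $e_i$, so $z$ is a linear combination of cycles. By \cref{prop: technical preproj result} (i), each cycle is, modulo relations, a scalar multiple of some $\sigma_i^m$. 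Hence $z = \sum_i \sum_m c_{i,m}\sigma_i^m$.

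Next I use commutation with the arrows of $K\mathbb{A}_n$, which are some of the $x_j$ or $y_j$ depending on the orientation. Take an arrow $\alpha\colon i \to i+1$ in $A$, say $\alpha = x_i$. From $z x_i = x_i z$ we get $c_{i,m}\sigma_i^m x_i = c_{i+1,m} x_i \sigma_{i+1}^m$. Using the preprojective relation $x_i y_i = y_{i-1}x_{i-1}$ (\cref{rem: signs in relations of classical preproj of type A}), one checks that $\sigma_i^m x_i = x_i\sigma_{i+1}^m$ up to the conventions of which product is written as $\sigma$. Provided this common element is nonzero, this forces $c_{i,m}=c_{i+1,m}$. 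The case of an arrow $y_i$ is symmetric. Since the underlying graph of $A$ is connected, the coefficients $c_{i,m}$ agree along the chain in every degree $m$ where the relevant paths are nonzero.

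The key remaining step is to show that an element satisfying these constraints also commutes with the arrows of $\Pi$ that are not in $A$, namely the reverse arrows, which lie in degree $1$ of the preprojective grading. The same computation, $\sigma_i^m y_i' = y_i'\sigma_{i\pm1}^m$ for the reverse arrow, holds with the same relations. So commutation with a reverse arrow between $i$ and $i+1$ demands exactly the same identity, $c_{i,m}=c_{i+1,m}$ whenever the relevant product is nonzero. Consequently, the conditions imposed by $\Pi_0$ coincide with those imposed by all of $\Pi$.

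I expect the main obstacle to be the nonvanishing bookkeeping. I need to verify, using \cref{prop: technical preproj result} (ii)--(iii), that $\sigma_i^m x_i \neq 0$ exactly when $\sigma_{i+1}^m y_i$-type products are nonzero. This ensures that the equalities of coefficients forced by arrows of $A$ and by reverse arrows occur for the same pairs $(i,m)$, and that the vanishing patterns $m \le \min(i-1, n-i)$ match on both sides of each edge. One also has to treat the signs in the relations carefully. I would handle this by writing $\sigma_i^m x_i$ as an explicit path via the relations and comparing it with $\sigma_{i+1}^m$ in the socle description used in the proof of \cref{prop: technical preproj result}.
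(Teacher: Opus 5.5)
Your proposal is correct, and it takes a shorter route than the paper.

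\textbf{The paper's route.} It fixes a homogeneous $z\in Z_{K\mathbb{A}_n}(\Pi)$ of path-length $2m$ and treats the case $z\in\socM\Pi$ separately. Otherwise it propagates the nonvanishing of $zx_k$ along the chain, using only the degree-zero arrows and the ranges in \cref{prop: technical preproj result}. This shows that $z$ is a scalar multiple of $\sum_{i=m+1}^{n-m}\sigma_i^m$. Only then does it check, via $\sigma_k^m x_k=x_k\sigma_{k+1}^m$ and $y_k\sigma_k^m=\sigma_{k+1}^m y_k$, that this explicit element is central.

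\textbf{Your route.} You never compute the centralizer. Writing $z=\sum c_{i,m}\sigma_i^m$, you get $zx_i-x_iz=\sum_m(c_{i,m}-c_{i+1,m})\,x_i\sigma_{i+1}^m$ and $zy_i-y_iz=\sum_m(c_{i+1,m}-c_{i,m})\,y_i\sigma_i^m$. So the two commutation conditions coincide once $x_i\sigma_{i+1}^m$ and $y_i\sigma_i^m$ vanish for the same $m$. Since $\Pi_0$ contains one arrow of each pair $\{x_i,y_i\}$, this finishes the proof.

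\textbf{The bookkeeping step you flagged does hold.}
\begin{enumerate}[(a)]
\item For $i\le (n-1)/2$, both products are nonzero iff $m\le i-1$. This follows from $y_i\sigma_i^m x_i=\sigma_{i+1}^{m+1}$ and part (ii).
\item For $i\ge (n+1)/2$, both products are nonzero iff $m\le n-i-1$. This follows from $\sigma_i^m x_iy_i=x_iy_i\sigma_i^m=\sigma_i^{m+1}$ and part (iii).
\end{enumerate}
What must match are these two odd-length paths. The ranges of $\sigma_i^m$ and $\sigma_{i+1}^m$ themselves differ, e.g.\ $\sigma_1^1=0\neq\sigma_2^1$.

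A quicker way to get the same fact is an anti-automorphism of $\Pi$. The map fixing the $e_i$ and swapping $x_i\leftrightarrow y_i$ preserves the relations of \cref{rem: signs in relations of classical preproj of type A}. It sends $x_i\sigma_{i+1}^m$ to $\sigma_{i+1}^m y_i=y_i\sigma_i^m$. With this, your argument needs neither $n$ odd nor the symmetric orientation.

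\textbf{Comparison.} What the paper's longer route buys is the explicit basis $\{\sum_{i=m+1}^{n-m}\sigma_i^m\}$ of $Z(\Pi)$, which is used later for $h_{\HH^0(\Pi)}$. Your edge conditions recover this in one line. For each $m$, they link exactly the vertices $m+1,\dots,n-m$, which is precisely where $\sigma_i^m\neq 0$.
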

\begin{proof}
Note that we may assume that we are dealing with a homogeneous element $z \in Z_{k\mathbb{A}_n}(\Pi)_{2m}$ with respect to the grading putting all arrows in degree $1$ since any grading on an algebra induces a grading on the center of that algebra. 
We proceed by establishing the following four properties of such an element $z$.
\begin{enumerate}[(i)]
    \item $z = \sum_{i = j}^{J}C_i \sigma_i^m$ for some positive integers $j,J$ and $m$ satisfying that $C_j\sigma_j^m, C_J\sigma_J^m \neq 0$.
    \item $m = j - 1$ for $m$ and $j$ as in \textit{(i)}.
    \item $J = n + 1 - j$ for $j$ and $J$ as in \textit{(i)}.
    \item $C_i = C_{i'}$ for $i,i'$ satisfying $j \leq i, i'$ and $i, i' \leq n+1 - j$ for $j$ as in \textit{(i)}. 
\end{enumerate}
Indeed, observe that \textit{(i)} through \textit{(iv)} imply that $z \in Z(\Pi)$ holds by a straightforward computation. 
Namely, one uses that for $\Lambda$ a quotient of a path algebra $KQ$ over a quiver $Q$ one has $z \in Z(\Lambda)$ if and only if $e_i z = z e_i$ and $\alpha z = z \alpha$ for all $i \in Q_0$ and $\alpha \in Q_1$.
Since we can set $C_i = 1$ for all $j \leq i \leq J$, the claim follows by using this and observing that 
\[
\sigma_k^m x_k = (x_ky_k)^mx_k = x_k(y_kx_k)^m = x_k \sigma_{k+1}^m,
\]
that
\[
y_k\sigma_k^m  = y_k(x_ky_k)^m = (y_kx_k)^my_k =  \sigma_{k+1}^m y_k,
\]
and that all of the terms occurring in the above are non-zero if and only if $j = m + 1 \leq k \leq n -  m = J$.
Consequently, it is sufficient to establish that \textit{(i)} through \textit{(iv)} all hold.

For \textit{(i)}, note that $z$ must be a sum of oriented cycles as $e_i z = z e_i$ must hold for every vertex $i$ in the quiver of $\Pi$. 
Hence, we can certainly write $z = \sum_{i = 1}^{n} C_i \sigma_{i}^{m}$ and pick $j$ and $J$ to be, respectively, the smallest and largest indices such that $C_j\sigma_j^m, C_J\sigma_J^m \neq 0$ hold.

For \textit{(ii)} through \textit{(iv)}, we note that they all hold if $z \in \socM \Pi$, i.e.\ if $z$ is equal to a scalar multiple of $\sigma_{(n-1)/2 + 1}^{(n-1)/2}$. 
Indeed, in this case, we obtain that $j  = (n-1)/2 + 1 = J$ and $m = (n-1)/2$, showing that \textit{(ii)} through \textit{(iv)} all hold.

In other words, it remains to check \textit{(ii)} through \textit{(iv)} in the case of $z \not \in \socM \Pi$, or, equivalently, in the case of $z x_{k} \neq 0$  for some $1 \leq k \leq n$.
For this, we begin by showing \textit{(ii)}.
Hence, observe that one now has $0 \neq z x_{k} = C_{k} \sigma_{k}^m x_{k}$, which of course implies that $C_{k} \neq 0$ and that $\sigma_{k}^m \neq 0$.
Observe that if $m < k - 1$, then $\sigma_k^{m+1} \neq 0$ must hold by \cref{prop: technical preproj result}, implying 
\[
0 \neq \sigma_{k}^{m+1} = y_{k-1}x_{k-1}\sigma_k^m = \sigma_k^m y_{k-1}x_{k-1}
\]
so that we also know that $x_{k-1}\sigma_k^m \neq 0$ and $\sigma_k^m y_{k-1} \neq 0$ hold. 

Now, while exactly one of $x_{k-1}$ and $y_{k-1}$ is in degree $0$, in either case, we can deduce that $C_{k-1} = C_{k} \neq 0$ when $m < k - 1$. 
Note that we may assume the relations of $\Pi$ to be as described in \cref{rem: signs in relations of classical preproj of type A}.
Thus, by repeating the argument if need be, we see that we could just as well have chosen $k = m + 1$ to begin with. 
Consequently, we have thus shown \textit{(ii)} with $j = k = m + 1$ as $\sigma_{i}^{m} = 0$ holds for $i < m$ by \cref{prop: technical preproj result}. 

We continue by showing \textit{(iii)} and \textit{(iv)} more or less at the same time.
Observe then that what we showed in the preceding paragraph implies that $j = m + 1 < (n-1)/2 + 1$ by our assumption that we are in the case of $z \not \in \socM \Pi$. 
Hence, using this and that
\[m \leq {j} - 1 = ({j} + 1) - 2 = \cdots = ((n-1)/2 + 1) - ((n-1)/2 + 1) - ({j}-1)),\]
we note that \cref{prop: technical preproj result} implies that $\sigma_{j+i}^{m+i} \neq 0$ for $0 \leq i \leq (n-1)/2 + 1 - j$ as well. 

Observe now that
\begin{align*}
\sigma_{j+1}^{m+1} 
& = (y_j x_j)^{m+1} \\
& = y_j (x_j y_j)^m x_j \\
& = y_j \sigma_{j}^m x_j \\
\end{align*}
thus implying that $\sigma_{j}^mx_j, y_j \sigma_{j}^m \neq 0$ if $\sigma_{j+1}^{m+1} \neq 0$. 
In fact, $\sigma_{j+1}^{m+1} \neq 0$ must hold since $z \not \in \socM \Pi$ by assumption and by applying \cref{prop: technical preproj result} \textit{(ii)} and \textit{(iii)}.
Hence, note that similarly to before, we can also deduce that $C_{j + 1} = C_j \neq 0$ by using that one of $x_{j}$ and $y_{j}$ is in degree $0$.
Moreover, since $\sigma_{j+1}^{m+1} = \sigma_{j+1}^{m} x_{j+1} y_{j+1}$ and $\sigma_{j+1}^{m+1} \neq 0$ if $j + 1 \leq (n-1)/2 + 1$, we get that $\sigma_{j+1}^{m} x_{j+1} \neq 0$ so that 
$$zx_{j+1} = C_{j+1} \sigma_{j+1}^{m} x_{j+1} \neq 0,$$ 
and we can now repeat the argument above with $j+1$ instead of $j$ and obtain that $C_{j+2} = C_{j+1}$ if $j + 1 \leq (n-1)/2 + 1$. 
Continuing in this way, we deduce that $C_i = C_{i'}$ for $j \leq i, i' \leq (n-1)/2 + 1$. 

At this point, note that we evidently also have 
\[j - 1 = n - (n + 1 - j) \]
\[j = n - (n + j) \]
\[\vdots\]
\[(n-1)/2 + 1 = n - (n + (n-1)/2 + 1), \]
so that 
\[
m \leq n - (n + 1 - j) = n - (n + 1 - (j + 1)) - 1 = \cdots = n - ((n-1)/2 + 1) - ((n-1)/2 - j),
\]
thus implying that $\sigma_{(n+1 - j) - i}^{m + i} \neq 0$ for $0 \leq i \leq (n-1)/2 + 1 - j$ by \cref{prop: technical preproj result}.
In particular, we also have that $\sigma_{i}^{m+1} \neq 0$ for $(n-1)/2 + 1 \leq i \leq n + 1 - j$. 
Hence, arguing as above, we deduce that $C_i = C_{i'}$ holds for $j \leq i \leq n+1 - j$, and would be done if we knew that $C_i = 0$ for $i > n+1 -j$.
However, we know this to hold by \cref{prop: technical preproj result} as $n - i < j - 1 = m$ must hold for such $i$, thus implying $\sigma_{i}^{m} = 0$.
Consequently, we deduce that $J = n + 1 - j$, thus showing \textit{(iii)}, which immediately establishes \textit{(iv)} since we have shown that $C_i = C_{i'}$ for $j \leq i \leq n + 1 - j$.

By our remark at the beginning of the proof, we are done. 
\end{proof}

We are now finally ready to compute the center of the higher preprojective algebras of the $2$-representation finite algebras obtained as tensor products of representation finite hereditary algebras of Dynkin type $\mathbb{A}$.

\begin{prop}\label{prop: center of higher preproj of type A_{2m + 1} times A_{2m + 1}}
Let $\Pi = \Pi_3(K\mathbb{A}_{n}^{\otimes 2})$ for odd $n$ and $\mathbb{A}_{n}$ oriented symmetrically, and consider all (higher) preprojective algebras to be endowed with the relevant (higher) preprojective gradings.
Then $$Z (\Pi) = Z(\Pi_2(K\mathbb{A}_{n})) \overline\otimes Z(\Pi_2(K\mathbb{A}_{n})).$$
\end{prop}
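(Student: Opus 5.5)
We prove the two inclusions separately. By \cref{prop: preproj of tensor product is Segre product of preproj}, we identify $\Pi$ with the Segre product $\Pi_2(K\mathbb{A}_n)\overline\otimes \Pi_2(K\mathbb{A}_n)$ as graded algebras, where each factor carries its preprojective grading. The degree zero part of each factor is $K\mathbb{A}_n$ with the symmetric orientation.

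For the inclusion $\subseteq$, we apply \cref{prop: center of preproj of tensor product} with $\Lambda=\Gamma=\Pi_2(K\mathbb{A}_n)$. This gives
\[
Z(\Pi)\subseteq Z_{K\mathbb{A}_n}(\Pi_2(K\mathbb{A}_n))\overline\otimes Z_{K\mathbb{A}_n}(\Pi_2(K\mathbb{A}_n)).
\]
Next we use \cref{prop: center relative to degree 0 of preproj of type A_n}, which states $Z_{K\mathbb{A}_n}(\Pi_2(K\mathbb{A}_n))=Z(\Pi_2(K\mathbb{A}_n))$ since $n$ is odd. Substituting this yields the desired inclusion.

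Some care is needed here. The proof of \cref{prop: center of preproj of tensor product} decomposes $z=\sum_j z_{1,j}\otimes z_{2,j}$ with linearly independent second factors. We should first reduce to $z$ homogeneous in the Segre grading, which is fine because the center of a graded algebra is graded. We can then take the $z_{1,j}$ and $z_{2,j}$ homogeneous of the same degree $i$. It follows that $z$ lies in the degree $i$ part of the Segre product of the two relative centralizers. For this to be meaningful, these centralizers must be graded subspaces; they are, because $\Lambda_0$ consists of homogeneous elements.

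For the inclusion $\supseteq$, take homogeneous $z_1\in Z(\Pi_2(K\mathbb{A}_n))_i$ and $z_2\in Z(\Pi_2(K\mathbb{A}_n))_i$. For any homogeneous $\lambda\otimes\gamma\in\Pi_j$ we compute
\[
(z_1\otimes z_2)(\lambda\otimes\gamma)=z_1\lambda\otimes z_2\gamma=\lambda z_1\otimes\gamma z_2=(\lambda\otimes\gamma)(z_1\otimes z_2).
\]
Here the multiplication in the Segre product is componentwise, landing in degree $i+j$ of each factor. Since $\Pi$ is spanned by such homogeneous elementary tensors, $z_1\otimes z_2$ is central in $\Pi$. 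Taking linear combinations of such elements gives the reverse inclusion.

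The step I expect to be the main obstacle is \cref{prop: center relative to degree 0 of preproj of type A_n}, together with its compatibility with gradings. That proposition is proved using the path-length grading, where every arrow has degree $1$, whereas here we need the preprojective grading. So I would check that it is an equality of subsets, which is independent of grading; it is. The equality of graded subspaces then follows because both sides are graded subspaces for the preprojective grading. With that observation, both inclusions combine to give the stated equality.
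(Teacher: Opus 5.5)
Your proposal is correct and follows the paper's argument, which consists of combining \cref{prop: center of preproj of tensor product} (applied via the Segre product description of $\Pi$) with \cref{prop: center relative to degree 0 of preproj of type A_n}. The paper leaves two points implicit that you spell out: the easy reverse inclusion, and the fact that the center and the relative centralizers are graded subspaces, so the path-length argument carries over to the preprojective grading.
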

\begin{proof}
This follows by \cref{prop: center relative to degree 0 of preproj of type A_n} together with \cref{prop: center of preproj of tensor product}.
\end{proof}

Recall that $\HH_0 \Lambda \cong \Lambda/[\Lambda, \Lambda]$; see e.g.\ \cite{loday}.

\begin{prop}\label{prop: HH_0 of Pi of A_n otimes A_n}
Let $\Pi = \Pi_3(K\mathbb{A}_{n}^{\otimes 2})$ for odd $n$ and $\mathbb{A}_{n}$ oriented symmetrically. 
Then $$\HH_0 \Pi = \Pi/\rad \Pi.$$
\end{prop}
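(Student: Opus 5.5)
Let $\Pi_2 := \Pi_2(K\mathbb{A}_n)$ with its preprojective grading, so that $\Pi \cong \Pi_2 \overline{\otimes} \Pi_2$ by \cref{prop: preproj of tensor product is Segre product of preproj}. The plan is to show directly that every element of $\rad \Pi$ lies in $[\Pi,\Pi]$. Then $\HH_0 \Pi = \Pi/[\Pi,\Pi]$ is a quotient of $\Pi/\rad\Pi$. For the reverse inclusion $[\Pi,\Pi]\subseteq \rad\Pi$, note that $\Pi/\rad\Pi$ is a product of copies of $K$ ($\Pi$ is basic and $K$ algebraically closed), hence commutative. The dimension count then agrees: $h_{\Pi/\rad\Pi}(x) = n^2$ in degree $0$.

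To show $\rad\Pi \subseteq [\Pi,\Pi]$, I would work with a basis of paths. Write elements of $\Pi$ as sums of $p\otimes p'$ with $p,p'$ paths of the same preprojective degree in $\Pi_2$. First, any such element that is not a ``cycle'' in both factors (i.e. $e_i p e_j$ with $i\neq j$, or similarly for $p'$) is a commutator: if $p = e_ipe_j$ with $i\ne j$, then $p\otimes p' = [(e_i\otimes e_{i'}), p\otimes p']$, where $p'=e_{i'}p'e_{j'}$ and $e_i\otimes e_{i'}$ is an idempotent of degree $0$. So only tensors $c\otimes c'$ of cycles remain. By \cref{prop: technical preproj result}(i), each of $c,c'$ equals a scalar times some $\sigma_i^m$, $\sigma_{i'}^{m'}$, with $m,m'\ge 0$ and not both zero, since the element lies in the radical.

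The key step, and the main obstacle, is showing that $\sigma_i^m\otimes\sigma_{i'}^{m'}$ with $(m,m')\neq(0,0)$ is a commutator modulo lower-order terms. Say $m\ge1$. Then $\sigma_i^m = x_i(y_i x_i)^{m-1} y_i$ (or the version using $y_{i-1},x_{i-1}$ if $x_i$ is unavailable), so that $\sigma_i^m = ab$ with $a$, $b$ arrows in $\Pi_2$. In the Segre product, $a$ and $b$ have preprojective degrees $\deg a, \deg b \in\{0,1\}$, summing to $\deg \sigma_i^m$. I then need to factor $\sigma_{i'}^{m'}$ compatibly as $a'b'$ with $\deg a' = \deg a$ and $\deg b'=\deg b$. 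This is the delicate point, since the preprojective degrees in the two factors need not line up.

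If the factorisation is possible, then $ab\otimes a'b' - ba\otimes b'a' = [a\otimes a', b\otimes b']$. Here $ba\otimes b'a'$ equals, via the relations of \cref{rem: signs in relations of classical preproj of type A}, a scalar multiple of $\sigma_{i+1}^m\otimes\sigma_{i'+1}^{m'}$ (or zero). This lets me move the cycle along the quiver, exactly as in the proof of \cref{prop: center relative to degree 0 of preproj of type A_n}. Iterating, every such tensor of cycles is congruent modulo $[\Pi,\Pi]$ to one whose first factor is pushed to vertex $1$ or $n$ (or vice versa). By \cref{prop: technical preproj result}(ii)--(iii), $\sigma_1^m=0=\sigma_n^m$ for $m\ge1$, which kills it.

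To handle the degree-matching, I would use the symmetric orientation. For each cycle $\sigma_i^m$ one may choose the splitting via $x_i$ or via $y_{i-1}$, and these give opposite degree patterns $(0,1)$ versus $(1,0)$ for the first arrow. This freedom should allow a compatible factorisation at every step. As a fallback, I would use the Euler characteristic/duality of the introduction's method to check that the cokernel is forced to be concentrated in degree $0$.
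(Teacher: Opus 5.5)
\textbf{This is a genuine gap, though an easily fillable one.} Your overall strategy is the paper's: reduce to tensors of cycles, then kill these by rotating cycles through commutators in the Segre product. The preliminary steps are fine:
\begin{itemize}
\item the idempotent commutator disposes of non-cycles;
\item $[\Pi,\Pi]\subseteq\rad\Pi$ holds because $\Pi/\rad\Pi\cong K^{n^2}$;
\item the Segre condition in fact forces $m=m'$, since a cycle of length $2m$ in $\Pi_2$ has preprojective degree $m$.
\end{itemize}

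The step you flag as the main obstacle, however, is not established, and the mechanism you propose for it fails. Splitting via $x_i$ or via $y_{i-1}$ does \emph{not} give opposite degree patterns at a source or sink of the orientation of $\mathbb{A}_n$: there both arrows lie in degree $0$ (resp.\ $1$). Under a symmetric orientation the middle vertex is always a source or a sink, and for the bipartite orientation every vertex is. Concretely, take $n=5$ and $1\leftarrow 2\to 3\leftarrow 4\to 5$. The nonzero element $\sigma_3\otimes\sigma_2$ admits no pair of arrows of equal degree starting, or ending, at $(3,2)$. So moving both factors by an arrow, as your formula $ba\otimes b'a'\sim\sigma_{i+1}^m\otimes\sigma_{i'+1}^{m'}$ intends, is impossible there.

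Your fallback does not work either. The method needs $C=\overline{\HH}_0(\Pi)=0$ as an input, and the paper obtains it from this very proposition. The Euler characteristic alone only sees the alternating combination $h_C-h_{X_1}+h_{X_2}$ in degrees $1,\dots,\ell-1$, so it cannot isolate $C$. Also, $\sigma_i^m=ab$ with $a,b$ arrows holds only for $m=1$; in general $b$ is a path of degree $m-\deg a$.

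\textbf{The missing idea:} the second factor need not move at all. Pair the arrow with a power of the cycle in the other factor, which commutes with everything relevant. For $2\le i\le n-1$ and $m\ge 1$, take
\begin{itemize}
\item $a=y_{i-1}$ and $b=(x_{i-1}y_{i-1})^{m-1}x_{i-1}$, so that $ab=\sigma_i^m$;
\item $a'=\sigma_{i'}^{\deg a}$ and $b'=\sigma_{i'}^{m-\deg a}$, where $\deg$ is the preprojective degree.
\end{itemize}
Then $\deg a'=\deg a$ and $\deg b'=m-\deg a=\deg b$, so $a\otimes a'$ and $b\otimes b'$ lie in $\Pi$, and
\[
[a\otimes a',\,b\otimes b']=\sigma_i^m\otimes\sigma_{i'}^m-(x_{i-1}y_{i-1})^m\otimes\sigma_{i'}^m=\sigma_i^m\otimes\sigma_{i'}^m-\sigma_{i-1}^m\otimes\sigma_{i'}^m .
\]
Since $\sigma_1^m=\sigma_n^m=0$ for $m\ge1$, iterating gives $\sigma_i^m\otimes\sigma_{i'}^m\equiv\sigma_1^m\otimes\sigma_{i'}^m=0$ modulo $[\Pi,\Pi]$. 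No condition on the orientation is needed. This completes your argument and makes precise the paper's brief ``rotate the cycle and use the Segre product'' step.
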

\begin{proof}
This is a straightforward computation using that $\HH_0 \Lambda \cong \Lambda/[\Lambda, \Lambda]$ holds for any algebra $\Lambda$: Indeed, if $\Lambda$ is a quotient of a path algebra, it is clear that any element of $\Lambda/[\Lambda, \Lambda]$ is a sum of oriented cycles. 
We can thus consider a cycle $\sigma := \alpha_1 \cdots \alpha_i \alpha_{i+1}$ in the quiver of $\Pi$ and use that
\[
\alpha_1 \cdots \alpha_i \alpha_{i+1} - \alpha_{i+1} \alpha_1 \cdots \alpha_i
\]
is in $[\Pi, \Pi]$ in combination with the description of $\Pi$ as a Segre product given by \cref{prop: preproj of tensor product is Segre product of preproj} to deduce that any such $\sigma$ is zero modulo $[\Pi, \Pi]$ unless $\sigma$ is a scalar multiple of an idempotent.
\end{proof}

\subsection{Computing $\HC_* \Pi, \HH_* \Pi$ and $\HH^* \Pi$ via the graded Euler characteristic}
We are now almost ready to begin computing the cyclic homology, the Hochschild homology, and the Hochschild cohomology of these algebras. 
The only thing that must be done before this is to compute their graded Euler characteristic. 
Since we want to use the results of \cref{sec:product formula for the euler characteristic}, the later parts of \cref{sec: On Frobenius algebras and d-hereditary algebras}, and the first part of \cref{sec: some linear algebra}, we assume that the base field $K$ is algebraically closed and of characteristic zero for the remainder of this subsection.

\begin{prop}
Let $\Pi = \Pi_3(K\mathbb{A}_{n}^{\otimes 2})$ for odd $n$ with $\mathbb{A}_{n}$ oriented symmetrically. 
Then 
\[
\chi_{\overline{\HC}_*(\Pi)}(x) = \frac{1}{1 - x^{n+1}} \left( \sum_{i = 1}^{n} x^{i} - x^{(n+1)/2} - \frac{(n-1)^2}{2}x^{n+1} \right). 
\]
\end{prop}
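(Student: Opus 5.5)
The plan is to compute $\det C_\Pi(x)$ in closed form from the results of \cref{sec: some linear algebra}, feed it into \cref{thm: etingof-ginzburg}, and read off the exponents $a_k$. Set $A = K\mathbb{A}_n \otimes_K K\mathbb{A}_n$ with the symmetric orientations and $\ell = (n+1)/2$. Then $A$ is an $\ell$-homogeneous $2$-representation finite algebra and $\Pi = \Pi_3(A)$.

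\textbf{Step 1: check the hypotheses of \cref{thm: etingof-ginzburg}.} In the higher preprojective grading, $\Pi$ is finite dimensional by \cref{prop: properties of higher preprojective algebras of d-RF algebras}, hence locally finite dimensional. Its degree zero part is $\Pi_0 = A$. This is a tensor product of two path algebras of acyclic quivers, so it is given by an acyclic quiver with admissible (commutativity) relations.

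\textbf{Step 2: compute $\det C_{\Delta(A)}(-x)$.} Apply \cref{prop: det of graded Cartan matrix of trivial extension} with $x$ replaced by $-x$. Since $A$ is triangular, $\det C_A = 1$. Using the Coxeter polynomial $(1+x)(1-(-x)^{n+1})^{n-1}$ computed for $K\mathbb{A}_n\otimes K\mathbb{A}_n$, and the fact that $n+1$ is even, I get
\[
\det C_{\Delta(A)}(-x) = \phi_A(-x) = (1-x)(1-x^{n+1})^{n-1}.
\]

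\textbf{Step 3: compute $\det C_\Pi(x)$.} Apply the $\ell$-homogeneous case of \cref{prop: product of graded cartan matrices formula} with $d=2$. The right-hand side becomes $\det(I-(-x)^{\ell}\Phi_A^{-\ell})$. By \cref{prop: det of I - Cox for 2-RF of type A 2n + 1 times A}, this equals $(1-x^{\ell})(1-x^{2\ell})^{(n^2-1)/2}$. Dividing by the result of Step 2, and using $2\ell = n+1$ and $(n^2-1)/2-(n-1) = (n-1)^2/2$, gives
\[
\det C_\Pi(x) = \frac{(1-x^{\ell})(1-x^{n+1})^{(n-1)^2/2}}{1-x}.
\]

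\textbf{Step 4: read off the $a_k$.} The right-hand side of \cref{thm: etingof-ginzburg} becomes
\[
\prod_{s\ge1}(1-x^{s})^{-1}(1-x^{s\ell})(1-x^{s(n+1)})^{(n-1)^2/2}.
\]
The exponents $a_k$ in $\prod_k(1-x^k)^{-a_k}$ are uniquely determined, for example by comparing logarithms or by induction on degree. So each factor can be matched separately:
\begin{itemize}
\item $\prod_s(1-x^s)^{-1}$ contributes $a_k = 1$ for all $k$, giving $x/(1-x)$;
\item $\prod_s(1-x^{s\ell})$ contributes $-1$ whenever $\ell\mid k$, giving $-x^{\ell}/(1-x^{\ell})$;
\item $\prod_s(1-x^{s(n+1)})^{(n-1)^2/2}$ contributes $-\frac{(n-1)^2}{2}\,x^{n+1}/(1-x^{n+1})$.
\end{itemize}
Putting these over the common denominator $1-x^{n+1}$, the numerator is
\[
(x+\dots+x^{n+1}) - (x^{\ell}+x^{2\ell}) - \tfrac{(n-1)^2}{2}x^{n+1}.
\]
Since $x^{2\ell} = x^{n+1}$, this simplifies to $\sum_{i=1}^n x^i - x^{(n+1)/2} - \frac{(n-1)^2}{2}x^{n+1}$, which is the claimed formula.

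\textbf{Main obstacle.} I expect the delicate point to be Step 3. One has to check that the sign convention $((-1)^{d-1}x\Phi_A^{-1})^{\ell}$ with $d=2$ really produces $(-x)^{\ell}\Phi_A^{-\ell}$, which is the form used in \cref{prop: det of I - Cox for 2-RF of type A 2n + 1 times A}. One also has to confirm that $(-1)^{\ell}\Phi_A^{-\ell}$ is indeed the antidiagonal permutation matrix $J_{n^2}$ (the Nakayama permutation), independently of the parity of $\ell$. A sign error at this step would turn the factor $(1-x^{\ell})$ into $(1+x^{\ell})$. I would cross-check the final answer against the $d=1$ type $\mathbb{A}$ example computed earlier in the paper.
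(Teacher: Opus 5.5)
Your proposal is correct and is essentially the paper's own proof. The paper also combines \cref{prop: product of graded cartan matrices formula}, \cref{prop: det of graded Cartan matrix of trivial extension} (with $\det C_A = 1$) and \cref{prop: det of I - Cox for 2-RF of type A 2n + 1 times A} to obtain $\det C_\Pi(x) = (1-x^{\ell})(1-x^{n+1})^{(n-1)^2/2}(1-x)^{-1}$, reads off $\frac{x}{1-x} - \frac{x^{\ell}}{1-x^{\ell}} - \frac{(n-1)^2}{2}\frac{x^{n+1}}{1-x^{n+1}}$ via \cref{thm: etingof-ginzburg}, and then clears denominators exactly as you do. Your explicit check of the hypotheses of \cref{thm: etingof-ginzburg} is a small addition that the paper leaves implicit, and the sign you worry about comes out as you have it: for $n=3$, a direct computation from the Segre product description gives $\det C_\Pi(x) = (1+x)(1-x^4)^2$, which agrees with your formula.
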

\begin{proof}
By \cref{prop: product of graded cartan matrices formula}, \cref{prop: det of graded Cartan matrix of trivial extension}, and \cref{prop: det of I - Cox for 2-RF of type A 2n + 1 times A} we have that 
\[
\det (C_\Pi(x)) = (1 - x^{(n+1)/2})(1 - x^{n+1})^{(n^2 - 1)/2}(1-x)^{-1}(1 - x^{n+1})^{-(n - 1)}.
\]
Hence, combining this and \cref{thm: etingof-ginzburg} yields that
\begin{align*}
\chi_{\overline{\HC}_*(\Pi)}(x) 
& = \frac{x}{ 1 - x} - \frac{x^{(n+1)/2}}{1 - x^{(n+1)/2}} - (n-1)^2/2 \frac{x^{n+1}}{1 - x^{n+1}}\\
& = \frac{1}{1 - x^{n+1}} \left(\sum_{i = 1}^{n+1} x^{i} - (1 + x^{(n+1)/2}) x^{(n+1)/2} - \frac{(n-1)^2}{2}x^{n+1} \right)\\
& = \frac{1}{1 - x^{n+1}} \left(\sum_{i = 1}^{n} x^{i} - x^{(n+1)/2} - \frac{(n-1)^2}{2}x^{n+1} \right). 
\end{align*}
\end{proof}

The diagram below can be derived from the Connes long exact sequence recalled in \cref{sec:reduced homology} if one assumes
\begin{enumerate}[(i)]
    \item that $\Lambda$ is graded Frobenius of highest degree $\ell - 1$; 
    \item that $\Lambda$ is a Calabi--Yau Frobenius algebra of dimension $m = 3$ --- see \cref{def: CY Frobenius} --- satisfying $\Omega^{3+1}_{\Lambda^{e}}\Lambda \cong \Lambda_{\mu}\langle \ell \rangle$ where $\mu$ is the Nakayama automorphism of $\Lambda$; 
    \item that $\Lambda$ satisfies the conclusions of \cref{prop: degrees of generators of terms of bimodule projective resolution} and \cref{prop: graded version of Happel's result}; and
    \item that $\mu^2 = 1$.
\end{enumerate}
We note that all but the last item above are satisfied if $\Lambda = \Pi_{d+1}(A)$ for $A$ an $\ell$-homogeneous $2$-representation finite algebra, whereas the last item is also satisfied if $A$ is obtained as a tensor product of two Dynkin hereditary algebras. 

\[
\begin{tikzcd}
\text{degree} & & &\\
& 0 \dar & \\
0 \leq \deg \leq \ell - 1 & \overline{\HH}_0(\Lambda) \dar["B_0"] \rar[equal, shorten = 2mm, shift right] & C \dar["\sim" {rotate=90, anchor=north}] &\\
1 \leq \deg \leq \ell & \overline{\HH}_1(\Lambda) \dar["B_1"] \rar[equal, shorten = 2mm, shift right] & C \rar[phantom, "\oplus"] & X_1 \dar["\sim" {rotate=90, anchor=north}]\\
1 \leq \deg \leq \ell & \overline{\HH}_2(\Lambda) \dar["B_2"] \rar[equal, shorten = 2mm, shift right] & X_2 \dar["\sim" {rotate=90, anchor=north}] \rar[phantom, "\oplus"] & X_1\\
1 \leq \deg \leq \ell & \overline{\HH}_3(\Lambda) \dar["B_3"] \rar[equal, shorten = 2mm, shift right] & X_2 \rar[phantom, "\oplus"] & X_3 \dar["\sim" {rotate=90, anchor=north}]\\
\ell \leq \deg \leq 2\ell - 1 & \overline{\HH}_4(\Lambda) \dar["B_4"] \rar[equal, shorten = 2mm, shift right]  & D(X_2) \langle 2\ell \rangle \dar["\sim" {rotate=90, anchor=north}] \rar[phantom, "\oplus"] & D(X_3) \langle 2\ell \rangle\\
\ell + 1 \leq \deg \leq 2\ell & \overline{\HH}_5(\Lambda) \dar["B_5"] \rar[equal, shorten = 2mm, shift right]  & D(X_2) \langle 2\ell \rangle \rar[phantom, "\oplus"] & D(X_1) \langle 2\ell \rangle \dar["\sim" {rotate=90, anchor=north}]\\
\ell + 1 \leq \deg \leq 2\ell & \overline{\HH}_6(\Lambda) \dar["B_6"] \rar[equal, shorten = 2mm, shift right]  & D(C) \langle 2\ell \rangle \dar["\sim" {rotate=90, anchor=north}] \rar[phantom, "\oplus"] & D(X_1) \langle 
2\ell \rangle\\
\ell + 1 \leq \deg \leq 2\ell & \overline{\HH}_7(\Lambda) \dar["B_7"] \rar[equal, shorten = 2mm, shift right] & D(C) \langle 2\ell \rangle \rar[phantom, "\oplus"] & L\langle 2\ell \rangle \dar["\sim" {rotate=90, anchor=north}]\\
2\ell \leq \deg \leq 3\ell - 1 & \overline{\HH}_8(\Lambda) \rar[equal, shorten = 2mm, shift right]  & C \langle 2\ell \rangle \rar[phantom, "\oplus"] & L\langle 2\ell \rangle\\
\end{tikzcd}
\]

Assumptions (i) and (iii) together yield the bounds on the degrees in the leftmost column by using \cref{prop: HH_* bounds on degrees}. 
One then uses \cref{prop: HH formula for periodic + CY Frob} together with (iii) and (iv) to deduce that 
\begin{align*}
\HH_{i,j}(\Lambda) 
& \cong D\HH_{7 - i, 2\ell - j}(\Lambda)
\end{align*}
holds for $1 \leq i \leq 6$.
In other words, we obtain that $\HH_{7 - i}(\Lambda) \cong D(\HH_i(\Lambda))\langle 2\ell \rangle$ holds for $1 \leq i \leq 6$. 
We can also observe that, on the one hand, $C \oplus X_1$ being trivial in degree $0$ implies that $D(C)\langle 2 \ell \rangle \oplus D(X_{1})\langle 2 \ell \rangle$ must be trivial in degree $2 \ell$. 
On the other hand, $L\langle 2 \ell \rangle$ must be concentrated in degree $2 \ell$ due to the bounds on the degrees that come from \cref{prop: HH_* bounds on degrees}.
In fact, also note that by using the estimates for $X_i$ and $D(X_{i})\langle 2\ell \rangle$, one can also deduce that the degree $\ell$ part of $X_i$ is trivial for $i = 1,2$. 
Similarly, we see that $X_3$ is concentrated in degree $\ell$.

Assume now that we are in the setup of the preceding proposition, i.e.\ $\Lambda = \Pi = \Pi_{2+1}(K\mathbb{A}_n^{\otimes 2})$ with $n$ an odd positive integer and $\mathbb{A}_n$ oriented symmetrically. 
In this case, we obtain that $X_3$ must be trivial as the coefficient of $x^{\ell}$ is zero since $\ell = (n+1)/2$. 
By comparing with the coefficient of $x^{2\ell} = x^{n+1}$, we similarly obtain that $L$ is of dimension $(n-1)^2/2$.

To get more, we have to know more about the reduced Hochschild homology. 
To begin with, we note that we know that $C$ is trivial since $\HH_0(\Pi) = \Pi/\rad \Pi$ by \cref{prop: HH_0 of Pi of A_n otimes A_n}.
Following this, we want to use that the zeroth stable Hochschild cohomology $\stHH^0(\Pi)$ is dual to $\stHH_3(\Pi) = \HH_3(\Pi)$, and that the former is a quotient of $\HH^0(\Pi)$. 
Indeed, recall that we have already computed the structure of the latter in \cref{prop: center of higher preproj of type A_{2m + 1} times A_{2m + 1}}. 
Hence, we see that its Hilbert series is given by $h_{\HH^0(\Pi)}(x) = \sum_{i = 0}^{\ell - 1}x^{i}$.
By using that $\stHH^0(\Pi) \cong \stHH_3(\Pi)\langle -1 \rangle$ due to \cref{prop: duality formulas}, we deduce that $\overline{\HH}_3(\Pi) = X_2$ is at most one dimensional in degrees $1$ through $\ell$, so that $X_1$ must be trivial.

With this, we know the Hilbert series of $\HH_i(\Pi)$ for $i = 0,1, \ldots, 8$, and thus also for all $i \geq 0$ by using that  $\HH_{i + 8}(\Pi) \cong \HH_{i}(\Pi)\langle 2\ell \rangle$ for $i \geq 1$.
Thus we are also able to give a similar description of the reduced cyclic homology of $\Pi$.

We record what we have just shown in the following theorem, where we recall that $K\mathbb{A}_n$ is $\ell$-homogeneous for $\ell = (n+1)/2$ if $n$ is odd and $\mathbb{A}_n$ is oriented symmetrically.

\begin{thm}\label{thm: cychlic and hochschild homology of higher preproj of type A x A}
Let $\Pi = \Pi_{2 + 1}(K\mathbb{A}_{2\ell - 1}^{\otimes 2})$ with $\ell \geq 2$ and $\mathbb{A}_{2\ell - 1}$ oriented symmetrically, and let $\Pi$ be endowed with the higher preprojective grading.
Then the cyclic homology of $\Pi$ is given by 
\[
\begin{array}{lcl}
\HC_0(\Pi) \cong \Pi/\rad \Pi,
 & \qquad &
\HC_1(\Pi) \cong 0,\\
\HC_2(\Pi) \cong X_2,
& & 
\HC_3(\Pi) \cong 0,\\
\HC_4(\Pi) \cong D(X_2)\langle 2\ell\rangle,
& & 
\HC_5(\Pi)  \cong 0,\\
\HC_6(\Pi)  \cong 0,
& &
\HC_7(\Pi) \cong L\langle 2\ell \rangle,\\
\HC_8(\Pi) \cong 0,
& &
\HC_{i + 8}(\Pi) \cong \HC_{i}(\Pi)\langle  2\ell  \rangle, \quad i \geq 1,
\end{array}
\]
where $h_{\Pi/\rad \Pi}(x) = (2\ell - 1)^2$, $h_{X_2}(x) = \sum_{i = 1}^{\ell}x^{i}$ and $h_{L}(x) = \frac{(2\ell - 2)^2}{2}$. 

Hence, the Hochschild homology of $\Pi$ is then given by 
\[
\begin{array}{lcl}
\HH_0(\Pi)  \cong \Pi/\rad \Pi, & \qquad &
\HH_1(\Pi)  \cong 0, \\
\HH_2(\Pi)  \cong X_2, & & \HH_3(\Pi)  \cong X_2,\\
\HH_4(\Pi)  \cong D(X_2)\langle 2\ell\rangle, & &
\HH_5(\Pi)  \cong D(X_2)\langle 2\ell\rangle,\\
\HH_6(\Pi)  \cong 0, & &
\HH_7(\Pi)  \cong L\langle 2\ell \rangle, \\
\HH_{8}(\Pi)  \cong L\langle 2\ell \rangle, & & \HH_{i + 8}(\Pi) \cong \HH_{i}(\Pi)\langle 2\ell \rangle, \quad i \geq 1.\\
\end{array}
\]
\end{thm}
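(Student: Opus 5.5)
The plan is to assemble the pieces already established in this subsection into the Connes diagram above and read off each term. First, I verify that $\Pi$ satisfies hypotheses (i)--(iv) preceding the diagram with $\ell = (n+1)/2$ and $n = 2\ell-1$. Items (i)--(iii) come from \cref{prop: properties of higher preprojective algebras of d-RF algebras} (iii)--(v), \cref{prop: degrees of generators of terms of bimodule projective resolution} and \cref{prop: graded version of Happel's result}. For (iv), I would use the Segre product description in \cref{prop: preproj of tensor product is Segre product of preproj}: it makes the Nakayama automorphism a tensor product of the Nakayama automorphisms of the two copies of $\Pi_2(K\mathbb{A}_n)$. Each of these is an involution on vertices, by \cref{prop: preproj and graded Frob} (ii), up to signs on arrows that square to $1$. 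Hence $\mu^2=1$ can be arranged.

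Granting this, the diagram gives the decomposition $\overline{\HH}_1 = C\oplus X_1$, $\overline{\HH}_2 = X_2\oplus X_1$, and so on. It also gives the duality $\HH_{7-i}\cong D(\HH_i)\langle 2\ell\rangle$ for $1\le i\le 6$ from \cref{prop: HH formula for periodic + CY Frob}. The next step is to kill the unknown summands.
\begin{enumerate}[(a)]
\item $C=0$, because $\HH_0(\Pi)=\Pi/\rad\Pi$ by \cref{prop: HH_0 of Pi of A_n otimes A_n}, and this coincides with $\HH_0(\Pi_0)$.
\item $X_1=0$, by the argument above: $\stHH^0(\Pi)$ is a quotient of $Z(\Pi)$, and $h_{Z(\Pi)}=\sum_{i=0}^{\ell-1}x^i$ by \cref{prop: center of higher preproj of type A_{2m + 1} times A_{2m + 1}} together with \cref{prop: technical preproj result}. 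The duality of \cref{prop: duality formulas} then bounds $\HH_3=X_2\oplus X_3$.
\end{enumerate}

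The remaining summands $X_2$, $X_3$ and $L$ live in disjoint degree windows. $X_2$ is concentrated in degrees $1,\dots,\ell-1$ or $1,\dots,\ell$, while $X_3$ is in degree $\ell$ and $L\langle 2\ell\rangle$ is in degree $2\ell$. I therefore compare with the graded Euler characteristic from the preceding proposition. Since $\overline{\HC}_i$ is the cokernel piece of $B_i$, we get $\overline{\HC}_0=C$, $\overline{\HC}_2=X_2$ (the image in $\overline{\HH}_3$), $\overline{\HC}_3 = X_3$, $\overline{\HC}_4=D(X_2)\langle 2\ell\rangle$, $\overline{\HC}_7=L\langle2\ell\rangle$, and the rest vanish.

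The Euler characteristic restricted to degrees $\le 2\ell$ is
\[
\sum_{i=1}^{2\ell-1}x^i - x^{\ell} - \tfrac{(2\ell-2)^2}{2}x^{2\ell}.
\]
In degrees $1,\dots,\ell$ this equals $h_{X_2}-h_{X_3}$, and in degrees $\ell+1,\dots,2\ell$ it equals $h_{D(X_2)\langle 2\ell\rangle}-h_{L\langle 2\ell\rangle}$. The coefficient of $x^\ell$ is zero. Since $X_3$ sits only in degree $\ell$ and $X_2$ is at most one-dimensional in each degree, I will need to decide whether degree $\ell$ belongs to $X_2$ or to $X_3$. This is the step I expect to be most delicate: the claimed answer $h_{X_2}=\sum_{i=1}^\ell x^i$ forces $X_3=0$ while $X_2$ is nonzero in degree $\ell$. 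To settle it, I will use the duality $X_2\cong D(X_2)\langle 2\ell\rangle$ in the overlapping window $\ell\le\deg\le 2\ell-1$ of $\overline{\HH}_4$, which pins the degree-$\ell$ piece. I will also recheck the degree window of $\overline{\HH}_2$ and $\overline{\HH}_3$ from \cref{prop: HH_* bounds on degrees}.

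Reading off the coefficients then yields $h_{X_2}=\sum_{i=1}^{\ell}x^i$ and $h_L=(2\ell-2)^2/2$.

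Finally, periodicity comes from $\Omega^8_{\Pi^e}\Pi\cong {}_1\Pi_{\mu^{-2}}\langle 2\ell\rangle\cong\Pi\langle 2\ell\rangle$, using \cref{prop: properties of higher preprojective algebras of d-RF algebras} (v) and $\mu^2=1$. This gives $\HH_{i+8}\cong\HH_i\langle2\ell\rangle$ for $i\ge1$. The short exact sequences $0\to\overline{\HC}_{i-1}\to\overline{\HH}_i\to\overline{\HC}_i\to0$ then propagate the periodicity to $\HC$. For the unreduced groups, I add back $\HC_*(\Pi_0)$ and $\HH_*(\Pi_0)$. These are $\Pi/\rad\Pi$ in degree $0$ for $\HH_0$, and vanish for $\HH_{>0}$ since $\gldim A<\infty$. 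For $\HC_{2k}(\Pi_0)\cong\HC_{2k}(K)^{n^2}$ I must check, via Lemma \ref{lem:reduced vs relative cyclic homology}, that the stated $\HC_{2k}$ is meant up to this degree-zero contribution.
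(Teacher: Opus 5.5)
Your route is the paper's own: the Connes diagram, the degree windows from \cref{prop: HH_* bounds on degrees}, the duality $\HH_{7-i}\cong D(\HH_i)\langle 2\ell\rangle$, $C=0$ from $\HH_0$, the bound on $\HH_3$ coming from $Z(\Pi)$, and comparison with the Euler characteristic. The degree-$\ell$ step fails, however, because you are aiming at a false target.

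\textbf{The degree-$\ell$ step.} There is no isomorphism $X_2\cong D(X_2)\langle 2\ell\rangle$ to use. Here $X_2=\overline{\HC}_2$ and $D(X_2)\langle 2\ell\rangle=\overline{\HC}_4$ are different pieces of the diagram. They are related only through $\HH_4\cong D(\HH_3)\langle 2\ell\rangle$ and $\HH_5\cong D(\HH_2)\langle 2\ell\rangle$. The window that matters is that of $\HH_5$, not those of $\overline{\HH}_2$ or $\overline{\HH}_3$. Since $\HH_5$ lives in degrees $\ell+1,\dots,2\ell$, the group $\HH_2\cong D(\HH_5)\langle 2\ell\rangle$ lives in degrees $0,\dots,\ell-1$. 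Hence its quotient $X_2$ vanishes in degree $\ell$. The same argument with $\HH_1\cong D(\HH_6)\langle 2\ell\rangle$ kills $X_1$ in degree $\ell$.

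So $X_3=\overline{\HC}_3$ is the only term that can contribute to the coefficient of $x^\ell$. That coefficient is $0$, so $X_3=0$. In degrees $1,\dots,\ell-1$ the comparison reads $h_{X_2}-h_{X_1}=\sum_{i=1}^{\ell-1}x^i$; you dropped the $-h_{X_1}$ term. Only then does the centre bound $h_{\HH_3}=h_{X_2}\le\sum_{i=1}^{\ell}x^i$ force $X_1=0$ and $h_{X_2}=\sum_{i=1}^{\ell-1}x^i$. In particular, your step (b) must come after the Euler characteristic comparison, not before it.

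\textbf{The statement contains a misprint.} The value $h_{X_2}=\sum_{i=1}^{\ell-1}x^i$ is exactly what the paper's argument yields: the text before the theorem notes that the degree-$\ell$ parts of $X_1$ and $X_2$ vanish. It is also the value recorded in \cref{thm: hochschild cohomology of higher preproj of type A x A}. The upper limit $\ell$ in the statement is therefore a misprint, and no argument can produce it. A nonzero degree-$\ell$ piece of $X_2$ would give $\HH_5\cong D(\HH_2)\langle 2\ell\rangle$ a nonzero degree-$\ell$ piece, outside its window.

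\textbf{The rest of the plan.} With this correction the remainder works.
\begin{enumerate}
\item The value $h_L=(2\ell-2)^2/2$ comes from the $x^{2\ell}$ coefficient, once $\overline{\HC}_5=\overline{\HC}_6=0$; this follows from $\HH_6\cong D(\HH_1)\langle 2\ell\rangle=0$.
\item When you push periodicity through the short exact sequences, the base case is $\overline{\HC}_8\hookrightarrow\overline{\HH}_9\cong\overline{\HH}_1\langle 2\ell\rangle=0$.
\item Your closing caveat is correct and should be stated explicitly rather than merely checked. $\HC_{2k}(\Pi_0)=\HC_{2k}(A)$ is $(2\ell-1)^2$-dimensional and sits split-injectively in internal degree $0$ of $\HC_{2k}(\Pi)$ for every $k$. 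Hence the entries $\HC_{2k}$ with $k\ge 1$, and the periodicity, are statements about $\overline{\HC}$, as the paper's own text indicates.
\end{enumerate}
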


At this point, we can also use duality formulas to obtain a corresponding description for its Hochschild cohomology. 
Namely, by \cref{prop: duality formulas} we have that
\[
\stHH^{i,j}(\Pi) \cong \stHH_{3 - i, j + 1}(\Pi) 
\]
and 
\[
\stHH^{i,j}(\Pi) \cong D\stHH^{7 - i,-j - 2}(\Pi)
\]
holds for $\Pi$.
Hence, the first formula yields  
$$\HH^{i}(\Pi)  \cong \HH_{3-i}(\Pi)\langle -1 \rangle$$ for $1 \leq i \leq 2$; and
$$\HH^{i}(\Pi) \cong \HH_{11 - i}(\Pi)\langle -2\ell - 1 \rangle$$
for $3 \leq i \leq 10$.
Moreover, by the second formula we have
\[
\HH^{i}(\Lambda) \cong D(\HH^{7 - i}(\Lambda))\langle -2 \rangle;
\]
for $1 \leq i \leq 6$; and 
\[
\HH^{i}(\Lambda) \cong D(\HH^{15 - i}(\Lambda))\langle -2\ell -2 \rangle
\]
for $7 \leq i \leq 14$.

By using these in combination with the preceding theorem, we obtain the following. 
\begin{thm}\label{thm: hochschild cohomology of higher preproj of type A x A}
Let $\Pi = \Pi_{2+1}(K\mathbb{A}_{2\ell -1}^{\otimes 2})$ with $\ell \geq 2$ and $\mathbb{A}_{2\ell - 1}$ oriented symmetrically, and let $\Pi$ be endowed with the higher preprojective grading.
Then the Hochschild cohomology of $\Pi$ is given by
\[
\begin{array}{lcl}
\HH^0(\Pi)  \cong Z, 
& \qquad &
\HH^1(\Pi)  \cong X_2\langle - 1\rangle, \\
\HH^2(\Pi)  \cong 0, 
& & 
\HH^3(\Pi)  \cong L\langle - 1\rangle,\\
\HH^4(\Pi)  \cong L\langle - 1\rangle, 
& &
\HH^5(\Pi)  \cong 0,\\
\HH^6(\Pi)  \cong D(X_2)\langle -1 \rangle, 
& &
\HH^7(\Pi)  \cong D(X_2)\langle -1 \rangle, \\
\HH^{8}(\Pi)  \cong  X_2\langle -2\ell - 1\rangle,
& & 
\HH^{i + 8}(\Pi) \cong \HH^{i}(\Pi)\langle 2\ell \rangle, \quad i \geq 1,\\
\end{array}
\]
where $h_{Z}(x) = \sum_{i = 0}^{\ell- 1}x^{i}$, $h_{X_2}(x) = \sum_{i = 1}^{\ell-1}x^{i}$, and $h_{L}(x) = 2(\ell - 1)^2 $.
\end{thm}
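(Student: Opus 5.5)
The plan is to transport the Hochschild homology of \cref{thm: cychlic and hochschild homology of higher preproj of type A x A} to Hochschild cohomology using the duality formulas of \cref{prop: duality formulas}. By \cref{prop: properties of higher preprojective algebras of d-RF algebras}, $\Pi$ is graded Frobenius of highest degree $a=\ell-1$ and Calabi--Yau Frobenius of dimension $m=3$, with $\Omega^{4}_{\Pi^e}\Pi\cong{}_1\Pi_{\mu^{-1}}\langle\ell\rangle=\langle a+1\rangle$. This gives $\stHH^{i,j}(\Pi)\cong\stHH_{3-i,j+1}(\Pi)$. In positive degrees the stable groups agree with the ordinary ones, and negative stable homological degrees are moved into positive ones using the grading-shifted periodicity $\Omega^{8}_{\Pi^e}\Pi\cong\Pi\langle 2\ell\rangle$, which uses $\mu^2=1$. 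This yields the two formulas already displayed before the statement:
\[
\HH^i(\Pi)\cong\HH_{3-i}(\Pi)\langle-1\rangle \quad (1\le i\le 2),
\qquad
\HH^i(\Pi)\cong\HH_{11-i}(\Pi)\langle-2\ell-1\rangle \quad (3\le i\le 10).
\]

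Next, I would treat $\HH^0$ separately. By \cref{prop: center of higher preproj of type A_{2m + 1} times A_{2m + 1}}, $\HH^0(\Pi)=Z(\Pi)=Z(\Pi_2)\overline\otimes Z(\Pi_2)$. Using \cref{prop: center relative to degree 0 of preproj of type A_n} and \cref{prop: technical preproj result}, the center of $\Pi_2(K\mathbb{A}_{2\ell-1})$ has exactly one central element in each preprojective degree $0,\dots,\ell-1$: the element $\sum_{i=m+1}^{n-m}\sigma_i^m$ lies in preprojective degree $m$, since each $2$-cycle has exactly one arrow in degree $0$. The Segre product therefore still has one dimension per degree, so $h_Z(x)=\sum_{i=0}^{\ell-1}x^i$.

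The remaining step is substitution. For $i=1$, $\HH^1\cong\HH_2\langle-1\rangle=X_2\langle-1\rangle$, and $\HH^2\cong\HH_1\langle-1\rangle=0$. For $3\le i\le 8$, the second formula gives $\HH^3\cong\HH_8\langle-2\ell-1\rangle=L\langle-1\rangle$, $\HH^4\cong\HH_7\langle-2\ell-1\rangle=L\langle-1\rangle$, $\HH^5\cong\HH_6=0$, and $\HH^6,\HH^7\cong D(X_2)\langle2\ell\rangle\langle-2\ell-1\rangle=D(X_2)\langle-1\rangle$. Finally $\HH^8\cong\HH_3\langle-2\ell-1\rangle=X_2\langle-2\ell-1\rangle$. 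The periodicity $\HH^{i+8}\cong\HH^i\langle2\ell\rangle$ for $i\ge1$ follows from $\Omega^8_{\Pi^e}\Pi\cong\Pi\langle2\ell\rangle$ by dimension shifting. As a consistency check, the second formula of \cref{prop: duality formulas} gives $\HH^i\cong D\HH^{7-i}\langle-2\rangle$, which I would verify on the list, for example by pairing $\HH^3$ with $\HH^4$.

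The main obstacle is the discrepancy in Hilbert series, which is also the step needing the most care. The statement gives $h_{X_2}=\sum_{i=1}^{\ell-1}x^i$ and $h_L=2(\ell-1)^2$, whereas the homology theorem gives $\sum_{i=1}^{\ell}x^i$ and $(2\ell-2)^2/2$. The two values of $h_L$ agree. For $X_2$, I would recompute directly. The argument preceding the homology theorem identifies $\HH_3=X_2$ with the stable quotient $\stHH^0(\Pi)\langle1\rangle$. This stable quotient is $Z$ modulo the maps factoring through projective bimodules, and that kills the degree $0$ part: the identity does not factor, but the scalars do not survive in the stable quotient appropriately once one combines it with the fact, derived from the degree bounds, that $X_2$ vanishes in degree $\ell$. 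Concretely, I would check that the top-degree central element, the socle element in degree $\ell-1$, is the image of the trace element of $\Pi^e$, hence zero in $\stHH^0$. That leaves degrees $0,\dots,\ell-2$ of $Z$, shifted to $1,\dots,\ell-1$, which gives $h_{X_2}=\sum_{i=1}^{\ell-1}x^i$. I would then use this corrected $X_2$ consistently in all entries above.
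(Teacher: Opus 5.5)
Your overall route is the paper's. The first formula of \cref{prop: duality formulas}, together with $\Omega^8_{\Pi^e}\Pi\cong\Pi\langle 2\ell\rangle$, gives $\HH^i\cong\HH_{3-i}\langle-1\rangle$ for $i=1,2$ and $\HH^i\cong\HH_{11-i}\langle-2\ell-1\rangle$ for $3\le i\le 10$. $\HH^0=Z$ comes from the Segre-product description of the center, and the rest is substitution. Your substitutions for $\HH^1,\dots,\HH^8$ are correct. Two steps, however, do not go through as you wrote them.

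\textbf{Periodicity.} The periodicity does not follow with the sign that you (and the statement) give. Since $\Ext$ is contravariant in the first variable, dimension shifting yields
\[
\HH^{i+8,j}(\Pi)\cong\Ext^{i}_{\Gr\Pi^e}(\Pi\langle 2\ell\rangle,\Pi\langle -j\rangle)\cong\Ext^{i}_{\Gr\Pi^e}(\Pi,\Pi\langle -j-2\ell\rangle)=\HH^{i,j+2\ell}(\Pi),
\]
that is, $\HH^{i+8}(\Pi)\cong\HH^i(\Pi)\langle-2\ell\rangle$ for $i\ge1$. Only the homology version carries $\langle 2\ell\rangle$. Your own list confirms this: $\HH^9\cong\HH_2\langle-2\ell-1\rangle=\HH^1\langle-2\ell\rangle$. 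It is also the sign implicitly used in the paper's formula $\HH^i\cong D(\HH^{15-i})\langle-2\ell-2\rangle$. So the last entry of the statement has a sign slip, and you should correct it rather than claim it follows by dimension shifting.

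\textbf{The Hilbert series of $X_2$.} You are right that $h_{X_2}=\sum_{i=1}^{\ell-1}x^i$ and that the $\sum_{i=1}^{\ell}x^i$ in \cref{thm: cychlic and hochschild homology of higher preproj of type A x A} is an error, but your justification needs repair.

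\begin{itemize}
\item Your claim that passing to $\stHH^0$ ``kills the degree $0$ part'' is false: $1\in Z_0$ survives, and indeed $\stHH^{0,0}\cong\HH_{3,1}\neq0$.
\item The vanishing of $X_2$ in degree $\ell$ needs no new computation. The isomorphism $\HH_2\cong D(\HH_5)\langle 2\ell\rangle$, combined with the bounds $[1,\ell]$ and $[\ell+1,2\ell]$ from \cref{prop: HH_* bounds on degrees}, puts $\HH_2$, and hence its summand $X_2$, in degrees $1,\dots,\ell-1$.
\item Your trace-element check on the socle element only shows $\stHH^{0,\ell-1}\cong\HH_{3,\ell}=X_3=0$. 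The paper reads this off from the zero coefficient of $x^\ell$ in the Euler characteristic.
\item That check is only an upper bound. To know that $Z_0,\dots,Z_{\ell-2}$ survive in $\stHH^0$, you would additionally have to show that the projective center is concentrated in degree $\ell-1$.
\end{itemize}

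The paper's argument sidesteps this. With $C=0$, for $1\le j\le\ell-1$ the Euler characteristic gives $\dim X_{2,j}-\dim X_{1,j}=1$. Meanwhile $\dim\HH_{3,j}\le\dim Z_{j-1}=1$. Hence $\dim X_{2,j}=1$ and $X_1=0$.
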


\section*{Acknowledgments}
The authors thank {\O}yvind Solberg for helpful comments on an earlier draft of this paper.

The authors benefited from the use of the software package QPA \cite{QPA} for the GAP system \cite{GAP4} to perform various computations that motivated and informed many of the results in this paper.

The second named author is grateful to have been supported by Norwegian Research Council project 301375, ``Applications of reduction techniques and computations in representation theory'' and by Norwegian Research Council project 357034, ``Symmetries of derived module categories via higher $\tau$-tilting- and almost $T$-Koszul theory'' during work on this paper.

\bibliographystyle{alpha}

\bibliography{big_awful_bib_file.bib}

\appendix
\section{Preliminaries on Dirichlet convolution} \label{sec:dirichlet convolution}

In \cref{sec:product formula for the euler characteristic}, we used several number-theoretic concepts related to arithmetic functions and Dirichlet convolution. Though these concepts are elementary, we review them in this appendix for the benefit of the reader.

An arithmetic function is a function whose domain is the set of positive integers and whose codomain is (a subset of) the complex numbers. Given two arithmetic functions $F$ and $G$, their Dirichlet convolution is the arithmetic function $F * G$ defined by
$$
(F * G)(n) = \sum_{d | n} F(d) G\left(\frac{n}{d}\right).
$$
It may be shown that the set of arithmetic functions forms a commutative ring, with multiplication given by Dirichlet convolution and addition given by pointwise addition, and with the multiplicative identity being the function $\varepsilon$ given by
$$
\varepsilon(n) = \begin{cases}
    1 & \text{if $n = 1$} \\
    0 & \text{otherwise.}
\end{cases}
$$
See e.g.\ Theorem 2.6 and Theorem 2.7 in \cite{apostol-numbertheory} for proofs of some of the ring axioms.

An arithmetic function $F$ is said to be multiplicative if it is not identically zero and $F(mn) = F(m) F(n)$ for all relatively prime pairs of positive integers $m$ and $n$.
The Dirichlet convolution of two multiplicative functions is again a multiplicative function (see e.g.\ \cite[Theorem 2.14]{apostol-numbertheory}).

An important example of a multiplicative arithmetic function is the M\"{o}bius function $\mu$, which is defined by the following formula.
$$
\mu(n) = \begin{cases}
    1 & \text{if $n = 1$} \\
    (-1)^k & \text{if $n$ is the product of $k$ distinct primes} \\
    0 & \text{otherwise.}
\end{cases}
$$
It may be shown that $\mu$ is the inverse with respect to Dirichlet convolution of the constant function $1$ (see e.g.\ \cite[Theorem 2.1]{apostol-numbertheory}). In particular, it follows that if $F$ and $G$ are arithmetic functions such that $G = 1 * F$, then we have $F = \mu * G$. More explicitly, if
$$
G(n) = \sum_{d | n} F(d)
$$
for all $n$, then
$$
F(n) = \sum_{d | n} \mu(d) G\left( \frac{n}{d} \right).
$$
This is known as the M\"{o}bius inversion formula.

The following result is in some sense an analogue of M\"{o}bius inversion for formal power series. While it is used implicitly in \cite{IGUSA1992101} and some version of it should be well known, we have not found a suitable reference  and have thus included a proof.

\begin{lem}\label{lem:convolution and inversion for power series}
Suppose $p(x)$ and $q(x)$ are formal power series over the complex numbers with no constant term, and suppose that $F$ and $G$ are arithmetic functions that are each other's inverses with respect to Dirichlet convolution. Then we have
$$
q(x) = \sum_{k = 1}^\infty F(k) p(x^k)
$$
if and only if
$$
p(x) = \sum_{k = 1}^\infty G(k) q(x^k).
$$
\end{lem}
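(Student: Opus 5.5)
The plan is to use the ring structure of arithmetic functions under Dirichlet convolution, together with the observation that the substitution operators $x \mapsto x^k$ compose multiplicatively. For an arithmetic function $H$ and a power series $p(x)$ without constant term, set
\[
(H \star p)(x) := \sum_{k = 1}^\infty H(k)\, p(x^k).
\]
Since $p$ has no constant term, $p(x^k)$ only involves monomials of degree at least $k$. Hence each coefficient of $(H \star p)(x)$ is a finite sum, and $H \star p$ is again a well-defined formal power series with no constant term.

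The key step is the associativity identity $H_1 \star (H_2 \star p) = (H_1 * H_2) \star p$. I would verify it coefficientwise. We have
\[
H_1 \star (H_2 \star p) = \sum_{a \geq 1} H_1(a) \sum_{b \geq 1} H_2(b)\, p(x^{ab}).
\]
Regrouping the terms by $n = ab$ gives $\sum_{n} \bigl(\sum_{a \mid n} H_1(a) H_2(n/a)\bigr) p(x^n) = \sum_n (H_1 * H_2)(n)\, p(x^n)$. The regrouping is legitimate because, for a fixed monomial $x^m$, only pairs with $ab \leq m$ contribute. So every manipulation is a rearrangement of a finite sum, and no convergence issues arise. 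I also note that $\varepsilon \star p = p$.

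With this in hand, both directions of the lemma are immediate. If $q = F \star p$, then
\[
G \star q = G \star (F \star p) = (G * F) \star p = \varepsilon \star p = p,
\]
using that $G * F = \varepsilon$ in the commutative ring of arithmetic functions. The converse is symmetric: swap the roles of $F$ and $G$, and of $p$ and $q$. I also need that $q$ has no constant term for this to make sense, and that is part of the hypothesis.

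The only point requiring care is the finiteness and rearrangement justification in the associativity step. There I would explicitly fix the coefficient of $x^m$ and observe that only indices $n \leq m$ enter. Everything else is formal.
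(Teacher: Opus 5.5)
Your proof is correct and follows the paper's argument: the paper substitutes $q(x)=\sum_{l} F(l)\,p(x^{l})$ into $\sum_{k} G(k)\,q(x^{k})$, regroups by $n = kl$, and uses $F * G = \varepsilon$, which is your associativity identity with $H_1 = G$ and $H_2 = F$. Your version is slightly more careful: it makes the coefficientwise finiteness behind the rearrangement explicit and states the symmetric converse, which the paper leaves implicit.
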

\begin{proof}
We prove the ``only if'' part of the statement. Suppose that $q(x) = \sum_{k= 1}^\infty F(k) p(x^k)$. Since the power series $q(x)$ has no constant term, the infinite sum $\sum_{k = 1}^\infty G(k) q(x^k)$ is well defined, and we compute its value as
\begin{align*}
\sum_{k = 1}^\infty G(k) q(x^k) &= \sum_{k = 1}^\infty G(k) \sum_{l = 1}^\infty F(l) p(x^{kl}) \\
&= \sum_{n = 1}^\infty \sum_{\substack{k, l \\ kl = n}} F(l) G(k) p(x^n) \\
&= \sum_{n = 1}^\infty \sum_{l | n} F(l) G\left(\frac{n}{l}\right) p(x^n) \\
&= \sum_{n = 1}^\infty (F * G)(n)p(x^n) \\
&= \sum_{n = 1}^\infty \varepsilon(n) p(x^n),
\end{align*}
where the last equality uses the fact that $F$ and $G$ are Dirichlet inverses. Now by using the fact that $\varepsilon(n)$ is $1$ for $n = 1$ and $0$ for all other $n$, we are left with $p(x)$, as desired.
\end{proof}
\end{document}